\definecolor{darkgreen}{rgb}{0,0.75,0}
\definecolor{darkred}{rgb}{0.75,0,0}
\definecolor{darkmagenta}{rgb}{0.5,0,0.5}
\spnewtheorem{thm}{Theorem}[section]{\bfseries}{\itshape}
\spnewtheorem{conj}[thm]{Conjecture}{\bfseries}{\itshape}
\spnewtheorem{cor}[thm]{Corollary}{\bfseries}{\itshape}
\spnewtheorem{lem}[thm]{Lemma}{\bfseries}{\itshape}
\spnewtheorem{prop}[thm]{Proposition}{\bfseries}{\itshape}
\spnewtheorem{dfn}[thm]{Definition}{\bfseries}{\rmfamily}
\spnewtheorem{ass}[thm]{Assumption}{\bfseries}{\rmfamily}
\spnewtheorem{exmp}[thm]{Example}{\bfseries}{\rmfamily}
\spnewtheorem{prb}[thm]{Problem}{\bfseries}{\rmfamily}
\spnewtheorem{rmk}[thm]{Remark}{\bfseries}{\rmfamily}
\spnewtheorem*{ntn}{Notation}{\bfseries}{\rmfamily}
\spnewtheorem*{prfsktch}{Sketch of the proof}{\itshape}{\rmfamily}
\numberwithin{equation}{section}
\def\@makefnmark{\hbox{(\@textsuperscript{\normalfont\@thefnmark})}}
\newcommand{\mr}[1]{\texttt{\href{http://www.ams.org/mathscinet-getitem?mr=#1}{MR#1}}}
\newcommand{\arxiv}[1]{\texttt{\href{http://arxiv.org/abs/#1}{arXiv:#1}}}
\newcommand{\ssstr}{\mathcal{L}}
\newcommand{\ssset}{K}
\newcommand{\ssindex}{S}
\newcommand{\ssproj}{\pi}
\newcommand{\words}{W}
\newcommand{\ssmap}{F}
\newcommand{\sseuc}{f}
\newcommand{\sscndc}{\rho}
\newcommand{\ssvertices}{V}
\newcommand{\ssnbd}{U}
\newcommand{\sssym}{\mathcal{G}_{\mathrm{sym}}}
\newcommand{\eucrefl}[1]{g_{#1}}
\newcommand{\SGvertex}{q}
\newcommand{\simplex}{\triangle}
\newcommand{\refmet}{d}
\newcommand{\form}{\mathcal{E}}
\newcommand{\domain}{\mathcal{F}}
\newcommand{\harfunc}[2]{\mathcal{H}_{#1,#2}}
\newcommand{\harext}[2]{h^{#1}_{#2}}
\newcommand{\harprincipal}[2]{h_{#1,#2}}
\newcommand{\harsecond}[2]{\check{h}_{#1,#2}}
\newcommand{\formsecond}{\tilde{\form}}
\newcommand{\domainsecond}{\tilde{\domain}}
\newcommand{\resismet}{R}
\newcommand{\renom}{\mathcal{R}}
\newcommand{\enermeas}[2]{\mu^{#1}_{\langle #2\rangle}}
\newcommand{\enermeasshift}[2]{\mathfrak{m}^{#1}_{\langle #2\rangle}}
\newcommand{\contfunc}{C}
\newcommand{\Borel}{\mathscr{B}}
\newcommand{\probspsigmaalg}{\mathscr{F}}
\newcommand{\formhoelderexp}{\alpha}
\newcommand{\one}{\mathds{1}} %indicator
\newcommand{\sgn}{\mathop{\operatorname{sgn}}}
\newcommand{\osc}{\mathop{\operatorname{osc}}\nolimits}
\newcommand{\id}{\mathop{\operatorname{id}}}
\newcommand{\supp}{\mathop{\operatorname{supp}}}
\renewcommand{\ackname}{Acknowledgements.}%
\providecommand{\ack}[1]{\par\addvspace\baselineskip
\noindent\ackname\enspace\ignorespaces#1}%
\def\subjclassname{\textup{2020} \textit{Mathematics Subject Classification:}}%
\providecommand{\subjclass}[1]{\par\addvspace\baselineskip
\noindent\subjclassname\enspace\ignorespaces#1}%
\begin{document}
\mainmatter
\title{\bfseries $p$-Energy forms on fractals: recent progress}
\titlerunning{$p$-Energy forms on fractals: recent progress}

\author{\begin{picture}(0,0)\put(-85,61){Version of December 14, 2023}\end{picture}Naotaka Kajino\inst{1} \and Ryosuke Shimizu\inst{2,3}}
\authorrunning{N.~Kajino and R.~Shimizu}
\tocauthor{Naotaka Kajino and Ryosuke Shimizu}
\institute{Research Institute for Mathematical Sciences, Kyoto University, Kyoto, Japan\\
\email{nkajino@kurims.kyoto-u.ac.jp}\and
Graduate School of Informatics, Kyoto University, Kyoto, Japan\and
Department of Mathematics, Faculty of Science and Engineering, Waseda University, Tokyo, Japan (current address)\\
\email{r-shimizu@aoni.waseda.jp}}

\maketitle

\begin{abstract}
In this article, we survey recent progress on self-similar $p$-energy forms
on self-similar fractals, where $p\in(1,\infty)$. While for $p=2$ the notion of
such forms coincides with that of self-similar Dirichlet forms and there have been
plenty of studies on them since the late 1980s, studies on the case of
$p\in(1,\infty)\setminus\{2\}$ was initiated much later in 2004 by Herman,
Peirone and Strichartz [\emph{Potential Anal.}\ \textbf{20} (2004), 125--148] and
Strichartz and Wong [\emph{Nonlinearity} \textbf{17} (2004), 595--616] and
no essential progress on this case had been made since then until a few years ago.
The recent progress by Kigami, Shimizu, Cao--Gu--Qiu and Murugan--Shimizu has established
the existence of such $p$-energy forms on general post-critically finite (p.-c.f.)\ self-similar
sets and on large classes of low-dimensional infinitely ramified self-similar sets, and
the authors have proved further detailed properties of these forms and associated $p$-harmonic
functions, mainly for p.-c.f.\ self-similar sets. This article
is devoted to a review of these results, focusing on the most recent developments
by the authors and illustrating them in the simplest non-trivial setting of
the two-dimensional standard Sierpi\'{n}ski gasket.
\keywords{(Two-dimensional standard) Sierpi\'{n}ski gasket, post-critically finite (p.-c.f.)\ self-similar set,
$p$-resistance form, generalized contraction property, $p$-harmonic function, strong comparison principle, $p$-energy measure}
\subjclass{Primary 28A80, 31C45, 31E05, 60G30; secondary 31C25, 46E36}
\ack{Naotaka Kajino was supported in part by JSPS KAKENHI Grant Numbers JP21H00989, JP22H01128.
Ryosuke Shimizu was supported in part by JSPS KAKENHI Grant Numbers JP20J20207, JP23KJ2011.
This work was supported by the Research Institute for Mathematical Sciences,
an International Joint Usage/Research Center located in Kyoto University.}
\end{abstract}
%%%%%
\section{Introduction}\label{sec:intro}
%%%%%
In this article, we survey recent progress on \emph{self-similar $p$-energy forms}
on self-similar fractals for $p\in(1,\infty)$. Namely, we are concerned with a functional
$\form_{p}\colon\domain_{p}\to[0,\infty)$ defined on a linear space $\domain_{p}$ of
$\mathbb{R}$-valued functions over a fractal $\ssset$ which is self-similar with
respect to a finite family of continuous injections $\{\ssmap_{i}\}_{i\in\ssindex}$,
such that $\form_{p}^{1/p}$ is a seminorm on $\domain_{p}$ and (at least) the following hold:
\begin{gather}\label{eq:form-unit-contraction-intro}
u^{+}\wedge 1\in\domain_{p}\quad\textrm{and}\quad\form_{p}(u^{+}\wedge 1)\leq\form_{p}(u)\quad\textrm{for any $u\in\domain_{p}$,}\\
\{u\circ\ssmap_{i}\}_{i\in S}\subset\domain_{p}\quad\textrm{and}\quad
	\form_{p}(u)=\sum_{i\in\ssindex}\sscndc_{p,i}\form_{p}(u\circ\ssmap_{i})
	\quad\textrm{for any $u\in\domain_{p}$}
\label{eq:ssform-intro}
\end{gather}
for some $(\sscndc_{p,i})_{i\in\ssindex}\in(0,\infty)^{\ssindex}$. We usually assume
also that $\domain_{p}$ is complete under a suitable norm involving $\form_{p}$ and
that $\form_{p}$ satisfies some $L^{p}$-type convexity and smoothness inequalities
like $p$-Clarkson's ones. Such $p$-energy forms $(\form_{p},\domain_{p})$ could
be considered as natural analogs of the $p$-th power of the canonical seminorm
$(\int_{\mathbb{R}^{n}}\lvert\nabla u\rvert^{p}\,dx)^{1/p}$ on the first-order
$L^{p}$-Sobolev space $W^{1,p}(\mathbb{R}^{n})$ over $\mathbb{R}^{n}$.

The case of $p=2$, where such forms are nothing but self-similar Dirichlet forms on
self-similar fractals, has been extensively studied since the late 1980s (see, e.g.,
the textbooks \cite{Bar98,Kig01,Str}, a recent study \cite[Section 6]{KM23} and
references therein). On the other hand, there had been no study on the case of
$p\in(1,\infty)\setminus\{2\}$ until the first construction of such forms on a class
of post-critically finite (p.-c.f.)\ self-similar sets by Herman, Peirone and Strichartz
\cite{HPS} in 2004 and a subsequent study of the standard $p$-Laplacian on the
two-dimensional standard Sierpi\'{n}ski gasket (Figure \ref{fig:SG} below)
by Strichartz and Wong \cite{SW} in 2004, and no essential progress on this case
had been made since then until a few years ago.

The situation changed in 2021, when Kigami \cite{Kig23} and Shimizu \cite{Shi}
started the construction of self-similar $p$-energy forms on much larger classes
of self-similar fractals including the Sierpi\'{n}ski carpet and various other
infinitely ramified self-similar sets, with the motivation toward better
understanding of the Ahlfors regular conformal dimension and related
conformal-geometric properties of the Sierpi\'{n}ski carpet; see also
\cite[(1.6) and Problem 7.7]{KM23} for some backgrounds on this motivation.
Soon after the first appearance of the works \cite{Kig23,Shi} on arXiv,
Cao, Gu and Qiu \cite{CGQ} gave a comprehensive study of the case of
p.-c.f.\ self-similar sets and in particular extended the construction of
self-similar $p$-energy forms as in \cite{HPS} to general p.-c.f.\ self-similar sets.
The constructions in \cite{Kig23,Shi} needed to assume that $p$ is greater than the
Ahlfors-regular conformal dimension of the fractal $\ssset$, in order for the domain
$\domain_{p}$ to be included in the space $\contfunc(\ssset)$ of continuous functions
on $\ssset$ (see also \cite{CCK} in this connection), but Murugan and Shimizu \cite{MS}
have recently removed this restriction on $p$ in the case of the Sierpi\'{n}ski carpet.
Another important aspect of the works \cite{Shi,MS} is that they have constructed
the associated \emph{$p$-energy measure} $\enermeas{p}{u}$ of each $u\in\domain_{p}$,
i.e., the unique Borel measure $\enermeas{p}{u}$ on the fractal $\ssset$ such that
\begin{equation}\label{eq:enermeas-intro}
\enermeas{p}{u}(\ssmap_{w}(\ssset))=\sscndc_{p,w}\form_{p}(u\circ\ssmap_{w})
	\qquad\textrm{for any $w\in\bigcup_{n\in\mathbb{N}\cup\{0\}}\ssindex^{n}$,}
\end{equation}
where $\ssmap_{w}:=\ssmap_{w_{1}}\circ\cdots\circ\ssmap_{w_{n}}$ and
$\sscndc_{p,w}:=\sscndc_{p,w_{1}}\cdots\sscndc_{p,w_{n}}$ for $w=w_{1}\ldots w_{n}\in\ssindex^{n}$
($\ssmap_{w}:=\id_{\ssset}$ and $\sscndc_{p,w}:=1$ if $n=0$),
and proved some fundamental properties of this family of measures,
including the following chain rule:
\begin{equation}\label{eq:chain-rule-intro}
d\enermeas{p}{\Phi(u)}=\lvert\Phi'(u)\rvert^{p}\,d\enermeas{p}{u}
	\quad\textrm{for any $u\in\domain_{p}\cap\contfunc(\ssset)$ and any $\Phi\in\contfunc^{1}(\mathbb{R})$.}
\end{equation}

While these results have established the existence of self-similar $p$-energy forms
on very large classes of self-similar sets, detailed properties of such forms are
yet to be studied. In fact, we have recently made essential progress in analyzing
$p$-harmonic functions and $p$-energy measures $\enermeas{p}{u}$ further, mainly in the
setting of p.-c.f.\ self-similar sets. Our main results can be summarized as follows;
see \cite[Chapter 1]{Kig01} for the definition of the notion of p.-c.f.\ self-similar structure.
\begin{enumerate}[label=\textup{(\arabic*)},align=left,leftmargin=*,topsep=6pt,itemsep=2pt]
\item\label{it:GCDiff-intro}(\cite{KS:GCDiff}) The constructions of a self-similar
	$p$-energy form as in the main results of \cite{HPS,Kig23,Shi,CGQ,MS}
	can be modified so as to guarantee that the resulting $p$-energy form
	$(\form_{p},\domain_{p})$ satisfies additionally the following properties:
	\begin{enumerate}[label=\textup{(\arabic{enumi}-\arabic*)},align=left,leftmargin=*,topsep=4pt,itemsep=2pt]
	\item\label{it:GC-intro}(Generalized contraction property) If $q\in(0,p]$, $r\in[p,\infty]$,
		$m,n\in\mathbb{N}$ and $T=(T_{1},\ldots,T_{n})\colon\mathbb{R}^{m}\to\mathbb{R}^{n}$
		satisfies $\lVert T(x)-T(y)\rVert_{\ell^{r}}\leq\lVert x-y\rVert_{\ell^{q}}$
		for any $x,y\in\mathbb{R}^{m}$ and $T(0)=0$,
		then for any $u=(u_{1},\ldots,u_{m})\in\domain_{p}^{m}$,
		$T(u)=(T_{1}(u),\ldots,T_{n}(u))\in\domain_{p}^{n}$ and
		\begin{equation}\label{eq:GC-intro}
		\lVert(\form_{p}(T_{k}(u))^{1/p})_{k=1}^{n}\rVert_{\ell^{r}}
			\leq\lVert(\form_{p}(u_{j})^{1/p})_{j=1}^{m}\rVert_{\ell^{q}}.
		\end{equation}
	\item\label{it:Diff-intro}(Differentiability) $\form_{p}$ is Fr\'{e}chet differentiable
		with locally $\formhoelderexp_{p}$-H\"{o}lder continuous Fr\'{e}chet derivative with respect
		to a natural norm on $\domain_{p}$, where $\formhoelderexp_{p}:=\frac{1}{p}\wedge\frac{p-1}{p}$.
	\end{enumerate}
\item\label{it:StrComp-pqEnergySing-intro}Assume that $\ssstr:=(\ssset,\ssindex,\{\ssmap_{i}\}_{i\in\ssindex})$ is a
	p.-c.f.\ self-similar structure with $\ssset$ connected and containing at least
	two elements, and that $(\form_{p},\domain_{p})$ is a self-similar $p$-energy
	form over $\ssstr$ satisfying \ref{it:GC-intro}. Then the following hold:
	\begin{enumerate}[label=\textup{(\arabic{enumi}-\arabic*)},align=left,leftmargin=*,topsep=4pt,itemsep=2pt]
	\item\label{it:StrComp-intro}(Strong comparison principle; \cite{KS:StrComp}) Let $U\subsetneq\ssset$ be a
		connected open subset of $\ssset$. If $u,v\in\contfunc(\overline{U}^{\ssset})$
		are harmonic on $U$ with respect to $\form_{p}$ and $u(x)\leq v(x)$ for any $x\in\partial_{\ssset}U$,
		then either $u(x)<v(x)$ for any $x\in U$ or $u(x)=v(x)$ for any $x\in\overline{U}^{\ssset}$.
	\item\label{it:p-RF-unique-intro}(Uniqueness under good symmetry; \cite{KS:StrComp}) If $\ssstr=(\ssset,\ssindex,\{\ssmap_{i}\}_{i\in\ssindex})$
		has certain good geometric symmetry and $(\form_{p},\domain_{p})$ is invariant
		under this symmetry of $\ssstr$, then any other such $p$-energy form over $\ssset$
		satisfying \eqref{eq:ssform-intro} with the same $(\sscndc_{p,i})_{i\in\ssindex}$ as
		$(\form_{p},\domain_{p})$ has domain $\domain_{p}$ and is a constant multiple of $\form_{p}$.
	\item\label{it:pqEnergySing-intro}(Singularity of $p$- and $q$-energy measures under very good symmetry; \cite{KS:pqEnergySing}) If $\ssstr=(\ssset,\ssindex,\{\ssmap_{i}\}_{i\in\ssindex})$
		has certain very good geometric symmetry, $p,q\in(1,\infty)$, $p\not=q$, and
		$(\form_{p},\domain_{p})$ and $(\form_{q},\domain_{q})$ are invariant
		under this symmetry of $\ssstr$, then the $p$-energy measure $\enermeas{p}{u}$
		of any $u\in\domain_{p}$ and the $q$-energy measure $\enermeas{q}{v}$ of
		any $v\in\domain_{q}$ are mutually singular.
	\end{enumerate}
\end{enumerate}

The purpose of this article is to give some concise descriptions of how these
results are related to each other and how they can be obtained, by illustrating them
in the simplest non-trivial setting of the (two-dimensional standard) Sierpi\'{n}ski
gasket (Figure \ref{fig:SG} below), even for which all the results in
\ref{it:GCDiff-intro} and \ref{it:StrComp-pqEnergySing-intro} above are new.
In fact, Strichartz and Wong \cite{SW} supposed without proofs some conditions implied
by \ref{it:GC-intro}, \ref{it:Diff-intro} and \ref{it:StrComp-intro} on the canonical
$p$-energy form on the Sierpi\'{n}ski gasket to derive some properties of it.
Our above results \ref{it:GCDiff-intro} and \ref{it:StrComp-intro} ensure that
a $p$-energy form on the Sierpi\'{n}ski gasket satisfying those conditions exists
and thereby turns out to have the properties derived in \cite{SW};
see also Subsection \ref{ssec:SG-strong-comp-p-RF-unique} below, where we provide
a self-contained treatment of principal results in \cite[Section 5]{SW}.

The rest of this paper is organized as follows. First in Section \ref{sec:p-RF},
we give a brief summary of the theory of $p$-resistance forms planned to appear
in \cite{KS:GCDiff}. In Section \ref{sec:SG-p-RF}, after introducing the
(two-dimensional standard) Sierpi\'{n}ski gasket in Subsection \ref{ssec:SG},
we survey the construction and basic properties of the canonical $p$-resistance form
on the Sierpi\'{n}ski gasket in Subsection \ref{ssec:SG-construct-p-RF}, describing the
modification obtained in \cite{KS:GCDiff} of the original construction due to \cite{HPS,CGQ}.
In Subsection \ref{ssec:SG-strong-comp-p-RF-unique}, we present some details of the
strong comparison principle and its application to uniqueness of a self-similar
$p$-resistance form planned to be presented in \cite{KS:StrComp}. Finally in
Section \ref{sec:SG-p-energy-meas}, keeping the setting of the Sierpi\'{n}ski gasket,
we introduce the $p$-energy measures, mention their basic properties, and prove that
the $p$-energy measures and the $q$-energy measures are mutually singular for any
$p,q\in(1,\infty)$ with $p\not=q$, whose proof in the greater generality described
in \ref{it:pqEnergySing-intro} above is planned to appear in \cite{KS:pqEnergySing}.
\begin{ntn}
In this paper, we adopt the following notation and conventions.
\begin{enumerate}[label=\textup{(\arabic*)},align=left,leftmargin=*,topsep=4pt,itemsep=2pt]
\item The symbols $\subset$ and $\supset$ for set inclusion
	\emph{allow} the case of the equality.
\item $\mathbb{N}:=\{n\in\mathbb{Z}\mid n>0\}$, i.e., $0\not\in\mathbb{N}$.
\item The cardinality (the number of elements) of a set $A$ is denoted by $\#A$.
\item We set $\sup\emptyset:=0$, set $a\vee b:=\max\{a,b\}$,
	$a\wedge b:=\min\{a,b\}$, $a^{+}:=a\vee 0$, $a^{-}:=-(a\wedge 0)$ and
	$\sgn(a):=\lvert a\rvert^{-1}a$ ($\sgn(0):=0$) for $a,b\in\mathbb{R}$,
	and use the same notation also for $\mathbb{R}$-valued
	functions and equivalence classes of them. All numerical functions
	in this paper are assumed to be $[-\infty,\infty]$-valued.
\item Let $n\in\mathbb{N}$. For $x=(x_{k})_{k=1}^{n}\in\mathbb{R}^{n}$, we set
	$\lVert x\rVert_{\ell^{p}}:=(\sum_{k=1}^{n}\lvert x_{k}\rvert^{p})^{1/p}$
	for $p\in(0,\infty)$, $\lVert x\rVert_{\ell^{\infty}}:=\max_{1\leq k\leq n}\lvert x_{k}\rvert$
	and $\lvert x\rvert:=\lVert x\rVert_{\ell^{2}}$. For $\Phi\colon\mathbb{R}^{n}\to\mathbb{R}$
	which is differentiable on $\mathbb{R}^{n}$ and for $k\in\{1,\ldots,n\}$, its first-order
	partial derivative in the $k$-th coordinate is denoted by $\partial_{k}\Phi$, and we set
	$\contfunc^{1}(\mathbb{R}^{n}):=\{\Phi\mid\textrm{$\Phi\colon\mathbb{R}^{n}\to\mathbb{R}$, $\Phi$ is
	differentiable on $\mathbb{R}^{n}$, $\partial_{1}\Phi,\ldots,\partial_{n}\Phi$ are continuous on $\mathbb{R}^{n}$}\}$.
\item Let $K$ be a non-empty set. We define $\id_{K}\colon K\to K$ by $\id_{K}(x):=x$,
	$\one_{A}=\one^{K}_{A}\in\mathbb{R}^{K}$ for $A\subset K$ by
	$\one_{A}(x):=\one^{K}_{A}(x):=\bigl\{\begin{smallmatrix}1 & \textrm{if $x\in A$,}\\ 0 & \textrm{if $x\not\in A$,}\end{smallmatrix}$
	set $\one_{x}:=\one^{K}_{x}:=\one_{\{x\}}$ for $x\in K$,
	$\lVert u\rVert_{\sup}:=\lVert u\rVert_{\sup,K}:=\sup_{x\in K}\lvert u(x)\rvert$
	for $u\in\mathbb{R}^{K}$, and
	$\osc_{K}[u]:=\sup_{x,y\in K}\lvert u(x)-u(y)\rvert=\sup_{x\in K}u(x)-\inf_{x\in K}u(x)$
	for $u\in\mathbb{R}^{K}$ with $\lVert u\rVert_{\sup}<\infty$.
\item Let $K$ be a topological space. The Borel $\sigma$-algebra of $K$ is denoted by $\Borel(K)$.
	The closure and boundary of $A\subset K$ in $K$ are denoted by
	$\overline{A}^{K}$ and $\partial_{K}A$, respectively.
	We set $\contfunc(K):=\{u\in\mathbb{R}^{K}\mid\textrm{$u$ is continuous}\}$ and
%	$\contfunc{C}_{\mathrm{c}}(K):=\{u\in\contfunc{C}(K)\mid\textrm{$\supp_{K}[u]$ is compact}\}$,
	$\supp_{K}[u]:=\overline{K\setminus u^{-1}(0)}^{K}$ for $u\in\contfunc(K)$.
%\item Let $(K,\refmet)$ be a metric space. We set
%	$\metball_{\refmet}(x,r):=\{y\in K\mid\refmet(x,y)<r\}$ for $(x,r)\in K\times(0,\infty)$.
\item Let $(K,\mathscr{B})$ be a measurable space and let $\mu,\nu$ be measures on
	$(K,\mathscr{B})$. We write $\nu \ll \mu$ and $\nu \perp \mu$ to mean that
	$\nu$ is absolutely continuous and singular, respectively, with respect to $\mu$.
%	We set $\mathscr{B}|_{A}:=\{B\cap A\mid B\in\mathscr{B}\}$ and
%	$\mu|_{A}:=\mu|_{\mathscr{B}|_{A}}$ for $A\in\mathscr{B}$.
\end{enumerate}
\end{ntn}
%
%%%%%
\section{$p$-Resistance forms: the definition and basic properties}\label{sec:p-RF}
%%%%%
As treated in \cite{Bar98,Kig01,Str}, the theory of self-similar Dirichlet forms on
p.-c.f.\ self-similar sets has been developed on the basis of that of resistance forms.
Likewise, the construction and basic properties of self-similar $p$-energy forms on
p.-c.f.\ self-similar sets can be presented most efficiently by referring to some
general facts for certain natural $L^{p}$-analogs of resistance forms which we call
\emph{$p$-resistance forms}. For this reason, we start our discussion with introducing
the notion of such forms and stating some basic properties of them. The details
of the results in this section will appear in \cite{KS:GCDiff}.

Let $p\in(1,\infty)$, which we fix throughout this section.
\begin{dfn}[$p$-Resistance forms]\label{dfn:p-RF}
Let $\ssset$ be a non-empty set. The pair $(\form_{p},\domain_{p})$ of a linear subspace
$\domain_{p}$ of $\mathbb{R}^{\ssset}$ and a functional $\form_{p}\colon\domain_{p}\to[0,\infty)$
is said to be a \emph{$p$-resistance form} on $\ssset$ if and only if the following
five conditions are satisfied:
\begin{enumerate}[label=\textup{(RF\arabic*)$_{p}$},align=left,leftmargin=*,topsep=4pt,itemsep=2pt]
\hypertarget{p-RF1}{\item}\label{it:p-RF1}$\form_{p}^{1/p}$ is a seminorm on $\domain_{p}$ and
	$\{u\in\domain_{p}\mid\form_{p}(u)=0\}=\mathbb{R}\one_{\ssset}$.
\hypertarget{p-RF2}{\item}\label{it:p-RF2}The quotient normed space $(\domain_{p}/\mathbb{R}\one_{\ssset},\form_{p}^{1/p})$ is a Banach space.
\hypertarget{p-RF3}{\item}\label{it:p-RF3}For any $x,y\in\ssset$ with $x\not=y$ there exists $u\in\domain_{p}$ such that $u(x)\not=u(y)$.
\hypertarget{p-RF4}{\item}\label{it:p-RF4}$R_{\form_{p}}(x,y):=\sup\biggl\{\dfrac{\lvert u(x)-u(y)\rvert^{p}}{\form_{p}(u)}\biggm\vert u\in\domain_{p}\setminus\mathbb{R}\one_{\ssset}\biggr\}<\infty$
	for any $x,y\in\ssset$.
\hypertarget{p-RF5}{\item}\label{it:p-RF5}(Generalized contraction property) If $q\in(0,p]$, $r\in[p,\infty]$,
	$m,n\in\mathbb{N}$ and $T=(T_{1},\ldots,T_{n})\colon\mathbb{R}^{m}\to\mathbb{R}^{n}$
	satisfies $\lVert T(x)-T(y)\rVert_{\ell^{r}}\leq\lVert x-y\rVert_{\ell^{q}}$
	for any $x,y\in\mathbb{R}^{m}$ and $T(0)=0$, then for any
	$u=(u_{1},\ldots,u_{m})\in\domain_{p}^{m}$, $T(u)=(T_{1}(u),\ldots,T_{n}(u))\in\domain_{p}^{n}$ and
	\begin{equation}\label{eq:GC}
	\lVert(\form_{p}(T_{k}(u))^{1/p})_{k=1}^{n}\rVert_{\ell^{r}}
		\leq\lVert(\form_{p}(u_{j})^{1/p})_{j=1}^{m}\rVert_{\ell^{q}}.
	\end{equation}
\end{enumerate}
\end{dfn}

The generalized contraction property \ref{it:p-RF5} is arguably the strongest possible
form of contraction properties of energy functionals, including as the special case with
$p=2$, $q=1$ and $n=1$ the so-called normal contractivity of symmetric Dirichlet forms
(see, e.g., \cite[Theorem 4.12]{MR} or \cite[Proposition I.3.3.1]{BH}). As discussed
in Example \ref{exmp:p-RF}-\ref{it:p-RF-RF} and Remark \ref{rmk:p-RF5-DF} below,
if $\form_{2}\colon\domain_{2}\to[0,\infty)$ is a quadratic form on a linear space
$\domain_{2}$ of (equivalence classes of) $\mathbb{R}$-valued functions and has some
suitable completeness property, then \hyperlink{p-RF5}{\textup{(RF5)$_{2}$}}
is equivalent to \eqref{eq:form-unit-contraction-intro}.
On the other hand, for $p\not=2$, at the moment we do not know any characterization
of \ref{it:p-RF5} by simpler conditions, but it turns out that we can still modify the
existing constructions of self-similar $p$-energy forms in \cite{HPS,Kig23,Shi,CGQ,MS}
so as to get ones satisfying \ref{it:p-RF5}. In this sense we miss essentially no
examples by requiring our $p$-energy forms to satisfy the strong condition \ref{it:p-RF5}.
\begin{exmp}\label{exmp:p-RF}
\begin{enumerate}[label=\textup{(\arabic*)},align=left,leftmargin=*,topsep=4pt,itemsep=2pt]
\item\label{it:p-RF-finite}Let $V$ be a non-empty finite set. Note that in this case
	$(\form_{p},\domain_{p})$ is a $p$-resistance form on $V$ if and only if $\domain_{p}=\mathbb{R}^{V}$
	and $\form_{p}\colon\mathbb{R}^{V}\to[0,\infty)$ satisfies \ref{it:p-RF1} and \ref{it:p-RF5};
	indeed, since \ref{it:p-RF5} implies \eqref{eq:form-unit-contraction-intro},
	\ref{it:p-RF3} is equivalent to $\domain_{p}=\mathbb{R}^{V}$ under \ref{it:p-RF1} and
	\ref{it:p-RF5} by \cite[Proposition 3.2]{Kig12}, and \ref{it:p-RF2} and \ref{it:p-RF4}
	are easily implied by \ref{it:p-RF1} and $\dim\domain_{p}/\mathbb{R}\one_{V}<\infty$.
	On the basis of this observation, for $p$-resistance forms $(\form_{p},\domain_{p})$
	on finite sets, we refer only to $\form_{p}$ and say simply that
	$\form_{p}$ is a $p$-resistance form on the set.
	
	Now, consider any functional $\form_{p}\colon\mathbb{R}^{V}\to[0,\infty)$ of the form
	\begin{equation}\label{eq:p-RF-finite-graph}
	\form_{p}(u)=\frac{1}{2}\sum_{x,y\in V}L_{xy}\lvert u(x)-u(y)\rvert^{p}
	\end{equation}
	for some $L=(L_{xy})_{x,y\in V}\in[0,\infty)^{V\times V}$ such that $L_{xy}=L_{yx}$ for any $x,y\in V$.
	It is obvious that $\form_{p}$ satisfies \ref{it:p-RF1}
	if and only if the graph $(V,E_{L})$ is connected, where
	$E_{L}:=\{\{x,y\}\mid\textrm{$x,y\in V$, $x\not=y$, $L_{xy}>0$}\}$.
	It is also easy to see, by using the reverse Minkowski inequality for
	$\lVert\cdot\rVert_{\ell^{p/r}}$ (see, e.g., \cite[Theorem 2.13]{AF})
	and Minkowski's inequality for $\lVert\cdot\rVert_{\ell^{p/q}}$, that
	$\form_{p}$ satisfies \ref{it:p-RF5}. It thus follows that $\form_{p}$
	is a $p$-resistance form on $V$ if and only if $(V,E_{L})$ is connected.
	
	Note that, while any $2$-resistance form on $V$ is of the form \eqref{eq:p-RF-finite-graph}
	with $p=2$, \emph{the counterpart of this fact for $p\not=2$ is NOT true} unless $\#V\leq 2$.
	As we will see in Section \ref{sec:SG-p-RF} and summarize in
	Remark \ref{rmk:SG-p-RF-renom-trace-necessary}, the analysis of self-similar $p$-energy forms
	on p.-c.f.\ self-similar sets inevitably involves $p$-energy forms on finite sets
	which may not be of the form \eqref{eq:p-RF-finite-graph}, and therefore the validity
	of contraction properties like \ref{it:p-RF5} for such forms becomes non-trivial.
\item\label{it:p-RF-RF}Recall from \cite[Definition 2.3.1]{Kig01} and \cite[Definition 3.1]{Kig12}
	that a \emph{resistance form} on a non-empty set $\ssset$ is defined as the pair
	$(\form,\domain)$ of a linear subspace $\domain$ of $\mathbb{R}^{\ssset}$ and
	a non-negative definite symmetric bilinear form $\form\colon\domain\times\domain\to\mathbb{R}$
	such that the associated quadratic form $(\form_{2},\domain)$
	given by $\form_{2}(u):=\form(u,u)$ satisfies
	\hyperlink{p-RF1}{\textup{(RF1)$_{2}$}}, \hyperlink{p-RF2}{\textup{(RF2)$_{2}$}},
	\hyperlink{p-RF3}{\textup{(RF3)$_{2}$}}, \hyperlink{p-RF4}{\textup{(RF4)$_{2}$}}
	and \eqref{eq:form-unit-contraction-intro}. As our terminology indicates,
	resistance forms on $\ssset$ can be canonically identified with
	$2$-resistance forms on $\ssset$ as defined in Definition \ref{dfn:p-RF},
	through the correspondence $(\form,\domain)\mapsto(\form_{2},\domain)$.
	Indeed, if $(\form,\domain)$ is a resistance form on $\ssset$, then
	$(\form_{2},\domain)$ satisfies \hyperlink{p-RF5}{\textup{(RF5)$_{2}$}} since, by
	\cite[Corollary 2.37]{K:Fractal2018} (see also \cite[Theorems 2.3.6, 2.3.7 and Lemma 2.3.8]{Kig01}),
	$\form_{2}$ can be expressed as a certain kind of supremum over resistance forms
	on the non-empty finite subsets of $\ssset$, which are necessarily of the form
	\eqref{eq:p-RF-finite-graph} with $p=2$ and hence satisfy
	\hyperlink{p-RF5}{\textup{(RF5)$_{2}$}} by \ref{it:p-RF-finite} above.
	Conversely, if $(\form_{2},\domain_{2})$ is a $2$-resistance form on $\ssset$,
	then two simple applications of \hyperlink{p-RF5}{\textup{(RF5)$_{2}$}} show
	\eqref{eq:form-unit-contraction-intro} with $p=2$ and that
	$\form_{2}(u+v)+\form_{2}(u-v)=2(\form_{2}(u)+\form_{2}(v))$
	for any $u,v\in\domain_{2}$, so that
	$\form\colon\domain_{2}\times\domain_{2}\to\mathbb{R}$ defined by
	$\form(u,v):=\frac{1}{4}(\form_{2}(u+v)-\form_{2}(u-v))$ is bilinear and symmetric,
	satisfies $\form(u,u)=\form_{2}(u)\geq 0$ for any $u\in\domain_{2}$ and thus gives
	a resistance form $(\form,\domain_{2})$ on $\ssset$.
\end{enumerate}
\end{exmp}
\begin{rmk}\label{rmk:p-RF5-DF}
Similarly, \emph{any symmetric Dirichlet form satisfies \hyperlink{p-RF5}{\textup{(RF5)$_{2}$}}},
which does not seem to have ever been stated in the literature.
Indeed, this claim follows by using the argument in Example \ref{exmp:p-RF}-\ref{it:p-RF-finite}
based on the reverse Minkowski inequality for $\lVert\cdot\rVert_{\ell^{2/r}}$
and Minkowski's inequality for $\lVert\cdot\rVert_{\ell^{2/q}}$
to modify \cite[Proof of Theorem 4.12]{MR} in the right manner.
\end{rmk}

Another application of Minkowski's inequality for $\lVert\cdot\rVert_{\ell^{r/p}}$
and the reverse Minkowski inequality for $\lVert\cdot\rVert_{\ell^{q/p}}$
also shows the following lemma.
\begin{lem}\label{lem:p-RF1-p-RF5-cone}
Let $\ssset$ be a non-empty set, let $\domain_{p}$ be a linear subspace of $\mathbb{R}^{\ssset}$,
let $\form_{p,1},\form_{p,2}\colon\domain_{p}\to[0,\infty)$ and $a_{1},a_{2}\in[0,\infty)$.
\begin{enumerate}[label=\textup{(\arabic*)},align=left,leftmargin=*,topsep=4pt,itemsep=2pt]
\item\label{it:p-RF1-cone}If $\form_{p,1}^{1/p},\form_{p,2}^{1/p}$ are seminorms on
	$\domain_{p}$, then so is $(a_{1}\form_{p,1}+a_{2}\form_{p,2})^{1/p}$.
\item\label{it:p-RF5-cone}If $(\form_{p,1},\domain_{p}),(\form_{p,2},\domain_{p})$
	satisfy \ref{it:p-RF5}, then so does $(a_{1}\form_{p,1}+a_{2}\form_{p,2},\domain_{p})$.
\end{enumerate}
\end{lem}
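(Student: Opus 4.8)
The plan is to treat the two parts separately, reducing each to a Minkowski-type inequality applied to the two-component vectors assembled from $\form_{p,1}$ and $\form_{p,2}$. Throughout I write $b_{i}:=a_{i}^{1/p}$ for $i\in\{1,2\}$, so that $(a_{1}\form_{p,1}(u)+a_{2}\form_{p,2}(u))^{1/p}$ is exactly $\lVert(b_{i}\form_{p,i}(u)^{1/p})_{i\in\{1,2\}}\rVert_{\ell^{p}}$. I will also use repeatedly the elementary identity $\lVert(x_{k}^{p})_{k}\rVert_{\ell^{s}}=\lVert(x_{k})_{k}\rVert_{\ell^{sp}}^{p}$ for nonnegative $(x_{k})_{k}$ and $s\in(0,\infty]$, together with the positive homogeneity $\lVert a_{i}x\rVert_{\ell^{s}}=a_{i}\lVert x\rVert_{\ell^{s}}$ valid for $a_{i}\geq 0$.

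For part \ref{it:p-RF1-cone}, positive homogeneity of $(a_{1}\form_{p,1}+a_{2}\form_{p,2})^{1/p}$ is immediate from that of $\form_{p,1}^{1/p}$ and $\form_{p,2}^{1/p}$. For subadditivity I would combine the triangle inequality for each seminorm $\form_{p,i}^{1/p}$ with the coordinatewise monotonicity of $\lVert\cdot\rVert_{\ell^{p}}$ on nonnegative vectors: this bounds $\lVert(b_{i}\form_{p,i}(u+v)^{1/p})_{i}\rVert_{\ell^{p}}$ by $\lVert(b_{i}\form_{p,i}(u)^{1/p})_{i}+(b_{i}\form_{p,i}(v)^{1/p})_{i}\rVert_{\ell^{p}}$, after which Minkowski's inequality for $\lVert\cdot\rVert_{\ell^{p}}$ finishes the job. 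This part is entirely routine.

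The substance is in part \ref{it:p-RF5-cone}. Set $\form_{p}:=a_{1}\form_{p,1}+a_{2}\form_{p,2}$ and fix $q\in(0,p]$, $r\in[p,\infty]$, $m,n,T$ and $u=(u_{1},\ldots,u_{m})\in\domain_{p}^{m}$ as in \ref{it:p-RF5}. The conclusion $T(u)\in\domain_{p}^{n}$ is immediate, since $\domain_{p}$ is the common domain of $\form_{p,1}$ and $\form_{p,2}$ and \ref{it:p-RF5} for either of them already yields this membership. Writing $s_{i,k}:=\form_{p,i}(T_{k}(u))^{1/p}$ and $t_{i,j}:=\form_{p,i}(u_{j})^{1/p}$, the hypothesis \ref{it:p-RF5} for $\form_{p,1}$ and $\form_{p,2}$ reads $\lVert(s_{i,k})_{k=1}^{n}\rVert_{\ell^{r}}\leq\lVert(t_{i,j})_{j=1}^{m}\rVert_{\ell^{q}}$ for $i\in\{1,2\}$, while the target \eqref{eq:GC} for $\form_{p}$, raised to the $p$-th power via the identity above, becomes
\[
\bigl\lVert(a_{1}s_{1,k}^{p}+a_{2}s_{2,k}^{p})_{k=1}^{n}\bigr\rVert_{\ell^{r/p}}\leq\bigl\lVert(a_{1}t_{1,j}^{p}+a_{2}t_{2,j}^{p})_{j=1}^{m}\bigr\rVert_{\ell^{q/p}}.
\]

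The crux, and the one point one must get right, is that the two exponents play opposite roles and the two sides must be made to meet at a common middle quantity. Since $r\geq p$ gives $r/p\geq 1$, Minkowski's inequality for $\lVert\cdot\rVert_{\ell^{r/p}}$ bounds the left-hand side above by $a_{1}\lVert(s_{1,k}^{p})_{k}\rVert_{\ell^{r/p}}+a_{2}\lVert(s_{2,k}^{p})_{k}\rVert_{\ell^{r/p}}=a_{1}\lVert(s_{1,k})_{k}\rVert_{\ell^{r}}^{p}+a_{2}\lVert(s_{2,k})_{k}\rVert_{\ell^{r}}^{p}$, and the scalar hypotheses then give the further upper bound $a_{1}\lVert(t_{1,j})_{j}\rVert_{\ell^{q}}^{p}+a_{2}\lVert(t_{2,j})_{j}\rVert_{\ell^{q}}^{p}$. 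On the other side, since $q\leq p$ gives $q/p\leq 1$, the reverse Minkowski inequality for $\lVert\cdot\rVert_{\ell^{q/p}}$ (applicable as all entries are nonnegative; see, e.g., \cite[Theorem 2.13]{AF}) bounds the right-hand side \emph{below} by $a_{1}\lVert(t_{1,j}^{p})_{j}\rVert_{\ell^{q/p}}+a_{2}\lVert(t_{2,j}^{p})_{j}\rVert_{\ell^{q/p}}$, which by the same identity equals $a_{1}\lVert(t_{1,j})_{j}\rVert_{\ell^{q}}^{p}+a_{2}\lVert(t_{2,j})_{j}\rVert_{\ell^{q}}^{p}$. Chaining the two chains through this common value yields the display, and hence \eqref{eq:GC} for $\form_{p}$. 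The boundary cases $r/p=1$ and $q/p=1$ reduce both inequalities to the additivity of the $\ell^{1}$-norm on nonnegative vectors, and the case $r=\infty$ is identical with $\ell^{r/p}=\ell^{\infty}$, using $\max_{k}(a_{1}s_{1,k}^{p}+a_{2}s_{2,k}^{p})\leq a_{1}\max_{k}s_{1,k}^{p}+a_{2}\max_{k}s_{2,k}^{p}$.
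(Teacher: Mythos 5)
Your proof is correct and follows exactly the route the paper indicates (its entire stated justification for this lemma is ``Minkowski's inequality for $\lVert\cdot\rVert_{\ell^{r/p}}$ and the reverse Minkowski inequality for $\lVert\cdot\rVert_{\ell^{q/p}}$''): you bound the left-hand side of \eqref{eq:GC} from above via Minkowski in $\ell^{r/p}$ and the right-hand side from below via reverse Minkowski in $\ell^{q/p}$, meeting at the common quantity $a_{1}\lVert(t_{1,j})_{j}\rVert_{\ell^{q}}^{p}+a_{2}\lVert(t_{2,j})_{j}\rVert_{\ell^{q}}^{p}$. The handling of the endpoint cases and of part \ref{it:p-RF1-cone} is likewise sound.
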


Throughout the rest of this section, we assume that $\ssset$ is a non-empty set and that
$(\form_{p},\domain_{p})$ is a $p$-resistance form on $\ssset$. The following inequality,
which is immediate from \ref{it:p-RF4}, is of fundamental importance in this setting.
\begin{prop}\label{prop:p-RF4-conseq}
For any $u\in\domain_{p}$ and any $x,y\in\ssset$,
\begin{equation}\label{eq:p-RF4-conseq}
\lvert u(x)-u(y)\rvert^{p}\leq\resismet_{\form_{p}}(x,y)\form_{p}(u).
\end{equation}
\end{prop}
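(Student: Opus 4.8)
The plan is to derive the inequality directly from the definition of $\resismet_{\form_{p}}(x,y)$ as the supremum appearing in \ref{it:p-RF4}, after first isolating the degenerate case of constant functions. Concretely, I would fix $u\in\domain_{p}$ and $x,y\in\ssset$ and split into two cases according to whether or not $u\in\mathbb{R}\one_{\ssset}$.

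In the first case, where $u\in\mathbb{R}\one_{\ssset}$, the function $u$ is constant, so that $u(x)=u(y)$ and hence the left-hand side $\lvert u(x)-u(y)\rvert^{p}$ vanishes; since the right-hand side is non-negative, \eqref{eq:p-RF4-conseq} holds trivially. (By \ref{it:p-RF1} one moreover has $\form_{p}(u)=0$ in this case, so in fact both sides equal $0$.) In the second case, where $u\in\domain_{p}\setminus\mathbb{R}\one_{\ssset}$, condition \ref{it:p-RF1} guarantees that $\form_{p}(u)>0$, so that the ratio $\lvert u(x)-u(y)\rvert^{p}/\form_{p}(u)$ is well defined; by the very definition of $\resismet_{\form_{p}}(x,y)$ as the supremum of this ratio over all such $u$ in \ref{it:p-RF4}, it is bounded above by $\resismet_{\form_{p}}(x,y)$, and multiplying through by $\form_{p}(u)>0$ yields \eqref{eq:p-RF4-conseq}.

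There is essentially no obstacle here, as the statement is an immediate reformulation of \ref{it:p-RF4}. The only point requiring a moment's attention is that the supremum defining $\resismet_{\form_{p}}(x,y)$ ranges only over $u\in\domain_{p}\setminus\mathbb{R}\one_{\ssset}$, so the constant functions $u\in\mathbb{R}\one_{\ssset}$ are not directly covered by that supremum and must be disposed of separately; this is exactly the trivial first case above, and handling it ensures that \eqref{eq:p-RF4-conseq} holds for \emph{every} $u\in\domain_{p}$.
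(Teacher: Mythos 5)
Your proposal is correct and follows exactly the route the paper intends: the paper states the inequality is ``immediate from \ref{it:p-RF4}'', and your argument simply makes that explicit by separating the trivial constant case from the case $u\notin\mathbb{R}\one_{\ssset}$, where the bound follows from the definition of the supremum $\resismet_{\form_{p}}(x,y)$. Nothing is missing.
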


\ref{it:p-RF5} transfers various inequalities satisfied by $L^{p}$-norms
to $\form_{p}^{1/p}$ and allows us thereby to analyze $p$-energy forms more deeply.
We collect some important special cases of \ref{it:p-RF5} in the following proposition;
see \cite[Theorem 4.7]{Cla} and the references therein for some background
for \ref{it:p-RF-StronglySubadditiveGen} and, e.g.,
\cite[Lemma 2.37 and Theorem 2.38]{AF} for \ref{it:p-RF-Clarkson}.
\begin{prop}\label{prop:p-RF5-conseq}
Let $u,v\in\domain_{p}$, and let $\varphi\in\contfunc(\mathbb{R})$ satisfy
$\lvert\varphi(t)-\varphi(s)\rvert\leq\lvert t-s\rvert$ for any $s,t\in\mathbb{R}$.
\begin{enumerate}[label=\textup{(\arabic*)},align=left,leftmargin=*,topsep=4pt,itemsep=2pt]
\item\label{it:p-RF-1Lip}$\varphi(u)\in\domain_{p}$ and $\form_{p}(\varphi(u))\leq\form_{p}(u)$.
	In particular, $f\in\domain_{p}$ and $\form_{p}(f)\leq\form_{p}(u)$
	for any $f\in\{u^{+}\wedge 1,\lvert u\rvert,u^{+},u^{-}\}$, and
	$u-\varphi(u-v),v+\varphi(u-v),u\wedge v,u\vee v\in\domain_{p}$.
\item\label{it:p-RF-product}If $\lVert u\rVert_{\sup}\vee\lVert v\rVert_{\sup}<\infty$,
	then $uv\in\domain_{p}$ and
	$\form_{p}(uv)^{1/p}\leq\lVert v\rVert_{\sup}\form_{p}(u)^{1/p}+\lVert u\rVert_{\sup}\form_{p}(v)^{1/p}$.
\item\label{it:p-RF-StronglySubadditive}\textup{(Strong subadditivity)}
	$\form_{p}(u\wedge v)+\form_{p}(u\vee v)\leq\form_{p}(u)+\form_{p}(v)$.
\item\label{it:p-RF-StronglySubadditiveGen}If $\varphi$ is non-decreasing, then
	$\form_{p}(u-\varphi(u-v))+\form_{p}(v+\varphi(u-v))\leq\form_{p}(u)+\form_{p}(v)$.
\item\label{it:p-RF-Clarkson}\textup{($p$-Clarkson's inequalities)} If $p\leq 2$, then
	\begin{align}\label{eq:p-RF-sClarkson-pleq2}
	\form_{p}(u+v)+\form_{p}(u-v)&\geq 2\bigl(\form_{p}(u)^{1/(p-1)}+\form_{p}(v)^{1/(p-1)}\bigr)^{p-1},\\
	\form_{p}(u+v)+\form_{p}(u-v)&\leq 2(\form_{p}(u)+\form_{p}(v)).
	\label{eq:p-RF-wClarkson-pleq2}
	\end{align}
	If $p\geq 2$, then
	\begin{align}\label{eq:p-RF-sClarkson-pgeq2}
		\form_{p}(u+v)+\form_{p}(u-v)&\leq 2\bigl(\form_{p}(u)^{1/(p-1)}+\form_{p}(v)^{1/(p-1)}\bigr)^{p-1},\\
		\form_{p}(u+v)+\form_{p}(u-v)&\geq 2(\form_{p}(u)+\form_{p}(v)).
	\label{eq:p-RF-wClarkson-pgeq2}
	\end{align}
\end{enumerate}
\end{prop}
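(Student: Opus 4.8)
The unifying plan is that every assertion follows by applying the generalized contraction property \ref{it:p-RF5} to a single, carefully chosen map $T\colon\mathbb{R}^{m}\to\mathbb{R}^{n}$ (or a scalar multiple of it), so that the only real work is to verify, for the correct pair $(q,r)$ with $q\in(0,p]$ and $r\in[p,\infty]$, the numerical contraction bound $\lVert T(x)-T(y)\rVert_{\ell^{r}}\leq\lVert x-y\rVert_{\ell^{q}}$ together with $T(0)=0$. First I would dispose of the normalization $\varphi(0)=0$ in the statements involving $\varphi$: replacing $\varphi$ by $\varphi-\varphi(0)$ alters each of $\varphi(u)$, $u-\varphi(u-v)$ and $v+\varphi(u-v)$ only by the additive constant $\varphi(0)\one_{\ssset}\in\mathbb{R}\one_{\ssset}\subset\domain_{p}$, which by \ref{it:p-RF1} changes neither membership in $\domain_{p}$ nor the value of $\form_{p}$. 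Then for \ref{it:p-RF-1Lip} I would take $m=n=1$, $q=r=p$ and $T=\varphi$, for which the hypothesis of \ref{it:p-RF5} is exactly the assumed $1$-Lipschitz bound, yielding $\varphi(u)\in\domain_{p}$ and $\form_{p}(\varphi(u))\leq\form_{p}(u)$. The four special functions arise by taking for $\varphi$ the $1$-Lipschitz maps $t\mapsto t^{+}\wedge 1,\lvert t\rvert,t^{+},t^{-}$, all vanishing at $0$, and the membership of $u-\varphi(u-v)$, $v+\varphi(u-v)$, $u\wedge v=\frac{1}{2}(u+v-\lvert u-v\rvert)$ and $u\vee v=\frac{1}{2}(u+v+\lvert u-v\rvert)$ then follows from $u-v\in\domain_{p}$, the case just proved applied to $u-v$, and the linearity of $\domain_{p}$.

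For the product rule \ref{it:p-RF-product} the obstacle is that $(s,t)\mapsto st$ is not globally Lipschitz, so I would truncate and rescale. Writing $M_{u}:=\lVert u\rVert_{\sup}$ and $M_{v}:=\lVert v\rVert_{\sup}$ and assuming $M_{u}M_{v}>0$ (the degenerate case forces $uv=0$ and makes the bound trivial), let $\psi_{u},\psi_{v}$ be the $1$-Lipschitz truncations onto $[-M_{u},M_{u}]$ and $[-M_{v},M_{v}]$ and set $S(a,b):=\psi_{u}(a/M_{v})\,\psi_{v}(b/M_{u})$. A one-line product estimate using $\lvert\psi_{u}\rvert\leq M_{u}$ and $\lvert\psi_{v}\rvert\leq M_{v}$ shows $\lvert S(a,b)-S(a',b')\rvert\leq\lvert a-a'\rvert+\lvert b-b'\rvert$, so that $S$ is a contraction for $(q,r)=(1,p)$ with $S(0,0)=0$; since $S(M_{v}u,M_{u}v)=uv$, applying \ref{it:p-RF5} to $(M_{v}u,M_{u}v)\in\domain_{p}^{2}$ gives $uv\in\domain_{p}$ and, by homogeneity of $\form_{p}^{1/p}$, the claimed bound.

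The subadditivity statements \ref{it:p-RF-StronglySubadditive} and \ref{it:p-RF-StronglySubadditiveGen} I would treat together, the former being the case $\varphi=(\cdot)^{+}$ of the latter. With $T(s,t):=(s-\varphi(s-t),\,t+\varphi(s-t))$ (and $\varphi(0)=0$, so $T(0)=0$) I would verify the contraction bound for $(q,r)=(p,p)$. Writing $a=s-s'$, $b=t-t'$ and $e=\varphi(s-t)-\varphi(s'-t')$, the monotonicity and $1$-Lipschitz property of $\varphi$ place $e$ between $0$ and $a-b$, and the bound reduces to $\lvert a-e\rvert^{p}+\lvert b+e\rvert^{p}\leq\lvert a\rvert^{p}+\lvert b\rvert^{p}$; this holds because $e\mapsto\lvert a-e\rvert^{p}+\lvert b+e\rvert^{p}$ is convex with minimum at $e=(a-b)/2$ and takes the common value $\lvert a\rvert^{p}+\lvert b\rvert^{p}$ at the two endpoints $e\in\{0,a-b\}$. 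Then \ref{it:p-RF5} yields both the membership and the inequality at once.

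Finally, for $p$-Clarkson's inequalities \ref{it:p-RF-Clarkson} I would apply \ref{it:p-RF5} to the linear map $T(s,t)=(s+t,s-t)$ or to its inverse $S(w_{1},w_{2})=\frac{1}{2}(w_{1}+w_{2},w_{1}-w_{2})$, each scaled by an appropriate power of $2$, with inputs $(u,v)$ or $(u+v,u-v)$ respectively. Writing $A=\form_{p}(u)^{1/p}$, $B=\form_{p}(v)^{1/p}$, $C=\form_{p}(u+v)^{1/p}$, $D=\form_{p}(u-v)^{1/p}$ and $p'=p/(p-1)$, the two weak inequalities compare $\lVert(C,D)\rVert_{\ell^{p}}$ with $2^{1/p}\lVert(A,B)\rVert_{\ell^{p}}$ and the two strong ones compare $\lVert(C,D)\rVert_{\ell^{p}}$ with $2^{1/p}\lVert(A,B)\rVert_{\ell^{p'}}$, the direction of each inequality being fixed by the case; after the scaling is absorbed, each required contraction bound becomes exactly one of the four classical scalar Clarkson inequalities between $\lvert a+b\rvert,\lvert a-b\rvert$ and $\lvert a\rvert,\lvert b\rvert$ (\cite{AF}). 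I expect the main obstacle here to be the exponent-and-scaling bookkeeping, in particular the case split: the upper bounds come from the forward map and the lower bounds from the inverse map, while in the two strong inequalities the exponent $p'$ must occupy the $r$-slot (needing $p'\geq p$, i.e.\ $p\leq 2$) or the $q$-slot (needing $p'\leq p$, i.e.\ $p\geq 2$), which is precisely what dictates the distinction between $p\leq 2$ and $p\geq 2$. Together with the convexity computation in \ref{it:p-RF-StronglySubadditiveGen}, this matching is the only genuinely delicate point.
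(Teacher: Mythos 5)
Your proposal is correct and follows exactly the route the paper intends: it presents Proposition \ref{prop:p-RF5-conseq} as ``important special cases of \ref{it:p-RF5}'' (deferring details to \cite{KS:GCDiff}), i.e.\ each assertion is obtained by feeding a suitably normalized map $T$ with the right pair $(q,r)$ into the generalized contraction property, with the scalar Clarkson inequalities of \cite[Theorem~2.38]{AF} supplying the numerical bounds in part \ref{it:p-RF-Clarkson}. Your choices of $T$, the convexity argument for \ref{it:p-RF-StronglySubadditiveGen}, and the exponent bookkeeping for the case split $p\leq 2$ versus $p\geq 2$ all check out.
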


$p$-Clarkson's inequalities \eqref{eq:p-RF-wClarkson-pleq2} and \eqref{eq:p-RF-sClarkson-pgeq2},
together with \ref{it:p-RF1}, the convexity of $\lvert\cdot\rvert^{p}$,
\eqref{eq:p-RF-sClarkson-pleq2} and \eqref{eq:p-RF-wClarkson-pgeq2}, imply the following properties.
\begin{thm}\label{thm:p-RF-Diff}
The quotient norm $\form_{p}\colon\domain_{p}/\mathbb{R}\one_{\ssset}\to\mathbb{R}$
is Fr\'{e}chet differentiable on $(\domain_{p}/\mathbb{R}\one_{\ssset},\form_{p}^{1/p})$.
In particular, for any $u,v\in\domain_{p}$,
\begin{equation}\label{eq:p-RF-Diff-dfn}
\textrm{the derivative}\quad
	\form_{p}(u;v):=\frac{1}{p}\frac{d}{dt}\form_{p}(u+tv)\bigg\vert_{t=0}\in\mathbb{R}
	\quad\textrm{exists,}
\end{equation}
$\form_{p}(u;\cdot)\colon\domain_{p}\to\mathbb{R}$ is linear,
$\form_{p}(u;u)=\form_{p}(u)$ and $\form_{p}(u;\one_{\ssset})=0$. Moreover,
for any $u,u_{1},u_{2},v\in\domain_{p}$ and any $a\in\mathbb{R}$, the following hold:
\begin{gather}\label{eq:p-RF-Diff-strictly-convex}
\textrm{$\mathbb{R}\ni t\mapsto\form_{p}(u+tv;v)\in\mathbb{R}$ is strictly increasing if and only if $v\not\in\mathbb{R}\one_{\ssset}$.}\\
\form_{p}(au;v)=\sgn(a)\lvert a\rvert^{p-1}\form_{p}(u;v),\qquad
	\form_{p}(u+a\one_{\ssset};v)=\form_{p}(u;v).\label{eq:p-RF-Diff-homogeneous}\\
\lvert\form_{p}(u;v)\rvert\leq\form_{p}(u)^{(p-1)/p}\form_{p}(v)^{1/p}.\label{eq:p-RF-Diff-Hoelder}\\
\lvert\form_{p}(u_{1};v)-\form_{p}(u_{2};v)\rvert\leq c_{p}(\form_{p}(u_{1})\vee\form_{p}(u_{2}))^{(p-1-\formhoelderexp_{p})/p}\form_{p}(u_{1}-u_{2})^{\formhoelderexp_{p}/p}\form_{p}(v)^{1/p}
\label{eq:p-RF-Diff-Hoelder-cont}
\end{gather}
for $\formhoelderexp_{p}:=\frac{1}{p}\wedge\frac{p-1}{p}$ and some $c_{p}\in(0,\infty)$
determined solely and explicitly by $p$.
\end{thm}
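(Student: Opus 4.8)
The plan is to derive everything from the convexity of $\form_{p}$ together with the four Clarkson inequalities. First I would fix $u,v\in\domain_{p}$ and observe that $g(t):=\form_{p}(u+tv)$ is convex on $\mathbb{R}$, since $t\mapsto\form_{p}(u+tv)^{1/p}$ is convex by the seminorm property in \ref{it:p-RF1} and $s\mapsto|s|^{p}$ is convex and non-decreasing on $[0,\infty)$. Hence the one-sided derivatives $g'(0^{\pm})$ exist and satisfy $0\le g'(0^{+})-g'(0^{-})=\lim_{t\downarrow0}t^{-1}\bigl(\form_{p}(u+tv)+\form_{p}(u-tv)-2\form_{p}(u)\bigr)$. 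Writing $\Delta_{2}(u,w):=\form_{p}(u+w)+\form_{p}(u-w)-2\form_{p}(u)$, the ``smoothness'' Clarkson inequalities \eqref{eq:p-RF-wClarkson-pleq2} (for $p\le2$) and \eqref{eq:p-RF-sClarkson-pgeq2} (for $p\ge2$), applied with $w=tv$, bound $\Delta_{2}(u,tv)$ by $2t^{p}\form_{p}(v)$ and by $2\bigl(\form_{p}(u)^{1/(p-1)}+t^{p/(p-1)}\form_{p}(v)^{1/(p-1)}\bigr)^{p-1}-2\form_{p}(u)$, respectively; in both cases this is $o(t)$ as $t\downarrow0$ because $p>1$ makes the exponents $p$ and $p/(p-1)$ exceed $1$. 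Thus $g$ is differentiable at $0$ and \eqref{eq:p-RF-Diff-dfn} is justified.

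Next I would read off the algebraic properties. Because $\form_{p}$ is convex, the directional derivative $v\mapsto\form_{p}(u;v)$ is sublinear, and the two-sided differentiability just proved makes it odd; sublinear and odd together force it to be linear. Differentiating $t\mapsto\form_{p}((1+t)u)=(1+t)^{p}\form_{p}(u)$, $t\mapsto\form_{p}(u+t\one_{\ssset})=\form_{p}(u)$ and $t\mapsto\form_{p}(au+tv)=|a|^{p}\form_{p}(u+(t/a)v)$ yields $\form_{p}(u;u)=\form_{p}(u)$, $\form_{p}(u;\one_{\ssset})=0$ and \eqref{eq:p-RF-Diff-homogeneous}. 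For \eqref{eq:p-RF-Diff-Hoelder} I would bound $\form_{p}(u+tv)\le(\form_{p}(u)^{1/p}+t\form_{p}(v)^{1/p})^{p}$ by the triangle inequality, differentiate at $t=0^{+}$, and then replace $v$ by $-v$. Finally, \eqref{eq:p-RF-Diff-strictly-convex} is equivalent to strict convexity of $g$ when $v\notin\mathbb{R}\one_{\ssset}$: applying the ``convexity'' Clarkson inequalities \eqref{eq:p-RF-sClarkson-pleq2} and \eqref{eq:p-RF-wClarkson-pgeq2} to $u+t_{1}v$ and $u+t_{2}v$ produces a strict midpoint inequality whenever $\form_{p}\bigl(\tfrac{t_{1}-t_{2}}{2}v\bigr)=\bigl|\tfrac{t_{1}-t_{2}}{2}\bigr|^{p}\form_{p}(v)>0$, while $\form_{p}(u+tv;v)\equiv0$ if $v\in\mathbb{R}\one_{\ssset}$.

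The main obstacle is the H\"{o}lder-continuity estimate \eqref{eq:p-RF-Diff-Hoelder-cont}; it is also what promotes G\^{a}teaux to Fr\'{e}chet differentiability, since a G\^{a}teaux differentiable functional whose derivative is continuous is automatically Fr\'{e}chet differentiable. The starting point is the quantitative first-order estimate $0\le R(u,w):=\form_{p}(u+w)-\form_{p}(u)-p\form_{p}(u;w)\le\Delta_{2}(u,w)$, which follows from convexity and the identity $R(u,w)+R(u,-w)=\Delta_{2}(u,w)$. Applying this at $u_{1}$ and $u_{2}$ in a common direction $sv$ and subtracting, linearity gives the exact identity $ps\bigl(\form_{p}(u_{1};v)-\form_{p}(u_{2};v)\bigr)=-D+R(u_{1},sv)-R(u_{2},sv)$, where $D$ is the \emph{mixed second difference} $\form_{p}(u_{2}+w+sv)-\form_{p}(u_{2}+w)-\form_{p}(u_{2}+sv)+\form_{p}(u_{2})$ with $w:=u_{1}-u_{2}$. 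Centering at $c:=u_{2}+\tfrac{w+sv}{2}$ rewrites this as $D=\Delta_{2}\bigl(c,\tfrac{w+sv}{2}\bigr)-\Delta_{2}\bigl(c,\tfrac{w-sv}{2}\bigr)$, so that both $D$ and the remainder terms are expressed through the second difference $\Delta_{2}$ alone.

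It then remains to bound these $\Delta_{2}$-terms by the smoothness Clarkson inequalities and to optimize over the scale $s>0$. For $p\le2$ the bound $\Delta_{2}(\cdot,z)\le2\form_{p}(z)$ is independent of the base point, and balancing the two resulting contributions at $s\sim\form_{p}(w)^{1/p}/\form_{p}(v)^{1/p}$ yields a bound of the shape $\form_{p}(u_{1}-u_{2})^{(p-1)/p}\form_{p}(v)^{1/p}$; for $p\ge2$ the corresponding bound on $\Delta_{2}(c,z)$ genuinely involves $\form_{p}(c)$, which must itself be controlled by $\form_{p}(u_{1})\vee\form_{p}(u_{2})$ and the increments through the triangle inequality, and this coupling degrades the attainable power of $\form_{p}(u_{1}-u_{2})$. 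Recasting these bounds into a single homogeneous form by means of the elementary inequality $\form_{p}(u_{1}-u_{2})\le2^{p}\bigl(\form_{p}(u_{1})\vee\form_{p}(u_{2})\bigr)$, which trades excess powers of the increment for powers of the base energy, should yield \eqref{eq:p-RF-Diff-Hoelder-cont} with $\formhoelderexp_{p}=\frac1p\wedge\frac{p-1}{p}$ and an explicit $c_{p}$. I expect the two-case optimization, and in particular the control of $\form_{p}(c)$ in the range $p\ge2$, to be the most delicate part of the argument.
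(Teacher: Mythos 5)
Your proposal is correct and follows exactly the route the paper indicates (the paper itself defers all details to \cite{KS:GCDiff}): convexity of $\form_{p}$ from \ref{it:p-RF1} together with the ``smoothness'' Clarkson inequalities \eqref{eq:p-RF-wClarkson-pleq2}/\eqref{eq:p-RF-sClarkson-pgeq2} for differentiability, \eqref{eq:p-RF-Diff-Hoelder} and \eqref{eq:p-RF-Diff-Hoelder-cont}, and the ``convexity'' ones \eqref{eq:p-RF-sClarkson-pleq2}/\eqref{eq:p-RF-wClarkson-pgeq2} for \eqref{eq:p-RF-Diff-strictly-convex}. The only blemish is a harmless sign slip: your exact identity should read $ps\bigl(\form_{p}(u_{1};v)-\form_{p}(u_{2};v)\bigr)=D-R(u_{1},sv)+R(u_{2},sv)$, but since you only use it through absolute values, the two-case optimization over $s$ goes through and in fact yields H\"{o}lder exponents at least as strong as $\formhoelderexp_{p}$.
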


Theorem \ref{thm:p-RF-Diff} plays fundamental roles in establishing fine properties
of harmonic functions with respect to $p$-resistance forms. Below we summarize some
important consequences of it. First, the following proposition states that the
variational and distributional formulations of the notion of harmonicity of functions
coincide for $p$-resistance forms.
\begin{prop}\label{prop:p-RF-harm-func}
Let $h\in\domain_{p}$ and $B\subset\ssset$. Then the following two conditions are equivalent:
\begin{enumerate}[label=\textup{(\arabic*)},align=left,leftmargin=*,topsep=4pt,itemsep=2pt]
\item\label{it:p-RF-harm-func-min}$\form_{p}(h)=\inf\{\form_{p}(u)\mid\textrm{$u\in\domain_{p}$, $u\vert_{B}=h\vert_{B}$}\}$.
\item\label{it:p-RF-harm-func-test}$\form_{p}(h;v)=0$ for any $v\in\domain_{p}$ with $v\vert_{B}=0$.
\end{enumerate}
\end{prop}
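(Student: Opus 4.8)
The plan is to read the proposition as the classical first-order optimality condition for minimizing the convex functional $\form_{p}$ over the affine subspace of functions that agree with $h$ on $B$, and to reduce the whole matter to one-variable calculus along line segments. First I would set $N:=\{v\in\domain_{p}\mid v\vert_{B}=0\}$, a linear subspace of $\domain_{p}$, and note that the competitor set in \ref{it:p-RF-harm-func-min} is exactly the affine subspace $\{u\in\domain_{p}\mid u\vert_{B}=h\vert_{B}\}=h+N$. For a fixed $v\in N$ I consider the real function $g_{v}(t):=\form_{p}(h+tv)$, $t\in\mathbb{R}$. By the Fréchet differentiability in Theorem \ref{thm:p-RF-Diff} together with \eqref{eq:p-RF-Diff-dfn} applied at the base point $h+tv$, $g_{v}$ is differentiable on $\mathbb{R}$ with
\[
g_{v}'(t)=p\,\form_{p}(h+tv;v)\qquad(t\in\mathbb{R}),
\]
and in particular $g_{v}'(0)=p\,\form_{p}(h;v)$.

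For the implication \ref{it:p-RF-harm-func-min}$\Rightarrow$\ref{it:p-RF-harm-func-test}, suppose $\form_{p}(h)$ equals the infimum in \ref{it:p-RF-harm-func-min}. Given any $v\in N$, every point $h+tv$ lies in $h+N$ and is therefore admissible, so $g_{v}(t)=\form_{p}(h+tv)\geq\form_{p}(h)=g_{v}(0)$ for all $t\in\mathbb{R}$. Thus $t=0$ is a global minimum of the differentiable function $g_{v}$, whence $0=g_{v}'(0)=p\,\form_{p}(h;v)$, i.e.\ $\form_{p}(h;v)=0$. Since $v\in N$ was arbitrary, \ref{it:p-RF-harm-func-test} follows.

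For the converse \ref{it:p-RF-harm-func-test}$\Rightarrow$\ref{it:p-RF-harm-func-min}, the extra ingredient is convexity of $\form_{p}$: since $\form_{p}^{1/p}$ is a seminorm on $\domain_{p}$ by \ref{it:p-RF1} and $s\mapsto s^{p}$ is convex and non-decreasing on $[0,\infty)$, the composition $\form_{p}=(\form_{p}^{1/p})^{p}$ is convex, so each $g_{v}$ is convex. Now take any admissible $u$, set $v:=u-h\in N$, and observe that $g_{v}$ is convex with $g_{v}'(0)=p\,\form_{p}(h;v)=0$ by hypothesis; hence $t=0$ is a global minimizer of $g_{v}$ and in particular $\form_{p}(u)=g_{v}(1)\geq g_{v}(0)=\form_{p}(h)$. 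As $h$ is itself admissible, this yields $\form_{p}(h)=\inf\{\form_{p}(u)\mid u\in\domain_{p},\ u\vert_{B}=h\vert_{B}\}$, which is \ref{it:p-RF-harm-func-min}.

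I do not expect any serious obstacle here: the entire argument hinges on the derivative identity $g_{v}'(t)=p\,\form_{p}(h+tv;v)$ and on the convexity of $\form_{p}$, both of which are immediate from the material already established. The only point deserving a line of care is that $g_{v}$ is differentiable at \emph{every} $t$, not merely at $t=0$, which follows by invoking \eqref{eq:p-RF-Diff-dfn} at the base point $h+tv$ rather than at $h$. (If one prefers to avoid citing the seminorm property for convexity, the monotonicity in \eqref{eq:p-RF-Diff-strictly-convex} shows directly that $t\mapsto g_{v}'(t)=p\,\form_{p}(h+tv;v)$ is non-decreasing, giving the same convexity of $g_{v}$.)
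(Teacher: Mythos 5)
Your argument is correct, and it is the standard first-order-condition-plus-convexity proof that the paper itself intends (the paper defers the details of Section~\ref{sec:p-RF} to \cite{KS:GCDiff}, so there is no written proof to compare against, but nothing else is needed): the derivative identity $g_{v}'(t)=p\,\form_{p}(h+tv;v)$ follows from Theorem~\ref{thm:p-RF-Diff} applied at the base point $h+tv$, and the convexity of $\form_{p}=(\form_{p}^{1/p})^{p}$ from \ref{it:p-RF1} (or, equivalently, the monotonicity of $t\mapsto\form_{p}(h+tv;v)$ from \eqref{eq:p-RF-Diff-strictly-convex}) upgrades the vanishing of $g_{v}'(0)$ to global minimality. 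Both directions are complete as written.
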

\begin{dfn}[$\form_{p}$-harmonic functions]\label{dfn:p-RF-harm-func}
Let $B\subset\ssset$. We say that $h\in\domain_{p}$ is \emph{$\form_{p}$-harmonic} on
$\ssset\setminus B$ if and only if $h$ satisfies either (and hence both) of
\ref{it:p-RF-harm-func-min} and \ref{it:p-RF-harm-func-test} in Proposition \ref{prop:p-RF-harm-func}.
We set $\harfunc{\form_{p}}{B}:=\{h\in\domain_{p}\mid\textrm{$h$ is $\form_{p}$-harmonic on $\ssset\setminus B$}\}$.
\end{dfn}

$\form_{p}$-harmonic functions with given boundary values uniquely exist, and their energies
under $\form_{p}$ define a new $p$-resistance form on the boundary set, as follows.%
\begin{thm}\label{thm:p-RF-trace}
Let $B\subset\ssset$ be non-empty, set $\domain_{p}\vert_{B}:=\{u\vert_{B}\mid u\in\domain_{p}\}$,
and define $\form_{p}\vert_{B}\colon\domain_{p}\vert_{B}\to[0,\infty)$ by
\begin{equation}\label{eq:p-RF-trace-form}
\form_{p}\vert_{B}(u):=\inf\{\form_{p}(v)\mid\textrm{$v\in\domain_{p}$, $v\vert_{B}=u$}\},
	\quad u\in\domain_{p}\vert_{B}.
\end{equation}
Then $(\form_{p}\vert_{B},\domain_{p}\vert_{B})$ is a $p$-resistance form on $B$ and
$\resismet_{\form_{p}\vert_{B}}=\resismet_{\form_{p}}\vert_{B\times B}$ \textup{(recall \ref{it:p-RF4})}.
Moreover, for any $u\in\domain_{p}\vert_{B}$ there exists a unique
$\harext{\form_{p}}{B}[u]\in\domain_{p}$ such that $\harext{\form_{p}}{B}[u]\big\vert_{B}=u$
and $\form_{p}\bigl(\harext{\form_{p}}{B}[u]\bigr)=\form_{p}\vert_{B}(u)$,
so that $\harfunc{\form_{p}}{B}=\harext{\form_{p}}{B}(\domain_{p}\vert_{B})$, and
\begin{gather}\label{eq:p-RF-harext-homogeneous}
\harext{\form_{p}}{B}[au+b\one_{B}]=a\harext{\form_{p}}{B}[u]+b\one_{\ssset}
	\quad\textrm{for any $u\in\domain_{p}\vert_{B}$ and any $a,b\in\mathbb{R}$,}\\
\form_{p}\vert_{B}(u\vert_{B};v\vert_{B})=\form_{p}(u;v)
	\quad\textrm{for any $u\in\harfunc{\form_{p}}{B}$ and any $v\in\domain_{p}$,}
\label{eq:p-RF-trace-Diff}
\end{gather}
where $\form_{p}\vert_{B}(u;v):=\frac{1}{p}\frac{d}{dt}\form_{p}\vert_{B}(u+tv)\vert_{t=0}$
for $u,v\in\domain_{p}\vert_{B}$ \textup{(recall \eqref{eq:p-RF-Diff-dfn})}.
\end{thm}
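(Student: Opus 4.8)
The plan is to base everything on existence and uniqueness of the energy-minimizing extension, and then to read off the five axioms and the two displayed identities.

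First I would prove, for each $u\in\domain_{p}\vert_{B}$, that the infimum defining $\form_{p}\vert_{B}(u)$ in \eqref{eq:p-RF-trace-form} is attained at a unique $\harext{\form_{p}}{B}[u]\in\domain_{p}$ with $\harext{\form_{p}}{B}[u]\vert_{B}=u$. Fixing any $w_{0}\in\domain_{p}$ with $w_{0}\vert_{B}=u$, the constraint set is the affine subspace $w_{0}+\domain_{p}^{B}$, where $\domain_{p}^{B}:=\{w\in\domain_{p}\mid w\vert_{B}=0\}$ is closed in the Banach space $(\domain_{p}/\mathbb{R}\one_{\ssset},\form_{p}^{1/p})$ because every point-difference functional $w\mapsto w(x)-w(y)$ is continuous by \eqref{eq:p-RF4-conseq}. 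For a minimizing sequence $(v_{n})_{n}$ with $v_{n}\vert_{B}=u$ and $\form_{p}(v_{n})\to\form_{p}\vert_{B}(u)=:c$, convexity of the constraint set gives $\form_{p}\bigl(\tfrac{1}{2}(v_{n}+v_{m})\bigr)\geq c$, and the uniform convexity furnished by the strong Clarkson inequality \eqref{eq:p-RF-sClarkson-pleq2} (if $p\leq 2$) or the weak Clarkson inequality \eqref{eq:p-RF-wClarkson-pgeq2} (if $p\geq 2$) then forces $\form_{p}\bigl(\tfrac{1}{2}(v_{n}-v_{m})\bigr)\to 0$. Hence $(v_{n})_{n}$ is $\form_{p}^{1/p}$-Cauchy modulo constants, its class converges by \ref{it:p-RF2}, and the constancy of $v_{n}\vert_{B}$ together with the continuity of point differences pins down a genuine limit $\harext{\form_{p}}{B}[u]$ with the required properties; the same Clarkson estimate applied to two minimizers gives uniqueness via \ref{it:p-RF1} and $B\neq\emptyset$. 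Since harmonicity on $\ssset\setminus B$ is precisely the minimization property \ref{it:p-RF-harm-func-min}, uniqueness immediately yields $\harfunc{\form_{p}}{B}=\harext{\form_{p}}{B}(\domain_{p}\vert_{B})$.

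Next I would check \ref{it:p-RF1}--\ref{it:p-RF5} for $(\form_{p}\vert_{B},\domain_{p}\vert_{B})$. Writing $\form_{p}\vert_{B}^{1/p}(u)=\inf\{\form_{p}^{1/p}(v)\mid v\vert_{B}=u\}$ exhibits $(\domain_{p}\vert_{B},\form_{p}\vert_{B}^{1/p})$ as the quotient of the seminormed space $(\domain_{p},\form_{p}^{1/p})$ by $\domain_{p}^{B}$; this makes \ref{it:p-RF1} routine (positive homogeneity and subadditivity of a quotient seminorm, with kernel $\mathbb{R}\one_{B}$ inherited from \ref{it:p-RF1} for $\form_{p}$), and it gives \ref{it:p-RF2} at once, the induced map $\domain_{p}/\mathbb{R}\one_{\ssset}\to\domain_{p}\vert_{B}/\mathbb{R}\one_{B}$ being an isometric surjection onto the quotient of a Banach space by the subspace $\{[w]\mid w\vert_{B}\textrm{ constant}\}$, which is closed by \eqref{eq:p-RF4-conseq}. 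Condition \ref{it:p-RF3} follows by restricting a separating $w\in\domain_{p}$, while \ref{it:p-RF4} and the identity $\resismet_{\form_{p}\vert_{B}}=\resismet_{\form_{p}}\vert_{B\times B}$ come from a two-sided comparison of the defining suprema: for $u=w\vert_{B}$ one has $\form_{p}\vert_{B}(u)\leq\form_{p}(w)$ and $\lvert u(x)-u(y)\rvert=\lvert w(x)-w(y)\rvert$, whereas evaluating the ratio on $\harext{\form_{p}}{B}[u]$, for which $\form_{p}\vert_{B}(u)=\form_{p}(\harext{\form_{p}}{B}[u])$, supplies the reverse. For \ref{it:p-RF5} I would lift: given $u=(u_{1},\ldots,u_{m})\in(\domain_{p}\vert_{B})^{m}$, set $h_{j}:=\harext{\form_{p}}{B}[u_{j}]$ and apply \ref{it:p-RF5} for $\form_{p}$ to $(h_{1},\ldots,h_{m})$; since $T_{k}(h_{1},\ldots,h_{m})\vert_{B}=T_{k}(u)$ and $\form_{p}\vert_{B}(T_{k}(u))\leq\form_{p}(T_{k}(h_{1},\ldots,h_{m}))$, the coordinatewise monotonicity of $\lVert\cdot\rVert_{\ell^{r}}$ upgrades \eqref{eq:GC} for $\form_{p}$ to the same inequality for $\form_{p}\vert_{B}$.

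Finally I would deduce the two identities. For \eqref{eq:p-RF-harext-homogeneous}, the function $a\harext{\form_{p}}{B}[u]+b\one_{\ssset}$ restricts on $B$ to $au+b\one_{B}$ and has energy $\lvert a\rvert^{p}\form_{p}(\harext{\form_{p}}{B}[u])=\lvert a\rvert^{p}\form_{p}\vert_{B}(u)=\form_{p}\vert_{B}(au+b\one_{B})$, so it is the minimizer by the uniqueness already established. For \eqref{eq:p-RF-trace-Diff}, observe that $\form_{p}\vert_{B}$ is now itself a $p$-resistance form on $B$, hence Fr\'{e}chet differentiable by Theorem \ref{thm:p-RF-Diff}, so both $t\mapsto\form_{p}\vert_{B}(u\vert_{B}+tv\vert_{B})$ and $t\mapsto\form_{p}(u+tv)$ are differentiable; the candidate extension $u+tv$ yields $\form_{p}\vert_{B}(u\vert_{B}+tv\vert_{B})\leq\form_{p}(u+tv)$ for all $t$, with equality at $t=0$ because $u\in\harfunc{\form_{p}}{B}$ forces $\form_{p}\vert_{B}(u\vert_{B})=\form_{p}(u)$, and a differentiable function touching another from below at an interior point shares its derivative there, giving \eqref{eq:p-RF-trace-Diff}. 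The main obstacle is the very first step: securing existence of the minimizer, where the uniform convexity encoded in the $p$-Clarkson inequalities of Proposition \ref{prop:p-RF5-conseq}\ref{it:p-RF-Clarkson} is essential and replaces the Hilbert-space orthogonal projection available only when $p=2$.
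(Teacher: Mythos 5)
The paper does not actually prove Theorem \ref{thm:p-RF-trace}: all results of Section \ref{sec:p-RF} are stated with the remark that ``the details \dots will appear in \cite{KS:GCDiff}'', so there is no in-paper argument to compare yours against. On its own merits your proof is correct and follows what is clearly the intended route, namely the direct method with uniform convexity supplied by $p$-Clarkson's inequalities in place of the Hilbert-space projection used for $p=2$ resistance forms in \cite[Chapter 2]{Kig01}. You pick the right Clarkson inequality in each regime (\eqref{eq:p-RF-sClarkson-pleq2} for $p\leq 2$, \eqref{eq:p-RF-wClarkson-pgeq2} for $p\geq 2$), the lifting of \ref{it:p-RF5} through $\harext{\form_{p}}{B}$ together with the coordinatewise monotonicity of $\lVert\cdot\rVert_{\ell^{r}}$ is the standard and correct way to get the generalized contraction property for the trace, and the ``touching from below'' argument for \eqref{eq:p-RF-trace-Diff} is legitimate once you have established that $(\form_{p}\vert_{B},\domain_{p}\vert_{B})$ is itself a $p$-resistance form so that Theorem \ref{thm:p-RF-Diff} applies to it. The only places where I would ask you to write a line more are (i) the extraction of a genuine representative of the limit of the minimizing sequence with the prescribed boundary values, which requires renormalizing at a point of $B$ and invoking \eqref{eq:p-RF4-conseq}, and (ii) the identification of the kernel of $\form_{p}\vert_{B}^{1/p}$ with $\mathbb{R}\one_{B}$, which uses that the infimum in \eqref{eq:p-RF-trace-form} is attained; both are implicit in your ordering of the steps and neither is a gap.
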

\begin{rmk}\label{rmk:p-RF-harext-homogeneous}
In contrast to \eqref{eq:p-RF-harext-homogeneous}, \emph{the map
$\harext{\form_{p}}{B}\colon\domain_{p}\vert_{B}\to\domain_{p}$ does NOT satisfy
either $\harext{\form_{p}}{B}[u+v]\leq\harext{\form_{p}}{B}[u]+\harext{\form_{p}}{B}[v]$
for any $u,v\in\domain_{p}\vert_{B}$ or
$\harext{\form_{p}}{B}[u+v]\geq\harext{\form_{p}}{B}[u]+\harext{\form_{p}}{B}[v]$
for any $u,v\in\domain_{p}\vert_{B}$ in general}, unless $p=2$ or $\#B\leq 2$.
\end{rmk}
\begin{dfn}[Trace of $p$-resistance form]\label{dfn:p-RF-trace}
Let $B\subset\ssset$ be non-empty. The $p$-resistance form
$(\form_{p}\vert_{B},\domain_{p}\vert_{B})$ on $B$ as defined in Theorem \ref{thm:p-RF-trace}
is called the \emph{trace} of $(\form_{p},\domain_{p})$ to $B$.
\end{dfn}
\begin{prop}\label{prop:p-RF-trace-compatible}
Let $A,B\subset\ssset$ satisfy $\emptyset\not=A\subset B$. Then
$(\form_{p}\vert_{B}\vert_{A},\domain_{p}\vert_{B}\vert_{A})=(\form_{p}\vert_{A},\domain_{p}\vert_{A})$
and $\harext{\form_{p}}{B}\circ\harext{\form_{p}\vert_{B}}{A}=\harext{\form_{p}}{A}$.
In particular, $\harext{\form_{p}\vert_{B}}{A}[u]=\harext{\form_{p}}{A}[u]\big\vert_{B}$
for any $u\in\domain_{p}\vert_{A}$.
\end{prop}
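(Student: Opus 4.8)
The plan is to read off both identities directly from the definition~\eqref{eq:p-RF-trace-form} of the trace form and from the defining minimality (and uniqueness) of the harmonic extension in Theorem~\ref{thm:p-RF-trace}, exploiting the elementary fact that nested infima over composable restriction constraints collapse. For the equality of domains, since $A\subset B$ one has $v\vert_B\vert_A=v\vert_A$ for every $v\in\domain_p$, so $\domain_p\vert_B\vert_A=\{v\vert_A\mid v\in\domain_p\}=\domain_p\vert_A$. For the forms, fix $u\in\domain_p\vert_A$ and unfold the two layers of~\eqref{eq:p-RF-trace-form}:
\begin{equation*}
\form_p\vert_B\vert_A(u)=\inf_{\substack{w\in\domain_p\vert_B\\ w\vert_A=u}}\,\inf_{\substack{v\in\domain_p\\ v\vert_B=w}}\form_p(v).
\end{equation*}
Eliminating $w=v\vert_B$, the combined constraints ``$v\vert_B=w$ and $w\vert_A=u$'' reduce (again by $A\subset B$) to the single constraint $v\vert_A=u$, so the double infimum equals $\inf\{\form_p(v)\mid v\in\domain_p,\ v\vert_A=u\}=\form_p\vert_A(u)$. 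This proves the first identity.

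For the second identity, put $h:=\harext{\form_p}{A}[u]$ and $w:=h\vert_B$, and show in turn that $w=\harext{\form_p\vert_B}{A}[u]$ and that $h=\harext{\form_p}{B}[w]$; both steps hinge on the energy identity $\form_p(h)=\form_p\vert_A(u)$ furnished by Theorem~\ref{thm:p-RF-trace}. Since $h\vert_B=w$, the function $h$ is admissible in the infimum~\eqref{eq:p-RF-trace-form} defining $\form_p\vert_B(w)$, giving $\form_p\vert_B(w)\leq\form_p(h)=\form_p\vert_A(u)$; conversely, every $v\in\domain_p$ with $v\vert_B=w$ satisfies $v\vert_A=w\vert_A=u$ and hence $\form_p(v)\geq\form_p\vert_A(u)$, so $\form_p\vert_B(w)\geq\form_p\vert_A(u)$ as well, and the same lower bound $\form_p\vert_B(w')\geq\form_p\vert_A(u)$ holds for every competitor $w'\in\domain_p\vert_B$ with $w'\vert_A=u$. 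Thus $w$ minimizes $\form_p\vert_B$ under the boundary constraint $\cdot\vert_A=u$, and the uniqueness clause of Theorem~\ref{thm:p-RF-trace} identifies $w$ with $\harext{\form_p\vert_B}{A}[u]$; since $w=h\vert_B=\harext{\form_p}{A}[u]\vert_B$, this is exactly the ``in particular'' assertion. Moreover the two displayed inequalities force $\form_p\vert_B(w)=\form_p(h)$, so $h$ realizes the infimum~\eqref{eq:p-RF-trace-form} for $w$ and is therefore equal to $\harext{\form_p}{B}[w]$ by uniqueness; combining the two steps yields $\harext{\form_p}{B}\circ\harext{\form_p\vert_B}{A}=\harext{\form_p}{A}$.

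The argument is little more than careful bookkeeping of restriction maps, and no analytic machinery beyond Theorem~\ref{thm:p-RF-trace} is required; in particular the generalized contraction property \ref{it:p-RF5} plays no role here. The only point demanding attention is the bidirectional comparison underlying $\form_p\vert_B(w)=\form_p(h)$: one must simultaneously use that $h$ is an admissible extension of $w$ (for the upper bound) and that \emph{every} admissible extension of $w$ restricts to $u$ on $A$ (for the lower bound, via $\form_p\vert_A$), so that the two trace values are pinned together exactly at $\form_p\vert_A(u)$, whence uniqueness does the rest.
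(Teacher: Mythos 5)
Your argument is correct and complete: the collapse of the nested infima gives $\form_p\vert_B\vert_A=\form_p\vert_A$ on the common domain, and the two-sided comparison pinning $\form_p\vert_B(h\vert_B)=\form_p(h)=\form_p\vert_A(u)$ together with the uniqueness clause of Theorem~\ref{thm:p-RF-trace} (applied once to $(\form_p,\domain_p)$ on $\ssset$ and once to the trace form on $B$) identifies both $h\vert_B$ with $\harext{\form_p\vert_B}{A}[u]$ and $h$ with $\harext{\form_p}{B}[h\vert_B]$. The paper states this proposition without proof (deferring details to the reference in preparation), but your bookkeeping is exactly the standard argument one expects here, the $L^p$-analogue of the corresponding fact for resistance forms in \cite[Lemma 2.3.8]{Kig01}, and you are right that \ref{it:p-RF5} plays no role.
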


As an easy consequence of the strong subadditivity of $\form_{p}$
(Proposition \ref{prop:p-RF5-conseq}-\ref{it:p-RF-StronglySubadditive}),
we have the following natural analog of weak maximum principle.
\begin{prop}[Weak comparison principle]\label{prop:p-RF-comp}
Let $B\subset\ssset$ be non-empty, and let $u,v\in\harfunc{\form_{p}}{B}$ satisfy
$u(x)\leq v(x)$ for any $x\in B$. Then $u(x)\leq v(x)$ for any $x\in\ssset$.
\end{prop}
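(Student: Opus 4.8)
The plan is to exploit the variational (energy-minimizing) characterization of $\form_p$-harmonicity together with the strong subadditivity of $\form_p$. First I would observe that $u\wedge v$ and $u\vee v$ both lie in $\domain_p$ by Proposition \ref{prop:p-RF5-conseq}-\ref{it:p-RF-1Lip}, and that the hypothesis $u\leq v$ on $B$ yields $(u\wedge v)\vert_{B}=u\vert_{B}$ and $(u\vee v)\vert_{B}=v\vert_{B}$. In other words, $u\wedge v$ is an admissible competitor for the energy-minimization problem defining harmonicity of $u$, and $u\vee v$ is one for $v$.

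Next, since $u\in\harfunc{\form_p}{B}$ minimizes $\form_p$ among functions agreeing with $u$ on $B$ (Proposition \ref{prop:p-RF-harm-func}-\ref{it:p-RF-harm-func-min}) and $u\wedge v$ is such a competitor, we get $\form_p(u)\leq\form_p(u\wedge v)$; symmetrically $\form_p(v)\leq\form_p(u\vee v)$. Adding these and comparing with the strong subadditivity inequality $\form_p(u\wedge v)+\form_p(u\vee v)\leq\form_p(u)+\form_p(v)$ from Proposition \ref{prop:p-RF5-conseq}-\ref{it:p-RF-StronglySubadditive} forces both to be equalities; in particular $\form_p(u\wedge v)=\form_p(u)$.

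Finally I would invoke the uniqueness of the energy minimizer with prescribed boundary values from Theorem \ref{thm:p-RF-trace}. Since $u\in\harfunc{\form_p}{B}$ equals $\harext{\form_p}{B}[u\vert_{B}]$ and hence $\form_p(u)=\form_p\vert_{B}(u\vert_{B})$, and since $u\wedge v\in\domain_p$ satisfies $(u\wedge v)\vert_{B}=u\vert_{B}$ together with $\form_p(u\wedge v)=\form_p(u)=\form_p\vert_{B}(u\vert_{B})$, the function $u\wedge v$ also attains the minimal energy with boundary value $u\vert_{B}$. By the uniqueness asserted in Theorem \ref{thm:p-RF-trace} it must coincide with $\harext{\form_p}{B}[u\vert_{B}]=u$. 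Thus $u\wedge v=u$ on all of $\ssset$, which is exactly $u\leq v$ on $\ssset$.

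There is no serious obstacle here, consistent with the framing as an easy consequence; the only point requiring care is the last step, where I need the \emph{strict} uniqueness of the harmonic extension (traceable to the strict convexity encoded in \eqref{eq:p-RF-Diff-strictly-convex} and underlying Theorem \ref{thm:p-RF-trace}) to upgrade the equality of energies $\form_p(u\wedge v)=\form_p(u)$ to the pointwise identity $u\wedge v=u$. Mere equality of energies would not by itself force the minimizers to coincide were the minimizer non-unique.
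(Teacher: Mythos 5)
Your proposal is correct and follows exactly the route the paper indicates: the paper introduces Proposition \ref{prop:p-RF-comp} as ``an easy consequence of the strong subadditivity of $\form_{p}$,'' and your argument---comparing $\form_{p}(u)\leq\form_{p}(u\wedge v)$, $\form_{p}(v)\leq\form_{p}(u\vee v)$ against Proposition \ref{prop:p-RF5-conseq}-\ref{it:p-RF-StronglySubadditive} and then invoking the uniqueness of the minimizer from Theorem \ref{thm:p-RF-trace} to get $u\wedge v=u$---is the standard and intended proof. Your closing caveat about needing the strict uniqueness of the harmonic extension is exactly the right point to flag, and it is supplied by Theorem \ref{thm:p-RF-trace}.
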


The strong subadditivity of $\form_{p}$
(Proposition \ref{prop:p-RF5-conseq}-\ref{it:p-RF-StronglySubadditive})
also implies the following important property of the derivative of $\form_{p}$.
\begin{prop}\label{prop:p-RF-Diff-monotone}
Let $u_{1},u_{2},v\in\domain_{p}$ satisfy $((u_{2}-u_{1})\wedge v)(x)=0$ for any $x\in\ssset$.
Then $\form_{p}(u_{1};v)\geq\form_{p}(u_{2};v)$.
\end{prop}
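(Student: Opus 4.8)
The plan is to deduce the inequality from the strong subadditivity of $\form_{p}$ (Proposition \ref{prop:p-RF5-conseq}-\ref{it:p-RF-StronglySubadditive}) combined with the differentiability of $\form_{p}$ (Theorem \ref{thm:p-RF-Diff}), the point being that the disjoint-support hypothesis makes the lattice operations applied to $u_{1}+tv$ and $u_{2}$ degenerate into affine expressions. First I would unpack the hypothesis: since $\min(a,b)=0$ forces $a\geq 0$, $b\geq 0$ and $\min\{a,b\}=0$, the condition $\bigl((u_{2}-u_{1})\wedge v\bigr)(x)=0$ for all $x\in\ssset$ says precisely that $u_{2}\geq u_{1}$ and $v\geq 0$ pointwise, and that at each point $x$ at least one of $u_{2}(x)-u_{1}(x)$ and $v(x)$ vanishes.

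The key step is the explicit evaluation of the lattice operations. For every $t\geq 0$ I claim that
\[
(u_{1}+tv)\wedge u_{2}=u_{1}\qquad\textrm{and}\qquad(u_{1}+tv)\vee u_{2}=u_{2}+tv.
\]
To see the first identity, split $\ssset$ according to whether $v(x)=0$ (then $u_{1}(x)+tv(x)=u_{1}(x)\leq u_{2}(x)$, so the minimum is $u_{1}(x)$) or $u_{1}(x)=u_{2}(x)$ (then, as $t\geq 0$ and $v(x)\geq 0$, the minimum is $u_{2}(x)=u_{1}(x)$); these two cases cover $\ssset$ by the hypothesis. The second identity then follows at once from $\min+\max=\text{sum}$, namely $(u_{1}+tv)\vee u_{2}=(u_{1}+tv)+u_{2}-\bigl((u_{1}+tv)\wedge u_{2}\bigr)=u_{2}+tv$. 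Note that all four functions lie in $\domain_{p}$ by linearity and by Proposition \ref{prop:p-RF5-conseq}-\ref{it:p-RF-1Lip}, so the identities are between genuine elements of $\domain_{p}$.

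With these identities in hand I would apply strong subadditivity to $u_{1}+tv$ and $u_{2}$, obtaining
\[
\form_{p}(u_{1})+\form_{p}(u_{2}+tv)\leq\form_{p}(u_{1}+tv)+\form_{p}(u_{2}),
\]
which rearranges to $\form_{p}(u_{2}+tv)-\form_{p}(u_{2})\leq\form_{p}(u_{1}+tv)-\form_{p}(u_{1})$ for every $t\geq 0$. Dividing by $t>0$ and letting $t\downarrow 0$, the right-hand derivatives are controlled; since by Theorem \ref{thm:p-RF-Diff} the two-sided derivatives $\frac{d}{dt}\form_{p}(u_{i}+tv)\vert_{t=0}$ exist, the one-sided limits coincide with them, and the inequality passes to the limit to give $p\,\form_{p}(u_{2};v)\leq p\,\form_{p}(u_{1};v)$, i.e.\ $\form_{p}(u_{1};v)\geq\form_{p}(u_{2};v)$. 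The only genuinely delicate point is that the lattice identities require $t\geq 0$, so that strong subadditivity yields information about the \emph{one-sided} difference quotient only; the role of the differentiability result from Theorem \ref{thm:p-RF-Diff} is exactly to upgrade this one-sided limit to the honest derivative $\form_{p}(u_{i};v)$. Everything else is a routine consequence of the two cited results.
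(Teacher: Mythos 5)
Your proof is correct and follows exactly the route the paper indicates: the paper introduces this proposition with the remark that it is a consequence of the strong subadditivity of $\form_{p}$ (Proposition \ref{prop:p-RF5-conseq}-\ref{it:p-RF-StronglySubadditive}), deferring details to \cite{KS:GCDiff}, and your argument --- the lattice identities $(u_{1}+tv)\wedge u_{2}=u_{1}$, $(u_{1}+tv)\vee u_{2}=u_{2}+tv$ for $t\geq 0$, followed by strong subadditivity and passage to the limit via the differentiability from Theorem \ref{thm:p-RF-Diff} --- is precisely the intended implementation. Your handling of the one-sided difference quotient versus the two-sided derivative is also the right point to flag.
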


Combining \eqref{eq:p-RF-Diff-homogeneous}, Proposition \ref{prop:p-RF-harm-func},
Theorem \ref{thm:p-RF-trace}, Propositions \ref{prop:p-RF-trace-compatible},
\ref{prop:p-RF-comp} and \ref{prop:p-RF-Diff-monotone},
we can prove the following H\"{o}lder continuity estimate for $\form_{p}$-harmonic
functions, which plays central roles in our analysis of $p$-harmonic functions and
$p$-energy measures for self-similar $p$-energy forms on self-similar sets
in \cite{KS:GCDiff,KS:StrComp,KS:pqEnergySing}.%
\begin{thm}\label{thm:p-RF-harm-func-Hoelder}
Let $B\subset\ssset$ be non-empty and set
$B^{\domain_{p}}:=\bigcap_{u\in\domain_{p},\,u\vert_{B}=0}u^{-1}(0)$.
Let $x\in\ssset\setminus B^{\domain_{p}}$ and set 
$\resismet_{\form_{p}}(x,B):=\form_{p}\vert_{B\cup\{x\}}\bigl(\one^{B\cup\{x\}}_{x}\bigr)^{-1}$.
Then for any $y\in\ssset$,
\begin{equation}\label{eq:p-RF-Green-func-Hoelder}
\harext{\form_{p}}{B\cup\{x\}}\bigl[\one^{B\cup\{x\}}_{x}\bigr](y)
	\leq\frac{\resismet_{\form_{p}}(x,y)^{1/(p-1)}}{\resismet_{\form_{p}}(x,B)^{1/(p-1)}}.
\end{equation}
Moreover, for any $h\in\harfunc{\form_{p}}{B}$ with $\lVert h\rVert_{\sup,B}<\infty$
and any $y\in\ssset$,
\begin{equation}\label{eq:p-RF-harm-func-Hoelder}
\lvert h(x)-h(y)\rvert
	\leq\frac{\resismet_{\form_{p}}(x,y)^{1/(p-1)}}{\resismet_{\form_{p}}(x,B)^{1/(p-1)}}\osc_{B}[h].
\end{equation}
\end{thm}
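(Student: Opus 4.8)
The plan is to prove the Green-function estimate \eqref{eq:p-RF-Green-func-Hoelder} first and then to derive \eqref{eq:p-RF-harm-func-Hoelder} from it by comparison. Write $g:=\harext{\form_p}{B\cup\{x\}}\bigl[\one^{B\cup\{x\}}_{x}\bigr]$, so that $g(x)=1$, $g\vert_{B}=0$, $g$ is $\form_p$-harmonic on $\ssset\setminus(B\cup\{x\})$, and, by Theorem \ref{thm:p-RF-trace}, $\form_p(g)=\resismet_{\form_p}(x,B)^{-1}$. Comparing $g$ with the constants $0$ and $1$ via the weak comparison principle (Proposition \ref{prop:p-RF-comp}) gives $0\le g\le1$ on $\ssset$, and the quantity to be estimated in \eqref{eq:p-RF-Green-func-Hoelder} is the drop $1-g(y)$ of $g$ below its peak value $g(x)=1$.

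Granting \eqref{eq:p-RF-Green-func-Hoelder}, I would deduce \eqref{eq:p-RF-harm-func-Hoelder} as follows. Since $h-\inf_{B}h$ is again $\form_p$-harmonic on $\ssset\setminus B$ with the same oscillation (by \eqref{eq:p-RF-Diff-homogeneous}), Proposition \ref{prop:p-RF-comp} lets me assume after normalization that $0\le h\le\osc_{B}[h]$ on $\ssset$; replacing $h$ by $\osc_{B}[h]-h$ if necessary, I may also assume $h(x)\ge h(y)$. Now $h\in\harfunc{\form_p}{B}\subset\harfunc{\form_p}{B\cup\{x\}}$, and by \eqref{eq:p-RF-Diff-homogeneous} the scalar multiple $h(x)g$ lies in $\harfunc{\form_p}{B\cup\{x\}}$ as well; since $h(x)g\le h$ on $B\cup\{x\}$ (with equality at $x$ and $h(x)g=0\le h$ on $B$), a second application of Proposition \ref{prop:p-RF-comp} yields $h(x)g\le h$ throughout $\ssset$. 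Evaluating at $y$ gives $h(x)-h(y)\le h(x)\bigl(1-g(y)\bigr)\le\osc_{B}[h]\,\bigl(1-g(y)\bigr)$, and \eqref{eq:p-RF-Green-func-Hoelder} turns this into \eqref{eq:p-RF-harm-func-Hoelder}.

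The crux is therefore \eqref{eq:p-RF-Green-func-Hoelder}. First I would pass to a finite model: by Proposition \ref{prop:p-RF-trace-compatible} the restriction $g\vert_{\{x,y\}\cup B}$ is the $\form_p\vert_{\{x,y\}\cup B}$-harmonic extension of $\one_{x}$ from $B\cup\{x\}$, so $g$ becomes harmonic at the single interior vertex $y$, and Theorem \ref{thm:p-RF-trace} keeps the resistances $\resismet_{\form_p}(x,B)$, $\resismet_{\form_p}(x,y)$ unchanged; since $g\equiv0$ on $B$ I may further collapse $B$ to one vertex (which only decreases $\resismet_{\form_p}(x,y)$, in our favor). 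Merely restricting $g$ to $\{x,y\}$ and using $\form_p\vert_{\{x,y\}}(g\vert_{\{x,y\}})\le\form_p(g)$ (equivalently Proposition \ref{prop:p-RF4-conseq}) already gives the weaker exponent $1/p$, i.e.\ $1-g(y)\le\bigl(\resismet_{\form_p}(x,y)/\resismet_{\form_p}(x,B)\bigr)^{1/p}$. To reach the sharp exponent $1/(p-1)$ I would use harmonicity at $y$ in its Euler--Lagrange form (Proposition \ref{prop:p-RF-harm-func}): together with the trace-derivative identity \eqref{eq:p-RF-trace-Diff} this identifies $\form_p(g;\cdot)$ with $\resismet_{\form_p}(x,B)^{-1}$ times the ``$x$-versus-$B$'' difference, and the $(p-1)$-homogeneity \eqref{eq:p-RF-Diff-homogeneous} converts the resulting first-order balance at $y$ into $\resismet_{\form_p}(x,B)^{1/(p-1)}\bigl(1-g(y)\bigr)\le\resismet_{\form_p}(x,y)^{1/(p-1)}$, the monotonicity of the derivative (Proposition \ref{prop:p-RF-Diff-monotone}) being what accounts for the competing paths not present in the pure series case.

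I expect the passage from the exponent $1/p$ to the sharp $1/(p-1)$ to be the main obstacle, and the difficulty to be structural rather than computational: for $p\ne2$ the traces of $\form_p$ to finite sets need not be of the graph form \eqref{eq:p-RF-finite-graph} (Example \ref{exmp:p-RF}-\ref{it:p-RF-finite}), so harmonicity at $y$ does not decouple into independent edge contributions, and the heuristic that ``$\resismet_{\form_p}^{1/(p-1)}$ adds in series''---which is precisely what dictates the exponent $1/(p-1)$---must be extracted purely from the variational structure. All the naive energy-comparison inequalities are lossy here, being tight only in the degenerate two-point situation, so the sharp estimate has to come from the optimality condition at $y$ combined with Proposition \ref{prop:p-RF-Diff-monotone}, and not from any direct network computation.
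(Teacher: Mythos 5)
The paper itself contains no proof of this theorem; the sentence preceding it merely lists the ingredients to be combined (\eqref{eq:p-RF-Diff-homogeneous}, Propositions \ref{prop:p-RF-harm-func}, \ref{prop:p-RF-trace-compatible}, \ref{prop:p-RF-comp}, \ref{prop:p-RF-Diff-monotone} and Theorem \ref{thm:p-RF-trace}), so your proposal can only be judged against that list and against correctness. Your reduction of \eqref{eq:p-RF-harm-func-Hoelder} to the Green-function bound is correct and complete: the normalizations, the inclusion $\harfunc{\form_{p}}{B}\subset\harfunc{\form_{p}}{B\cup\{x\}}$, and the two applications of Proposition \ref{prop:p-RF-comp} giving $h(x)g\leq h$ and $h\leq\osc_{B}[h]$ are sound and use exactly the listed tools. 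One caveat: what this reduction needs is a bound on $1-g(y)=g(x)-g(y)$, whereas \eqref{eq:p-RF-Green-func-Hoelder} as printed bounds $g(y)$ itself. The printed form cannot be what is meant (at $y=x$ it reads $1\leq 0$, and for $p=2$ on a series network $x$--$y$--$b$ with resistances $1$ and $100$ and $B=\{b\}$ one gets $g(y)=100/101$ against a right-hand side of $1/101$), so your reading is the intended one; but you substitute it silently, and as literally stated \eqref{eq:p-RF-Green-func-Hoelder} does not ``turn'' your inequality $h(x)-h(y)\leq\osc_{B}[h](1-g(y))$ into \eqref{eq:p-RF-harm-func-Hoelder}.

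The genuine gap is that you never prove the Green-function bound $1-g(y)\leq(\resismet_{\form_{p}}(x,y)/\resismet_{\form_{p}}(x,B))^{1/(p-1)}$, which is the entire content of the theorem; you name the right tools but the sentence asserting that homogeneity ``converts the first-order balance at $y$'' into the desired inequality is exactly the step that is missing, since everything hinges on exhibiting a pair of functions and a test direction to which Proposition \ref{prop:p-RF-Diff-monotone} applies, and your sketch contains neither. One working choice: set $V:=B\cup\{x,y\}$, $E:=\form_{p}\vert_{V}$, $c:=g(y)$, $G:=g\vert_{V}$, $W_{0}:=\harext{E}{\{x,y\}}\bigl[\one^{\{x,y\}}_{x}\bigr]$ and $W:=(1-c)W_{0}+c\one_{V}$. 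Harmonicity of $G$ at $y$ and linearity of $E(G;\cdot)$ give $E(G;\one^{V}_{x})=E(G;G)=E(G)=\resismet_{\form_{p}}(x,B)^{-1}$; Proposition \ref{prop:p-RF-comp} gives $W_{0}\geq 0$, hence $W\geq G$ on $V$ with equality at $x$ and $y$, so $((W-G)\wedge\one^{V}_{x})=0$ pointwise and Proposition \ref{prop:p-RF-Diff-monotone} yields $E(G;\one^{V}_{x})\geq E(W;\one^{V}_{x})=(1-c)^{p-1}E(W_{0};\one^{V}_{x})=(1-c)^{p-1}\resismet_{\form_{p}}(x,y)^{-1}$, using \eqref{eq:p-RF-Diff-homogeneous}, \eqref{eq:p-RF-trace-Diff} and the fact that a $p$-resistance form on a two-point set is $\kappa\lvert u(x)-u(y)\rvert^{p}$. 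Combining the two chains gives $(1-c)^{p-1}\leq\resismet_{\form_{p}}(x,y)/\resismet_{\form_{p}}(x,B)$, as required. Without such a construction your argument delivers only the exponent $1/p$, and the alternatives you gesture at (bounding a cross-derivative by \eqref{eq:p-RF-Diff-Hoelder}, or ``collapsing $B$ to one vertex'') also terminate at $1/p$; so the first, and main, assertion of the theorem remains unproved in your proposal.
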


The special case of \eqref{eq:p-RF-Green-func-Hoelder} with $B=\{z\}$ for $z\in\ssset\setminus\{x\}$,
the same inequality with $x$ and $z$ interchanged, and the equality
$\harext{\form_{p}}{\{x,z\}}\bigl[\one^{\{x,z\}}_{z}\bigr]
	=\one_{\ssset}-\harext{\form_{p}}{\{x,z\}}\bigl[\one^{\{x,z\}}_{x}\bigr]$
implied by \eqref{eq:p-RF-harext-homogeneous} together yield the triangle inequality
for $\resismet_{p}^{1/(p-1)}$ and thereby show the following corollary.
\begin{cor}\label{cor:p-RF-p-RM-triangle-ineq}
$\resismet_{p}^{1/(p-1)}\colon\ssset\times\ssset\to[0,\infty)$ is a metric on $\ssset$.
\end{cor}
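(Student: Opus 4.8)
The plan is to verify the three metric axioms for $d:=\resismet_{\form_{p}}^{1/(p-1)}$ separately, treating symmetry and positive-definiteness as routine and concentrating on the triangle inequality, whose mechanism is already isolated in the sentence preceding the corollary. Symmetry is immediate from the definition of $\resismet_{\form_{p}}$ in \ref{it:p-RF4}, since $\lvert u(x)-u(y)\rvert=\lvert u(y)-u(x)\rvert$. For definiteness, $\resismet_{\form_{p}}(x,x)=0$ by the convention $\sup\emptyset:=0$, while for $x\ne y$ condition \ref{it:p-RF3} furnishes some $u\in\domain_{p}$ with $u(x)\ne u(y)$, hence $u\notin\mathbb{R}\one_{\ssset}$ and $\form_{p}(u)>0$ by \ref{it:p-RF1}, so that $\resismet_{\form_{p}}(x,y)\ge\lvert u(x)-u(y)\rvert^{p}/\form_{p}(u)>0$; thus $d(x,y)=0$ if and only if $x=y$.

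For the triangle inequality, fix $x,y,z\in\ssset$; the cases $x=z$, $y=x$ and $y=z$ are trivial, so assume $x,y,z$ are pairwise distinct. First I would record the identification $\resismet_{\form_{p}}(x,\{z\})=\resismet_{\form_{p}}(x,z)$. By Theorem \ref{thm:p-RF-trace} the trace $(\form_{p}\vert_{\{x,z\}},\domain_{p}\vert_{\{x,z\}})$ is a $p$-resistance form on $\{x,z\}$ with $\resismet_{\form_{p}\vert_{\{x,z\}}}=\resismet_{\form_{p}}\vert_{\{x,z\}\times\{x,z\}}$. On the two-point set the quotient by constants is one-dimensional, and using \ref{it:p-RF1} one checks that $\form_{p}\vert_{\{x,z\}}(u)=\lvert u(x)-u(z)\rvert^{p}\form_{p}\vert_{\{x,z\}}(\one^{\{x,z\}}_{x})$ for every $u$; hence the ratio $\lvert u(x)-u(z)\rvert^{p}/\form_{p}\vert_{\{x,z\}}(u)$ is independent of the non-constant $u$ and equals $\form_{p}\vert_{\{x,z\}}(\one^{\{x,z\}}_{x})^{-1}=\resismet_{\form_{p}}(x,\{z\})$. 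Comparing with the definition of $\resismet_{\form_{p}\vert_{\{x,z\}}}(x,z)$ yields $\resismet_{\form_{p}}(x,\{z\})=\resismet_{\form_{p}\vert_{\{x,z\}}}(x,z)=\resismet_{\form_{p}}(x,z)$, and symmetrically $\resismet_{\form_{p}}(z,\{x\})=\resismet_{\form_{p}}(x,z)$.

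Next I would apply Theorem \ref{thm:p-RF-harm-func-Hoelder} with $B=\{z\}$; its hypothesis $x\notin\{z\}^{\domain_{p}}$ holds because $x\ne z$ and \ref{it:p-RF3} provides some $u\in\domain_{p}$ which, after subtracting $u(z)\one_{\ssset}$, satisfies $u(z)=0$ and $u(x)\ne 0$. Writing $h:=\harext{\form_{p}}{\{x,z\}}\bigl[\one^{\{x,z\}}_{x}\bigr]$, inequality \eqref{eq:p-RF-Green-func-Hoelder} together with the previous step gives $h(y)\le \resismet_{\form_{p}}(x,y)^{1/(p-1)}/\resismet_{\form_{p}}(x,z)^{1/(p-1)}$. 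Interchanging the roles of $x$ and $z$ and invoking \eqref{eq:p-RF-harext-homogeneous} in the form $\harext{\form_{p}}{\{x,z\}}\bigl[\one^{\{x,z\}}_{z}\bigr]=\one_{\ssset}-h$ gives $1-h(y)\le \resismet_{\form_{p}}(z,y)^{1/(p-1)}/\resismet_{\form_{p}}(x,z)^{1/(p-1)}$. Adding these two inequalities and multiplying through by $\resismet_{\form_{p}}(x,z)^{1/(p-1)}>0$ produces exactly $\resismet_{\form_{p}}(x,z)^{1/(p-1)}\le \resismet_{\form_{p}}(x,y)^{1/(p-1)}+\resismet_{\form_{p}}(y,z)^{1/(p-1)}$.

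Since every ingredient is already supplied by the stated results, I do not expect a genuine obstacle within this corollary itself: the only points requiring care are the reduction $\resismet_{\form_{p}}(x,\{z\})=\resismet_{\form_{p}}(x,z)$ and the verification of the hypothesis $x\notin\{z\}^{\domain_{p}}$, which together legitimize the use of \eqref{eq:p-RF-Green-func-Hoelder}. The substantive analytic content—namely the Green-function/H\"{o}lder estimate \eqref{eq:p-RF-Green-func-Hoelder}—resides in Theorem \ref{thm:p-RF-harm-func-Hoelder} and is taken as given here.
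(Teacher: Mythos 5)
Your proposal is correct and follows exactly the route the paper indicates in the paragraph preceding the corollary: the special case of \eqref{eq:p-RF-Green-func-Hoelder} with $B=\{z\}$, the same inequality with $x$ and $z$ interchanged, and the identity $\harext{\form_{p}}{\{x,z\}}\bigl[\one^{\{x,z\}}_{z}\bigr]=\one_{\ssset}-\harext{\form_{p}}{\{x,z\}}\bigl[\one^{\{x,z\}}_{x}\bigr]$ from \eqref{eq:p-RF-harext-homogeneous}. You also correctly supply the two details the paper leaves implicit, namely the identification $\resismet_{\form_{p}}(x,\{z\})=\resismet_{\form_{p}}(x,z)$ via Theorem \ref{thm:p-RF-trace} and the verification of $x\notin\{z\}^{\domain_{p}}$ from \ref{it:p-RF3}.
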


We conclude this section by presenting an expression of $(\form_{p},\domain_{p})$ as the
``inductive limit'' of its traces $\{\form_{p}\vert_{V}\}_{V\subset\ssset,\,1\leq\#V<\infty}$
to finite subsets, which is a straightforward extension of the counterpart for resistance forms given
in \cite[Corollary 2.37]{K:Fractal2018} (see also \cite[Proof of Theorem 2.36-(2)]{K:Fractal2018}
and \cite[Theorems 2.3.6, 2.3.7 and Lemma 2.3.8]{Kig01}), and a few applications of it
to convergence in the seminorm $\form_{p}^{1/p}$.
\begin{thm}\label{thm:p-RF-inductive-limit}
It holds that
\begin{align}\label{eq:p-RF-inductive-limit-domain}
\domain_{p}&=\{u\in\mathbb{R}^{\ssset}\mid\sup\nolimits_{V\subset\ssset,\,1\leq\#V<\infty}\form_{p}\vert_{V}(u\vert_{V})<\infty\},\\
\form_{p}(u)&=\sup\nolimits_{V\subset\ssset,\,1\leq\#V<\infty}\form_{p}\vert_{V}(u\vert_{V})\quad\textrm{for any $u\in\domain_{p}$.}
\label{eq:p-RF-inductive-limit-form}
\end{align}
\end{thm}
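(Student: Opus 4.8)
The plan is to prove both \eqref{eq:p-RF-inductive-limit-domain} and \eqref{eq:p-RF-inductive-limit-form} simultaneously by analyzing, for a given $u\in\mathbb{R}^{\ssset}$, the net of harmonic extensions of its restrictions to finite subsets. Write $\Phi(u):=\sup_{V\subset\ssset,\,1\leq\#V<\infty}\form_{p}\vert_{V}(u\vert_{V})$, the supremum ranging over the directed set of non-empty finite subsets of $\ssset$ ordered by inclusion. First I would observe that this quantity is well defined for every $u\in\mathbb{R}^{\ssset}$: since $\form_{p}\vert_{V}$ is a $p$-resistance form on the finite set $V$ by Theorem \ref{thm:p-RF-trace}, Example \ref{exmp:p-RF}-\ref{it:p-RF-finite} gives $\domain_{p}\vert_{V}=\mathbb{R}^{V}\ni u\vert_{V}$, so $h_{V}:=\harext{\form_{p}}{V}[u\vert_{V}]\in\domain_{p}$ exists with $\form_{p}(h_{V})=\form_{p}\vert_{V}(u\vert_{V})$. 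Moreover the compatibility $(\form_{p}\vert_{V'})\vert_{V}=\form_{p}\vert_{V}$ for $V\subset V'$ from Proposition \ref{prop:p-RF-trace-compatible}, combined with the definition \eqref{eq:p-RF-trace-form} of the trace (testing with the competitor $w=u\vert_{V'}$), shows that $V\mapsto\form_{p}\vert_{V}(u\vert_{V})$ is non-decreasing, so $\Phi(u)=\lim_{V}\form_{p}(h_{V})$ along the net.

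The easy inequality $\Phi(u)\leq\form_{p}(u)$ for $u\in\domain_{p}$ is immediate from \eqref{eq:p-RF-trace-form}, taking the competitor $v=u$. The crux is the reverse bound, for which I would show that the net $\{h_{V}\}$ is Cauchy in the seminorm $\form_{p}^{1/p}$ and converges to $u$ modulo constants. The structural observation replacing Hilbert-space orthogonality is a quasi-minimality of midpoints: for $V\subset V'$ one has $h_{V'}\vert_{V}=u\vert_{V}=h_{V}\vert_{V}$, so $\frac{1}{2}(h_{V}+h_{V'})$ is again an extension of $u\vert_{V}$ and hence $\form_{p}\bigl(\tfrac{1}{2}(h_{V}+h_{V'})\bigr)\geq\form_{p}(h_{V})$ by minimality of $h_{V}$. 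Since $\form_{p}(h_{V})^{1/p}$ increases to $\Phi(u)^{1/p}$, for $V,V'$ large the quantities $\form_{p}(h_{V})^{1/p}$, $\form_{p}(h_{V'})^{1/p}$ and $\form_{p}\bigl(\tfrac{1}{2}(h_{V}+h_{V'})\bigr)^{1/p}$ (the last squeezed between $\form_{p}(h_{V})^{1/p}$ and the triangle-inequality bound) are all close to $\Phi(u)^{1/p}$. Feeding this into the uniform convexity of $\form_{p}^{1/p}$ — which follows from the $p$-Clarkson inequalities in Proposition \ref{prop:p-RF5-conseq}-\ref{it:p-RF-Clarkson} — forces $\form_{p}(h_{V}-h_{V'})\to 0$, so the net is Cauchy. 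By completeness of the quotient Banach space \ref{it:p-RF2} it converges in $\form_{p}^{1/p}$ to some $g\in\domain_{p}$, and Proposition \ref{prop:p-RF4-conseq} upgrades this to $h_{V}(x)-h_{V}(y)\to g(x)-g(y)$ for all $x,y\in\ssset$. Because the net ranges over \emph{all} finite subsets, for any fixed $x,y$ we eventually have $\{x,y\}\subset V$ and thus $h_{V}(x)-h_{V}(y)=u(x)-u(y)$; hence $g-u$ is constant, $\form_{p}(g)=\form_{p}(u)$, and $\form_{p}(h_{V})\to\form_{p}(u)$. This gives $\form_{p}(u)=\Phi(u)$, proving \eqref{eq:p-RF-inductive-limit-form}.

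The domain identity \eqref{eq:p-RF-inductive-limit-domain} then follows with almost no extra work. The inclusion $\subset$ is \eqref{eq:p-RF-inductive-limit-form} itself, since $\form_{p}(u)<\infty$. For $\supset$, suppose $u\in\mathbb{R}^{\ssset}$ satisfies $\Phi(u)<\infty$; the entire Cauchy argument above used only finiteness of $\Phi(u)$ and never that $u\in\domain_{p}$, so the net $\{h_{V}\}\subset\domain_{p}$ still converges to some $g\in\domain_{p}$ with $g-u$ constant pointwise. Since $\mathbb{R}\one_{\ssset}\subset\domain_{p}$ by \ref{it:p-RF1}, we conclude $u=g+(\text{const})\one_{\ssset}\in\domain_{p}$.

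I expect the main obstacle to be the Cauchy estimate for $\{h_{V}\}$: unlike the case $p=2$, where the harmonic extensions are orthogonal projections and Pythagoras gives the estimate instantly, for general $p$ one must substitute the quantitative uniform convexity supplied by $p$-Clarkson, and take some care that the argument is carried out along the net of \emph{all} finite subsets rather than a single exhausting sequence, which would fail to control an uncountable $\ssset$.
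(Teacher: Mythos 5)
Your proposal is correct and follows essentially the route the paper intends: the paper defers the proof to the standard resistance-form argument of Kigami (harmonic extensions to finite subsets, monotonicity of the trace energies, and a midpoint/uniform-convexity estimate), with the parallelogram law replaced by the $p$-Clarkson inequalities of Proposition \ref{prop:p-RF5-conseq}-\ref{it:p-RF-Clarkson} — exactly the substitution the paper itself makes in the proof of Proposition \ref{prop:SG-p-RF-harmonic-approx}. Your observations that $\domain_{p}\vert_{V}=\mathbb{R}^{V}$ makes $\Phi(u)$ well defined for every $u\in\mathbb{R}^{\ssset}$, and that one must work with the directed net of all finite subsets rather than an exhausting sequence, are precisely the points where care is needed, and you handle both.
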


Theorem \ref{thm:p-RF-inductive-limit} easily implies the following simple
characterization of the convergence in $\domain_{p}$ with respect to $\form_{p}^{1/p}$.
\begin{prop}\label{prop:p-RF-norm-convergence-characterize}
Let $u\in\domain_{p}$ and $\{u_{n}\}_{n\in\mathbb{N}}\subset\domain_{p}$.
\begin{enumerate}[label=\textup{(\arabic*)},align=left,leftmargin=*,topsep=4pt,itemsep=2pt]
\item\label{it:p-RF-upper-semicontinuous}Assume that $\lim_{n\to\infty}(u_{n}(x)-u_{n}(y))=u(x)-u(y)$
	for any $x,y\in\ssset$. Then $\form_{p}(u)\leq\liminf_{n\to\infty}\form_{p}(u_{n})$.
\item\label{it:p-RF-norm-convergence-characterize}$\lim_{n\to\infty}\form_{p}(u-u_{n})=0$
	if and only if $\limsup_{n\to\infty}\form_{p}(u_{n})\leq\form_{p}(u)$ and
	$\lim_{n\to\infty}(u_{n}(x)-u_{n}(y))=u(x)-u(y)$ for any $x,y\in\ssset$.
\end{enumerate}
\end{prop}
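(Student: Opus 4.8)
The plan is to derive everything from the inductive-limit formula \eqref{eq:p-RF-inductive-limit-form} of Theorem \ref{thm:p-RF-inductive-limit} together with the $p$-Clarkson inequalities in Proposition \ref{prop:p-RF5-conseq}-\ref{it:p-RF-Clarkson}. For part \ref{it:p-RF-upper-semicontinuous} (lower semicontinuity of $\form_{p}$), by \eqref{eq:p-RF-inductive-limit-form} it suffices to prove $\form_{p}\vert_{V}(u\vert_{V})\leq\liminf_{n\to\infty}\form_{p}(u_{n})$ for every non-empty finite $V\subset\ssset$. Fix such a $V$ and a base point $x_{0}\in V$, and put $\tilde{u}_{n}:=u_{n}\vert_{V}-u_{n}(x_{0})\one_{V}$ and $\tilde{u}:=u\vert_{V}-u(x_{0})\one_{V}$, which lie in the linear space $\domain_{p}\vert_{V}$. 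The hypothesis gives $\tilde{u}_{n}(x)=u_{n}(x)-u_{n}(x_{0})\to u(x)-u(x_{0})=\tilde{u}(x)$ for every $x\in V$, i.e. $\tilde{u}_{n}\to\tilde{u}$ in the finite-dimensional space $\domain_{p}\vert_{V}$. Since $(\form_{p}\vert_{V},\domain_{p}\vert_{V})$ is a $p$-resistance form on $V$ (Theorem \ref{thm:p-RF-trace}), $\form_{p}\vert_{V}^{1/p}$ is a seminorm on $\domain_{p}\vert_{V}$, hence continuous by finite-dimensionality, so $\form_{p}\vert_{V}(u_{n}\vert_{V})=\form_{p}\vert_{V}(\tilde{u}_{n})\to\form_{p}\vert_{V}(\tilde{u})=\form_{p}\vert_{V}(u\vert_{V})$, the two outer equalities using invariance under adding multiples of $\one_{V}$ (\ref{it:p-RF1}). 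On the other hand $\form_{p}\vert_{V}(u_{n}\vert_{V})\leq\form_{p}(u_{n})$ directly from the definition \eqref{eq:p-RF-trace-form} of the trace, and combining these and taking the supremum over $V$ via \eqref{eq:p-RF-inductive-limit-form} gives the claim.

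The forward implication of part \ref{it:p-RF-norm-convergence-characterize} is routine. If $\form_{p}(u-u_{n})\to0$, the reverse triangle inequality for the seminorm $\form_{p}^{1/p}$ (\ref{it:p-RF1}) yields $\lvert\form_{p}(u_{n})^{1/p}-\form_{p}(u)^{1/p}\rvert\leq\form_{p}(u-u_{n})^{1/p}\to0$, so $\form_{p}(u_{n})\to\form_{p}(u)$ and in particular $\limsup_{n\to\infty}\form_{p}(u_{n})\leq\form_{p}(u)$; and Proposition \ref{prop:p-RF4-conseq} applied to $u-u_{n}$ gives $\lvert(u-u_{n})(x)-(u-u_{n})(y)\rvert^{p}\leq\resismet_{\form_{p}}(x,y)\form_{p}(u-u_{n})\to0$ for all $x,y\in\ssset$, i.e. $u_{n}(x)-u_{n}(y)\to u(x)-u(y)$.

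The substance, and the step I expect to be the main obstacle, is the converse implication, since this is exactly where the uniform convexity of $\form_{p}^{1/p}$ encoded in the ``strong'' side of $p$-Clarkson's inequalities must be exploited. Assume the two stated conditions. Part \ref{it:p-RF-upper-semicontinuous} gives $\form_{p}(u)\leq\liminf_{n\to\infty}\form_{p}(u_{n})$, which with the assumed $\limsup_{n\to\infty}\form_{p}(u_{n})\leq\form_{p}(u)$ forces $\lim_{n\to\infty}\form_{p}(u_{n})=\form_{p}(u)$. Applying part \ref{it:p-RF-upper-semicontinuous} also to the midpoints $\tfrac{1}{2}(u+u_{n})$, whose increments converge to those of $u$, gives $\liminf_{n\to\infty}\form_{p}\bigl(\tfrac{1}{2}(u+u_{n})\bigr)\geq\form_{p}(u)$. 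I would then apply $p$-Clarkson's inequalities with $a=\tfrac{1}{2}(u+u_{n})$ and $b=\tfrac{1}{2}(u-u_{n})$, so that $a+b=u$ and $a-b=u_{n}$. For $p\geq2$, \eqref{eq:p-RF-wClarkson-pgeq2} gives $\form_{p}(u)+\form_{p}(u_{n})\geq2\bigl(\form_{p}(\tfrac{1}{2}(u+u_{n}))+\form_{p}(\tfrac{1}{2}(u-u_{n}))\bigr)$; taking $\limsup$ and using the two facts just established yields $\limsup_{n\to\infty}\form_{p}\bigl(\tfrac{1}{2}(u-u_{n})\bigr)\leq0$, whence $\form_{p}(u-u_{n})=2^{p}\form_{p}\bigl(\tfrac{1}{2}(u-u_{n})\bigr)\to0$ by homogeneity of the seminorm. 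For $p\leq2$, \eqref{eq:p-RF-sClarkson-pleq2} gives instead $\form_{p}(u)+\form_{p}(u_{n})\geq2\bigl(A_{n}^{1/(p-1)}+B_{n}^{1/(p-1)}\bigr)^{p-1}$ with $A_{n}:=\form_{p}(\tfrac{1}{2}(u+u_{n}))$ and $B_{n}:=\form_{p}(\tfrac{1}{2}(u-u_{n}))$; writing $s:=1/(p-1)\geq1$, the right-hand side equals $2\lVert(A_{n},B_{n})\rVert_{\ell^{s}}\geq2A_{n}$, which first forces $A_{n}\to\form_{p}(u)$, and then $B_{n}^{s}\leq\bigl(\tfrac{1}{2}(\form_{p}(u)+\form_{p}(u_{n}))\bigr)^{s}-A_{n}^{s}\to0$, so again $\form_{p}(u-u_{n})=2^{p}B_{n}\to0$. (In either regime the degenerate case $\form_{p}(u)=0$ can also be seen directly, since then $u\in\mathbb{R}\one_{\ssset}$ and $\form_{p}(u-u_{n})=\form_{p}(u_{n})\to0$.)
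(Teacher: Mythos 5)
Your proof is correct and follows essentially the route the paper intends: part \ref{it:p-RF-upper-semicontinuous} via the inductive-limit formula of Theorem \ref{thm:p-RF-inductive-limit} (which is exactly what the paper cites as the source of this proposition) together with continuity of the seminorm $\form_{p}\vert_{V}^{1/p}$ on the finite-dimensional space $\mathbb{R}^{V}$, and the converse of part \ref{it:p-RF-norm-convergence-characterize} via the Clarkson inequalities applied to $\tfrac{1}{2}(u+u_{n})$ and $\tfrac{1}{2}(u-u_{n})$, which is the same midpoint substitution the paper itself uses in the proof of Proposition \ref{prop:SG-p-RF-harmonic-approx}. No gaps.
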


Proposition \ref{prop:p-RF-norm-convergence-characterize}-\ref{it:p-RF-norm-convergence-characterize}
further yields the following useful approximation results.%
\begin{prop}\label{prop:p-RF-approx}
\begin{enumerate}[label=\textup{(\arabic*)},align=left,leftmargin=*,topsep=4pt,itemsep=2pt]
\item\label{it:p-RF-approx-cutoff-given-func}Let
	$\{\varphi_{n}\}_{n\in\mathbb{N}}\subset\contfunc(\mathbb{R})$ satisfy
	$\lvert\varphi_{n}(t)-\varphi_{n}(s)\rvert\leq\lvert t-s\rvert$
	for any $n\in\mathbb{N}$ and any $s,t\in\mathbb{R}$ and
	$\lim_{n\to\infty}\varphi_{n}(t)=t$ for any $t\in\mathbb{R}$.
	Then $\{\varphi_{n}(u)\}_{n\in\mathbb{N}}\subset\domain_{p}$ and
	$\lim_{n\to\infty}\form_{p}(u-\varphi_{n}(u))=0$ for any $u\in\domain_{p}$.
\item\label{it:p-RF-approx-modify-given-seq}Let $u\in\domain_{p}$,
	$\{u_{n}\}_{n\in\mathbb{N}}\subset\domain_{p}$ and
	$\varphi\in\contfunc(\mathbb{R})$ satisfy
	$\lim_{n\to\infty}\form_{p}(u-u_{n})=0$,
	$\lim_{n\to\infty}u_{n}(x)=u(x)$ for some $x\in\ssset$,
	$\lvert\varphi(t)-\varphi(s)\rvert\leq\lvert t-s\rvert$
	for any $s,t\in\mathbb{R}$ and $\varphi(u)=u$.
	Then $\{\varphi(u_{n})\}_{n\in\mathbb{N}}\subset\domain_{p}$ and
	$\lim_{n\to\infty}\form_{p}(u-\varphi(u_{n}))=0$.
\end{enumerate}
\end{prop}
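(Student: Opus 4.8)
The plan is to reduce both statements to the convergence characterization in Proposition \ref{prop:p-RF-norm-convergence-characterize}-\ref{it:p-RF-norm-convergence-characterize}, using the one-Lipschitz contraction property from Proposition \ref{prop:p-RF5-conseq}-\ref{it:p-RF-1Lip} to produce the relevant members of $\domain_{p}$ together with the energy bounds needed to verify its $\limsup$ hypothesis. In both parts the two things I must establish are: that the candidate sequence (namely $\varphi_{n}(u)$ or $\varphi(u_{n})$) lies in $\domain_{p}$ with its energy asymptotically dominated by $\form_{p}(u)$, and that its increments converge pointwise to those of $u$.

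For \ref{it:p-RF-approx-cutoff-given-func}, since each $\varphi_{n}$ is continuous and one-Lipschitz, Proposition \ref{prop:p-RF5-conseq}-\ref{it:p-RF-1Lip} immediately gives $\varphi_{n}(u)\in\domain_{p}$ and $\form_{p}(\varphi_{n}(u))\leq\form_{p}(u)$, hence $\limsup_{n\to\infty}\form_{p}(\varphi_{n}(u))\leq\form_{p}(u)$. The increment condition is direct here: $\varphi_{n}(u)(x)-\varphi_{n}(u)(y)=\varphi_{n}(u(x))-\varphi_{n}(u(y))\to u(x)-u(y)$ for every $x,y\in\ssset$ by the pointwise convergence $\varphi_{n}(t)\to t$. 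Applying Proposition \ref{prop:p-RF-norm-convergence-characterize}-\ref{it:p-RF-norm-convergence-characterize} with $u_{n}:=\varphi_{n}(u)$ then yields $\lim_{n\to\infty}\form_{p}(u-\varphi_{n}(u))=0$.

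For \ref{it:p-RF-approx-modify-given-seq}, again $\varphi(u_{n})\in\domain_{p}$ with $\form_{p}(\varphi(u_{n}))\leq\form_{p}(u_{n})$ by Proposition \ref{prop:p-RF5-conseq}-\ref{it:p-RF-1Lip}. Since $\form_{p}^{1/p}$ is a seminorm (by \ref{it:p-RF1}) and $\form_{p}(u-u_{n})\to 0$, the reverse triangle inequality gives $\form_{p}(u_{n})\to\form_{p}(u)$, whence $\limsup_{n\to\infty}\form_{p}(\varphi(u_{n}))\leq\form_{p}(u)$. The increment condition is the substantive point. First, applying \eqref{eq:p-RF4-conseq} to $u-u_{n}$ gives $\lvert(u_{n}(z)-u_{n}(w))-(u(z)-u(w))\rvert^{p}\leq\resismet_{\form_{p}}(z,w)\form_{p}(u-u_{n})\to 0$ for all $z,w\in\ssset$, so the increments of $u_{n}$ converge to those of $u$. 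Combining this with the single-point hypothesis $u_{n}(x)\to u(x)$ at the given point $x$ upgrades it to \emph{pointwise} convergence $u_{n}(z)\to u(z)$ at every $z\in\ssset$, by writing $u_{n}(z)=(u_{n}(z)-u_{n}(x))+u_{n}(x)$. Continuity of $\varphi$ together with $\varphi(u)=u$ then gives $\varphi(u_{n}(z))\to\varphi(u(z))=u(z)$ for every $z$, and subtracting yields the increment convergence for $\varphi(u_{n})$. Proposition \ref{prop:p-RF-norm-convergence-characterize}-\ref{it:p-RF-norm-convergence-characterize} completes the proof.

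The main obstacle is precisely this increment condition in \ref{it:p-RF-approx-modify-given-seq}: convergence in the seminorm $\form_{p}^{1/p}$ controls only increments, not absolute values, so without the anchoring hypothesis $u_{n}(x)\to u(x)$ one cannot pass $u_{n}$ through the nonlinear $\varphi$ and recover $\varphi(u_{n})\to u$ pointwise. The role of $\varphi(u)=u$ is to ensure the limit is again $u$ rather than some $\varphi(u)$. Everything else is a routine invocation of the contraction property and the seminorm inequality \eqref{eq:p-RF4-conseq}.
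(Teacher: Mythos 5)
Your proof is correct and follows the route the paper itself indicates: both parts are reduced to Proposition \ref{prop:p-RF-norm-convergence-characterize}-\ref{it:p-RF-norm-convergence-characterize} by using Proposition \ref{prop:p-RF5-conseq}-\ref{it:p-RF-1Lip} for the energy upper bound and \eqref{eq:p-RF4-conseq} together with the anchoring hypothesis $u_{n}(x)\to u(x)$ to get pointwise (hence increment) convergence through the nonlinear $\varphi$. No gaps; this matches the intended argument.
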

\begin{rmk}\label{rmk:p-RF-approx}
Typical choices of $\{\varphi_{n}\}_{n\in\mathbb{N}}\subset\contfunc(\mathbb{R})$
in Proposition \ref{prop:p-RF-approx}-\ref{it:p-RF-approx-cutoff-given-func} are
$\varphi_{n}(t)=(-n)\vee(t\wedge n)$ and $\varphi_{n}(t)=t-(-\frac{1}{n})\vee(t\wedge\frac{1}{n})$.
A typical use of Proposition \ref{prop:p-RF-approx}-\ref{it:p-RF-approx-modify-given-seq}
is to obtain a sequence of $I$-valued functions converging to $u$ in $\form_{p}^{1/p}$
when $I\subset\mathbb{R}$ is a closed interval and $u\in\domain_{p}$ is $I$-valued,
by considering $\varphi\in\contfunc(\mathbb{R})$ given by $\varphi(t):=(\inf I)\vee(t\wedge\sup I)$.
\end{rmk}
%
%%%%%
\section{Canonical $p$-resistance form on the Sierpi\'{n}ski gasket}\label{sec:SG-p-RF}
%%%%%
In this section, we introduce the (two-dimensional standard) Sierpi\'{n}ski gasket
and discuss the construction, the uniqueness and some basic properties of the
canonical $p$-resistance form on it.
%%%
\subsection{Sierpi\'{n}ski gasket}\label{ssec:SG}
%%%
To start with, the Sierpi\'{n}ski gasket is defined as follows.
\begin{dfn}[Sierpi\'{n}ski gasket]\label{dfn:SG}
Set $\ssindex:=\{1,2,3\}$, and
let $\ssvertices_{0}:=\{\SGvertex_{i}\mid i\in\ssindex\}\subset\mathbb{R}^{2}$
be the set of the vertices of an equilateral triangle
$\simplex\subset\mathbb{R}^{2}$, so that
$\simplex$ is the convex hull of $\ssvertices_{0}$ in $\mathbb{R}^{2}$.
We further define $\sseuc_{i}\colon\mathbb{R}^{2}\to\mathbb{R}^{2}$
by $\sseuc_{i}(x):=\SGvertex_{i}+\frac{1}{2}(x-\SGvertex_{i})$
for each $i\in\ssindex$, let $\ssset$ be the \emph{self-similar set} associated with
$\{\sseuc_{i}\}_{i\in\ssindex}$, i.e., the unique non-empty compact subset of $\mathbb{R}^{2}$
such that $\ssset=\bigcup_{i\in\ssindex}\sseuc_{i}(\ssset)$,
which exists and satisfies $\ssset\subsetneq\simplex$ thanks to
$\bigcup_{i\in\ssindex}\sseuc_{i}(\simplex)\subsetneq\simplex$
by \cite[Theorem 1.1.4]{Kig01}, and set $\ssmap_{i}:=\sseuc_{i}\vert_{\ssset}$
for each $i\in\ssindex$. The set $\ssset$ is called the
\emph{(two-dimensional standard) Sierpi\'{n}ski gasket}
(see Figure \ref{fig:SG} above). We equip $\ssset$ with the Euclidean metric
$\refmet\colon\ssset\times\ssset\to[0,\infty)$ given by $\refmet(x,y):=\lvert x-y\rvert$,
and the group of isometries of $(\ssset,\refmet)$ is denoted by $\sssym$.
\end{dfn}
%
%%%%%
\begin{figure}[t]\centering
	\includegraphics[height=140pt]{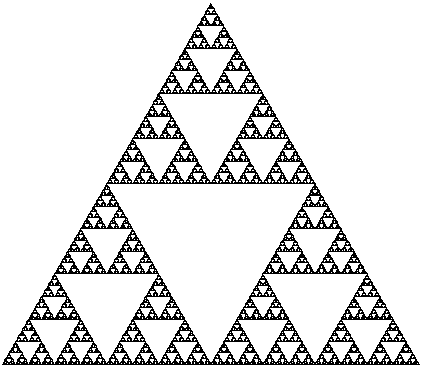}
	\caption{The (two-dimensional standard) Sierpi\'{n}ski gasket}
	\label{fig:SG}
\end{figure}
%%%%%

Throughout the rest of this paper, we fix the setting of Definition \ref{dfn:SG},
and use the following notation, which is standard in studying self-similar sets.
\begin{dfn}\label{dfn:words}
\begin{enumerate}[label=\textup{(\arabic*)},align=left,leftmargin=*,topsep=4pt,itemsep=2pt]
\item\label{it:words}We set $\words_{0}:=\{\emptyset\}$, where $\emptyset$ is an element called the \emph{empty word},
	$\words_{n}:=\ssindex^{n}=\{w_{1}\dots w_{n}\mid\textrm{$w_{i}\in\ssindex$ for $i\in\{1,\dots,n\}$}\}$
	for each $n\in\mathbb{N}$, and $\words_{*}:=\bigcup_{n\in\mathbb{N}\cup\{0\}}\words_{n}$.
	For $w\in\words_{*}$, the unique $n\in\mathbb{N}\cup\{0\}$ with $w\in\words_{n}$ is
	denoted by $\lvert w\rvert$ and called the \emph{length} of $w$.
%	For $i\in\ssindex$ and $n\in\mathbb{N}\cup\{0\}$ we set $i^{n}:=i\ldots i\in\words_{n}$.
%\item\label{it:words-shift}We set
%	$\Sigma:=S^{\mathbb{N}}=\{\omega_{1}\omega_{2}\omega_{3}\ldots\mid \omega_{i}\in S\textrm{ for }i\in\mathbb{N}\}$,
%	which is always equipped with the product topology of the discrete topology on $S$,
%	and define the \emph{shift map} $\sigma\colon\Sigma\to\Sigma$ by
%	$\sigma(\omega_{1}\omega_{2}\omega_{3}\dots):=\omega_{2}\omega_{3}\omega_{4}\dots$.
%	For $i\in S$ we define $\sigma_{i}\colon\Sigma\to\Sigma$  by
%	$\sigma_{i}(\omega_{1}\omega_{2}\omega_{3}\dots):=i\omega_{1}\omega_{2}\omega_{3}\dots$.
%	For $\omega=\omega_{1}\omega_{2}\omega_{3}\ldots\in\Sigma$ and
%	$n\in\mathbb{N}\cup\{0\}$, we write $[\omega]_{n}:=\omega_{1}\dots\omega_{n}\in W_{n}$.
\item\label{it:words-product}For $w=w_{1}\ldots w_{\lvert w\rvert},v=v_{1}\ldots v_{\lvert v\rvert}\in\words_{*}$,
	we define $wv\in\words_{*}$ by $wv:=w_{1}\ldots w_{\lvert w\rvert}v_{1}\ldots v_{\lvert v\rvert}$
	($w\emptyset:=w$, $\emptyset v:=v$). We also define
	$w^{(1)}\ldots w^{(k)}\in\words_{*}$ for $k\in\mathbb{N}\setminus\{1,2\}$
	and $w^{(1)},\ldots,w^{(k)}\in\words_{*}$
	inductively by $w^{(1)}\ldots w^{(k)}:=(w^{(1)}\ldots w^{(k-1)})w^{(k)}$.
	For $w\in\words_{*}$ and $n\in\mathbb{N}\cup\{0\}$ we set $w^{n}:=w\ldots w\in\words_{n\lvert w\rvert}$.
\item\label{it:words-ssmap}We set $\ssmap_{w}:=\ssmap_{w_{1}}\circ\cdots\circ\ssmap_{w_{n}}$
	($\ssmap_{\emptyset}:=\id_{\ssset}$) and $\ssset_{w}:=\ssmap_{w}(\ssset)$
	for each $w=w_{1}\ldots w_{n}\in\words_{*}$,
	$\ssvertices_{n}:=\bigcup_{w\in\words_{n}}\ssmap_{w}(\ssvertices_{0})$ for each $n\in\mathbb{N}$,
	and $\ssvertices_{*}:=\bigcup_{n\in\mathbb{N}\cup\{0\}}\ssvertices_{n}$,
	so that $\ssvertices_{n-1}\subsetneq\ssvertices_{n}$ for any $n\in\mathbb{N}$
	and $\ssvertices_{*}$ is dense in $\ssset$.
\end{enumerate}
\end{dfn}

The properties stated in the following proposition are crucial in developing the theory
of self-similar energy forms on the Sierpi\'{n}ski gasket $\ssset$.
\begin{prop}[Cf.\ {\cite[Proposition 1.3.5-(2) and Example 1.3.15]{Kig01}}]\label{prop:SG-intersecting-cells}
Let $w=w_{1}\ldots w_{\lvert w\rvert},v=v_{1}\ldots v_{\lvert v\rvert}\in\words_{*}\setminus\{\emptyset\}$
satisfy $w_{k}\not=v_{k}$ for some $k\in\{1,\ldots,\lvert w\rvert\wedge\lvert v\rvert\}$.
\begin{enumerate}[label=\textup{(\arabic*)},align=left,leftmargin=*,topsep=4pt,itemsep=2pt]
\item\label{it:SG-intersecting-cells}$\ssset_{w}\cap\ssset_{v}=\ssmap_{w}(\ssvertices_{0})\cap\ssmap_{v}(\ssvertices_{0})$.
\item\label{it:SG-intersecting-cells-words}$\ssset_{w}\cap\ssset_{v}\not=\emptyset$ if and only if there exist
	$\tau\in\words_{*}$, $n,m\in\mathbb{N}\cup\{0\}$ and $i,j\in\ssindex$ with $i\not=j$
	such that $w=\tau ij^{n}$ and $v=\tau ji^{m}$, in which case
	$\ssset_{w}\cap\ssset_{v}=\{\ssmap_{\tau}(\SGvertex_{ij})\}$,
	where $\SGvertex_{ij}:=\ssmap_{i}(\SGvertex_{j})=\ssmap_{j}(\SGvertex_{i})$.
\end{enumerate}
\end{prop}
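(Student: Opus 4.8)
The plan is to reduce both assertions to two elementary facts about the level-one cells and then to propagate them through the self-similar structure by induction; throughout I write $\ssset_{u}:=\ssmap_{u}(\ssset)$ and use that each $\ssmap_{s}=\sseuc_{s}\vert_{\ssset}$, and hence each $\ssmap_{u}$, is injective, since $\sseuc_{s}$ is an affine bijection of $\mathbb{R}^{2}$.

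First I would record the base geometric fact that encodes the shape of $\simplex$: for distinct $i,j\in\ssindex$,
\begin{equation}\label{eq:SG-base-geom}
\ssset_{i}\cap\ssset_{j}=\{\SGvertex_{ij}\}
\qquad\textrm{and}\qquad
\bigl(\SGvertex_{l}\in\ssset_{i}\iff l=i\bigr)\quad\textrm{for all }l\in\ssindex.
\end{equation}
Indeed $\ssset_{i}=\sseuc_{i}(\ssset)\subset\sseuc_{i}(\simplex)$, and $\sseuc_{i}(\simplex)$ is the half-scale copy of $\simplex$ at the corner $\SGvertex_{i}$, with vertices $\SGvertex_{i},\SGvertex_{ij},\SGvertex_{ik}$ for $\ssindex=\{i,j,k\}$; two such copies $\sseuc_{i}(\simplex)$ and $\sseuc_{j}(\simplex)$ meet only in the midpoint $\SGvertex_{ij}=\sseuc_{i}(\SGvertex_{j})=\sseuc_{j}(\SGvertex_{i})$ of their common edge, and among $\SGvertex_{1},\SGvertex_{2},\SGvertex_{3}$ only $\SGvertex_{i}$ lies in $\sseuc_{i}(\simplex)$. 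Combining these inclusions with $\SGvertex_{ij}\in\ssset_{i}\cap\ssset_{j}$ and $\SGvertex_{i}=\sseuc_{i}(\SGvertex_{i})\in\ssset_{i}$ yields \eqref{eq:SG-base-geom}. This is the one genuine obstacle in the argument: it is the only place where the Euclidean geometry of $\simplex$ is used, and everything afterward is formal manipulation of the self-similar structure.

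Next, using \eqref{eq:SG-base-geom} I would prove by induction on $\lvert u\rvert$ the localization lemma that, for $l\in\ssindex$ and $u\in\words_{*}$,
\begin{equation}\label{eq:SG-loc}
\SGvertex_{l}\in\ssset_{u}\iff u=l^{\lvert u\rvert}.
\end{equation}
The case $\lvert u\rvert=0$ is trivial. For $\lvert u\rvert\geq 1$ write $u=u_{1}u'$: if $\SGvertex_{l}\in\ssset_{u}\subset\ssset_{u_{1}}$ then $u_{1}=l$ by \eqref{eq:SG-base-geom}, whence $\SGvertex_{l}=\sseuc_{l}(\SGvertex_{l})\in\sseuc_{l}(\ssset_{u'})$ forces $\SGvertex_{l}\in\ssset_{u'}$ by injectivity of $\sseuc_{l}$, so $u'=l^{\lvert u'\rvert}$ by the inductive hypothesis and thus $u=l^{\lvert u\rvert}$; the converse holds since $\ssmap_{l}$ fixes $\SGvertex_{l}$.

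Finally I would assemble the two parts. By hypothesis the set of indices $k$ with $w_{k}\not=v_{k}$ is a nonempty subset of $\{1,\dots,\lvert w\rvert\wedge\lvert v\rvert\}$; let $k_{0}$ be its least element, put $\tau:=w_{1}\dots w_{k_{0}-1}=v_{1}\dots v_{k_{0}-1}$, $i:=w_{k_{0}}$, $j:=v_{k_{0}}$ (so $i\not=j$), and let $w',v'$ be the corresponding suffixes, so that $w=\tau iw'$ and $v=\tau jv'$. Injectivity of $\ssmap_{\tau}$ then gives
\begin{align*}
\ssset_{w}\cap\ssset_{v}&=\ssmap_{\tau}(\ssset_{iw'}\cap\ssset_{jv'}),\\
\ssmap_{w}(\ssvertices_{0})\cap\ssmap_{v}(\ssvertices_{0})&=\ssmap_{\tau}\bigl(\ssmap_{iw'}(\ssvertices_{0})\cap\ssmap_{jv'}(\ssvertices_{0})\bigr).
\end{align*}
Since $\ssset_{iw'}\subset\ssset_{i}$ and $\ssset_{jv'}\subset\ssset_{j}$, the first identity in \eqref{eq:SG-base-geom} gives $\ssset_{iw'}\cap\ssset_{jv'}\subset\{\SGvertex_{ij}\}$; and because $\SGvertex_{ij}=\sseuc_{i}(\SGvertex_{j})=\sseuc_{j}(\SGvertex_{i})$ with $\sseuc_{i},\sseuc_{j}$ injective, \eqref{eq:SG-loc} shows that $\SGvertex_{ij}\in\ssset_{iw'}\cap\ssset_{jv'}$ holds exactly when $w'=j^{\lvert w'\rvert}$ and $v'=i^{\lvert v'\rvert}$. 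In that case $\SGvertex_{ij}=\ssmap_{i}(\SGvertex_{j})=\ssmap_{iw'}(\SGvertex_{j})\in\ssmap_{iw'}(\ssvertices_{0})$, since $\ssmap_{w'}$ fixes $\SGvertex_{j}$, and symmetrically $\SGvertex_{ij}\in\ssmap_{jv'}(\ssvertices_{0})$, so both displayed intersections equal $\{\SGvertex_{ij}\}$; otherwise both are empty. Applying $\ssmap_{\tau}$ proves \ref{it:SG-intersecting-cells} in every case, and in the nonempty case gives $\ssset_{w}\cap\ssset_{v}=\{\ssmap_{\tau}(\SGvertex_{ij})\}$ with $w=\tau ij^{\lvert w'\rvert}$ and $v=\tau ji^{\lvert v'\rvert}$, which is precisely \ref{it:SG-intersecting-cells-words}.
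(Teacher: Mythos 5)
Your proof is correct. Note, however, that the paper itself gives no proof of this proposition: it is stated with a ``Cf.'' reference to \cite[Proposition 1.3.5-(2) and Example 1.3.15]{Kig01}, where the result is obtained as an instance of the general theory of self-similar structures --- one shows that for any self-similar structure $K_{w}\cap K_{v}$ is controlled by the critical set $\mathcal{C}=\pi^{-1}\bigl(\bigcup_{i\neq j}(\ssset_{i}\cap\ssset_{j})\bigr)$ and then computes, for the gasket, that $\bigcup_{i\neq j}(\ssset_{i}\cap\ssset_{j})$ consists of the three midpoints $\SGvertex_{ij}$ and that the post-critical set is $\ssvertices_{0}$. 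Your route is a self-contained, elementary re-derivation of exactly the pieces of that machinery needed here: your display \eqref{eq:SG-base-geom} is the computation of the critical set, your localization lemma \eqref{eq:SG-loc} plays the role of identifying the post-critical data ($\pi^{-1}(\SGvertex_{l})=\{ll l\cdots\}$), and the final assembly via the longest common prefix $\tau$ and injectivity of $\ssmap_{\tau}$ reproduces the general intersection argument. What your approach buys is transparency and independence from the symbolic-dynamics formalism (shift space, $\pi$, $\sigma$); what the cited approach buys is that the same template applies verbatim to any p.-c.f.\ self-similar set once the critical set is computed, which is the level of generality the authors need elsewhere. Two small points worth making explicit if you write this up: the identity $F_{\tau}(A)\cap F_{\tau}(B)=F_{\tau}(A\cap B)$ does require the injectivity of $F_{\tau}$ that you invoke, and in part (2) the decomposition $w=\tau ij^{n}$, $v=\tau ji^{m}$ forces $\tau$ to be the longest common prefix (the letters at position $\lvert\tau\rvert+1$ differ), so your choice of $\tau$ entails no loss of generality in the ``if'' direction.
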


The following proposition can be easily verified by showing, for any $g\in\sssym$,
first that $g(\ssvertices_{0})=\ssvertices_{0}$ and then by an induction on $n$
that $f^{-1}\circ g\vert_{\ssvertices_{n}}=\id_{\ssvertices_{n}}$ for any
$n\in\mathbb{N}$ for the isometry $f$ of $\mathbb{R}^{2}$ with
$f\vert_{\ssvertices_{0}}=g\vert_{\ssvertices_{0}}$.
\begin{prop}\label{prop:SG-symmetry}
$\sssym=\{f\vert_{\ssset}\mid\textrm{$f$ is an isometry of $\mathbb{R}^{2}$, $f(\ssvertices_{0})=\ssvertices_{0}$}\}$,
and the group $\sssym$ is generated by $\{\eucrefl{xy}\vert_{\ssset}\mid\textrm{$x,y\in\ssvertices_{0}$, $x\not=y$}\}$,
where $\eucrefl{xy}\colon\mathbb{R}^{2}\to\mathbb{R}^{2}$ denotes the reflection
in the line $\{z\in\mathbb{R}^{2}\mid\lvert z-x\rvert=\lvert z-y\rvert\}$.
\end{prop}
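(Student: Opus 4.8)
The plan is to prove the two inclusions of the set equality and then extract the generation statement from the resulting group isomorphism. The inclusion ``$\supseteq$'' is the routine direction: if $f$ is an isometry of $\mathbb{R}^{2}$ with $f(\ssvertices_{0})=\ssvertices_{0}$, then writing $f(x)=Ax+t$ with $A$ orthogonal, a direct computation gives $f\circ\sseuc_{i}\circ f^{-1}=\sseuc_{\sigma(i)}$, where $\sigma$ is the permutation of $\ssindex$ induced by $f$ on $\ssvertices_{0}$. Hence $f(\ssset)$ is the self-similar set associated with $\{\sseuc_{\sigma(i)}\}_{i\in\ssindex}=\{\sseuc_{i}\}_{i\in\ssindex}$, so $f(\ssset)=\ssset$ by the uniqueness in \cite[Theorem 1.1.4]{Kig01}, and $f\vert_{\ssset}$ is a distance-preserving bijection of $\ssset$, i.e. $f\vert_{\ssset}\in\sssym$. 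In particular, for $x,y\in\ssvertices_{0}$ with $x\neq y$ the reflection $\eucrefl{xy}$ swaps $x,y$ and fixes the third vertex, so $\eucrefl{xy}(\ssvertices_{0})=\ssvertices_{0}$ and $\eucrefl{xy}\vert_{\ssset}\in\sssym$.

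For the inclusion ``$\subseteq$'', fix $g\in\sssym$ and first establish a purely metric characterization of $\ssvertices_{0}$. Since $\ssset\subset\simplex$ and $\simplex$ is an equilateral triangle of side length $\ell:=\refmet(\SGvertex_{i},\SGvertex_{j})$, the only pairs of points of $\simplex$ at distance $\ell$ are the three pairs of vertices; as $\ssvertices_{0}\subset\ssset\subset\simplex$, this yields $\diam(\ssset,\refmet)=\ell$ and $\ssvertices_{0}=\{x\in\ssset\mid\refmet(x,y)=\diam(\ssset,\refmet)\text{ for some }y\in\ssset\}$. This description is preserved by any $\refmet$-isometry, so $g(\ssvertices_{0})=\ssvertices_{0}$. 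Consequently $g\vert_{\ssvertices_{0}}$ is a congruence of the affinely independent triple $\ssvertices_{0}$ and extends to a unique isometry $f$ of $\mathbb{R}^{2}$, which satisfies $f(\ssvertices_{0})=g(\ssvertices_{0})=\ssvertices_{0}$; by the first part, $f(\ssset)=\ssset$ and $f\vert_{\ssset}\in\sssym$.

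Now set $h:=(f\vert_{\ssset})^{-1}\circ g\in\sssym$, so that $h(\SGvertex_{i})=\SGvertex_{i}$ for every $i\in\ssindex$. For each $x\in\ssset$ and each $i\in\ssindex$ we have $\lvert h(x)-\SGvertex_{i}\rvert=\refmet\bigl(h(x),h(\SGvertex_{i})\bigr)=\refmet(x,\SGvertex_{i})=\lvert x-\SGvertex_{i}\rvert$, so $h(x)$ and $x$ have equal Euclidean distances to the three non-collinear points $\SGvertex_{1},\SGvertex_{2},\SGvertex_{3}$. Since a point of $\mathbb{R}^{2}$ is uniquely determined by its distances to three non-collinear points, it follows that $h(x)=x$; hence $h=\id_{\ssset}$ and $g=f\vert_{\ssset}$, completing ``$\subseteq$''. (This trilateration argument replaces the induction on $\ssvertices_{n}$ indicated in the paragraph preceding the statement.) For the generation claim, the map $f\mapsto f\vert_{\ssset}$ is a group homomorphism from $D:=\{f\mid f\text{ is an isometry of }\mathbb{R}^{2},\ f(\ssvertices_{0})=\ssvertices_{0}\}$ onto $\sssym$ by the equality just proved, and it is injective because $f\vert_{\ssset}=f'\vert_{\ssset}$ forces $f=f'$ on the affinely independent set $\ssvertices_{0}$ and hence on $\mathbb{R}^{2}$. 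Thus $\sssym$ is isomorphic to $D$, the symmetry group of the equilateral triangle $\simplex$, which is generated by its three reflections; restricting gives that $\sssym$ is generated by $\{\eucrefl{xy}\vert_{\ssset}\mid x,y\in\ssvertices_{0},\ x\neq y\}$.

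The main obstacle I anticipate is the metric characterization of $\ssvertices_{0}$ together with the rigidity step: one must verify the elementary but essential geometric fact that in $\simplex$ only vertex pairs realize the distance $\ell$, and then use the three-point determination of points in the plane to upgrade ``fixes $\ssvertices_{0}$ pointwise'' to ``is the identity on $\ssset$.'' Once these are in place, the conjugation identity in the first part and the homomorphism/injectivity bookkeeping in the last part are routine.
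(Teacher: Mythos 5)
Your proof is correct, and it follows the same overall skeleton as the paper's one-sentence argument (show $g(\ssvertices_{0})=\ssvertices_{0}$, extend $g\vert_{\ssvertices_{0}}$ to a Euclidean isometry $f$, then show $f^{-1}\circ g=\id_{\ssset}$), but the final rigidity step is genuinely different. The paper proposes an induction on $n$ proving that $f^{-1}\circ g$ fixes $\ssvertices_{n}$ pointwise and then uses the density of $\ssvertices_{*}$ in $\ssset$; you instead note that $h:=(f\vert_{\ssset})^{-1}\circ g$ fixes the three non-collinear points of $\ssvertices_{0}$, so $h(x)$ and $x$ have equal distances to them and trilateration in $\mathbb{R}^{2}$ forces $h(x)=x$ for every $x\in\ssset$ at once. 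Your route is more elementary --- it uses only the ambient Euclidean rigidity and needs neither the cell structure of $\ssvertices_{n}$ nor density of $\ssvertices_{*}$ --- and applies verbatim to any planar set whose isometries preserve a non-collinear triple; the paper's induction is the version that would survive in settings without such ambient rigidity. Your supporting steps (the conjugation identity $f\circ\sseuc_{i}\circ f^{-1}=\sseuc_{\sigma(i)}$ plus uniqueness of the self-similar set to get $f(\ssset)=\ssset$, the diameter characterization of $\ssvertices_{0}$, and the identification of the restriction map as an isomorphism onto the dihedral group of $\simplex$) are all sound; the one fact you leave as an assertion --- that in $\simplex$ only vertex pairs realize the diameter $\diam(\ssset,\refmet)$ --- is the standard extreme-point argument and is fine to leave at that level here.
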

%
%%%
\subsection{Construction of the canonical $p$-resistance form}\label{ssec:SG-construct-p-RF}
%%%
Now we let $p\in(1,\infty)$ and fix it throughout the rest of this section.
The following lemma is immediate from Proposition \ref{prop:SG-intersecting-cells}-\ref{it:SG-intersecting-cells},
Lemma \ref{lem:p-RF1-p-RF5-cone} and the characterization of $p$-resistance forms on
finite sets given at the beginning of Example \ref{exmp:p-RF}-\ref{it:p-RF-finite}.%
\begin{lem}\label{lem:SG-p-RF-V0-Vn}
Let $\form_{p}^{(0)}$ be a $p$-resistance form on $\ssvertices_{0}$,
$\sscndc_{p}\in(0,\infty)$, $n\in\mathbb{N}\cup\{0\}$, and define
$\renom_{\sscndc_{p},n}(\form_{p}^{(0)})\colon\mathbb{R}^{\ssvertices_{n}}\to[0,\infty)$ by
\begin{equation}\label{eq:p-RF-V0-Vn}
\renom_{\sscndc_{p},n}(\form_{p}^{(0)})(u):=\sum_{w\in\words_{n}}\sscndc_{p}^{n}\form_{p}^{(0)}(u\circ\ssmap_{w}\vert_{\ssvertices_{0}}),
	\qquad u\in\mathbb{R}^{\ssvertices_{n}}.
\end{equation}
Then $\renom_{\sscndc_{p},n}(\form_{p}^{(0)})$ is a $p$-resistance form on $\ssvertices_{n}$.
\end{lem}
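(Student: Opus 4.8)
The plan is to reduce everything to the characterization of $p$-resistance forms on finite sets given at the beginning of Example \ref{exmp:p-RF}-\ref{it:p-RF-finite}: since $\ssvertices_{n}$ is finite and $\form_{p}:=\renom_{\sscndc_{p},n}(\form_{p}^{(0)})$ is by construction a $[0,\infty)$-valued functional defined on all of $\mathbb{R}^{\ssvertices_{n}}$, it suffices to verify that $\form_{p}$ satisfies \ref{it:p-RF1} and \ref{it:p-RF5}. Throughout I would write $R_{w}\colon\mathbb{R}^{\ssvertices_{n}}\to\mathbb{R}^{\ssvertices_{0}}$, $R_{w}(u):=u\circ\ssmap_{w}\vert_{\ssvertices_{0}}$, for the linear restriction map attached to each $w\in\words_{n}$, which is well-defined because $\ssmap_{w}(\ssvertices_{0})\subset\ssvertices_{n}$, so that $\form_{p}=\sum_{w\in\words_{n}}\sscndc_{p}^{n}\,\form_{p}^{(0)}\circ R_{w}$.

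For \ref{it:p-RF5} I would first check that each summand $\form_{p}^{(0)}\circ R_{w}$ satisfies it, and the key observation is that the coordinatewise action of any $T=(T_{1},\ldots,T_{n'})\colon\mathbb{R}^{m}\to\mathbb{R}^{n'}$ commutes with $R_{w}$: for $u=(u_{1},\ldots,u_{m})\in(\mathbb{R}^{\ssvertices_{n}})^{m}$ one has $R_{w}(T_{k}(u))=T_{k}(R_{w}(u_{1}),\ldots,R_{w}(u_{m}))$, since both sides amount to evaluating $T_{k}$ pointwise on $\ssmap_{w}(\ssvertices_{0})$. Because the domain of $\form_{p}^{(0)}$ is all of $\mathbb{R}^{\ssvertices_{0}}$ (again by Example \ref{exmp:p-RF}-\ref{it:p-RF-finite}), applying \ref{it:p-RF5} for $\form_{p}^{(0)}$ to $(R_{w}(u_{1}),\ldots,R_{w}(u_{m}))$ then yields precisely \eqref{eq:GC} for $\form_{p}^{(0)}\circ R_{w}$. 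With each summand satisfying \ref{it:p-RF5}, Lemma \ref{lem:p-RF1-p-RF5-cone}-\ref{it:p-RF5-cone}, applied inductively over the finitely many $w\in\words_{n}$ with the positive coefficients $\sscndc_{p}^{n}$, gives \ref{it:p-RF5} for $\form_{p}$.

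For the seminorm half of \ref{it:p-RF1}, each $(\form_{p}^{(0)}\circ R_{w})^{1/p}$ is a seminorm, being the composition of the seminorm $(\form_{p}^{(0)})^{1/p}$ with the linear map $R_{w}$, and Lemma \ref{lem:p-RF1-p-RF5-cone}-\ref{it:p-RF1-cone}, again applied inductively, shows that $\form_{p}^{1/p}$ is a seminorm on $\mathbb{R}^{\ssvertices_{n}}$. It then remains to identify the null space. Since $\sscndc_{p}^{n}>0$ and the summands are nonnegative, $\form_{p}(u)=0$ holds if and only if $\form_{p}^{(0)}(R_{w}(u))=0$ for every $w\in\words_{n}$, which by \ref{it:p-RF1} for $\form_{p}^{(0)}$ is equivalent to $u$ being constant on each cell $\ssmap_{w}(\ssvertices_{0})$, $w\in\words_{n}$.

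The one genuinely non-formal step, and the main obstacle, is to upgrade ``constant on every cell'' to ``globally constant on $\ssvertices_{n}$''; equivalently, to establish the connectivity of the level-$n$ graph on $\ssvertices_{n}$ whose edges join two distinct vertices lying in a common cell $\ssmap_{w}(\ssvertices_{0})$. I would prove this by induction on $n$: the base case $n=0$ is immediate as $\ssvertices_{0}$ is a single triangle, and for the inductive step I would use the decomposition $\ssvertices_{n}=\bigcup_{i\in\ssindex}\ssmap_{i}(\ssvertices_{n-1})$, noting that $\ssmap_{i}$ carries the connected level-$(n-1)$ graph isomorphically onto the induced subgraph on $\ssmap_{i}(\ssvertices_{n-1})$, so each piece is connected, while for $i\not=j$ the pieces $\ssmap_{i}(\ssvertices_{n-1})$ and $\ssmap_{j}(\ssvertices_{n-1})$ share the vertex $\SGvertex_{ij}=\ssmap_{i}(\SGvertex_{j})=\ssmap_{j}(\SGvertex_{i})$ by the intersection structure recorded in Proposition \ref{prop:SG-intersecting-cells}-\ref{it:SG-intersecting-cells}. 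Hence the three connected pieces glue into a connected whole. Connectivity then forces any function constant on each cell to be constant throughout $\ssvertices_{n}$, giving $\{u\in\mathbb{R}^{\ssvertices_{n}}\mid\form_{p}(u)=0\}=\mathbb{R}\one_{\ssvertices_{n}}$ and completing the verification of \ref{it:p-RF1}.
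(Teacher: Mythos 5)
Your proof is correct and follows essentially the same route the paper indicates: the lemma is stated there as immediate from the finite-set characterization in Example \ref{exmp:p-RF}-\ref{it:p-RF-finite}, Lemma \ref{lem:p-RF1-p-RF5-cone}, and the cell-intersection structure of Proposition \ref{prop:SG-intersecting-cells}, which are exactly the ingredients you assemble (commutation of coordinatewise $T$ with the restriction maps $R_{w}$, the cone lemma for \ref{it:p-RF1} and \ref{it:p-RF5}, and connectivity of the level-$n$ cell graph for the null-space identification). The only nitpick is that the shared vertex $\SGvertex_{ij}=\ssmap_{i}(\SGvertex_{j})=\ssmap_{j}(\SGvertex_{i})$ is recorded in Proposition \ref{prop:SG-intersecting-cells}-\ref{it:SG-intersecting-cells-words} rather than in part \ref{it:SG-intersecting-cells}, though it is in any case an immediate midpoint computation from Definition \ref{dfn:SG}.
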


As presented in \cite[Sections 6 and 7]{Bar98}, \cite[Chapter 3]{Kig01} and
\cite[Chapter 1 and Section 4.2]{Str}, the problem of constructing a self-similar
Dirichlet form on a given p.-c.f.\ self-similar set is reduced to finding
$\sscndc_{2}\in(0,\infty)$ and a resistance form $\form_{2}^{(0)}$ on $\ssvertices_{0}$
satisfying $\renom_{\sscndc_{2},1}(\form_{2}^{(0)})\big\vert_{\ssvertices_{0}}=\form_{2}^{(0)}$
(or a generalization of it where $\sscndc_{2}=\sscndc_{2,i}$ is allowed to depend on
$i\in\ssindex$ as in \eqref{eq:ssform-intro}). The same is true also for
$p$-energy forms with general $p$, and such a construction was achieved first by
Herman, Peirone and Strichartz \cite{HPS}, for the Sierpi\'{n}ski gasket in
\cite[Corollary 3.7]{HPS} and for a class of p.-c.f.\ self-similar sets in
\cite[Theorem 5.8]{HPS}. Recently Cao, Gu and Qiu \cite{CGQ} have extended these
results in \cite{HPS} to much wider classes of p.-c.f.\ self-similar sets and
developed a theory of the construction of $p$-energy forms on general
p.-c.f.\ self-similar sets, extending many of the results in
\cite[Chapter 3]{Kig01} to general $p$. (See also \cite[Section 4.6]{Kig23},
where Kigami has constructed self-similar $p$-energy forms on a large class
of p.-c.f.\ self-similar sets, as a special case of his general result
\cite[Theorem 4.6]{Kig23} whose proof is based on a different method.)

A serious problem with all these results is that the constructed $p$-energy forms
are explicitly claimed to satisfy only \eqref{eq:form-unit-contraction-intro},
which is far from strong enough for further detailed analysis of them.
Our contribution in this context is that we have identified \ref{it:p-RF5}
as the right property to assume for $p$-energy forms and have verified that
the constructions in \cite{HPS,CGQ,Kig23} can be modified or seen to yield
$p$-energy forms satisfying \ref{it:p-RF5}. The details of this result will
appear in \cite{KS:GCDiff}. In the present setting of the Sierpi\'{n}ski gasket,
a version of it can be stated as follows. Recall Theorem \ref{thm:p-RF-trace}
and Definition \ref{dfn:p-RF-trace} for traces of $p$-resistance forms.%
\begin{thm}\label{thm:SG-p-RF-V0-exists}
There exists a unique $\sscndc_{p}\in(0,\infty)$ such that
$\renom_{\sscndc_{p},1}(\form_{p}^{(0)})\big\vert_{\ssvertices_{0}}=\form_{p}^{(0)}$
for some $p$-resistance form $\form_{p}^{(0)}$ on $\ssvertices_{0}$.
Moreover, $\form_{p}^{(0)}$ can be chosen so as to be \emph{$\sssym$-invariant},
i.e., satisfy $\form_{p}^{(0)}(u\circ g\vert_{\ssvertices_{0}})=\form_{p}^{(0)}(u)$
for any $u\in\mathbb{R}^{\ssvertices_{0}}$ and any $g\in\sssym$.
\end{thm}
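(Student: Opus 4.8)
The plan is to read the required identity as a nonlinear eigenvalue problem for a renormalization map and to solve it by a Perron--Frobenius-type argument in the cone of $p$-resistance forms on $\ssvertices_{0}$. Since $\renom_{\sscndc_{p},1}(\form_{p}^{(0)})=\sscndc_{p}\renom_{1,1}(\form_{p}^{(0)})$ and taking the trace to $\ssvertices_{0}$ commutes with multiplication by the positive constant $\sscndc_{p}$, the equation $\renom_{\sscndc_{p},1}(\form_{p}^{(0)})\vert_{\ssvertices_{0}}=\form_{p}^{(0)}$ is equivalent to $\Lambda(\form_{p}^{(0)})=\sscndc_{p}^{-1}\form_{p}^{(0)}$, where I set $\Lambda(\form_{p}^{(0)}):=\renom_{1,1}(\form_{p}^{(0)})\vert_{\ssvertices_{0}}$. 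Thus it suffices to produce a $p$-resistance form $\form_{p}^{(0)}$ on $\ssvertices_{0}$ together with $\lambda_{p}\in(0,\infty)$ satisfying $\Lambda(\form_{p}^{(0)})=\lambda_{p}\form_{p}^{(0)}$, to show that $\lambda_{p}$ is forced, and to arrange $\form_{p}^{(0)}$ to be $\sssym$-invariant; then $\sscndc_{p}=\lambda_{p}^{-1}$.

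First I would record the structural properties of $\Lambda$. By Lemma \ref{lem:SG-p-RF-V0-Vn}, $\renom_{1,1}(\form_{p}^{(0)})$ is a $p$-resistance form on $\ssvertices_{1}$, so by Theorem \ref{thm:p-RF-trace} its trace $\Lambda(\form_{p}^{(0)})$ is a $p$-resistance form on $\ssvertices_{0}$; hence $\Lambda$ maps the cone of $p$-resistance forms on $\ssvertices_{0}$ into itself. Because $\form_{p}^{(0)}\mapsto\renom_{1,1}(\form_{p}^{(0)})$ is linear and the trace (an infimum over extensions) is positively homogeneous of degree $1$, $\Lambda$ is positively homogeneous: $\Lambda(c\form_{p}^{(0)})=c\,\Lambda(\form_{p}^{(0)})$ for $c>0$. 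Moreover, since the trace is a pointwise infimum of the sum $\sum_{i\in\ssindex}\form_{p}^{(0)}(v\circ\ssmap_{i}\vert_{\ssvertices_{0}})$ over extensions $v$, $\Lambda$ is \emph{monotone} for the pointwise order on functionals on $\mathbb{R}^{\ssvertices_{0}}$: if $\form_{p,1}^{(0)}\leq\form_{p,2}^{(0)}$ pointwise then $\Lambda(\form_{p,1}^{(0)})\leq\Lambda(\form_{p,2}^{(0)})$. Finally, as each $g\in\sssym$ permutes $\ssvertices_{0}$ and maps the level-$1$ cells $\{\ssmap_{i}(\ssset)\}_{i\in\ssindex}$ among themselves, $\Lambda$ preserves $\sssym$-invariance.

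Granting existence (the crux, discussed last), uniqueness of $\sscndc_{p}$ is then immediate and does \emph{not} require uniqueness of the eigenform. Suppose $\Lambda(\form_{p}^{(0)})=\lambda_{p}\form_{p}^{(0)}$ and $\Lambda(\tilde{\form}_{p}^{(0)})=\tilde{\lambda}_{p}\tilde{\form}_{p}^{(0)}$ for two $p$-resistance forms on $\ssvertices_{0}$. Since $(\form_{p}^{(0)})^{1/p}$ and $(\tilde{\form}_{p}^{(0)})^{1/p}$ are norms on the finite-dimensional space $\mathbb{R}^{\ssvertices_{0}}/\mathbb{R}\one_{\ssvertices_{0}}$ by \ref{it:p-RF1}, they are comparable; set $c:=\max\{\tilde{\form}_{p}^{(0)}(u)/\form_{p}^{(0)}(u)\mid u\in\mathbb{R}^{\ssvertices_{0}}\setminus\mathbb{R}\one_{\ssvertices_{0}}\}\in(0,\infty)$, attained at some $u_{*}$ by compactness of the quotient unit sphere. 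Then $\tilde{\form}_{p}^{(0)}\leq c\,\form_{p}^{(0)}$ pointwise, so monotonicity and homogeneity give $\tilde{\lambda}_{p}\tilde{\form}_{p}^{(0)}=\Lambda(\tilde{\form}_{p}^{(0)})\leq c\,\Lambda(\form_{p}^{(0)})=c\,\lambda_{p}\form_{p}^{(0)}$; evaluating at $u_{*}$, where $\tilde{\form}_{p}^{(0)}(u_{*})=c\,\form_{p}^{(0)}(u_{*})>0$, yields $\tilde{\lambda}_{p}\leq\lambda_{p}$, and the reverse inequality follows by symmetry. Hence the eigenvalue, and so $\sscndc_{p}=\lambda_{p}^{-1}$, is unique.

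The main obstacle is the \emph{existence} of a (symmetric) eigenform, which is genuinely infinite-dimensional: the $\sssym$-invariant $p$-resistance forms on $\ssvertices_{0}$ correspond to $\sssym$-invariant $p$-th powers of norms on $\mathbb{R}^{\ssvertices_{0}}/\mathbb{R}\one_{\ssvertices_{0}}\cong\mathbb{R}^{2}$, a cone that is not finite-dimensional even after imposing \ref{it:p-RF5}. I would run the fixed-point argument inside this $\sssym$-invariant cone, which $\Lambda$ preserves: starting from a reference symmetric $p$-resistance form (e.g.\ the unit-conductance form of \eqref{eq:p-RF-finite-graph} with $L_{xy}\equiv 1$, which satisfies \ref{it:p-RF5} by Example \ref{exmp:p-RF}-\ref{it:p-RF-finite}), one normalizes the iterates $\Lambda^{n}(\form_{p,0}^{(0)})$ and extracts a limit, which is the desired eigenform. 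The heart of the matter is compactness, namely showing that the normalized iterates remain in a fixed non-degenerate part of the cone, i.e.\ that the ratio of the upper to the lower norm-comparison constant against the reference form stays bounded; this is where the monotonicity of $\Lambda$ enters through a Birkhoff-type contraction in Hilbert's projective metric, with two-sided control of $\resismet_{\form_{p}}$ via \ref{it:p-RF4} and the non-degeneracy guaranteed by Lemma \ref{lem:SG-p-RF-V0-Vn} preventing degeneration. Equivalently, existence may be imported from the constructions of Herman--Peirone--Strichartz \cite{HPS} and Cao--Gu--Qiu \cite{CGQ}, the only additional points being that these can be carried out within the $\sssym$-invariant forms and upgraded to satisfy \ref{it:p-RF5}, the details of which are the content of \cite{KS:GCDiff}.
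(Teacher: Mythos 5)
Your uniqueness argument is correct and is essentially the paper's: the paper likewise reduces to $\#\ssvertices_{0}<\infty$, \ref{it:p-RF1}, and the compatibility of iterated traces (Proposition \ref{prop:p-RF-trace-compatible}), i.e.\ to comparability of the two candidate norms on $\mathbb{R}^{\ssvertices_{0}}/\mathbb{R}\one_{\ssvertices_{0}}$ together with monotonicity and homogeneity of the renormalization-plus-trace map; your one-step version with the ratio maximizer $u_{*}$ is a clean equivalent of iterating to level $n$ and letting $n\to\infty$. Your observations that $\Lambda$ preserves the cone of $p$-resistance forms (Lemma \ref{lem:SG-p-RF-V0-Vn} plus Theorem \ref{thm:p-RF-trace}), is monotone and $1$-homogeneous, and preserves $\sssym$-invariance are all correct and are exactly the structural facts the paper relies on.

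For existence, however, your primary route is not actually carried out. The claim that the normalized iterates $\Lambda^{n}(\form_{p,0}^{(0)})$ stay in a fixed non-degenerate part of the cone ``via a Birkhoff-type contraction in Hilbert's projective metric'' is precisely the hard point: the relevant cone of $\sssym$-invariant $p$-homogeneous convex functionals on $\mathbb{R}^{\ssvertices_{0}}/\mathbb{R}\one_{\ssvertices_{0}}$ is infinite-dimensional, $\Lambda$ is genuinely nonlinear for $p\neq 2$, and establishing the required two-sided comparison bounds uniformly in $n$ is the substance of \cite[Corollary 3.7]{HPS}; asserting it does not prove it. Your fallback --- importing existence from \cite{HPS,CGQ} --- is exactly what the paper does (it cites \cite[Theorems 6.3 and 4.2]{CGQ}), so the proposal is rescued by that sentence. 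One further point you defer to \cite{KS:GCDiff} but which the paper settles on the spot: no ``upgrade'' to \ref{it:p-RF5} is needed for the finite-set form, because every element of the class $\mathcal{Q}_{p}(\ssvertices_{0})$ used in \cite{CGQ} is built from graph-type forms \eqref{eq:p-RF-finite-graph} by taking traces, and hence already satisfies \ref{it:p-RF5} by Example \ref{exmp:p-RF}-\ref{it:p-RF-finite} and Theorem \ref{thm:p-RF-trace}. Including that observation would make your appeal to the literature self-contained at the level of $\ssvertices_{0}$.
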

\begin{prfsktch}
The uniqueness of $\sscndc_{p}$ is immediate from $\#\ssvertices_{0}<\infty$,
\ref{it:p-RF1} for $\form_{p}^{(0)}$ and the fact that 
$\renom_{\sscndc_{p},n}(\form_{p}^{(0)})\big\vert_{\ssvertices_{n}}=\form_{p}^{(0)}$
for any $n\in\mathbb{N}$ by Proposition \ref{prop:SG-intersecting-cells}-\ref{it:SG-intersecting-cells},
$\renom_{\sscndc_{p},1}(\form_{p}^{(0)})\big\vert_{\ssvertices_{0}}=\form_{p}^{(0)}$
and Proposition \ref{prop:p-RF-trace-compatible}.
For existence, by \cite[Theorems 6.3 and 4.2]{CGQ} there exist
$\sscndc_{p}\in(0,\infty)$ and $\form_{p}^{(0)}\in\mathcal{Q}_{p}(\ssvertices_{0})$ with
the desired properties except \ref{it:p-RF5}, where $\mathcal{Q}_{p}(\ssvertices_{0})$
is as defined in \cite[Definition 2.8]{CGQ}, but we see from the
definition of $\mathcal{Q}_{p}(\ssvertices_{0})$, Example \ref{exmp:p-RF}-\ref{it:p-RF-finite}
and Theorem \ref{thm:p-RF-trace} that any $E\in\mathcal{Q}_{p}(\ssvertices_{0})$ in fact
satisfies \ref{it:p-RF5} and in particular $\form_{p}^{(0)}$ does, completing the proof.
\qed\end{prfsktch}

Throughout the rest of this paper, we let $\sscndc_{p}$ denote the unique element of
$(0,\infty)$ as in Theorem \ref{thm:SG-p-RF-V0-exists} and fix a $\sssym$-invariant
$p$-resistance form $\form_{p}^{(0)}$ on $\ssvertices_{0}$ satisfying
$\renom_{\sscndc_{p},1}(\form_{p}^{(0)})\big\vert_{\ssvertices_{0}}=\form_{p}^{(0)}$.
As stated in Theorem \ref{thm:SG-p-RF-V0-unique} below, $\form_{p}^{(0)}$
will turn out to be unique up to constant multiples.

The following proposition is an easy consequence of
Proposition \ref{prop:SG-intersecting-cells}-\ref{it:SG-intersecting-cells},
$\renom_{\sscndc_{p},1}(\form_{p}^{(0)})\big\vert_{\ssvertices_{0}}=\form_{p}^{(0)}$
and Proposition \ref{prop:p-RF-trace-compatible}.
\begin{prop}\label{prop:SG-p-RF-compatible-seq}
$\renom_{\sscndc_{p},n+m}(\form_{p}^{(0)})\big\vert_{\ssvertices_{n}}
	=\renom_{\sscndc_{p},n}(\form_{p}^{(0)})$
for any $n,m\in\mathbb{N}\cup\{0\}$.
\end{prop}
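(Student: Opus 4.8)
The plan is to prove the statement by induction on $m$, using the transitivity of traces to reduce everything to the single-step identity $\renom_{\sscndc_{p},n+1}(\form_{p}^{(0)})\vert_{\ssvertices_{n}}=\renom_{\sscndc_{p},n}(\form_{p}^{(0)})$. The case $m=0$ is trivial, since tracing a $p$-resistance form to its own underlying set returns the same form. For the inductive step, assuming the assertion for a given $m$, I would apply Proposition~\ref{prop:p-RF-trace-compatible} to the $p$-resistance form $\renom_{\sscndc_{p},n+m+1}(\form_{p}^{(0)})$ on $\ssvertices_{n+m+1}$ with $\ssvertices_{n}\subset\ssvertices_{n+m}$ to factor the trace as $\renom_{\sscndc_{p},n+m+1}(\form_{p}^{(0)})\vert_{\ssvertices_{n}}=\bigl(\renom_{\sscndc_{p},n+m+1}(\form_{p}^{(0)})\vert_{\ssvertices_{n+m}}\bigr)\vert_{\ssvertices_{n}}$; the inner trace equals $\renom_{\sscndc_{p},n+m}(\form_{p}^{(0)})$ by the single-step identity at level $n+m$, and the result then follows from the inductive hypothesis.

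It therefore remains to establish the single-step identity for every $n\in\mathbb{N}\cup\{0\}$. First I would regroup the length-$(n+1)$ words as $wi$ with $w\in\words_{n}$ and $i\in\ssindex$ and use $\ssmap_{wi}=\ssmap_{w}\circ\ssmap_{i}$ to rewrite the defining sum \eqref{eq:p-RF-V0-Vn} as $\renom_{\sscndc_{p},n+1}(\form_{p}^{(0)})(u)=\sum_{w\in\words_{n}}\sscndc_{p}^{n}\,\renom_{\sscndc_{p},1}(\form_{p}^{(0)})\bigl((u\circ\ssmap_{w})\vert_{\ssvertices_{1}}\bigr)$ for $u\in\mathbb{R}^{\ssvertices_{n+1}}$. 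The heart of the argument, and the step I expect to be the main obstacle, is to show that the infimum over extensions $u$ with $u\vert_{\ssvertices_{n}}=v$ defining the trace decouples into a sum of independent cell-wise infima. This rests on finite ramification: from Proposition~\ref{prop:SG-intersecting-cells}-\ref{it:SG-intersecting-cells} I would deduce both that $\ssmap_{w}(\ssvertices_{1})\cap\ssvertices_{n}=\ssmap_{w}(\ssvertices_{0})$ and that the ``interior'' sets $\ssmap_{w}(\ssvertices_{1})\setminus\ssvertices_{n}$, $w\in\words_{n}$, are pairwise disjoint with union $\ssvertices_{n+1}\setminus\ssvertices_{n}$. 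Indeed, any point lying in two distinct cells $\ssmap_{w}(\ssvertices_{1})$ and $\ssmap_{w'}(\ssvertices_{1})$ belongs to $\ssset_{w}\cap\ssset_{w'}=\ssmap_{w}(\ssvertices_{0})\cap\ssmap_{w'}(\ssvertices_{0})\subset\ssvertices_{n}$, so the free values of $u$ on $\ssvertices_{n+1}\setminus\ssvertices_{n}$ split disjointly among the cells while the constraint $u\vert_{\ssvertices_{n}}=v$ fixes, for each $w$, exactly the corner values $(u\circ\ssmap_{w})\vert_{\ssvertices_{0}}=(v\circ\ssmap_{w})\vert_{\ssvertices_{0}}$.

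Granting this decoupling, each $w$-summand is minimized independently, so
\[
\renom_{\sscndc_{p},n+1}(\form_{p}^{(0)})\vert_{\ssvertices_{n}}(v)
=\sum_{w\in\words_{n}}\sscndc_{p}^{n}\,\renom_{\sscndc_{p},1}(\form_{p}^{(0)})\big\vert_{\ssvertices_{0}}\bigl((v\circ\ssmap_{w})\vert_{\ssvertices_{0}}\bigr),
\]
where the inner infima are the values of the trace of $\renom_{\sscndc_{p},1}(\form_{p}^{(0)})$ to $\ssvertices_{0}$, well-defined and attained by Theorem~\ref{thm:p-RF-trace}. Finally I would invoke the self-similar identity $\renom_{\sscndc_{p},1}(\form_{p}^{(0)})\big\vert_{\ssvertices_{0}}=\form_{p}^{(0)}$ fixed in Theorem~\ref{thm:SG-p-RF-V0-exists} to replace each inner term by $\form_{p}^{(0)}\bigl((v\circ\ssmap_{w})\vert_{\ssvertices_{0}}\bigr)$, which recovers exactly $\renom_{\sscndc_{p},n}(\form_{p}^{(0)})(v)$ by \eqref{eq:p-RF-V0-Vn}. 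This completes the single-step identity and hence, via the induction above, the proposition.
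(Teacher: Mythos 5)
Your proposal is correct and follows the same route the paper indicates: the paper derives the proposition as "an easy consequence" of Proposition \ref{prop:SG-intersecting-cells}-\ref{it:SG-intersecting-cells} (giving the cell-wise decoupling of the minimization), the fixed-point identity $\renom_{\sscndc_{p},1}(\form_{p}^{(0)})\big\vert_{\ssvertices_{0}}=\form_{p}^{(0)}$, and the trace transitivity of Proposition \ref{prop:p-RF-trace-compatible}, which are exactly your three ingredients. You have merely written out the details (the regrouping of words, the disjointness of $\ssmap_{w}(\ssvertices_{1})\setminus\ssvertices_{n}$, and the induction on $m$) that the paper leaves implicit.
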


As a reflection of the fact that the Sierpi\'{n}ski gasket is a p.-c.f.\ self-similar
set, we also have the following important feature of $\sscndc_{p}$.
\begin{prop}[{\cite[Lemma 3.8 and Theorem 5.9]{HPS}}, {\cite[Lemma 5.4]{CGQ}}]\label{prop:SG-p-RF-resis-scale-less-than-1}
$\sscndc_{p}>1$.
\end{prop}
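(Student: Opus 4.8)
The plan is to read off the value of $\sscndc_{p}$ from the fixed-point relation $\renom_{\sscndc_{p},1}(\form_{p}^{(0)})\big\vert_{\ssvertices_{0}}=\form_{p}^{(0)}$ by expanding the trace on the left via its variational description (Theorem \ref{thm:p-RF-trace}, formula \eqref{eq:p-RF-trace-form}). Since $\renom_{\sscndc_{p},1}(\form_{p}^{(0)})(v)=\sscndc_{p}\sum_{w\in\words_{1}}\form_{p}^{(0)}(v\circ\ssmap_{w}\vert_{\ssvertices_{0}})$, for every $u\in\mathbb{R}^{\ssvertices_{0}}$ one gets
\[
\form_{p}^{(0)}(u)=\renom_{\sscndc_{p},1}(\form_{p}^{(0)})\big\vert_{\ssvertices_{0}}(u)
=\sscndc_{p}\,I(u),\qquad
I(u):=\inf\biggl\{\sum_{w\in\words_{1}}\form_{p}^{(0)}(v\circ\ssmap_{w}\vert_{\ssvertices_{0}})\biggm\vert v\in\mathbb{R}^{\ssvertices_{1}},\ v\vert_{\ssvertices_{0}}=u\biggr\}.
\]
For any non-constant $u$ both $\form_{p}^{(0)}(u)$ and $I(u)$ are positive by \ref{it:p-RF1}, so $\sscndc_{p}=\form_{p}^{(0)}(u)/I(u)$; thus $\sscndc_{p}>1$ is equivalent to $I(u)<\form_{p}^{(0)}(u)$, and since $I(u)$ is an infimum it suffices to exhibit \emph{one} non-constant $u$ and \emph{one} extension $v$ with $\sum_{w\in\words_{1}}\form_{p}^{(0)}(v\circ\ssmap_{w}\vert_{\ssvertices_{0}})<\form_{p}^{(0)}(u)$.

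I would take $u:=\one_{\SGvertex_{1}}^{\ssvertices_{0}}$ and set $E_{0}:=\form_{p}^{(0)}(u)>0$. Recall from Definition \ref{dfn:words} and Proposition \ref{prop:SG-intersecting-cells} that $\ssvertices_{1}=\ssvertices_{0}\cup\{\SGvertex_{12},\SGvertex_{13},\SGvertex_{23}\}$ and that the three cells restrict to $\ssvertices_{0}$ as $\ssmap_{1}(\ssvertices_{0})=\{\SGvertex_{1},\SGvertex_{12},\SGvertex_{13}\}$, $\ssmap_{2}(\ssvertices_{0})=\{\SGvertex_{12},\SGvertex_{2},\SGvertex_{23}\}$ and $\ssmap_{3}(\ssvertices_{0})=\{\SGvertex_{13},\SGvertex_{23},\SGvertex_{3}\}$. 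For a small parameter $s\in(0,1)$ I would define $v$ by $v\vert_{\ssvertices_{0}}=u$, $v(\SGvertex_{12})=v(\SGvertex_{13})=s$ and $v(\SGvertex_{23})=0$. Using the homogeneity $\form_{p}^{(0)}(a\,\cdot)=\lvert a\rvert^{p}\form_{p}^{(0)}(\cdot)$ and the constant-invariance $\form_{p}^{(0)}(\cdot+c\one_{\ssvertices_{0}})=\form_{p}^{(0)}(\cdot)$ (both from \ref{it:p-RF1}), the $\ssmap_{1}$-cell contributes $\form_{p}^{(0)}((1-s)\one_{\SGvertex_{1}}^{\ssvertices_{0}})=(1-s)^{p}E_{0}$ after subtracting the constant $s$, while the $\ssmap_{2}$- and $\ssmap_{3}$-cells both restrict to $s\,\one_{\SGvertex_{1}}^{\ssvertices_{0}}$ and hence each contribute $s^{p}E_{0}$. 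Therefore
\[
\sum_{w\in\words_{1}}\form_{p}^{(0)}(v\circ\ssmap_{w}\vert_{\ssvertices_{0}})=\bigl((1-s)^{p}+2s^{p}\bigr)E_{0}.
\]
Note that this computation uses only \ref{it:p-RF1} and the combinatorics of Proposition \ref{prop:SG-intersecting-cells}, not any symmetry of $\form_{p}^{(0)}$.

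It then remains to check that $g(s):=(1-s)^{p}+2s^{p}<1$ for some $s\in(0,1)$. Here $g(0)=1$ and $g'(s)=-p(1-s)^{p-1}+2ps^{p-1}$ with $s^{p-1}\to 0$ as $s\downarrow 0$ because $p>1$, so $g'(0^{+})=-p<0$ and thus $g(s)<1$ for all sufficiently small $s>0$. (Minimizing $g$ exactly gives the sharper $g(s)=2/(1+2^{1/(p-1)})^{p-1}$, which is $<1$ since $(1+2^{1/(p-1)})^{p-1}>(2^{1/(p-1)})^{p-1}=2$, yielding the quantitative bound $\sscndc_{p}\geq\tfrac12(1+2^{1/(p-1)})^{p-1}$.) For such $s$ the displayed sum equals $g(s)E_{0}<E_{0}=\form_{p}^{(0)}(u)$, whence $I(u)<\form_{p}^{(0)}(u)$ and $\sscndc_{p}=\form_{p}^{(0)}(u)/I(u)>1$.

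The computation is elementary, so there is no serious technical obstacle; the real content is conceptual. The crux — and the place where the hypothesis $p>1$ enters essentially — is the choice of extension together with the observation that pushing the two midpoints $\SGvertex_{12},\SGvertex_{13}$ slightly downhill to value $s$ lowers the energy to first order (the term $-ps$) while the cost of the extra bending, of order $s^{p}$, is negligible as $s\downarrow 0$. This first-order gain disappears at $p=1$ (there $g'(0^{+})=+1>0$), which is precisely the mechanism underlying $\sscndc_{p}>1$ in \cite{HPS,CGQ}.
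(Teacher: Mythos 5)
Your proof is correct. Note that the paper does not actually prove this proposition---it is stated with citations to \cite[Lemma 3.8 and Theorem 5.9]{HPS} and \cite[Lemma 5.4]{CGQ}---so what you have supplied is a self-contained elementary argument where the survey defers to the literature. The reduction via the fixed-point relation and the trace formula \eqref{eq:p-RF-trace-form} to the identity $\form_{p}^{(0)}(u)=\sscndc_{p}\,I(u)$ is exactly right; the cell-by-cell bookkeeping (the $\ssmap_{1}$-cell restriction is $s\one_{\ssvertices_{0}}+(1-s)\one_{\SGvertex_{1}}$, the other two restrict to $s\one_{\SGvertex_{1}}$, so only the homogeneity and constant-invariance coming from \ref{it:p-RF1} together with Proposition \ref{prop:SG-intersecting-cells} are used) checks out; and the elementary inequality $(1-s)^{p}+2s^{p}<1$ for small $s>0$ is indeed where $p>1$ enters, the linear gain $-ps$ beating the $O(s^{p})=o(s)$ cost. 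The quantitative byproduct $\sscndc_{p}\geq\frac{1}{2}(1+2^{1/(p-1)})^{p-1}$ is also correct and consistent with $\sscndc_{2}=5/3$. One minor point: the positivity of $I(u)$ for non-constant $u$ is most cleanly read off from $I(u)=\form_{p}^{(0)}(u)/\sscndc_{p}>0$ rather than asserted directly from \ref{it:p-RF1} (a priori an infimum of positive quantities could vanish; here it cannot, since by Theorem \ref{thm:p-RF-trace} the trace is itself a $p$-resistance form). Your observation that no $\sssym$-invariance of $\form_{p}^{(0)}$ is needed is worth retaining, since it means the lower bound applies to every eigenform, which is what the uniqueness discussion around Theorem \ref{thm:SG-p-RF-V0-exists} requires.
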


Based on Propositions \ref{prop:SG-p-RF-compatible-seq} and \ref{prop:SG-p-RF-resis-scale-less-than-1},
the standard machinery for constructing the ``inductive limit'' of $p$-energy forms
as presented in \cite[Proposition 5.3]{CGQ}, which is an adaptation of the relevant
pieces of the theory of resistance forms due to \cite[Sections 2.2, 2.3 and 3.3]{Kig01},
gives the following result.
\begin{dfn}\label{dfn:SG-p-RF}
We set $\form_{p}^{(n)}:=\renom_{\sscndc_{p},n}(\form_{p}^{(0)})$ for each $n\in\mathbb{N}$,
and define a linear subspace $\domain_{p,*}$ of $\mathbb{R}^{\ssvertices_{*}}$, one
$\domain_{p}$ of $\contfunc(\ssset)$ and a functional $\form_{p}\colon\domain_{p}\to[0,\infty)$ by
\begin{align}\label{eq:SG-p-RF-domain}
\domain_{p}&:=\Bigl\{u\in\contfunc(\ssset)\Bigm\vert\lim_{n\to\infty}\form_{p}^{(n)}(u\vert_{\ssvertices_{n}})<\infty\Bigr\},\\
\form_{p}(u)&:=\lim_{n\to\infty}\form_{p}^{(n)}(u\vert_{\ssvertices_{n}})\in[0,\infty),
	\quad u\in\domain_{p};
\label{eq:SG-p-RF-form}
\end{align}
note that $\{\form_{p}^{(n)}(u\vert_{\ssvertices_{n}})\}_{n\in\mathbb{N}\cup\{0\}}\subset[0,\infty)$
is non-decreasing by Proposition \ref{prop:SG-p-RF-compatible-seq} and hence has a limit in $[0,\infty]$
for any $u\in\mathbb{R}^{\ssvertices_{*}}$.
\end{dfn}
\begin{thm}[Cf.\ {\cite[Proposition 5.3]{CGQ}}]\label{thm:SG-p-RF}
$(\form_{p},\domain_{p})$ is a $p$-resistance form on $\ssset$ with
$\form_{p}\vert_{\ssvertices_{n}}=\form_{p}^{(n)}$ for any $n\in\mathbb{N}\cup\{0\}$
and with the metric $\resismet_{\form_{p}}^{1/(p-1)}$
\textup{(recall Corollary \ref{cor:p-RF-p-RM-triangle-ineq})} compatible with the original
(Euclidean) topology of $\ssset$, and satisfies the following \emph{self-similarity}:
\begin{enumerate}[label=\textup{(SSE\arabic*)},align=left,leftmargin=*,topsep=4pt,itemsep=2pt]
\item\label{it:SG-p-RF-SSE1}$\domain_{p}=\{u\in\contfunc(\ssset)\mid\textrm{$u\circ\ssmap_{i}\in\domain_{p}$ for any $i\in\ssindex$}\}$.
\item\label{it:SG-p-RF-SSE2}$\form_{p}(u)=\sum_{i\in\ssindex}\sscndc_{p}\form_{p}(u\circ\ssmap_{i})$ for any $u\in\domain_{p}$.
\end{enumerate}
Moreover, $(\form_{p},\domain_{p})$ is \emph{$\sssym$-invariant}, i.e., $u\circ g\in\domain_{p}$
and $\form_{p}(u\circ g)=\form_{p}(u)$ for any $u\in\domain_{p}$ and any $g\in\sssym$.
\end{thm}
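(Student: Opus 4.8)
The plan is to realize $(\form_{p},\domain_{p})$ as the inductive limit of the compatible sequence of $p$-resistance forms $\{\form_{p}^{(n)}\}_{n\in\mathbb{N}\cup\{0\}}$ on the increasing finite sets $\{\ssvertices_{n}\}_{n}$ furnished by Lemma \ref{lem:SG-p-RF-V0-Vn} and Proposition \ref{prop:SG-p-RF-compatible-seq}, and to extract each asserted property either from the general inductive-limit construction recorded in \cite[Proposition 5.3]{CGQ} or by passing to the monotone limit in the corresponding finite-level property. The monotonicity $\form_{p}^{(n)}(u\vert_{\ssvertices_{n}})=\form_{p}^{(m)}\vert_{\ssvertices_{n}}(u\vert_{\ssvertices_{n}})\le\form_{p}^{(m)}(u\vert_{\ssvertices_{m}})$ for $m\ge n$ is immediate from Proposition \ref{prop:SG-p-RF-compatible-seq} and the definition of the trace form \eqref{eq:p-RF-trace-form}, so the defining limit in \eqref{eq:SG-p-RF-form} is a supremum over $n$; this is the form in which the finite-level properties will be transferred.

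The analytic core, and the main obstacle, is to show that $(\form_{p},\domain_{p})$ satisfies \ref{it:p-RF1}--\ref{it:p-RF4} with $\form_{p}\vert_{\ssvertices_{n}}=\form_{p}^{(n)}$ and with $\resismet_{\form_{p}}^{1/(p-1)}$ compatible with the Euclidean topology. First I would note that, by Theorem \ref{thm:p-RF-trace} and Proposition \ref{prop:SG-p-RF-compatible-seq}, the resistances $\resismet_{\form_{p}^{(n)}}(x,y)$ agree for all $n$ with $x,y\in\ssvertices_{n}$, defining a single function $\resismet(x,y)$ on $\ssvertices_{*}$ for which $\resismet^{1/(p-1)}$ is a metric (Corollary \ref{cor:p-RF-p-RM-triangle-ineq}). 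The key estimate is that $\resismet^{1/(p-1)}$ is H\"{o}lder-equivalent to $\refmet$ on $\ssvertices_{*}$: the self-similar scaling \eqref{eq:p-RF-V0-Vn} forces the resistance across any level-$n$ cell to be $\sscndc_{p}^{-n}$ times the one across $\ssvertices_{0}$, and chaining $O(1)$ such cells bounds $\resismet(x,y)$ between multiples of $\sscndc_{p}^{-n}$ when $\refmet(x,y)\asymp 2^{-n}$; since $\sscndc_{p}>1$ by Proposition \ref{prop:SG-p-RF-resis-scale-less-than-1}, this gives $\resismet(x,y)^{1/(p-1)}\asymp\refmet(x,y)^{\theta}$ with $\theta:=(\log_{2}\sscndc_{p})/(p-1)>0$. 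Consequently the completion of $(\ssvertices_{*},\resismet^{1/(p-1)})$ is homeomorphic to $\ssset$, and the inductive-limit machinery of \cite[Proposition 5.3]{CGQ} (adapting \cite[Sections 2.2, 2.3, 3.3]{Kig01}) then delivers \ref{it:p-RF1}--\ref{it:p-RF4}, the trace identity $\form_{p}\vert_{\ssvertices_{n}}=\form_{p}^{(n)}$, and topological compatibility. Here completeness \ref{it:p-RF2} comes from the finite-dimensional completeness of each $\form_{p}^{(n)}$ combined with the uniform H\"{o}lder bound (via Proposition \ref{prop:p-RF4-conseq} at each level) and the continuity of $\form_{p}^{(n)}(\,\cdot\,\vert_{\ssvertices_{n}})$ under pointwise convergence of differences.

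It remains to add \ref{it:p-RF5}, which the construction in \cite{CGQ} does not record but which holds here because each $\form_{p}^{(n)}$ already satisfies it (Lemma \ref{lem:SG-p-RF-V0-Vn}). Given $T$ and $u=(u_{1},\dots,u_{m})\in\domain_{p}^{m}$ as in \ref{it:p-RF5}, each $T_{k}(u)$ lies in $\contfunc(\ssset)$ since $T_{k}$ is Lipschitz and the $u_{j}$ are continuous, and $T(u)\vert_{\ssvertices_{n}}=T(u\vert_{\ssvertices_{n}})$; applying \ref{it:p-RF5} for $\form_{p}^{(n)}$ yields $\lVert(\form_{p}^{(n)}(T_{k}(u)\vert_{\ssvertices_{n}})^{1/p})_{k}\rVert_{\ell^{r}}\le\lVert(\form_{p}^{(n)}(u_{j}\vert_{\ssvertices_{n}})^{1/p})_{j}\rVert_{\ell^{q}}$. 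The right-hand side increases to the finite quantity $\lVert(\form_{p}(u_{j})^{1/p})_{j}\rVert_{\ell^{q}}$, so the monotone left-hand side converges, forcing each $T_{k}(u)\in\domain_{p}$; letting $n\to\infty$ gives \eqref{eq:GC} for $\form_{p}$.

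Finally I would obtain the self-similarity and symmetry by the same limiting scheme, using the algebraic decomposition of $\renom_{\sscndc_{p},n}(\form_{p}^{(0)})$ by first symbol. Splitting $w=iw'\in\words_{n}$ with $i\in\ssindex$, $w'\in\words_{n-1}$ and using $\ssmap_{iw'}=\ssmap_{i}\circ\ssmap_{w'}$ in \eqref{eq:p-RF-V0-Vn} gives $\form_{p}^{(n)}(u\vert_{\ssvertices_{n}})=\sum_{i\in\ssindex}\sscndc_{p}\,\form_{p}^{(n-1)}((u\circ\ssmap_{i})\vert_{\ssvertices_{n-1}})$; since the left-hand side increases to $\form_{p}(u)$ and each summand increases to $\sscndc_{p}\form_{p}(u\circ\ssmap_{i})$, letting $n\to\infty$ proves \ref{it:SG-p-RF-SSE2} and, reading the finiteness in both directions, \ref{it:SG-p-RF-SSE1}. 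For $\sssym$-invariance, each $g\in\sssym$ fixes $\ssvertices_{0}$ (Proposition \ref{prop:SG-symmetry}) and satisfies $g\circ\ssmap_{i}=\ssmap_{\sigma(i)}\circ g$ for a permutation $\sigma$ of $\ssindex$ determined by $g\vert_{\ssvertices_{0}}$, whence $g(\ssvertices_{n})=\ssvertices_{n}$; combining this with the $\sssym$-invariance of $\form_{p}^{(0)}$ from Theorem \ref{thm:SG-p-RF-V0-exists} and the fact that $w\mapsto\sigma(w_{1})\ldots\sigma(w_{n})$ permutes $\words_{n}$ shows $\form_{p}^{(n)}((u\circ g)\vert_{\ssvertices_{n}})=\form_{p}^{(n)}(u\vert_{\ssvertices_{n}})$ for every $n$, and the limit gives $u\circ g\in\domain_{p}$ and $\form_{p}(u\circ g)=\form_{p}(u)$.
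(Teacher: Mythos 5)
Your proposal is correct and follows essentially the same route as the paper, which simply invokes the standard inductive-limit machinery of \cite[Proposition 5.3]{CGQ} (adapted from \cite[Sections 2.2, 2.3 and 3.3]{Kig01}) applied to the compatible sequence $\{\form_{p}^{(n)}\}_{n}$ together with $\sscndc_{p}>1$; your monotone-limit arguments for \ref{it:p-RF5}, the self-similarity and the $\sssym$-invariance are exactly the intended supplements to that machinery. The only point stated more loosely than it should be is the two-sided comparison of $\resismet$ with $\sscndc_{p}^{-n}$ on level-$n$ cells (the lower bound needs an extension-of-functions argument, not just \eqref{eq:p-RF-V0-Vn}), but for topological compatibility the upper bound plus compactness of $(\ssset,\refmet)$ already suffices.
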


Below are some basic properties of $(\form_{p},\domain_{p})$. First, the following simple
lemma underlies the monotonicity of $\sscndc_{p}^{1/(p-1)}$ as a function of $p$
(see Theorem \ref{thm:SG-p-RF-resis-scale-decreasing} below), which is a key observation
in our study of $p$-energy measures in \cite{KS:pqEnergySing}.
\begin{lem}\label{lem:SG-p-RF-RM-contraction}
For any $w\in\words_{*}$ and any $x,y\in\ssset$,
\begin{equation}\label{eq:SG-p-RF-RM-contraction}
\resismet_{\form_{p}}(\ssmap_{w}(x),\ssmap_{w}(y))
	\leq\sscndc_{p}^{-\lvert w\rvert}\resismet_{\form_{p}}(x,y).
\end{equation}
\end{lem}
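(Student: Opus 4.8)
The plan is to reduce the assertion to a sub-multiplicativity of energy under precomposition with $\ssmap_w$, combined with the basic resistance inequality \eqref{eq:p-RF4-conseq}. Write $n:=\lvert w\rvert$. First I would iterate the self-similarity \ref{it:SG-p-RF-SSE2}: since the weight attached to each $i\in\ssindex$ is the \emph{same} number $\sscndc_p$, a straightforward induction on $n$ (using $\ssmap_{vi}=\ssmap_{v}\circ\ssmap_{i}$ and \ref{it:SG-p-RF-SSE1} to keep all the functions $u\circ\ssmap_{v}$ inside $\domain_p$) gives
\[
\form_p(u)=\sum_{v\in\words_n}\sscndc_p^{\,n}\form_p(u\circ\ssmap_v)\qquad\textrm{for every }u\in\domain_p.
\]
Every summand is nonnegative, so discarding all terms except the one indexed by $v=w$ yields the key one-sided bound
\[
\form_p(u\circ\ssmap_w)\leq\sscndc_p^{-n}\form_p(u)\qquad\textrm{for every }u\in\domain_p.
\]

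With this in hand I would estimate the resistance directly from its definition \ref{it:p-RF4}. Fix $u\in\domain_p\setminus\mathbb{R}\one_\ssset$; then $u\circ\ssmap_w\in\domain_p$ by \ref{it:SG-p-RF-SSE1}, and applying the consequence \eqref{eq:p-RF4-conseq} of \ref{it:p-RF4} to the function $u\circ\ssmap_w$ at the points $x,y$ gives
\[
\lvert u(\ssmap_w(x))-u(\ssmap_w(y))\rvert^p=\lvert(u\circ\ssmap_w)(x)-(u\circ\ssmap_w)(y)\rvert^p\leq\resismet_{\form_p}(x,y)\,\form_p(u\circ\ssmap_w).
\]
Combining this with the key bound from the first step and dividing by $\form_p(u)>0$, I obtain
\[
\frac{\lvert u(\ssmap_w(x))-u(\ssmap_w(y))\rvert^p}{\form_p(u)}\leq\sscndc_p^{-n}\resismet_{\form_p}(x,y).
\]
Since the right-hand side does not depend on $u$, taking the supremum over all $u\in\domain_p\setminus\mathbb{R}\one_\ssset$ reproduces the defining supremum of $\resismet_{\form_p}(\ssmap_w(x),\ssmap_w(y))$ and hence gives exactly \eqref{eq:SG-p-RF-RM-contraction}.

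I do not expect a genuine obstacle here: the entire argument hinges on the single observation that dropping nonnegative terms in the iterated self-similarity identity converts the equality into the energy inequality $\form_p(u\circ\ssmap_w)\leq\sscndc_p^{-\lvert w\rvert}\form_p(u)$, after which the conclusion is a routine manipulation of the supremum defining $\resismet_{\form_p}$. The only points that merit a word of care are that $u\circ\ssmap_w$ indeed lies in $\domain_p$ (which is guaranteed by \ref{it:SG-p-RF-SSE1}) and that the supremum runs over non-constant $u$, so that $\form_p(u)>0$ and no division by zero occurs; the degenerate case $u\in\mathbb{R}\one_\ssset$ is already excluded from the supremum and is in any event covered by \eqref{eq:p-RF4-conseq}, whose left-hand side then vanishes.
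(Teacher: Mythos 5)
Your argument is correct and is exactly the paper's (one-line) proof spelled out: iterate \ref{it:SG-p-RF-SSE2} with \ref{it:SG-p-RF-SSE1}, drop the nonnegative terms other than $v=w$ to get $\form_{p}(u\circ\ssmap_{w})\leq\sscndc_{p}^{-\lvert w\rvert}\form_{p}(u)$, and conclude via the supremum in \ref{it:p-RF4}. No issues.
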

\begin{proof}
This is immediate from \ref{it:SG-p-RF-SSE1}, \ref{it:SG-p-RF-SSE2} and \ref{it:p-RF4}.
\qed\end{proof}

From the above construction of $(\form_{p},\domain_{p})$, we easily obtain the following
characterizations of $\form_{p}$-harmonic functions on $\ssset\setminus\ssvertices_{n}$
for $n\in\mathbb{N}\cup\{0\}$. Recall Definition \ref{dfn:p-RF-harm-func} and Theorem \ref{thm:p-RF-trace}.
\begin{prop}\label{prop:SG-p-RF-harmonic-Vn}
Let $n\in\mathbb{N}\cup\{0\}$. Then for each $h\in\contfunc(\ssset)$,
the following three conditions are equivalent to each other:
\begin{enumerate}[label=\textup{(\arabic*)},align=left,leftmargin=*,topsep=4pt,itemsep=2pt]
\item\label{it:SG-p-RF-harmonic-Vn}$h\in\harfunc{\form_{p}}{\ssvertices_{n}}$.
%\item\label{it:SG-p-RF-harmonic-Vn-trace}$h\in\domain_{p}$ and
%	$\form_{p}(h)=\form_{p}^{(n)}(h\vert_{\ssvertices_{n}})$.
\item\label{it:SG-p-RF-harmonic-Vn-self-sim}$h\circ\ssmap_{w}\in\harfunc{\form_{p}}{\ssvertices_{0}}$
	for any $w\in\words_{n}$.
\item\label{it:SG-p-RF-harmonic-Vn-test}For any $m\in\mathbb{N}$ with $m>n$ and any $x\in\ssvertices_{m}\setminus\ssvertices_{n}$,
	\begin{equation}\label{eq:SG-p-RF-harmonic-Vn-test}
	\sum_{w\in\words_{m},\,x\in\ssmap_{w}(\ssvertices_{0})}\sscndc_{p}^{m}\form_{p}^{(0)}\bigl(h\circ\ssmap_{w}\vert_{\ssvertices_{0}};\one_{\ssmap_{w}^{-1}(x)}\bigr)=0.
	\end{equation}
\end{enumerate}
\end{prop}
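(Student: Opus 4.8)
The plan is to establish the two equivalences \ref{it:SG-p-RF-harmonic-Vn}$\Leftrightarrow$\ref{it:SG-p-RF-harmonic-Vn-self-sim} and \ref{it:SG-p-RF-harmonic-Vn}$\Leftrightarrow$\ref{it:SG-p-RF-harmonic-Vn-test} separately, relying on the variational/distributional characterization of $\form_{p}$-harmonicity (Proposition \ref{prop:p-RF-harm-func}), the self-similarity \ref{it:SG-p-RF-SSE1}--\ref{it:SG-p-RF-SSE2} together with the identity $\form_{p}\vert_{\ssvertices_{m}}=\form_{p}^{(m)}$ from Theorem \ref{thm:SG-p-RF}, and the trace-derivative compatibility \eqref{eq:p-RF-trace-Diff}.

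For \ref{it:SG-p-RF-harmonic-Vn}$\Leftrightarrow$\ref{it:SG-p-RF-harmonic-Vn-self-sim}, I would iterate \ref{it:SG-p-RF-SSE2} to get $\form_{p}(u)=\sum_{w\in\words_{n}}\sscndc_{p}^{n}\form_{p}(u\circ\ssmap_{w})$ and observe, via Proposition \ref{prop:SG-intersecting-cells}-\ref{it:SG-intersecting-cells}, that distinct cells $\ssset_{w}$ ($w\in\words_{n}$) meet only within $\ssvertices_{n}$. Hence any competitor $u\in\domain_{p}$ with $u\vert_{\ssvertices_{n}}=h\vert_{\ssvertices_{n}}$ decouples into pieces $u\circ\ssmap_{w}\in\domain_{p}$ constrained only by $u\circ\ssmap_{w}\vert_{\ssvertices_{0}}=h\circ\ssmap_{w}\vert_{\ssvertices_{0}}$, while conversely any family of such pieces glues (pasting lemma, using \ref{it:SG-p-RF-SSE1}) into a single competitor. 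Therefore the minimization defining \ref{it:SG-p-RF-harmonic-Vn} factorizes as $\inf\{\form_{p}(u)\mid u\vert_{\ssvertices_{n}}=h\vert_{\ssvertices_{n}}\}=\sum_{w\in\words_{n}}\sscndc_{p}^{n}\inf\{\form_{p}(v)\mid v\vert_{\ssvertices_{0}}=h\circ\ssmap_{w}\vert_{\ssvertices_{0}}\}$; since $\sscndc_{p}^{n}>0$ and each summand is dominated termwise by the corresponding $\form_{p}(h\circ\ssmap_{w})$, equality of the totals is equivalent to equality in every cell, which is exactly \ref{it:SG-p-RF-harmonic-Vn-self-sim}.

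For \ref{it:SG-p-RF-harmonic-Vn}$\Leftrightarrow$\ref{it:SG-p-RF-harmonic-Vn-test}, I would first differentiate $\form_{p}^{(m)}=\renom_{\sscndc_{p},m}(\form_{p}^{(0)})$ term by term (legitimate by Theorem \ref{thm:p-RF-Diff}) to obtain, for $x\in\ssvertices_{m}$, the identity $\form_{p}^{(m)}(u;\one_{x})=\sum_{w\in\words_{m},\,x\in\ssmap_{w}(\ssvertices_{0})}\sscndc_{p}^{m}\form_{p}^{(0)}(u\circ\ssmap_{w}\vert_{\ssvertices_{0}};\one_{\ssmap_{w}^{-1}(x)})$, the terms with $x\notin\ssmap_{w}(\ssvertices_{0})$ vanishing because the direction restricts to the zero function on $\ssvertices_{0}$. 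By linearity of the derivative, \eqref{eq:SG-p-RF-harmonic-Vn-test} for all $x\in\ssvertices_{m}\setminus\ssvertices_{n}$ is thus equivalent to $\form_{p}^{(m)}(h\vert_{\ssvertices_{m}};v)=0$ for all $v\in\mathbb{R}^{\ssvertices_{m}}$ with $v\vert_{\ssvertices_{n}}=0$, i.e.\ to $h\vert_{\ssvertices_{m}}$ being $\form_{p}^{(m)}$-harmonic on $\ssvertices_{m}\setminus\ssvertices_{n}$, which by Proposition \ref{prop:p-RF-harm-func} (applied to the finite $p$-resistance form $\form_{p}^{(m)}$) equals the discrete minimization $\form_{p}^{(m)}(h\vert_{\ssvertices_{m}})=\min\{\form_{p}^{(m)}(u)\mid u\vert_{\ssvertices_{n}}=h\vert_{\ssvertices_{n}}\}$. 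To get \ref{it:SG-p-RF-harmonic-Vn-test} from \ref{it:SG-p-RF-harmonic-Vn}, I would note that the constraint sets shrink as $m$ grows, so $\harfunc{\form_{p}}{\ssvertices_{n}}\subset\harfunc{\form_{p}}{\ssvertices_{m}}$ for $m\ge n$; then \eqref{eq:p-RF-trace-Diff} with $B=\ssvertices_{m}$ gives $\form_{p}^{(m)}(h\vert_{\ssvertices_{m}};v\vert_{\ssvertices_{m}})=\form_{p}(h;v)=0$ for every $v\in\domain_{p}$ with $v\vert_{\ssvertices_{n}}=0$, and since such $v$ restrict onto all of $\{\xi\in\mathbb{R}^{\ssvertices_{m}}\mid\xi\vert_{\ssvertices_{n}}=0\}$ the discrete condition follows. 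For the converse I would invoke the monotone convergence $\form_{p}^{(m)}(u\vert_{\ssvertices_{m}})\uparrow\form_{p}(u)$ built into \eqref{eq:SG-p-RF-form}: comparing $h$ at each level with the fixed extension $\harext{\form_{p}}{\ssvertices_{n}}[h\vert_{\ssvertices_{n}}]$ bounds $\form_{p}^{(m)}(h\vert_{\ssvertices_{m}})$ uniformly, so $h\in\domain_{p}$; then for any competitor $u\in\domain_{p}$ with $u\vert_{\ssvertices_{n}}=h\vert_{\ssvertices_{n}}$ the discrete minimality gives $\form_{p}^{(m)}(h\vert_{\ssvertices_{m}})\le\form_{p}^{(m)}(u\vert_{\ssvertices_{m}})\le\form_{p}(u)$, and $m\to\infty$ yields $\form_{p}(h)\le\form_{p}(u)$, i.e.\ \ref{it:SG-p-RF-harmonic-Vn}.

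The routine inputs are the term-by-term differentiation of $\form_{p}^{(m)}$ and the cell-wise gluing. The step I expect to demand the most care is the passage between the continuous derivative $\form_{p}(h;\cdot)$ and its discrete counterparts $\form_{p}^{(m)}(h\vert_{\ssvertices_{m}};\cdot)$: the forward direction hinges on the nesting $\harfunc{\form_{p}}{\ssvertices_{n}}\subset\harfunc{\form_{p}}{\ssvertices_{m}}$ so that the exact identity \eqref{eq:p-RF-trace-Diff} is available at every level, while the converse must first secure $h\in\domain_{p}$ before any variational comparison is meaningful and then upgrade the family of level-$m$ minimality properties to the continuous one through the monotone limit \eqref{eq:SG-p-RF-form}.
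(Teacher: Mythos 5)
The paper states this proposition without proof (``we easily obtain\ldots from the above construction''), and your argument is correct and is precisely the one the paper's framework is set up to deliver: the cell-decoupling of the minimization via \ref{it:SG-p-RF-SSE1}--\ref{it:SG-p-RF-SSE2} and Proposition \ref{prop:SG-intersecting-cells}-\ref{it:SG-intersecting-cells} for \ref{it:SG-p-RF-harmonic-Vn}$\Leftrightarrow$\ref{it:SG-p-RF-harmonic-Vn-self-sim}, and the passage between $\form_{p}(h;\cdot)$ and $\form_{p}^{(m)}(h\vert_{\ssvertices_{m}};\cdot)$ via Proposition \ref{prop:p-RF-harm-func}, \eqref{eq:p-RF-trace-Diff}, $\form_{p}\vert_{\ssvertices_{m}}=\form_{p}^{(m)}$ and the monotone limit \eqref{eq:SG-p-RF-form} for \ref{it:SG-p-RF-harmonic-Vn}$\Leftrightarrow$\ref{it:SG-p-RF-harmonic-Vn-test}. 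You also correctly identify and handle the one point that genuinely needs care, namely that \ref{it:SG-p-RF-harmonic-Vn-test} must first be shown to force $h\in\domain_{p}$ before the variational characterization can be invoked.
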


The implication from \ref{it:SG-p-RF-harmonic-Vn} to \ref{it:SG-p-RF-harmonic-Vn-self-sim}
in Proposition \ref{prop:SG-p-RF-harmonic-Vn} enables us to conclude the following
localized version of the weak comparison principle (Proposition \ref{prop:p-RF-comp}).
\begin{prop}[A localized weak comparison principle]\label{prop:SG-p-RF-harmonic-Vn-comp}
Let $n\in\mathbb{N}\cup\{0\}$, $w\in\words_{n}$, and let
$u,v\in\harfunc{\form_{p}}{\ssvertices_{n}}$ satisfy $u(x)\leq v(x)$ for any
$x\in\ssmap_{w}(\ssvertices_{0})$. Then $u(x)\leq v(x)$ for any $x\in\ssset_{w}$.
\end{prop}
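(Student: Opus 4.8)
The plan is to reduce the statement to the global weak comparison principle (Proposition \ref{prop:p-RF-comp}) by pulling everything back to the reference cell via the contraction $\ssmap_{w}$, using the self-similarity of $\form_{p}$-harmonicity recorded in Proposition \ref{prop:SG-p-RF-harmonic-Vn}.

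First I would invoke the implication from \ref{it:SG-p-RF-harmonic-Vn} to \ref{it:SG-p-RF-harmonic-Vn-self-sim} in Proposition \ref{prop:SG-p-RF-harmonic-Vn}: since $u,v\in\harfunc{\form_{p}}{\ssvertices_{n}}$ and $w\in\words_{n}$, the pullbacks $u\circ\ssmap_{w}$ and $v\circ\ssmap_{w}$ lie in $\harfunc{\form_{p}}{\ssvertices_{0}}$, so in particular they belong to $\domain_{p}$ and are $\form_{p}$-harmonic on $\ssset\setminus\ssvertices_{0}$.

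Next I would transfer the boundary inequality. For each $y\in\ssvertices_{0}$ we have $\ssmap_{w}(y)\in\ssmap_{w}(\ssvertices_{0})$, so the hypothesis $u\leq v$ on $\ssmap_{w}(\ssvertices_{0})$ gives $(u\circ\ssmap_{w})(y)\leq(v\circ\ssmap_{w})(y)$ for every $y\in\ssvertices_{0}$. Applying Proposition \ref{prop:p-RF-comp} with $B=\ssvertices_{0}$ to the harmonic functions $u\circ\ssmap_{w}$ and $v\circ\ssmap_{w}$ then yields $(u\circ\ssmap_{w})(x)\leq(v\circ\ssmap_{w})(x)$ for every $x\in\ssset$. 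Finally, since $\ssset_{w}=\ssmap_{w}(\ssset)$ by definition, every point of $\ssset_{w}$ is of the form $\ssmap_{w}(x)$ with $x\in\ssset$, and the last inequality is precisely $u\leq v$ on $\ssset_{w}$, which is the assertion.

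There is essentially no serious obstacle here: the whole argument is a two-step application of results already established, most of the real work having been absorbed into Proposition \ref{prop:SG-p-RF-harmonic-Vn}. The one point requiring care is that the correct implication of that proposition is used --- namely that $\form_{p}$-harmonicity on $\ssset\setminus\ssvertices_{n}$ forces the pullback $h\circ\ssmap_{w}$ to be $\form_{p}$-harmonic on $\ssset\setminus\ssvertices_{0}$ --- after which the ordering transfers cleanly, because $\ssmap_{w}$ maps $\ssset$ onto $\ssset_{w}$ and the pointwise inequality is preserved under composition with $\ssmap_{w}$.
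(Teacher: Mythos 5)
Your proof is correct and is exactly the argument the paper intends: the text introducing the proposition says it follows from the implication \ref{it:SG-p-RF-harmonic-Vn}$\Rightarrow$\ref{it:SG-p-RF-harmonic-Vn-self-sim} of Proposition \ref{prop:SG-p-RF-harmonic-Vn} combined with the weak comparison principle (Proposition \ref{prop:p-RF-comp}), which is precisely your two-step reduction via the pullback $u\circ\ssmap_{w}$, $v\circ\ssmap_{w}$ with $B=\ssvertices_{0}$.
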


The following proposition is useful in reducing the proof of a statement for general $u\in\domain_{p}$
to the case of $u\in\bigcup_{n\in\mathbb{N}\cup\{0\}}\harfunc{\form_{p}}{\ssvertices_{n}}$.
\begin{prop}\label{prop:SG-p-RF-harmonic-approx}
Let $u\in\domain_{p}$ and set $u_{n}:=\harext{\form_{p}}{\ssvertices_{n}}[u\vert_{\ssvertices_{n}}]$
for each $n\in\mathbb{N}\cup\{0\}$. Then $\lim_{n\to\infty}\form_{p}(u-u_{n},u-u_{n})=0$.
\end{prop}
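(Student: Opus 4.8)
The plan is to verify the two conditions in Proposition~\ref{prop:p-RF-norm-convergence-characterize}-\ref{it:p-RF-norm-convergence-characterize} for the sequence $\{u_{n}\}_{n\in\mathbb{N}\cup\{0\}}$, namely that $\limsup_{n\to\infty}\form_{p}(u_{n})\leq\form_{p}(u)$ and that $\lim_{n\to\infty}(u_{n}(x)-u_{n}(y))=u(x)-u(y)$ for all $x,y\in\ssset$; this will give $\lim_{n\to\infty}\form_{p}(u-u_{n})=0$, which is the assertion since $\form_{p}(u-u_{n};u-u_{n})=\form_{p}(u-u_{n})$ by Theorem~\ref{thm:p-RF-Diff}. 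Note first that $u_{n}\in\harfunc{\form_{p}}{\ssvertices_{n}}\subset\domain_{p}$, so $u-u_{n}\in\domain_{p}$, by Theorem~\ref{thm:p-RF-trace}.

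The energy condition is immediate from the inductive-limit construction. By Theorem~\ref{thm:p-RF-trace} we have $u_{n}\vert_{\ssvertices_{n}}=u\vert_{\ssvertices_{n}}$ and $\form_{p}(u_{n})=\form_{p}\vert_{\ssvertices_{n}}(u\vert_{\ssvertices_{n}})$, and $\form_{p}\vert_{\ssvertices_{n}}=\form_{p}^{(n)}$ by Theorem~\ref{thm:SG-p-RF}. By \eqref{eq:SG-p-RF-form} and the monotonicity recorded in Definition~\ref{dfn:SG-p-RF}, the sequence $\{\form_{p}^{(n)}(u\vert_{\ssvertices_{n}})\}_{n}$ increases to $\form_{p}(u)$, whence $\form_{p}(u_{n})\uparrow\form_{p}(u)$ and in particular $\limsup_{n\to\infty}\form_{p}(u_{n})=\form_{p}(u)$.

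The main work is the (uniform) pointwise convergence $u_{n}\to u$, which I would obtain from a maximum principle on cells together with the uniform continuity of $u$. Fix $n$ and $w\in\words_{n}$. Since $u_{n}$ and every constant function belong to $\harfunc{\form_{p}}{\ssvertices_{n}}$, applying the localized weak comparison principle (Proposition~\ref{prop:SG-p-RF-harmonic-Vn-comp}) with $u_{n}$ against the constants $\max_{\ssmap_{w}(\ssvertices_{0})}u$ and $\min_{\ssmap_{w}(\ssvertices_{0})}u$, and recalling $u_{n}=u$ on $\ssvertices_{n}\supset\ssmap_{w}(\ssvertices_{0})$, shows that $u_{n}$ takes values in the interval $[\min_{\ssmap_{w}(\ssvertices_{0})}u,\,\max_{\ssmap_{w}(\ssvertices_{0})}u]$ throughout $\ssset_{w}$, so that $\osc_{\ssset_{w}}[u_{n}]\leq\osc_{\ssset_{w}}[u]$. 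Now for an arbitrary $x\in\ssset$, choosing $w\in\words_{n}$ with $x\in\ssset_{w}$ and any $q\in\ssmap_{w}(\ssvertices_{0})$, the identity $u_{n}(q)=u(q)$ and the triangle inequality give $\lvert u_{n}(x)-u(x)\rvert\leq\osc_{\ssset_{w}}[u_{n}]+\osc_{\ssset_{w}}[u]\leq 2\osc_{\ssset_{w}}[u]$. Since each $\ssmap_{i}$ is a $\tfrac{1}{2}$-contraction we have $\diam_{\refmet}\ssset_{w}=2^{-n}\diam_{\refmet}\ssset$ for $w\in\words_{n}$, and $u$ is uniformly continuous on the compact set $\ssset$, so the right-hand side tends to $0$ uniformly in $x$ as $n\to\infty$. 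Thus $u_{n}\to u$ uniformly, and in particular $\lim_{n\to\infty}(u_{n}(x)-u_{n}(y))=u(x)-u(y)$ for all $x,y\in\ssset$.

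With both conditions established, Proposition~\ref{prop:p-RF-norm-convergence-characterize}-\ref{it:p-RF-norm-convergence-characterize} yields $\lim_{n\to\infty}\form_{p}(u-u_{n})=0$, completing the proof. I expect the oscillation estimate via the cell-wise maximum principle to be the only genuinely substantive step; the energy convergence is forced by the construction \eqref{eq:SG-p-RF-form}, and the remaining passages are routine.
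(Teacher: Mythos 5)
Your proof is correct, but it takes a genuinely different route from the paper's. The paper's argument is a pure uniform-convexity argument and never looks at pointwise values: writing $\frac{u+u_{n}}{2},\frac{u-u_{n}}{2}$ in place of $u,v$ in $p$-Clarkson's inequalities \eqref{eq:p-RF-sClarkson-pleq2} and \eqref{eq:p-RF-wClarkson-pgeq2}, and noting that $\form_{p}\bigl(\frac{u+u_{n}}{2}\bigr)\geq\form_{p}(u_{n})=\form_{p}^{(n)}(u\vert_{\ssvertices_{n}})$ because $\frac{u+u_{n}}{2}$ agrees with $u$ on $\ssvertices_{n}$ while $u_{n}$ is the energy minimizer with that boundary data, one sees that both $\form_{p}(u_{n})$ and $\form_{p}\bigl(\frac{u+u_{n}}{2}\bigr)$ are squeezed toward $\form_{p}(u)$ by \eqref{eq:SG-p-RF-form}, which forces $\form_{p}\bigl(\frac{u-u_{n}}{2}\bigr)\to 0$. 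Your route instead goes through the characterization of norm convergence in Proposition~\ref{prop:p-RF-norm-convergence-characterize}-\ref{it:p-RF-norm-convergence-characterize}, whose energy half is the same monotonicity observation but whose pointwise half you supply separately via the cell-wise maximum principle (Proposition~\ref{prop:SG-p-RF-harmonic-Vn-comp} applied against constants) together with the uniform continuity of $u$ and $\diam_{\refmet}\ssset_{w}=2^{-n}\diam_{\refmet}\ssset$. All the steps check out: constants do lie in $\harfunc{\form_{p}}{\ssvertices_{n}}$, so the oscillation bound $\osc_{\ssset_{w}}[u_{n}]\leq\osc_{\ssset_{w}}[u]$ is legitimate. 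What your approach buys is the stronger intermediate conclusion that $u_{n}\to u$ \emph{uniformly} on $\ssset$, which is of independent use; what it costs is reliance on machinery (the localized comparison principle and the specific cell geometry of the gasket) that the paper's two-line convexity argument avoids entirely, and which would need replacing in settings where such a clean cell structure is unavailable.
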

\begin{proof}
This is immediate from \eqref{eq:p-RF-sClarkson-pleq2} and \eqref{eq:p-RF-wClarkson-pgeq2}
with $\frac{u+u_{n}}{2},\frac{u-u_{n}}{2}$ in place of $u,v$,
$\form_{p}\bigl(\frac{u+u_{n}}{2}\bigr)\geq\form_{p}(u_{n})=\form_{p}^{(n)}(u\vert_{\ssvertices_{n}})$
for $n\in\mathbb{N}\cup\{0\}$ and \eqref{eq:SG-p-RF-form}.
\qed\end{proof}

We collect some important consequences of Theorem \ref{thm:SG-p-RF}
in relation to the topology of $\ssset$ in the following theorem.
\begin{thm}[Cf.\ {\cite[Theorem 5.2]{CGQ}}]\label{thm:SG-p-RF-regular-str-local}
\begin{enumerate}[label=\textup{(\arabic*)},align=left,leftmargin=*,topsep=4pt,itemsep=2pt]
\item\label{it:SG-p-RF-regular}$(\form_{p},\domain_{p})$ is \emph{regular}, i.e.,
	$\domain_{p}$ is a dense subalgebra of $(\contfunc(\ssset),\lVert\cdot\rVert_{\sup})$.
\item\label{it:SG-p-RF-strongly-local}$(\form_{p},\domain_{p})$ is
	\emph{strongly local}, i.e., $\form_{p}(u_{1};v)=\form_{p}(u_{2};v)$ for any
	$u_{1},u_{2},v\in\domain_{p}$ that satisfy $(u_{1}(x)-u_{2}(x)-a)(v(x)-b)=0$
	for any $x\in\ssset$ for some $a,b\in\mathbb{R}$.
\end{enumerate}
\end{thm}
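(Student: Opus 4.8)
The plan is to treat the two assertions separately, with \ref{it:SG-p-RF-regular} a quick application of the Stone--Weierstrass theorem and \ref{it:SG-p-RF-strongly-local} the substantive part. For \ref{it:SG-p-RF-regular}, I would first check that $\domain_{p}$ is a subalgebra of $\contfunc(\ssset)$: it is a linear subspace by construction, it contains the constants since $\mathbb{R}\one_{\ssset}\subset\domain_{p}$ by \ref{it:p-RF1}, and it is closed under products because every $u\in\domain_{p}\subset\contfunc(\ssset)$ is bounded (as $\ssset$ is compact) so that Proposition \ref{prop:p-RF5-conseq}-\ref{it:p-RF-product} applies. Since $\domain_{p}$ separates the points of $\ssset$ by \ref{it:p-RF3} (recall that $(\form_{p},\domain_{p})$ is a $p$-resistance form by Theorem \ref{thm:SG-p-RF}), the Stone--Weierstrass theorem yields density of $\domain_{p}$ in $(\contfunc(\ssset),\lVert\cdot\rVert_{\sup})$.

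For \ref{it:SG-p-RF-strongly-local} I would first reduce to $a=b=0$. Since $\form_{p}(u;\one_{\ssset})=0$ and $\form_{p}(u;\cdot)$ is linear by Theorem \ref{thm:p-RF-Diff}, replacing $v$ by $v-b\one_{\ssset}$ leaves both derivatives unchanged, and since $\form_{p}(u+a\one_{\ssset};\cdot)=\form_{p}(u;\cdot)$ by \eqref{eq:p-RF-Diff-homogeneous}, replacing $u_{1}$ by $u_{1}-a\one_{\ssset}$ reduces the hypothesis to $wv=0$ on $\ssset$, where $w:=u_{1}-u_{2}$, and the goal to $\form_{p}(u_{2}+w;v)=\form_{p}(u_{2};v)$. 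The central step is then the following locality statement for \emph{strictly separated} supports: if $\theta,\zeta\in\domain_{p}$ satisfy $\supp_{\ssset}[\theta]\cap\supp_{\ssset}[\zeta]=\emptyset$, then $\form_{p}(g+\theta;\zeta)=\form_{p}(g;\zeta)$ for every $g\in\domain_{p}$.

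To prove the separated-support locality I would pass to the finite-level derivative formula. Writing $g_{n}:=\harext{\form_{p}}{\ssvertices_{n}}[g\vert_{\ssvertices_{n}}]$, Proposition \ref{prop:SG-p-RF-harmonic-approx} gives $\form_{p}(g-g_{n})\to 0$, so \eqref{eq:p-RF-Diff-Hoelder-cont} yields $\form_{p}(g;\zeta)=\lim_{n\to\infty}\form_{p}(g_{n};\zeta)$, while \eqref{eq:p-RF-trace-Diff}, the identity $\form_{p}\vert_{\ssvertices_{n}}=\form_{p}^{(n)}$ from Theorem \ref{thm:SG-p-RF}, and differentiation of \eqref{eq:p-RF-V0-Vn} give
\[
\form_{p}(g_{n};\zeta)=\sum_{\tau\in\words_{n}}\sscndc_{p}^{n}\form_{p}^{(0)}\bigl(g\circ\ssmap_{\tau}\vert_{\ssvertices_{0}};\zeta\circ\ssmap_{\tau}\vert_{\ssvertices_{0}}\bigr).
\]
Once $\diam\ssset_{\tau}<\operatorname{dist}(\supp_{\ssset}[\theta],\supp_{\ssset}[\zeta])$ for all $\tau\in\words_{n}$, each cell $\ssset_{\tau}$ misses at least one of the two disjoint compact supports: if it misses $\supp_{\ssset}[\zeta]$ then $\zeta\circ\ssmap_{\tau}\vert_{\ssvertices_{0}}=0$ and the summand vanishes by linearity of $\form_{p}^{(0)}(\cdot;\cdot)$ in its second slot, whereas if it misses $\supp_{\ssset}[\theta]$ then $(g+\theta)\circ\ssmap_{\tau}\vert_{\ssvertices_{0}}=g\circ\ssmap_{\tau}\vert_{\ssvertices_{0}}$ and the summands for $g$ and $g+\theta$ coincide. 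Hence the two sums agree term by term for all large $n$, and letting $n\to\infty$ proves the claim.

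Finally I would bridge from strictly separated to merely non-overlapping supports by approximation. Since $wv=0$ forces $\supp_{\ssset}[w]\subset\{v=0\}$, the soft truncation $v_{\delta}:=\sgn(v)(\lvert v\rvert-\delta)^{+}$ satisfies $\supp_{\ssset}[v_{\delta}]\subset\{\lvert v\rvert\geq\delta\}$, which is disjoint from $\supp_{\ssset}[w]$; applying the separated-support locality with $\theta=w$, $\zeta=v_{\delta}$, $g=u_{2}$ gives $\form_{p}(u_{1};v_{\delta})=\form_{p}(u_{2};v_{\delta})$ for every $\delta>0$. As $v_{\delta}=\varphi_{\delta}(v)$ for the $1$-Lipschitz $\varphi_{\delta}(t)=t-(-\delta)\vee(t\wedge\delta)$, Proposition \ref{prop:p-RF-approx}-\ref{it:p-RF-approx-cutoff-given-func} gives $\form_{p}(v-v_{\delta})\to 0$ as $\delta\to 0$, and \eqref{eq:p-RF-Diff-Hoelder} together with linearity in the second slot yields $\form_{p}(u_{i};v)=\lim_{\delta\to 0}\form_{p}(u_{i};v_{\delta})$ for $i=1,2$, whence $\form_{p}(u_{1};v)=\form_{p}(u_{2};v)$. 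I expect the main obstacle to be exactly this passage across \emph{touching} supports: the monotonicity of the derivative in Proposition \ref{prop:p-RF-Diff-monotone} only sandwiches $\form_{p}(u_{1};v)$ and $\form_{p}(u_{2};v)$ between the energies of $u_{1}\wedge u_{2}$ and $u_{1}\vee u_{2}$ without forcing equality, and strong locality genuinely fails at the finite level for general $p$-resistance forms, so the truncation-and-limit device above, rather than any cell-by-cell identity, is the essential point.
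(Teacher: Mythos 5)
Your proposal is correct and takes essentially the same route as the paper: Stone--Weierstrass for \ref{it:SG-p-RF-regular}, and for \ref{it:SG-p-RF-strongly-local} the identical two-step scheme of first proving locality for strictly separated supports by a cell decomposition at a sufficiently deep level $n$ and then reaching touching supports by replacing $v$ with $\varphi_{\delta}(v-b\one_{\ssset})$ for $\varphi_{\delta}(t)=t-(-\delta)\vee(t\wedge\delta)$ and letting $\delta\to 0$ via Proposition \ref{prop:p-RF-approx}-\ref{it:p-RF-approx-cutoff-given-func} and \eqref{eq:p-RF-Diff-Hoelder}. The only (harmless) implementation difference is in the separated-supports step, where you pass through the discrete traces $\form_{p}^{(n)}$ and the harmonic extensions of Proposition \ref{prop:SG-p-RF-harmonic-approx}, while the paper applies \ref{it:SG-p-RF-SSE1} and \ref{it:SG-p-RF-SSE2} directly to write $\form_{p}(\,\cdot\,;v)=\sum_{w\in\words_{n}}\sscndc_{p}^{n}\form_{p}(\,\cdot\circ\ssmap_{w};v\circ\ssmap_{w})$ and thereby avoids the extra limit in $n$.
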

\begin{proof}
\begin{enumerate}[label=\textup{(\arabic*)},align=left,leftmargin=*,topsep=4pt,itemsep=2pt]
\item\label{it:SG-p-RF-regular-prf}This follows from the compactness of $\ssset$,
	\ref{it:p-RF1}, \ref{it:p-RF3}, Proposition \ref{prop:p-RF5-conseq}-\ref{it:p-RF-product}
	and the Stone--Weierstrass theorem (see, e.g., \cite[Theorem 2.4.11]{Dud}).
\item\label{it:SG-p-RF-strongly-local-prf}Under the stronger assumption
	$\supp_{\ssset}[u_{1}-u_{2}-a\one_{\ssset}]\cap\supp_{\ssset}[v-b\one_{\ssset}]=\emptyset$
	this is immediate from the compactness of $\ssset$, \ref{it:SG-p-RF-SSE1}, \ref{it:SG-p-RF-SSE2}
	and \eqref{eq:p-RF-Diff-dfn}, and the general case follows by applying this special case
	with $v$ replaced by $\varphi_{n}(v-b\one_{\ssset})$ for $\varphi_{n}\in\contfunc(\mathbb{R})$
	given by $\varphi_{n}(t)=t-(-\frac{1}{n})\vee(t\wedge\frac{1}{n})$ and using
	Proposition \ref{prop:p-RF-approx}-\ref{it:p-RF-approx-cutoff-given-func}
	to let $n\to\infty$.
\qed\end{enumerate}
\end{proof}

The following proposition is used to prove the weak comparison principle for arbitrary
open subsets of $\ssset$ (Proposition \ref{prop:SG-p-RF-comp}), which is an intermediate
step of the proof of the strong one (Theorem \ref{thm:SG-p-RF-strong-comp} below).
Detailed proofs of Propositions \ref{prop:SG-p-RF-local-redefinition} and \ref{prop:SG-p-RF-comp}
will be given in \cite{KS:GCDiff} in the setting of a local $p$-resistance form.
\begin{prop}\label{prop:SG-p-RF-local-redefinition}
Let $U$ be an open subset of $\ssset$ and let $u\in\domain_{p}$ satisfy $u(x)=0$
for any $x\in\partial_{\ssset}U$. Then $u\one_{U}\in\domain_{p}$.
\end{prop}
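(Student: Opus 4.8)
\emph{The plan.} The idea is to approximate $u\one_{U}$ from within by truncations that vanish on a whole \emph{neighborhood} of $\partial_{\ssset}U$, for which the sharp cutoff by $\one_{U}$ is harmless, and to control all energies through the finite-level forms $\form_{p}^{(n)}=\renom_{\sscndc_{p},n}(\form_{p}^{(0)})$ via Definition \ref{dfn:SG-p-RF}. First I would record that $u\one_{U}\in\contfunc(\ssset)$: on the open sets $U$ and $\ssset\setminus\overline{U}^{\ssset}$ the function $u\one_{U}$ coincides with the continuous functions $u$ and $0$ respectively, while for $x\in\partial_{\ssset}U$ we have $u\one_{U}(x)=u(x)=0$ and $\lvert u\one_{U}(y)\rvert\leq\lvert u(y)\rvert\to 0$ as $y\to x$ by continuity of $u$. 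Writing $u=u^{+}-u^{-}$ with $u^{\pm}\in\domain_{p}$ (Proposition \ref{prop:p-RF5-conseq}-\ref{it:p-RF-1Lip}) and noting $u^{\pm}=0$ on $\partial_{\ssset}U$, it suffices by linearity of $\domain_{p}$ to treat the case $u\geq 0$.

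\emph{The key claim.} I would first prove: \emph{if $g\in\domain_{p}$ vanishes on some open set $W$ with $\partial_{\ssset}U\subset W$, then $g\one_{U}\in\domain_{p}$ and $\form_{p}(g\one_{U})\leq\form_{p}(g)$.} Set $V:=\ssset\setminus\overline{U}^{\ssset}$, so that $g=g\one_{U}+g\one_{V}$ since $g=0$ on $\partial_{\ssset}U$. The point is that $\supp_{\ssset}[g\one_{U}]\subset\overline{U}^{\ssset}\setminus W$ and $\supp_{\ssset}[g\one_{V}]\subset\overline{V}^{\ssset}\setminus W$ are \emph{disjoint compact} sets, because $\overline{U}^{\ssset}\cap\overline{V}^{\ssset}\subset\partial_{\ssset}U\subset W$; let $\delta>0$ denote the distance between them. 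Choosing $n_{0}$ so large that $\diam_{\refmet}\ssset_{w}<\delta$ for all $w\in\words_{*}$ with $\lvert w\rvert\geq n_{0}$, no cell $\ssset_{w}$ of level $n\geq n_{0}$ meets both supports. Hence on each such $\ssset_{w}$ either $g\one_{V}\equiv 0$, so $g=g\one_{U}$ on $\ssset_{w}$ and both give the \emph{same} cell term $\sscndc_{p}^{n}\form_{p}^{(0)}(\,\cdot\circ\ssmap_{w}\vert_{\ssvertices_{0}})$, or $g\one_{U}\equiv 0$ on $\ssset_{w}$ and its cell term vanishes. Summing over $w\in\words_{n}$ and discarding the nonnegative terms of the latter type gives
\begin{equation*}
\form_{p}^{(n)}\bigl((g\one_{U})\vert_{\ssvertices_{n}}\bigr)\leq\form_{p}^{(n)}\bigl(g\vert_{\ssvertices_{n}}\bigr)\qquad\textrm{for all }n\geq n_{0}.
\end{equation*}
Since both sides are non-decreasing in $n$ (Proposition \ref{prop:SG-p-RF-compatible-seq}) and, as above, $g\one_{U}\in\contfunc(\ssset)$, Definition \ref{dfn:SG-p-RF} yields $\form_{p}(g\one_{U})=\lim_{n\to\infty}\form_{p}^{(n)}((g\one_{U})\vert_{\ssvertices_{n}})\leq\lim_{n\to\infty}\form_{p}^{(n)}(g\vert_{\ssvertices_{n}})=\form_{p}(g)<\infty$, so $g\one_{U}\in\domain_{p}$, proving the claim.

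\emph{Passing to the sharp cutoff.} For $u\geq 0$ and $\epsilon>0$ I would apply the claim to $g:=(u-\epsilon)^{+}$, which lies in $\domain_{p}$ with $\form_{p}(g)\leq\form_{p}(u)$ by Proposition \ref{prop:p-RF5-conseq}-\ref{it:p-RF-1Lip} (as $t\mapsto(t-\epsilon)^{+}$ is $1$-Lipschitz), and which vanishes on the open set $\{u<\epsilon\}\supset\partial_{\ssset}U$. The claim gives $(u-\epsilon)^{+}\one_{U}\in\domain_{p}$ with $\form_{p}\bigl((u-\epsilon)^{+}\one_{U}\bigr)\leq\form_{p}(u)$, a bound \emph{uniform} in $\epsilon$. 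Since $\lVert(u-\epsilon)^{+}-u\rVert_{\sup}\leq\epsilon$, for each fixed $n$ the finitely many values of $((u-\epsilon)^{+}\one_{U})\vert_{\ssvertices_{n}}$ converge to those of $(u\one_{U})\vert_{\ssvertices_{n}}$ as $\epsilon\downarrow 0$, whence by continuity of the finite-level form and $\form_{p}\vert_{\ssvertices_{n}}=\form_{p}^{(n)}$ together with $\form_{p}^{(n)}(\,\cdot\,)\leq\form_{p}(\,\cdot\,)$ (Theorems \ref{thm:p-RF-trace} and \ref{thm:SG-p-RF}),
\begin{equation*}
\form_{p}^{(n)}\bigl((u\one_{U})\vert_{\ssvertices_{n}}\bigr)=\lim_{\epsilon\downarrow 0}\form_{p}^{(n)}\bigl(((u-\epsilon)^{+}\one_{U})\vert_{\ssvertices_{n}}\bigr)\leq\form_{p}(u).
\end{equation*}
Taking the supremum over $n$ and recalling $u\one_{U}\in\contfunc(\ssset)$, Definition \ref{dfn:SG-p-RF} gives $u\one_{U}\in\domain_{p}$; the general case then follows from the reduction to $u^{\pm}$.

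\emph{The main obstacle.} The essential difficulty is precisely the cells straddling $\partial_{\ssset}U$, across which $u\one_{U}$ may jump at each finite level, so that a direct cell-by-cell comparison with $u$ is unavailable. Pre-truncating by $(u-\epsilon)^{+}$ to create a buffer neighborhood $W$ on which $g$ vanishes converts the sharp cutoff into one separating two disjoint compact supports; finite ramification (Proposition \ref{prop:SG-intersecting-cells}-\ref{it:SG-intersecting-cells}) then ensures that at a fine enough scale no single cell sees both sides, making the energy comparison immediate, and the uniform bound lets $\epsilon\downarrow 0$ recover the sharp truncation.
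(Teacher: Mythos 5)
Your argument is correct, but it takes a genuinely different route from the paper's. Both proofs begin by pre-truncating $u$ near the value $0$ so that the truncated function vanishes on a whole neighborhood of $\partial_{\ssset}U$ (the paper uses the two-sided $\varphi_{n}(t)=t-(-\frac{1}{n})\vee(t\wedge\frac{1}{n})$, so it never needs your reduction to $u\geq 0$). The divergence is in how the sharp cutoff by $\one_{U}$ is then absorbed: the paper invokes regularity (Theorem \ref{thm:SG-p-RF-regular-str-local}-\ref{it:SG-p-RF-regular}) to produce a Urysohn-type function $v_{n}\in\domain_{p}$ with $\one_{A_{n}}\leq v_{n}\leq\one_{U}$, writes $\varphi_{n}(u)\one_{U}=\varphi_{n}(u)v_{n}\in\domain_{p}$ via the product rule (Proposition \ref{prop:p-RF5-conseq}-\ref{it:p-RF-product}), and then shows the sequence is Cauchy using strong locality before concluding by the completeness \ref{it:p-RF2}; you instead exploit the concrete cell structure, observing that once the two disjoint compact supports of $g\one_{U}$ and $g\one_{V}$ are at positive distance, no cell of sufficiently high level meets both, so the cell-by-cell comparison of $\form_{p}^{(n)}$ is immediate, and you finish with the uniform bound $\form_{p}^{(n)}\bigl((u\one_{U})\vert_{\ssvertices_{n}}\bigr)\leq\form_{p}(u)$ and the characterization \eqref{eq:SG-p-RF-domain} of $\domain_{p}$. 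Your route is more elementary and self-contained in the present setting---it avoids regularity, the product estimate, strong locality and the completeness axiom entirely, and yields the extra bound $\form_{p}(u\one_{U})\leq\form_{p}(u)$ for free---but it is tied to the self-similar, finitely ramified geometry of $\ssset$ (shrinking cells whose energy contributions are determined by finitely many boundary values), whereas the paper's argument is designed to survive in the abstract setting of a local $p$-resistance form announced for \cite{KS:GCDiff}, and it also gives the stronger conclusion that $\varphi_{n}(u)\one_{U}\to u\one_{U}$ in the seminorm $\form_{p}^{1/p}$ rather than mere membership.
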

\begin{prfsktch}
Let $n\in\mathbb{N}$, define $\varphi_{n}\in\contfunc(\mathbb{R})$ by
$\varphi_{n}(t)=t-(-\frac{1}{n})\vee(t\wedge\frac{1}{n})$ and set
$A_{n}:=U\cap\supp_{\ssset}[\varphi_{n}(u)]$. Since
$A_{n}=\overline{U}^{\ssset}\cap\supp_{\ssset}[\varphi_{n}(u)]$ by $u\vert_{\partial_{\ssset}U}=0$,
$A_{n}$ is a compact subset of $U$, hence by
Theorem \ref{thm:SG-p-RF-regular-str-local}-\ref{it:SG-p-RF-regular}
and Proposition \ref{prop:p-RF5-conseq}-\ref{it:p-RF-1Lip}
we can take $v_{n}\in\domain_{p}$ such that
$\one_{A_{n}}(x)\leq v_{n}(x)\leq\one_{U}(x)$ for any $x\in\ssset$,
and then $\varphi_{n}(u)\one_{U}=\varphi_{n}(u)v_{n}\in\domain_{p}$
by Proposition \ref{prop:p-RF5-conseq}-\ref{it:p-RF-product}. Now it is not difficult
to see that $\{\varphi_{n}(u)\one_{U}\}_{n\in\mathbb{N}}$ gives a Cauchy sequence in
$(\domain_{p}/\mathbb{R}\one_{\ssset},\form_{p}^{1/p})$
by Theorem \ref{thm:SG-p-RF-regular-str-local}-\ref{it:SG-p-RF-strongly-local}
and Proposition \ref{prop:p-RF-approx}-\ref{it:p-RF-approx-cutoff-given-func} 
and thus converges in norm in $(\domain_{p}/\mathbb{R}\one_{\ssset},\form_{p}^{1/p})$
to its pointwise limit $u\one_{U}$ by \ref{it:p-RF2} and Proposition \ref{prop:p-RF4-conseq},
whence $u\one_{U}\in\domain_{p}$.
\qed\end{prfsktch}
\begin{prop}[Weak comparison principle]\label{prop:SG-p-RF-comp}
Let $U\subsetneq\ssset$ be an open subset of $\ssset$ and let
$u,v\in\harfunc{\form_{p}}{\ssset\setminus U}$ satisfy $u(x)\leq v(x)$
for any $x\in\partial_{\ssset}U$. Then $u(x)\leq v(x)$ for any $x\in U$.
\end{prop}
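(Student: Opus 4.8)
The plan is to reduce the assertion to the global weak comparison principle (Proposition~\ref{prop:p-RF-comp}) by passing to the trace of $(\form_{p},\domain_{p})$ on $K:=\overline{U}^{\ssset}$. Write $B:=\ssset\setminus U$, so that $B$ is non-empty, $\ssset\setminus B=U$ and, since $U$ is open, $K\setminus U=\partial_{\ssset}U$. If $\partial_{\ssset}U=\emptyset$ then $U$ is both open and closed, hence $U=\emptyset$ by the connectedness of $\ssset$ and the assertion is vacuous; so I may assume $\partial_{\ssset}U\not=\emptyset$. By Theorem~\ref{thm:p-RF-trace} the trace $(\form_{p}\vert_{K},\domain_{p}\vert_{K})$ is a $p$-resistance form on $K$, and because $K\setminus U=\partial_{\ssset}U$ is exactly the boundary-data set of $U$ as seen inside $K$, I would obtain the conclusion by applying Proposition~\ref{prop:p-RF-comp} to $(\form_{p}\vert_{K},\domain_{p}\vert_{K})$ with the non-empty set $\partial_{\ssset}U$ playing the role of $B$. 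The crux is therefore to show that $u\vert_{K}$ and $v\vert_{K}$ lie in $\harfunc{\form_{p}\vert_{K}}{\partial_{\ssset}U}$, i.e.\ that harmonicity on $U$ is preserved under restriction to $K$ and detects only the boundary values on $\partial_{\ssset}U$, not the values of $u,v$ on the interior of $B$.

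To establish this, set $u_{*}:=\harext{\form_{p}}{K}[u\vert_{K}]$, which is $\form_{p}$-harmonic on $\ssset\setminus K$ and satisfies $u_{*}\vert_{K}=u\vert_{K}$. First I would check that $u_{*}$ is also $\form_{p}$-harmonic on $U$: for any $\phi\in\domain_{p}$ with $\phi\vert_{B}=0$ one has $\supp_{\ssset}[\phi]\subset\overline{U}^{\ssset}=K$, on which $u_{*}-u$ vanishes, so $(u_{*}-u)\phi\equiv0$ and the strong locality of $(\form_{p},\domain_{p})$ (Theorem~\ref{thm:SG-p-RF-regular-str-local}-\ref{it:SG-p-RF-strongly-local}) gives $\form_{p}(u_{*};\phi)=\form_{p}(u;\phi)=0$. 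Next, given any $\phi\in\domain_{p}$ with $\phi\vert_{\partial_{\ssset}U}=0$, Proposition~\ref{prop:SG-p-RF-local-redefinition} shows $\phi\one_{U}\in\domain_{p}$, and the decomposition $\phi=\phi\one_{U}+(\phi-\phi\one_{U})$ splits $\phi$ into a part vanishing on $B$ and a part vanishing on $K$; applying the harmonicity of $u_{*}$ on $U$ and on $\ssset\setminus K$ to the two parts and using the linearity of $\form_{p}(u_{*};\cdot)$ yields $\form_{p}(u_{*};\phi)=0$. Hence $u_{*}$ is $\form_{p}$-harmonic on $\ssset\setminus\partial_{\ssset}U$ with $u_{*}\vert_{\partial_{\ssset}U}=u\vert_{\partial_{\ssset}U}$, so by the uniqueness in Theorem~\ref{thm:p-RF-trace} we get $u_{*}=\harext{\form_{p}}{\partial_{\ssset}U}[u\vert_{\partial_{\ssset}U}]$. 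Restricting to $K$ and invoking the trace compatibility $\harext{\form_{p}\vert_{K}}{\partial_{\ssset}U}[\,\cdot\,]=\harext{\form_{p}}{\partial_{\ssset}U}[\,\cdot\,]\vert_{K}$ (Proposition~\ref{prop:p-RF-trace-compatible} with $A=\partial_{\ssset}U\subset K$) then gives $u\vert_{K}=u_{*}\vert_{K}=\harext{\form_{p}\vert_{K}}{\partial_{\ssset}U}[u\vert_{\partial_{\ssset}U}]\in\harfunc{\form_{p}\vert_{K}}{\partial_{\ssset}U}$, and likewise $v\vert_{K}\in\harfunc{\form_{p}\vert_{K}}{\partial_{\ssset}U}$.

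With this in hand I would finish by applying Proposition~\ref{prop:p-RF-comp} to the $p$-resistance form $(\form_{p}\vert_{K},\domain_{p}\vert_{K})$ on $K$ with boundary set $\partial_{\ssset}U$: since $u\vert_{K},v\vert_{K}\in\harfunc{\form_{p}\vert_{K}}{\partial_{\ssset}U}$ and $u(x)\leq v(x)$ for all $x\in\partial_{\ssset}U$, it follows that $u(x)\leq v(x)$ for all $x\in K$, in particular for all $x\in U$, which is the claim.

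The main obstacle is precisely the middle step: making rigorous that harmonicity on $U$ \emph{forgets} the values of $u$ and $v$ on the interior of $\ssset\setminus U$, so that the comparison can be localized to $\overline{U}^{\ssset}$ and thereby reduced to the already-available global principle. This is exactly where the strong locality of $(\form_{p},\domain_{p})$ and the local redefinition lemma (Proposition~\ref{prop:SG-p-RF-local-redefinition}) are indispensable; once this locality is in place, the remaining ingredients---the trace construction and its compatibility under iterated restriction---are purely formal.
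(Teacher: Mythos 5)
Your argument is correct, but it reaches Proposition \ref{prop:p-RF-comp} by a genuinely different reduction than the paper's. The paper keeps the ambient set $\ssset$ fixed and instead modifies the \emph{functions}: it sets $f:=u-(u-v)^{+}\one_{\ssset\setminus U}$ and $g:=v+(u-v)^{+}\one_{\ssset\setminus U}$, notes that $(u-v)^{+}$ vanishes on $\partial_{\ssset}U$ so that Proposition \ref{prop:SG-p-RF-local-redefinition} puts $(u-v)^{+}\one_{\ssset\setminus U}$ in $\domain_{p}$, observes via strong locality that $f,g$ are still in $\harfunc{\form_{p}}{\ssset\setminus U}$ because they differ from $u,v$ only on $\ssset\setminus U$, and then applies Proposition \ref{prop:p-RF-comp} on $\ssset$ itself with $B=\ssset\setminus U$, since $f=u\wedge v\leq u\vee v=g$ on $\ssset\setminus U$ while $f=u$ and $g=v$ on $U$. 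You instead keep the functions fixed and localize the \emph{form}: you prove that $u\vert_{\overline{U}^{\ssset}}$ and $v\vert_{\overline{U}^{\ssset}}$ lie in $\harfunc{\form_{p}\vert_{\overline{U}^{\ssset}}}{\partial_{\ssset}U}$ (via the test-function decomposition $\phi=\phi\one_{U}+\phi\one_{\ssset\setminus U}$, strong locality, and the trace compatibility of Proposition \ref{prop:p-RF-trace-compatible}) and then apply Proposition \ref{prop:p-RF-comp} to the trace form on $\overline{U}^{\ssset}$. Both routes rest on exactly the same two non-formal ingredients --- Proposition \ref{prop:SG-p-RF-local-redefinition} and Theorem \ref{thm:SG-p-RF-regular-str-local}-\ref{it:SG-p-RF-strongly-local} --- and both terminate in Proposition \ref{prop:p-RF-comp}; the paper's truncation trick is shorter, while your version is longer but isolates a reusable localization statement (harmonicity on $U$ sees only the boundary data on $\partial_{\ssset}U$ once one passes to the trace on $\overline{U}^{\ssset}$), which is of independent interest. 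Your handling of the degenerate case $\partial_{\ssset}U=\emptyset$ and of the identity $(u_{*}-u)\phi\equiv 0$ needed for strong locality are both sound.
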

\begin{prfsktch}
Setting $f:=u-(u-v)^{+}\one_{\ssset\setminus U}$ and $g:=v+(u-v)^{+}\one_{\ssset\setminus U}$,
we have $f,g\in\domain_{p}$ by Proposition \ref{prop:p-RF5-conseq}-\ref{it:p-RF-1Lip},
$u\vert_{\partial_{\ssset}U}\leq v\vert_{\partial_{\ssset}U}$ and
Proposition \ref{prop:SG-p-RF-local-redefinition},
$f,g\in\harfunc{\form_{p}}{\ssset\setminus U}$ by
Theorem \ref{thm:SG-p-RF-regular-str-local}-\ref{it:SG-p-RF-strongly-local}
and $u,v\in\harfunc{\form_{p}}{\ssset\setminus U}$,
$f(x)=(u\wedge v)(x)\leq(u\vee v)(x)=g(x)$ for any $x\in\ssset\setminus U$, and thus
$u(x)=f(x)\leq g(x)=v(x)$ for any $x\in U$ by Proposition \ref{prop:p-RF-comp}.
\qed\end{prfsktch}
%
%%%
\subsection{Strong comparison principle and the uniqueness of the form}\label{ssec:SG-strong-comp-p-RF-unique}
%%%
While the weak comparison principle as in Propositions \ref{prop:SG-p-RF-harmonic-Vn-comp}
and \ref{prop:SG-p-RF-comp} above is already good enough in many applications, for detailed
analysis of $\form_{p}$-harmonic functions it is very important to have the strong
comparison principle as stated in Theorem \ref{thm:SG-p-RF-strong-comp} below.
In fact, it can be proved for general p.-c.f.\ self-similar sets by
combining Proposition \ref{prop:p-RF-Diff-monotone} with the self-similarity of
$(\form_{p},\domain_{p})$ and $\#\ssvertices_{0}<\infty$, and allows us to apply
a nonlinear version of Perron--Frobenius theory presented in \cite[Chapters 5 and 6]{LN}
and thereby to conclude the uniqueness of $\form_{p}^{(0)}$ under the assumption of some good
geometric symmetry of the self-similar set by following \cite[Proof of Theorem 2.6]{Pei}.
The details of these results will be provided in \cite{KS:StrComp}. In this subsection,
we give their precise statements and a brief sketch of the proofs of them
in the current setting of the Sierpi\'{n}ski gasket. First, the strong comparison
principle can be stated as follows.
\begin{thm}[Strong comparison principle]\label{thm:SG-p-RF-strong-comp}
Let $U\subsetneq\ssset$ be a connected open subset of $\ssset$ and let
$u,v\in\harfunc{\form_{p}}{\ssset\setminus U}$ satisfy $u(x)\leq v(x)$
for any $x\in\partial_{\ssset}U$. Then either $u(x)<v(x)$ for any $x\in U$
or $u(x)=v(x)$ for any $x\in\overline{U}^{\ssset}$.
\end{thm}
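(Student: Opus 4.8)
The plan is to deduce the dichotomy from the weak comparison principle together with a propagation of the \emph{contact set}. By Proposition \ref{prop:SG-p-RF-comp} we already have $u\leq v$ on $\overline{U}^{\ssset}$, so it suffices to show that $N:=\{x\in U\mid u(x)=v(x)\}$, which is relatively closed in $U$ by continuity, is also relatively open in $U$: since $U$ is connected, this forces $N=\emptyset$ (whence $u<v$ on $U$) or $N=U$ (whence $u=v$ on $\overline{U}^{\ssset}$ by continuity). Thus everything reduces to showing that a contact point has a neighborhood on which $u=v$.

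The engine for this is a flux identity at interior vertices. First I would treat a contact point $x_{0}\in N$ lying in $\ssvertices_{*}$, say $x_{0}\in\ssvertices_{m}$ with $m$ so large that every $m$-cell $\ssset_{\omega}$ with $x_{0}\in\ssset_{\omega}$ is contained in $U$ (such $m$ exists since these cells form a neighborhood basis at $x_{0}$). Since $u,v$ are $\form_{p}$-harmonic on $U$, both satisfy the test-function condition \eqref{eq:SG-p-RF-harmonic-Vn-test} at $x_{0}$. For each $\omega\in\words_{m}$ with $x_{0}\in\ssmap_{\omega}(\ssvertices_{0})$ I would apply Proposition \ref{prop:p-RF-Diff-monotone} on $\ssvertices_{0}$ with $u_{1}=u\circ\ssmap_{\omega}\vert_{\ssvertices_{0}}$, $u_{2}=v\circ\ssmap_{\omega}\vert_{\ssvertices_{0}}$ and test direction $\one_{\ssmap_{\omega}^{-1}(x_{0})}$; this is legitimate because $u_{2}-u_{1}\geq 0$ and vanishes at $\ssmap_{\omega}^{-1}(x_{0})$, so that $(u_{2}-u_{1})\wedge\one_{\ssmap_{\omega}^{-1}(x_{0})}\equiv 0$. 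Summing the resulting inequalities with the positive weights $\sscndc_{p}^{m}$ and invoking \eqref{eq:SG-p-RF-harmonic-Vn-test} for both $u$ and $v$ forces every summand to be an equality, i.e.
\[
\form_{p}^{(0)}\bigl(u\circ\ssmap_{\omega}\vert_{\ssvertices_{0}};\one_{\ssmap_{\omega}^{-1}(x_{0})}\bigr)
=\form_{p}^{(0)}\bigl(v\circ\ssmap_{\omega}\vert_{\ssvertices_{0}};\one_{\ssmap_{\omega}^{-1}(x_{0})}\bigr)
\]
for each such $\omega$. The next step is an equality case of Proposition \ref{prop:p-RF-Diff-monotone} on the finite set $\ssvertices_{0}$, which I would extract from the strict convexity \eqref{eq:p-RF-Diff-strictly-convex} in Theorem \ref{thm:p-RF-Diff}: it upgrades each flux equality to $u\circ\ssmap_{\omega}\vert_{\ssvertices_{0}}=v\circ\ssmap_{\omega}\vert_{\ssvertices_{0}}$, i.e. $u=v$ at all vertices of $\ssset_{\omega}$. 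Uniqueness of the harmonic extension (Theorem \ref{thm:p-RF-trace}) then yields $u=v$ on all of $\ssset_{\omega}$, hence on the neighborhood $\bigcup\{\ssset_{\omega}\mid x_{0}\in\ssset_{\omega}\}$ of $x_{0}$, so $x_{0}\in\operatorname{int}N$.

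Read contrapositively and propagated along the connected graphs on $\ssvertices_{m}$ (here $\#\ssvertices_{0}<\infty$ is used), the same flux identity shows that, whenever the boundary data do not coincide, the inequality is in fact strict on the dense set $\ssvertices_{*}\cap U$. It remains to handle a contact point $x_{0}\in U\setminus\ssvertices_{*}$, and this is the step I expect to be the main obstacle: such $x_{0}$ is never a vertex, so the vertex engine does not apply to it directly. I would address it through the self-similarity \ref{it:SG-p-RF-SSE1}--\ref{it:SG-p-RF-SSE2}, passing to the unique nested sequence of cells $\ssset_{\eta_{k}}\ni x_{0}$ shrinking to $x_{0}$ and pulling back by $\ssmap_{\eta_{k}}$ to reduce to the single-copy statement that two ordered $\form_{p}$-harmonic functions on $\ssset$ (with boundary $\ssvertices_{0}$) agreeing at one interior point have identical boundary values.

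Promoting the strict inequality, already established on the dense vertex set $\ssvertices_{*}\setminus\ssvertices_{0}$, to this genuinely interior, non-vertex point is precisely where the argument must exploit the self-similar structure and $\#\ssvertices_{0}<\infty$ in full, via a quantitative (rather than merely qualitative) comparison along the address of $x_{0}$; this is the delicate point, since a naive limit of the strict inequalities over the shrinking cells only returns the non-strict bound. Once this single-copy rigidity is in hand, the reduction of the previous paragraph closes the openness of $N$ at every point of $U$, and the connectedness argument completes the proof.
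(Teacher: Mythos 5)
Your skeleton coincides with the paper's: weak comparison first, then show the contact set $N$ is relatively open in $U$ and invoke connectedness, with the flux identity \eqref{eq:SG-p-RF-harmonic-Vn-test} plus Proposition \ref{prop:p-RF-Diff-monotone} yielding the per-cell flux equality $\form_{p}^{(0)}(u\circ\ssmap_{\omega}\vert_{\ssvertices_{0}};\one_{\ssmap_{\omega}^{-1}(x_{0})})=\form_{p}^{(0)}(v\circ\ssmap_{\omega}\vert_{\ssvertices_{0}};\one_{\ssmap_{\omega}^{-1}(x_{0})})$ at a contact vertex. The pivotal step, however, has a genuine gap: the ``equality case of Proposition \ref{prop:p-RF-Diff-monotone}'' you want does not follow from \eqref{eq:p-RF-Diff-strictly-convex}. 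That statement gives strict monotonicity of $t\mapsto\form_{p}(u+tv;v)$ only when the increment is \emph{parallel} to the test direction, whereas here the increment $v\circ\ssmap_{\omega}-u\circ\ssmap_{\omega}$ is supported on $\ssvertices_{0}\setminus\{\ssmap_{\omega}^{-1}(x_{0})\}$ while the test direction is $\one_{\ssmap_{\omega}^{-1}(x_{0})}$. The cross-direction equality case is false for general $p$-resistance forms on a three-point set (for the path-graph form $\lvert u(\SGvertex_{1})-u(\SGvertex_{2})\rvert^{p}+\lvert u(\SGvertex_{2})-u(\SGvertex_{3})\rvert^{p}$ the flux $\form_{p}(\cdot\,;\one_{\SGvertex_{1}})$ does not see $u(\SGvertex_{3})$ at all), and for the actual $\form_{p}^{(0)}$ the required single-coordinate strict monotonicity of the flux is precisely Proposition \ref{prop:SG-p-RF-normal-derivative-self-sim}-\ref{it:SG-p-RF-normal-derivative-monotone}, which the paper derives \emph{from} Theorem \ref{thm:SG-p-RF-strong-comp}; your route is therefore circular as written. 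The paper's proof avoids this by setting $B:=\{y\in\ssvertices_{0}\mid u\circ\ssmap_{w}(y)<v\circ\ssmap_{w}(y)\}$, collecting flux equalities at \emph{every} contact vertex $y\in\ssvertices_{0}\setminus B$ of the cell, sandwiching the interpolant $f:=u\circ\ssmap_{w}\vert_{\ssvertices_{0}}+a\one_{B}$ between $u\circ\ssmap_{w}$ and $v\circ\ssmap_{w}$ via Proposition \ref{prop:p-RF-Diff-monotone}, and then using $\form_{p}^{(0)}(\cdot\,;\one_{\ssvertices_{0}})=0$ to push the equalities into the direction $\one_{B}$ --- where the increment $a\one_{B}$ \emph{is} parallel to the test direction, so \eqref{eq:p-RF-Diff-strictly-convex} applies and produces the contradiction. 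Some such device is indispensable, not a technicality.

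The second gap is the non-vertex contact point, which you identify as the main obstacle and leave open; in fact it is the easy step and needs no quantitative comparison along the address of $x_{0}$. If $x_{0}\in U\setminus\ssvertices_{*}$ and $u(x_{0})=v(x_{0})$, take $w\in\words_{m}$ with $x_{0}\in\ssset_{w}$ and $m$ large enough that the flux identities hold at all three points of $\ssmap_{w}(\ssvertices_{0})$. If $u<v$ on $\ssmap_{w}(\ssvertices_{0})$, then with $c:=\min_{\ssvertices_{0}}(v\circ\ssmap_{w}-u\circ\ssmap_{w})>0$ the function $u\circ\ssmap_{w}+c\one_{\ssset}$ lies in $\harfunc{\form_{p}}{\ssvertices_{0}}$ by \eqref{eq:p-RF-harext-homogeneous} and is dominated by $v\circ\ssmap_{w}$ on $\ssvertices_{0}$, hence on all of $\ssset$ by Proposition \ref{prop:p-RF-comp}, contradicting $u(x_{0})=v(x_{0})$. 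So some corner of $\ssset_{w}$ is a contact vertex, the (repaired) vertex argument gives $u=v$ on $\ssset_{w}$, and $\ssset_{w}$ is a neighborhood of the non-vertex point $x_{0}$. With these two repairs your open--closed scheme closes exactly as in the paper.
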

\begin{prfsktch}
For $x\in\ssset$ and $m\in\mathbb{N}$ we set
$\ssset_{m,x}:=\bigcup_{w\in\words_{m},\,x\in\ssset_{w}}\ssset_{w}$ and
$\ssnbd_{m,x}:=\bigcup_{y\in\ssset_{m,x}\cap\ssvertices_{m}}\ssset_{m,y}$.
First, we see from \ref{it:SG-p-RF-SSE1}, \ref{it:SG-p-RF-SSE2}, \eqref{eq:p-RF-Diff-dfn},
\eqref{eq:p-RF-trace-Diff} and $\form_{p}\vert_{\ssvertices_{0}}=\form_{p}^{(0)}$ from
Theorem \ref{thm:SG-p-RF} that for any $h\in\harfunc{\form_{p}}{\ssset\setminus U}$,
\begin{gather}\label{eq:SG-p-RF-harmonic-self-sim-general}
h\circ\ssmap_{w}\in\harfunc{\form_{p}}{\ssvertices_{0}}
	\quad\textrm{for any $w\in\words_{*}$ with $\ssset_{w}\subset U$, and}\\
\textrm{\eqref{eq:SG-p-RF-harmonic-Vn-test} holds for any $m\in\mathbb{N}$
and any $x\in\ssvertices_{m}$ with $\ssset_{m,x}\subset U$.}
\label{eq:SG-p-RF-harmonic-test-general}
\end{gather}

Let $q\in U\cap\ssvertices_{*}$, let $m\in\mathbb{N}$ satisfy $q\in\ssvertices_{m}$
and $\ssnbd_{m,q}\subset U$, let $w\in\words_{m}$ satisfy $q\in\ssset_{w}$,
and assume that $u(q)=v(q)$. We claim that
$u\circ\ssmap_{w}=v\circ\ssmap_{w}$. Suppose to the contrary that
$B:=\{y\in\ssvertices_{0}\mid u\circ\ssmap_{w}(y)<v\circ\ssmap_{w}(y)\}\not=\emptyset$.
Then setting $a:=\min_{y\in B}(v\circ\ssmap_{w}(y)-u\circ\ssmap_{w}(y))$ and
$f:=u\circ\ssmap_{w}\vert_{\ssvertices_{0}}+a\one_{B}$, for any $y\in\ssvertices_{0}\setminus B$
we would be able to see from \eqref{eq:SG-p-RF-harmonic-test-general} with
$x=\ssmap_{w}(y)$ and Proposition \ref{prop:p-RF-Diff-monotone} that
\begin{equation}\label{eq:SG-p-RF-strong-comp-derivative-stay}
\form_{p}^{(0)}(u\circ\ssmap_{w}\vert_{\ssvertices_{0}};\one_{y})
	=\form_{p}^{(0)}(f;\one_{y})
	=\form_{p}^{(0)}(v\circ\ssmap_{w}\vert_{\ssvertices_{0}};\one_{y}),
\end{equation}
whereas by $\ssmap_{w}^{-1}(q)\in\ssvertices_{0}\setminus B\not=\emptyset\not=B$,
\eqref{eq:p-RF-Diff-strictly-convex} and $a>0$ we would also have
\begin{equation}\label{eq:SG-p-RF-strong-comp-derivative-incr}
\form_{p}^{(0)}(u\circ\ssmap_{w}\vert_{\ssvertices_{0}};\one_{B})<\form_{p}^{(0)}(f;\one_{B}).
\end{equation}
The conjunction of \eqref{eq:SG-p-RF-strong-comp-derivative-stay} and
\eqref{eq:SG-p-RF-strong-comp-derivative-incr} would contradict
$\form_{p}^{(0)}(u\circ\ssmap_{w}\vert_{\ssvertices_{0}};\one_{\ssvertices_{0}})
	=0=\form_{p}^{(0)}(f;\one_{\ssvertices_{0}})$,
proving $u\circ\ssmap_{w}\vert_{\ssvertices_{0}}=v\circ\ssmap_{w}\vert_{\ssvertices_{0}}$
and hence $u\circ\ssmap_{w}=v\circ\ssmap_{w}$ by \eqref{eq:SG-p-RF-harmonic-self-sim-general}.

Finally, if $x\in U\setminus\ssvertices_{*}$ and $u(x)=v(x)$, then choosing
$m\in\mathbb{N}$ and $w\in\words_{m}$ so that $x\in\ssset_{w}$ and
$\bigcup_{q\in\ssmap_{w}(\ssvertices_{0})}\ssnbd_{m,q}\subset U$,
we have $u(q)=v(q)$ for some $q\in\ssmap_{w}(\ssvertices_{0})$ by 
\eqref{eq:SG-p-RF-harmonic-self-sim-general}, \eqref{eq:p-RF-harext-homogeneous},
Proposition \ref{prop:p-RF-comp} and $u(x)=v(x)$, and therefore
$u\circ\ssmap_{w}=v\circ\ssmap_{w}$ by the previous paragraph.
It follows that $\{x\in U\mid u(x)=v(x)\}$ is both open and closed in $U$
and thus either $\emptyset$ or $U$ by the connectedness of $U$.
\qed\end{prfsktch}

Theorem \ref{thm:SG-p-RF-strong-comp} has an important application to identifying the
principal term of the asymptotic behavior of $h\in\harfunc{\form_{p}}{\ssvertices_{0}}$
at each point of $\ssvertices_{0}=\{\SGvertex_{i}\mid i\in\ssindex\}$ as proved in
Theorem \ref{thm:SG-p-RF-Perron-Frobenius} below. For this purpose we need the
following proposition, which indicates what the asymptotic decay rate here should be.
In fact, part of Proposition \ref{prop:SG-p-RF-normal-derivative-self-sim}-\ref{it:SG-p-RF-normal-derivative-self-sim},\ref{it:SG-p-RF-normal-derivative-monotone},\ref{it:SG-p-RF-harmonic-eigenfunc}
and of Theorem \ref{thm:SG-p-RF-Perron-Frobenius} was obtained in \cite[Section 5]{SW}
under the suppositions of \eqref{eq:p-RF-Diff-dfn}, the strict convexity of
$(\domain_{p}/\mathbb{R}\one_{\ssset},\form_{p}^{1/p})$ implied by
\eqref{eq:p-RF-sClarkson-pleq2} and \eqref{eq:p-RF-wClarkson-pgeq2}, and some
special cases of Theorem \ref{thm:SG-p-RF-strong-comp} and Proposition \ref{prop:p-RF-comp},
but we provide here self-contained proofs of them for the reader's convenience.
\begin{prop}\label{prop:SG-p-RF-normal-derivative-self-sim}
Let $i\in\ssindex$, $j\in\ssindex\cap\{i-2,i+1\}$ and $k\in\ssindex\cap\{i-1,i+2\}$.
\begin{enumerate}[label=\textup{(\arabic*)},align=left,leftmargin=*,topsep=4pt,itemsep=2pt]
\item\label{it:SG-p-RF-normal-derivative-self-sim}$\form_{p}^{(0)}(h\vert_{\ssvertices_{0}};\one_{\SGvertex_{i}})
		=\sscndc_{p}^{n}\form_{p}^{(0)}(h\circ\ssmap_{i^{n}}\vert_{\ssvertices_{0}};\one_{\SGvertex_{i}})$
	for any $h\in\harfunc{\form_{p}}{\ssvertices_{0}}$ and any $n\in\mathbb{N}$.
\item\label{it:SG-p-RF-normal-derivative-monotone}$\mathbb{R}\ni t\mapsto\form_{p}^{(0)}(a\one_{\SGvertex_{j}}+t\one_{\SGvertex_{k}};\one_{\SGvertex_{i}})\in\mathbb{R}$
	and $\mathbb{R}\ni t\mapsto\form_{p}^{(0)}(t\one_{\SGvertex_{j}}-a\one_{\SGvertex_{k}};\one_{\SGvertex_{i}})\in\mathbb{R}$
	are strictly decreasing and $\form_{p}^{(0)}(a\one_{\SGvertex_{j}}-a\one_{\SGvertex_{k}};\one_{\SGvertex_{i}})=0$
	for any $a\in\mathbb{R}$.
\item\label{it:SG-p-RF-harmonic-eigenfunc}Define
	$\harprincipal{p}{i}:=\harext{\form_{p}}{\ssvertices_{0}}[\one_{\SGvertex_{j}}+\one_{\SGvertex_{k}}]$
	and $\harsecond{p}{i}:=\harext{\form_{p}}{\ssvertices_{0}}[\one_{\SGvertex_{k}}-\one_{\SGvertex_{j}}]$.
	Then $\harprincipal{p}{i}\circ\ssmap_{i}=\sscndc_{p}^{-1/(p-1)}\harprincipal{p}{i}$, and there exists
	$\lambda_{p}\in(0,\sscndc_{p}^{-1/(p-1)})$ such that $\harsecond{p}{i}\circ\ssmap_{i}=\lambda_{p}\harsecond{p}{i}$.
\item\label{it:SG-p-RF-harmonic-osc-contraction}Let $h\in\harfunc{\form_{p}}{\ssvertices_{0}}$.
	Then $\osc_{\ssset}[h\circ\ssmap_{i}]=\sscndc_{p}^{-1/(p-1)}\osc_{\ssset}[h]$ if $h(\SGvertex_{j})=h(\SGvertex_{k})$,
	and $0<\osc_{\ssset}[h\circ\ssmap_{i}]<\sscndc_{p}^{-1/(p-1)}\osc_{\ssset}[h]$ if $h(\SGvertex_{j})\not=h(\SGvertex_{k})$.
\item\label{it:SG-p-RF-harmonic-non-degenerate}$h\circ\ssmap_{i}\in\harfunc{\form_{p}}{\ssvertices_{0}}\setminus\mathbb{R}\one_{\ssset}$
	for any $h\in\harfunc{\form_{p}}{\ssvertices_{0}}\setminus\mathbb{R}\one_{\ssset}$.
\end{enumerate}
\end{prop}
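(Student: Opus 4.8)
The plan is to reduce everything to a single normal‑derivative identity relating consecutive levels, and then to run the symmetry and comparison machinery on top of it. For \ref{it:SG-p-RF-normal-derivative-self-sim} I would argue by induction on $n$, the case $n=1$ being the crux. Since $h\in\harfunc{\form_{p}}{\ssvertices_{0}}$ and $\ssvertices_{0}\subset\ssvertices_{1}$, Proposition \ref{prop:p-RF-harm-func} gives $h\in\harfunc{\form_{p}}{\ssvertices_{1}}$ as well. I apply the trace‑derivative identity \eqref{eq:p-RF-trace-Diff} to the test function $v:=\harext{\form_{p}}{\ssvertices_{1}}[\one^{\ssvertices_{1}}_{\SGvertex_{i}}]$, once with $B=\ssvertices_{0}$ and once with $B=\ssvertices_{1}$: as $v\vert_{\ssvertices_{0}}=\one_{\SGvertex_{i}}$ and $v\vert_{\ssvertices_{1}}=\one^{\ssvertices_{1}}_{\SGvertex_{i}}$, both sides equal $\form_{p}(h;v)$, so $\form_{p}^{(0)}(h\vert_{\ssvertices_{0}};\one_{\SGvertex_{i}})=\form_{p}^{(1)}(h\vert_{\ssvertices_{1}};\one^{\ssvertices_{1}}_{\SGvertex_{i}})$. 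Differentiating \eqref{eq:p-RF-V0-Vn} in the test slot and using that $\SGvertex_{i}\in\ssmap_{w}(\ssvertices_{0})$ only for $w=i$ (with $\ssmap_{i}^{-1}(\SGvertex_{i})=\SGvertex_{i}$) collapses the sum to $\sscndc_{p}\form_{p}^{(0)}(h\circ\ssmap_{i}\vert_{\ssvertices_{0}};\one_{\SGvertex_{i}})$. Since $h\circ\ssmap_{i}\in\harfunc{\form_{p}}{\ssvertices_{0}}$ by Proposition \ref{prop:SG-p-RF-harmonic-Vn}\ref{it:SG-p-RF-harmonic-Vn-self-sim}, iterating yields the general $n$.

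For \ref{it:SG-p-RF-normal-derivative-monotone} the vanishing is pure symmetry. Letting $g\in\sssym$ be the reflection fixing $\SGvertex_{i}$ and interchanging $\SGvertex_{j},\SGvertex_{k}$, $\sssym$‑invariance of $\form_{p}^{(0)}$ differentiates to $\form_{p}^{(0)}(u\circ g;w\circ g)=\form_{p}^{(0)}(u;w)$; applied to $u=a\one_{\SGvertex_{j}}-a\one_{\SGvertex_{k}}$ and $w=\one_{\SGvertex_{i}}$ (so $u\circ g=-u$, $w\circ g=w$) and combined with the homogeneity \eqref{eq:p-RF-Diff-homogeneous} it forces $\form_{p}^{(0)}(u;\one_{\SGvertex_{i}})=-\form_{p}^{(0)}(u;\one_{\SGvertex_{i}})=0$. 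Non‑strict monotonicity is Proposition \ref{prop:p-RF-Diff-monotone}: for $t_{1}<t_{2}$ the functions $u_{l}=a\one_{\SGvertex_{j}}+t_{l}\one_{\SGvertex_{k}}$ satisfy $((u_{2}-u_{1})\wedge\one_{\SGvertex_{i}})\equiv0$, giving $\form_{p}^{(0)}(u_{1};\one_{\SGvertex_{i}})\geq\form_{p}^{(0)}(u_{2};\one_{\SGvertex_{i}})$, and the second family is the $g$‑image of the first. \emph{The strictness is the main obstacle}: neither Proposition \ref{prop:p-RF-Diff-monotone} nor the underlying strong subadditivity can detect the strict drop of the normal derivative, since that drop is governed by a mixed second‑order quantity that the contraction property \ref{it:p-RF5} forces only to be nonpositive, not strictly negative. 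I would obtain it from the strong comparison principle (Theorem \ref{thm:SG-p-RF-strong-comp}): because $u_{1}\leq u_{2}$ with equality off $\SGvertex_{k}$, the harmonic extensions satisfy $\harext{\form_{p}}{\ssvertices_{0}}[u_{1}]<\harext{\form_{p}}{\ssvertices_{0}}[u_{2}]$ on $\ssset\setminus\ssvertices_{0}$, and a boundary‑point (Hopf‑type) estimate at $\SGvertex_{i}$ upgrades $\geq$ to $>$.

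For \ref{it:SG-p-RF-harmonic-eigenfunc} symmetry again does most of the work. The data $\one_{\SGvertex_{j}}+\one_{\SGvertex_{k}}$ is $g$‑invariant, so $\harprincipal{p}{i}$ is $g$‑invariant and $\harprincipal{p}{i}\circ\ssmap_{i}$ is $\form_{p}$‑harmonic (Proposition \ref{prop:SG-p-RF-harmonic-Vn}) with boundary values $(0,c,c)=c(\one_{\SGvertex_{j}}+\one_{\SGvertex_{k}})$ for $c=\harprincipal{p}{i}(\ssmap_{i}(\SGvertex_{j}))$; by \eqref{eq:p-RF-harext-homogeneous} this gives $\harprincipal{p}{i}\circ\ssmap_{i}=c\,\harprincipal{p}{i}$. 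To evaluate $c$ I combine \ref{it:SG-p-RF-normal-derivative-self-sim} with \eqref{eq:p-RF-Diff-homogeneous}: from $\harprincipal{p}{i}\circ\ssmap_{i^{n}}=c^{n}\harprincipal{p}{i}$ one gets $\form_{p}^{(0)}(\harprincipal{p}{i};\one_{\SGvertex_{i}})=\sscndc_{p}^{\,n}c^{n(p-1)}\form_{p}^{(0)}(\harprincipal{p}{i};\one_{\SGvertex_{i}})$, hence $\sscndc_{p}c^{p-1}=1$ once the flux is nonzero. Nonvanishing follows without circularity: if $\form_{p}^{(0)}(\harprincipal{p}{i}\vert_{\ssvertices_{0}};\one_{\SGvertex_{i}})=0$, then \eqref{eq:p-RF-trace-Diff} and linearity give $\form_{p}(\harprincipal{p}{i};v)=0$ for every $v\in\domain_{p}$ vanishing on $\{\SGvertex_{j},\SGvertex_{k}\}$, so by Proposition \ref{prop:p-RF-harm-func} $\harprincipal{p}{i}$ is harmonic on $\ssset\setminus\{\SGvertex_{j},\SGvertex_{k}\}$ and thus $\equiv1$ by weak comparison (Proposition \ref{prop:p-RF-comp}), contradicting $\harprincipal{p}{i}(\SGvertex_{i})=0$; therefore $c=\sscndc_{p}^{-1/(p-1)}$. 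Since $(\one_{\SGvertex_{k}}-\one_{\SGvertex_{j}})\circ g=-(\one_{\SGvertex_{k}}-\one_{\SGvertex_{j}})$, the same computation yields $\harsecond{p}{i}\circ\ssmap_{i}=\lambda_{p}\harsecond{p}{i}$ with $\lambda_{p}=-\harsecond{p}{i}(\ssmap_{i}(\SGvertex_{j}))$, and the two‑sided bound $\lambda_{p}\in(0,\sscndc_{p}^{-1/(p-1)})$ is \ref{it:SG-p-RF-harmonic-osc-contraction} specialized to $\harsecond{p}{i}$.

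For \ref{it:SG-p-RF-harmonic-osc-contraction}, weak comparison gives $\osc_{\ssset}[h]=\osc_{\ssvertices_{0}}[h\vert_{\ssvertices_{0}}]$ and likewise inside the cell, so the statement concerns $\{h(\SGvertex_{i}),h(\ssmap_{i}(\SGvertex_{j})),h(\ssmap_{i}(\SGvertex_{k}))\}$. If $h(\SGvertex_{j})=h(\SGvertex_{k})$, then \eqref{eq:p-RF-harext-homogeneous} gives the affine representation $h=(h(\SGvertex_{j})-h(\SGvertex_{i}))\harprincipal{p}{i}+h(\SGvertex_{i})\one_{\ssset}$, whence $h\circ\ssmap_{i}=(h(\SGvertex_{j})-h(\SGvertex_{i}))\sscndc_{p}^{-1/(p-1)}\harprincipal{p}{i}+h(\SGvertex_{i})\one_{\ssset}$ and the equality $\osc_{\ssset}[h\circ\ssmap_{i}]=\sscndc_{p}^{-1/(p-1)}\osc_{\ssset}[h]$ is immediate. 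The positivity and the strict inequality when $h(\SGvertex_{j})\neq h(\SGvertex_{k})$ come from Theorem \ref{thm:SG-p-RF-strong-comp}, comparing $h$ with the $g$‑symmetric harmonic functions sharing its extreme boundary values; this is the nonlinear Perron–Frobenius content, $\harprincipal{p}{i}$ being the principal, oscillation‑maximizing eigenfunction. Finally \ref{it:SG-p-RF-harmonic-non-degenerate} is immediate: $h\circ\ssmap_{i}\in\harfunc{\form_{p}}{\ssvertices_{0}}$ always, and \ref{it:SG-p-RF-harmonic-osc-contraction} gives $\osc_{\ssset}[h\circ\ssmap_{i}]>0$ whenever $\osc_{\ssset}[h]>0$. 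As flagged above, the genuinely hard point throughout is the passage from the weak comparison in Proposition \ref{prop:p-RF-Diff-monotone} to the strict assertions in \ref{it:SG-p-RF-normal-derivative-monotone} and \ref{it:SG-p-RF-harmonic-osc-contraction}, which requires the strong comparison principle rather than \ref{it:p-RF5} alone.
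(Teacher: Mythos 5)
Your overall architecture---the level-one flux identity, the reflection symmetry, and the strong comparison principle---matches the paper's, and your treatments of \ref{it:SG-p-RF-normal-derivative-self-sim}, of the eigenvalue $\kappa_{p}=\sscndc_{p}^{-1/(p-1)}$ in \ref{it:SG-p-RF-harmonic-eigenfunc} (your non-vanishing argument for the flux is a valid, slightly longer alternative to the paper's one-line use of $\one_{\SGvertex_{j}}+\one_{\SGvertex_{k}}=\one_{\ssvertices_{0}}-\one_{\SGvertex_{i}}$ and \eqref{eq:p-RF-Diff-homogeneous}), and of \ref{it:SG-p-RF-harmonic-osc-contraction} and \ref{it:SG-p-RF-harmonic-non-degenerate} are essentially the paper's. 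The genuine gap is exactly where you flag the difficulty: the strictness in \ref{it:SG-p-RF-normal-derivative-monotone}. You correctly note that Proposition \ref{prop:p-RF-Diff-monotone} gives only $\geq$ and that Theorem \ref{thm:SG-p-RF-strong-comp} gives strict ordering of the harmonic extensions in the interior, but the ``boundary-point (Hopf-type) estimate at $\SGvertex_{i}$'' that is supposed to convert this into strict inequality of fluxes is never formulated or proved, and in this nonlinear setting it is essentially a restatement of the claim: the ``normal derivative'' is the nonlinear functional $\form_{p}^{(0)}(\,\cdot\,;\one_{\SGvertex_{i}})$, so it cannot be applied to the difference of two functions, and since $\form_{p}^{(0)}$ is not known to be of the graph form \eqref{eq:p-RF-finite-graph} one cannot read off strictness termwise. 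The paper's mechanism, which is the missing idea, is a diagonal trick: by \ref{it:SG-p-RF-normal-derivative-self-sim} with $n=1$ and the strict interior ordering from Theorem \ref{thm:SG-p-RF-strong-comp}, it suffices to show that $\form_{p}^{(0)}(u+t\one_{\SGvertex_{j}}+s\one_{\SGvertex_{k}};\one_{\SGvertex_{i}})<\form_{p}^{(0)}(u;\one_{\SGvertex_{i}})$ when \emph{both} $t,s>0$; and this holds because increasing both off-$i$ coordinates by the same amount equals subtracting $t\one_{\SGvertex_{i}}$ modulo constants, so \eqref{eq:p-RF-Diff-homogeneous} and the strict convexity \eqref{eq:p-RF-Diff-strictly-convex} give a strict drop along the diagonal, while Proposition \ref{prop:p-RF-Diff-monotone} absorbs the remaining one-sided increment.

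A secondary omission is the sign of $\lambda_{p}$ in \ref{it:SG-p-RF-harmonic-eigenfunc}: specializing \ref{it:SG-p-RF-harmonic-osc-contraction} to $\harsecond{p}{i}$ only controls $\osc_{\ssset}[\harsecond{p}{i}\circ\ssmap_{i}]=\lvert\lambda_{p}\rvert\osc_{\ssset}[\harsecond{p}{i}]$, hence gives $\lvert\lambda_{p}\rvert\in(0,\sscndc_{p}^{-1/(p-1)})$ but not $\lambda_{p}>0$. The positivity $\lambda_{p}=\harsecond{p}{i}(\ssmap_{i}(\SGvertex_{k}))>0$ needs a separate application of Theorem \ref{thm:SG-p-RF-strong-comp} on the half-gasket $U=\{x\in\ssset\setminus\ssvertices_{0}\mid\lvert x-\SGvertex_{j}\rvert>\lvert x-\SGvertex_{k}\rvert\}$, on whose boundary $\harsecond{p}{i}$ equals $\one^{\partial_{\ssset}U}_{\SGvertex_{k}}\geq 0$ by the antisymmetry $\harsecond{p}{i}\circ\eucrefl{\SGvertex_{j}\SGvertex_{k}}\vert_{\ssset}=-\harsecond{p}{i}$.
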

\begin{proof}
\begin{enumerate}[label=\textup{(\arabic*)},align=left,leftmargin=*,topsep=4pt,itemsep=2pt]
\item\label{it:SG-p-RF-normal-derivative-self-sim-prf}This is the counterpart of
	\cite[Corollary 3.2.2]{Kig01} and can be proved in the same way. Indeed,
	since $h\vert_{\ssvertices_{n}}\in\harfunc{\form_{p}^{(n)}}{\ssvertices_{0}}$
	by $\form_{p}\vert_{\ssvertices_{n}}=\form_{p}^{(n)}$ from Theorem \ref{thm:SG-p-RF}
	and Proposition \ref{prop:p-RF-trace-compatible},
	$\form_{p}^{(0)}(h\vert_{\ssvertices_{0}};\one_{\SGvertex_{i}})
		=\form_{p}^{(n)}(h\vert_{\ssvertices_{n}};\one^{\ssvertices_{n}}_{\SGvertex_{i}})$
	by $\form_{p}^{(0)}=\form_{p}^{(n)}\vert_{\ssvertices_{0}}$ from
	Proposition \ref{prop:SG-p-RF-compatible-seq} and \eqref{eq:p-RF-trace-Diff}, and
	the assertion follows from this last equality, the definition \eqref{eq:p-RF-V0-Vn} of
	$\form_{p}^{(n)}=\renom_{\sscndc_{p},n}(\form_{p}^{(0)})$ and \eqref{eq:p-RF-Diff-dfn}.
\item\label{it:SG-p-RF-normal-derivative-monotone-prf}For the first claim, since
	$\harext{\form_{p}}{\ssvertices_{0}}[t\one_{\SGvertex_{j}}+s\one_{\SGvertex_{k}}](\ssmap_{i}(\SGvertex_{j}))$
	and $\harext{\form_{p}}{\ssvertices_{0}}[t\one_{\SGvertex_{j}}+s\one_{\SGvertex_{k}}](\ssmap_{i}(\SGvertex_{k}))$
	are strictly increasing in each of $t,s\in\mathbb{R}$ by Theorem \ref{thm:SG-p-RF-strong-comp},
	in view of \ref{it:SG-p-RF-normal-derivative-self-sim-prf} with $n=1$
	it suffices to show that
	$\form_{p}^{(0)}(u+t\one_{\SGvertex_{j}}+s\one_{\SGvertex_{k}};\one_{\SGvertex_{i}})
		<\form_{p}^{(0)}(u;\one_{\SGvertex_{i}})$
	for any $u\in\mathbb{R}^{\ssvertices_{0}}$ and any $t,s\in(0,\infty)$, which in
	turn follows from Proposition \ref{prop:p-RF-Diff-monotone} and the fact that
	$\form_{p}^{(0)}(u+t\one_{\SGvertex_{j}}+t\one_{\SGvertex_{k}};\one_{\SGvertex_{i}})
		=\form_{p}^{(0)}(u-t\one_{\SGvertex_{i}};\one_{\SGvertex_{i}})
		<\form_{p}^{(0)}(u;\one_{\SGvertex_{i}})$
	for any $t\in(0,\infty)$ by \eqref{eq:p-RF-Diff-homogeneous} and
	\eqref{eq:p-RF-Diff-strictly-convex}. The latter assertion is immediate from
	the invariance of $\form_{p}^{(0)}$ under $\eucrefl{\SGvertex_{j}\SGvertex_{k}}\vert_{\ssvertices_{0}}$
	(recall Proposition \ref{prop:SG-symmetry}) and \eqref{eq:p-RF-Diff-homogeneous}.
\item\label{it:SG-p-RF-harmonic-eigenfunc-prf}The invariance of $\form_{p}$
	under $\eucrefl{\SGvertex_{j}\SGvertex_{k}}\vert_{\ssset}$ yields
	$\harprincipal{p}{i}\circ\eucrefl{\SGvertex_{j}\SGvertex_{k}}\vert_{\ssset}=\harprincipal{p}{i}$ and
	$\harsecond{p}{i}\circ\eucrefl{\SGvertex_{j}\SGvertex_{k}}\vert_{\ssset}=-\harsecond{p}{i}$,
	which together with $\harprincipal{p}{i}\circ\ssmap_{i},\harsecond{p}{i}\circ\ssmap_{i}\in\harfunc{\form_{p}}{\ssvertices_{0}}$
	from Proposition \ref{prop:SG-p-RF-harmonic-Vn} implies that
	$\harprincipal{p}{i}\circ\ssmap_{i}=\kappa_{p}\harprincipal{p}{i}$ and $\harsecond{p}{i}\circ\ssmap_{i}=\lambda_{p}\harsecond{p}{i}$
	for some $\kappa_{p},\lambda_{p}\in\mathbb{R}$. The equality
	$\kappa_{p}=\sscndc_{p}^{-1/(p-1)}$ is the counterpart of \cite[Lemma A.1.5]{Kig01}
	and follows in the same way from \ref{it:SG-p-RF-normal-derivative-self-sim}
	with $n=1$, \eqref{eq:p-RF-Diff-homogeneous} and the fact that
	$\form_{p}^{(0)}(\one_{\SGvertex_{j}}+\one_{\SGvertex_{k}};\one_{\SGvertex_{i}})<0$
	by \ref{it:SG-p-RF-normal-derivative-monotone-prf} (or \eqref{eq:p-RF-Diff-homogeneous}).
	Finally,
	$\lambda_{p}=\harsecond{p}{i}(\ssmap_{i}(\SGvertex_{k}))
		<\harprincipal{p}{i}(\ssmap_{i}(\SGvertex_{k}))=\kappa_{p}=\sscndc_{p}^{-1/(p-1)}$
	and $\lambda_{p}=\harsecond{p}{i}(\ssmap_{i}(\SGvertex_{k}))>0$
	by Theorem \ref{thm:SG-p-RF-strong-comp} with, respectively, $U=\ssset\setminus\ssvertices_{0}$
	and $U=\{x\in\ssset\setminus\ssvertices_{0}\mid\lvert x-\SGvertex_{j}\rvert>\lvert x-\SGvertex_{k}\rvert\}$,
	which satisfies $\harsecond{p}{i}\vert_{\partial_{\ssset}U}=\one^{\partial_{\ssset}U}_{\SGvertex_{k}}$
	by $\harsecond{p}{i}\circ\eucrefl{\SGvertex_{j}\SGvertex_{k}}\vert_{\ssset}=-\harsecond{p}{i}$.
\item\label{it:SG-p-RF-harmonic-osc-contraction-prf}By replacing $h$ with
	$h-h(\SGvertex_{i})\one_{\ssset}$ on the basis of \eqref{eq:p-RF-harext-homogeneous}
	we may assume that $h(\SGvertex_{i})=0$, and then this is immediate from
	\eqref{eq:p-RF-harext-homogeneous}, \ref{it:SG-p-RF-harmonic-eigenfunc}
	and Theorem \ref{thm:SG-p-RF-strong-comp}.
\item\label{it:SG-p-RF-harmonic-non-degenerate-prf}This is immediate from
	Proposition \ref{prop:SG-p-RF-harmonic-Vn} and \ref{it:SG-p-RF-harmonic-osc-contraction}.
\qed\end{enumerate}
\end{proof}
\begin{thm}[Perron--Frobenius type theorem]\label{thm:SG-p-RF-Perron-Frobenius}
Let $i\in\ssindex$, $h\in\harfunc{\form_{p}}{\ssvertices_{0}}$ and set
$c_{p,i}(h):=-\sgn\bigl(\form_{p}^{(0)}(h\vert_{\ssvertices_{0}};\one_{\SGvertex_{i}})\bigr)
	\bigl\lvert\form_{p}^{(0)}(h\vert_{\ssvertices_{0}};\one_{\SGvertex_{i}})/\form_{p}^{(0)}(\harprincipal{p}{i}\vert_{\ssvertices_{0}};\one_{\SGvertex_{i}})\bigr\rvert^{1/(p-1)}$.
Then in the norm topologies of both $(\contfunc(\ssset),\lVert\cdot\rVert_{\sup})$
and $(\domain_{p}/\mathbb{R}\one_{\ssset},\form_{p}^{1/p})$,
\begin{equation}\label{eq:SG-p-RF-Perron-Frobenius}
\lim_{n\to\infty}\sscndc_{p}^{n/(p-1)}(h\circ\ssmap_{i^{n}}-h(\SGvertex_{i})\one_{\ssset})
	=c_{p,i}(h)\harprincipal{p}{i}.
\end{equation}
\end{thm}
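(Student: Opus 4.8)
The plan is to recast \eqref{eq:SG-p-RF-Perron-Frobenius} as a nonlinear Perron--Frobenius statement for the renormalization map sending the boundary data of an $\form_p$-harmonic function to those of its restriction to the $i$-cell, and then to fix the limiting constant through the conserved ``normal-derivative'' identity of Proposition \ref{prop:SG-p-RF-normal-derivative-self-sim}-\ref{it:SG-p-RF-normal-derivative-self-sim}. Since $\ssmap_i(\SGvertex_i)=\SGvertex_i$, the value $(h\circ\ssmap_{i^n})(\SGvertex_i)=h(\SGvertex_i)$ does not depend on $n$, so using \eqref{eq:p-RF-harext-homogeneous} and \eqref{eq:p-RF-Diff-homogeneous} I first reduce to the case $h(\SGvertex_i)=0$, which alters neither $c_{p,i}(h)$ nor the left-hand side of \eqref{eq:SG-p-RF-Perron-Frobenius}. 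Let $\{j,k\}=\ssindex\setminus\{i\}$ and identify the $2$-dimensional space $\{v\in\mathbb{R}^{\ssvertices_0}\mid v(\SGvertex_i)=0\}$ with $\mathbb{R}^2$ via $v\mapsto(v(\SGvertex_j),v(\SGvertex_k))$. On this space define $A_i v:=\bigl(\harext{\form_p}{\ssvertices_0}[v]\circ\ssmap_i\bigr)\big\vert_{\ssvertices_0}$; by Proposition \ref{prop:SG-p-RF-harmonic-Vn} it maps the space into itself, keeps the value $0$ at $\SGvertex_i$, and satisfies $\harext{\form_p}{\ssvertices_0}[A_i^n v]=\harext{\form_p}{\ssvertices_0}[v]\circ\ssmap_{i^n}$, so the orbit of $A_i$ records exactly the boundary values of $h\circ\ssmap_{i^n}$. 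The map $A_i$ is continuous (in fact $1$-Lipschitz for $\lVert\cdot\rVert_{\sup}$, since $\harext{\form_p}{\ssvertices_0}$ is, by \eqref{eq:p-RF-harext-homogeneous} and the weak comparison principle), order-preserving (again by Proposition \ref{prop:p-RF-comp}), homogeneous of degree $1$ (by \eqref{eq:p-RF-harext-homogeneous}), and satisfies $A_i e=\mu e$ with $e:=\harprincipal{p}{i}\vert_{\ssvertices_0}=\one_{\SGvertex_j}+\one_{\SGvertex_k}$ and $\mu:=\sscndc_p^{-1/(p-1)}$ by Proposition \ref{prop:SG-p-RF-normal-derivative-self-sim}-\ref{it:SG-p-RF-harmonic-eigenfunc}. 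Most importantly, applying the strong comparison principle (Theorem \ref{thm:SG-p-RF-strong-comp}) with $U=\ssset\setminus\ssvertices_0$ (connected, open, and $\ne\ssset$) upgrades monotonicity to strictness: if $v\le w$ and $v\ne w$, then $A_i v<A_i w$ in both coordinates, because $\SGvertex_{ij},\SGvertex_{ik}\in\ssset\setminus\ssvertices_0$.

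The crux, which replaces the appeal to the nonlinear Perron--Frobenius theory of \cite{LN}, is to prove that $y_n:=\mu^{-n}A_i^n v$ converges to a multiple of $e$. Here I would use the Collatz--Wielandt functionals $\underline{c}(y):=\min\{y(\SGvertex_j),y(\SGvertex_k)\}$ and $\overline{c}(y):=\max\{y(\SGvertex_j),y(\SGvertex_k)\}$, characterized by $\underline{c}(y)\,e\le y\le\overline{c}(y)\,e$. Homogeneity, monotonicity and $A_i e=\mu e$ give $\underline{c}(A_i y)\ge\mu\,\underline{c}(y)$ and $\overline{c}(A_i y)\le\mu\,\overline{c}(y)$, so $\mu^{-n}\underline{c}(A_i^n v)$ is non-decreasing and $\mu^{-n}\overline{c}(A_i^n v)$ is non-increasing; both stay in $[\underline{c}(v),\overline{c}(v)]$ and hence converge, say to $\underline{\gamma}\le\overline{\gamma}$, and $\{y_n\}$ is bounded with $\underline{c}(y_n)\,e\le y_n\le\overline{c}(y_n)\,e$. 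To force $\underline{\gamma}=\overline{\gamma}$ I argue by contradiction and compactness: if $\underline{\gamma}<\overline{\gamma}$, any subsequential limit $y_\ast$ of $\{y_n\}$ has $\underline{c}(y_\ast)=\underline{\gamma}<\overline{\gamma}=\overline{c}(y_\ast)$, so $w:=y_\ast-\underline{\gamma}\,e$ satisfies $w\ge0$ and $w\ne0$; strict monotonicity then yields $\mu^{-1}A_i y_\ast>\underline{\gamma}\,e$ in both coordinates, i.e.\ $\underline{c}(\mu^{-1}A_i y_\ast)>\underline{\gamma}$, whereas continuity of $A_i$ and $\underline{c}$ identify this quantity with $\lim_n\underline{c}(y_{n+1})=\underline{\gamma}$, a contradiction. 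Hence $\underline{\gamma}=\overline{\gamma}=:\gamma$ and $y_n\to\gamma e$ by squeezing.

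To identify $\gamma=c_{p,i}(h)$, I combine the conserved-flux identity of Proposition \ref{prop:SG-p-RF-normal-derivative-self-sim}-\ref{it:SG-p-RF-normal-derivative-self-sim} with \eqref{eq:p-RF-Diff-homogeneous}: for every $n$,
\[
\form_p^{(0)}(h\vert_{\ssvertices_0};\one_{\SGvertex_i})
=\sscndc_p^{n}\form_p^{(0)}\bigl((h\circ\ssmap_{i^n})\vert_{\ssvertices_0};\one_{\SGvertex_i}\bigr)
=\form_p^{(0)}(y_n;\one_{\SGvertex_i}),
\]
where the last equality uses $y_n=\mu^{-n}(h\circ\ssmap_{i^n})\vert_{\ssvertices_0}$ together with \eqref{eq:p-RF-Diff-homogeneous} applied to the positive scalar $\mu^{-n}=\sscndc_p^{n/(p-1)}$. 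Letting $n\to\infty$ and using the continuity of $\form_p^{(0)}(\cdot;\one_{\SGvertex_i})$ (a consequence of \eqref{eq:p-RF-Diff-Hoelder-cont}) and \eqref{eq:p-RF-Diff-homogeneous} once more gives $\form_p^{(0)}(h\vert_{\ssvertices_0};\one_{\SGvertex_i})=\sgn(\gamma)\lvert\gamma\rvert^{p-1}\form_p^{(0)}(e;\one_{\SGvertex_i})$. Since $\form_p^{(0)}(e;\one_{\SGvertex_i})<0$ (established in the proof of Proposition \ref{prop:SG-p-RF-normal-derivative-self-sim}-\ref{it:SG-p-RF-harmonic-eigenfunc}), inverting the increasing bijection $t\mapsto\sgn(t)\lvert t\rvert^{p-1}$ of $\mathbb{R}$ recovers precisely $\gamma=c_{p,i}(h)$.

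Finally, the boundary-value convergence $y_n\to\gamma e$ upgrades to the two required modes. In $\lVert\cdot\rVert_{\sup}$ it follows from the $1$-Lipschitz continuity of $\harext{\form_p}{\ssvertices_0}$ noted above, since $\mu^{-n}(h\circ\ssmap_{i^n})=\harext{\form_p}{\ssvertices_0}[y_n]$ and $\gamma\harprincipal{p}{i}=\harext{\form_p}{\ssvertices_0}[\gamma e]$; in $\form_p^{1/p}$ it follows from Proposition \ref{prop:p-RF-norm-convergence-characterize}-\ref{it:p-RF-norm-convergence-characterize}, because $\form_p\bigl(\mu^{-n}(h\circ\ssmap_{i^n})\bigr)=\form_p^{(0)}(y_n)\to\form_p^{(0)}(\gamma e)=\form_p(\gamma\harprincipal{p}{i})$ and the differences of values converge. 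I expect the main obstacle to be the convergence step of the second paragraph, namely proving genuine convergence of the normalized orbit rather than mere boundedness; this is exactly where the strict monotonicity furnished by the strong comparison principle is indispensable.
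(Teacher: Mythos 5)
Your argument is correct, and its global architecture (normalize by $\sscndc_{p}^{n/(p-1)}$, prove convergence of the boundary data to a multiple of $\one_{\SGvertex_{j}}+\one_{\SGvertex_{k}}$, pin down the constant via the conserved normal derivative from Proposition \ref{prop:SG-p-RF-normal-derivative-self-sim}-\ref{it:SG-p-RF-normal-derivative-self-sim} and \eqref{eq:p-RF-Diff-homogeneous}, then upgrade to $\lVert\cdot\rVert_{\sup}$- and $\form_{p}^{1/p}$-convergence via the $1$-Lipschitz property of $\harext{\form_{p}}{\ssvertices_{0}}$ and Proposition \ref{prop:p-RF-norm-convergence-characterize}-\ref{it:p-RF-norm-convergence-characterize}) matches the paper's. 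The essential difference lies in how the gap $b_{n}-a_{n}$ between the two non-fixed boundary values is collapsed. The paper first orders $a_{n}\leq b_{n}$ using the invariance of $\form_{p}$ under the reflection $\eucrefl{\SGvertex_{j}\SGvertex_{k}}$, obtains two individually monotone sequences, and then excludes a limit with $a\neq b$ by showing that the fixed-point equation $h_{\infty}=\sscndc_{p}^{1/(p-1)}h_{\infty}\circ\ssmap_{i}$ is incompatible with the strong comparison principle and the eigenfunction structure of Proposition \ref{prop:SG-p-RF-normal-derivative-self-sim}-\ref{it:SG-p-RF-harmonic-eigenfunc}. You instead run a Collatz--Wielandt argument on $\min$ and $\max$ of the boundary values, which are monotone under the normalized iteration regardless of how $a_{n}$ and $b_{n}$ are ordered, and close the gap by compactness plus the strict monotonicity of $A_{i}$ furnished by Theorem \ref{thm:SG-p-RF-strong-comp} applied at a subsequential limit. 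Your route never invokes the reflection symmetry nor the second eigenfunction $\harsecond{p}{i}$, which is exactly the style of the nonlinear Perron--Frobenius theory of \cite{LN} alluded to in Remark \ref{rmk:SG-p-RF-Perron-Frobenius}; the noteworthy point is that, because $A_{i}$ here is order-preserving and $1$-homogeneous (for scalars of either sign, by \eqref{eq:p-RF-harext-homogeneous}) on the whole two-dimensional space rather than merely on a cone, your version does not need the sign restriction $h\geq h(\SGvertex_{i})$ on $\ssvertices_{0}\setminus\{\SGvertex_{i}\}$ that the general cone-based theory would impose. What the paper's symmetry argument buys in exchange is a more quantitative picture (the explicit eigenvalue $\lambda_{p}<\sscndc_{p}^{-1/(p-1)}$ governing the decay of the antisymmetric part), whereas yours is softer but less tied to the special geometry of the gasket.
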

\begin{proof}
Let $j,k\in\ssindex$ be as in Proposition \ref{prop:SG-p-RF-normal-derivative-self-sim}, and set
$h_{n}:=\sscndc_{p}^{n/(p-1)}(h\circ\ssmap_{i^{n}}-h(\SGvertex_{i})\one_{\ssset})$,
$a_{n}:=h_{n}(\SGvertex_{j})$ and $b_{n}:=h_{n}(\SGvertex_{k})$ for $n\in\mathbb{N}\cup\{0\}$.
Replacing $h$ with $-h$ on the basis of
\eqref{eq:p-RF-harext-homogeneous} if $a_{0}>b_{0}$, we may and do assume that $a_{0}\leq b_{0}$.
Then noting that $\{h_{n}\}_{n\in\mathbb{N}\cup\{0\}}\subset\harfunc{\form_{p}}{\ssvertices_{0}}$
by Proposition \ref{prop:SG-p-RF-harmonic-Vn} and \eqref{eq:p-RF-harext-homogeneous},
we see by an induction on $n$ using \eqref{eq:p-RF-harext-homogeneous},
Proposition \ref{prop:SG-p-RF-normal-derivative-self-sim}-\ref{it:SG-p-RF-harmonic-eigenfunc}
and Proposition \ref{prop:SG-p-RF-comp} that $a_{n-1}\leq a_{n}\leq b_{n}\leq b_{n-1}$
for any $n\in\mathbb{N}$, where $a_{n}\leq b_{n}$ follows from Proposition \ref{prop:SG-p-RF-comp}
with $U=\{x\in\ssset\setminus\ssvertices_{0}\mid\lvert x-\SGvertex_{j}\rvert>\lvert x-\SGvertex_{k}\rvert\}$,
$u=h\circ\ssmap_{i^{n-1}}\circ\eucrefl{\SGvertex_{j}\SGvertex_{k}}\vert_{\ssset}$ and
$v=h\circ\ssmap_{i^{n-1}}$. Thus $a:=\lim_{n\to\infty}a_{n}\in\mathbb{R}$ and
$b:=\lim_{n\to\infty}b_{n}\in\mathbb{R}$ exist, which means that
$\lim_{n\to\infty}\lVert h_{\infty}-h_{n}\rVert_{\sup}=0$ for
$h_{\infty}:=\harext{\form_{p}}{\ssvertices_{0}}[a\one_{\SGvertex_{j}}+b\one_{\SGvertex_{k}}]$ since
$\lVert u-v\rVert_{\sup}=\lVert u\vert_{\ssvertices_{0}}-v\vert_{\ssvertices_{0}}\rVert_{\sup,\ssvertices_{0}}$
for any $u,v\in\harfunc{\form_{p}}{\ssvertices_{0}}$ by \eqref{eq:p-RF-harext-homogeneous}
and Proposition \ref{prop:p-RF-comp}. Then letting $n\to\infty$ in the obvious equality
$h_{n+1}=\sscndc_{p}^{1/(p-1)}h_{n}\circ\ssmap_{i}$ shows that
$h_{\infty}=\sscndc_{p}^{1/(p-1)}h_{\infty}\circ\ssmap_{i}$,
which cannot hold under $a\not=b$ by \eqref{eq:p-RF-harext-homogeneous},
Proposition \ref{prop:SG-p-RF-normal-derivative-self-sim}-\ref{it:SG-p-RF-harmonic-eigenfunc}
and Theorem \ref{thm:SG-p-RF-strong-comp} and thus implies that $a=b$. Therefore
$h_{\infty}=a\harprincipal{p}{i}$ by \eqref{eq:p-RF-harext-homogeneous}, and we further see from
Proposition \ref{prop:SG-p-RF-normal-derivative-self-sim}-\ref{it:SG-p-RF-normal-derivative-self-sim},
\eqref{eq:p-RF-Diff-homogeneous}, $\#\ssvertices_{0}<\infty$ and the continuity of
$\form_{p}^{(0)}(\cdot;\one_{\SGvertex_{i}})$ on $\mathbb{R}^{\ssvertices_{0}}/\mathbb{R}\one_{\ssvertices_{0}}$
from \eqref{eq:p-RF-Diff-Hoelder-cont} that
\begin{equation}\label{eq:SG-p-RF-Perron-Frobenius-coefficient}
\begin{split}
\form_{p}^{(0)}(h\vert_{\ssvertices_{0}};\one_{\SGvertex_{i}})
	=\lim_{n\to\infty}\form_{p}^{(0)}(h_{n}\vert_{\ssvertices_{0}};\one_{\SGvertex_{i}})
	&=\form_{p}^{(0)}(h_{\infty}\vert_{\ssvertices_{0}};\one_{\SGvertex_{i}})\\
=\form_{p}^{(0)}(a\harprincipal{p}{i}\vert_{\ssvertices_{0}};\one_{\SGvertex_{i}})
	&=\sgn(a)\lvert a\rvert^{p-1}\form_{p}^{(0)}(\harprincipal{p}{i}\vert_{\ssvertices_{0}};\one_{\SGvertex_{i}}),
\end{split}
\end{equation}
so that $a=c_{p,i}(h)$ since $\form_{p}^{(0)}(\harprincipal{p}{i}\vert_{\ssvertices_{0}};\one_{\SGvertex_{i}})<0$
by Proposition \ref{prop:SG-p-RF-normal-derivative-self-sim}-\ref{it:SG-p-RF-normal-derivative-monotone-prf}
(or \eqref{eq:p-RF-Diff-homogeneous}). Finally, by
$h_{\infty}\in\harfunc{\form_{p}}{\ssvertices_{0}}$,
$\{h_{n}\}_{n\in\mathbb{N}\cup\{0\}}\subset\harfunc{\form_{p}}{\ssvertices_{0}}$,
$\form_{p}\vert_{\ssvertices_{0}}=\form_{p}^{(0)}$ from Theorem \ref{thm:SG-p-RF},
\ref{it:p-RF1}, $\#\ssvertices_{0}<\infty$ and
$\lim_{n\to\infty}\lVert h_{\infty}\vert_{\ssvertices_{0}}-h_{n}\vert_{\ssvertices_{0}}\rVert_{\sup,\ssvertices_{0}}=0$
we have
\begin{equation}\label{eq:SG-p-RF-Perron-Frobenius-conv-energy}
\form_{p}(h_{n})=\form_{p}^{(0)}(h_{n}\vert_{\ssvertices_{0}})
	\xrightarrow{n\to\infty}\form_{p}^{(0)}(h_{\infty}\vert_{\ssvertices_{0}})
	=\form_{p}(h_{\infty}),
\end{equation}
which together with $\lim_{n\to\infty}\lVert h_{\infty}-h_{n}\rVert_{\sup}=0$ and
Proposition \ref{prop:p-RF-norm-convergence-characterize}-\ref{it:p-RF-norm-convergence-characterize}
yields $\lim_{n\to\infty}\form_{p}(h_{\infty}-h_{n})=0$, completing the proof.
\qed\end{proof}
\begin{rmk}\label{rmk:SG-p-RF-Perron-Frobenius}
The above proof of Theorem \ref{thm:SG-p-RF-Perron-Frobenius} and that of the existence of
$\harprincipal{p}{i}\in\harfunc{\form_{p}}{\ssvertices_{0}}$
with $\harprincipal{p}{i}(\SGvertex_{i})=0$,
$\harprincipal{p}{i}(\SGvertex_{j})\wedge\harprincipal{p}{i}(\SGvertex_{k})>0$ and
$\harprincipal{p}{i}\circ\ssmap_{i}=\sscndc_{p}^{-1/(p-1)}\harprincipal{p}{i}$
in Proposition \ref{prop:SG-p-RF-normal-derivative-self-sim}-\ref{it:SG-p-RF-harmonic-eigenfunc}
rely heavily on the invariance of $\form_{p}$ with respect to
$\eucrefl{\SGvertex_{j}\SGvertex_{k}}\vert_{\ssset}$ and the fact that
$\ssset_{i}\cap(\ssset_{j}\cup\ssset_{k})=\{\ssmap_{i}(\SGvertex_{j}),\ssmap_{i}(\SGvertex_{k})\}$
consists of two points and is invariant with respect to $\eucrefl{\SGvertex_{j}\SGvertex_{k}}\vert_{\ssset}$.
For more general p.-c.f.\ self-similar sets, such very nice geometric features are
usually only partially available or not available, and some alternative arguments are
needed to establish the same kind of results. Fortunately, there is a well-established
theory on Perron--Frobenius type theorems for \emph{nonlinear} $1$-homogeneous
order-preserving maps presented in \cite{LN}, and our strong comparison principle
as in Theorem \ref{thm:SG-p-RF-strong-comp} is strong enough to let us apply
the most relevant results \cite[Corollary 5.4.2 and Theorem 6.5.1]{LN} there.
The statements obtained in this way are similar to
Proposition \ref{prop:SG-p-RF-normal-derivative-self-sim}-\ref{it:SG-p-RF-harmonic-eigenfunc}
and Theorem \ref{thm:SG-p-RF-Perron-Frobenius}, but considerably weaker in the sense
that the counterpart of the eigenfunction $\harprincipal{p}{i}$ is never explicit and
that the convergence result as in \eqref{eq:SG-p-RF-Perron-Frobenius} requires the
additional assumption that $h(x)-h(\SGvertex_{i})\geq 0$ for any
$x\in\ssvertices_{0}\setminus\{\SGvertex_{i}\}$. It is not clear to the authors how negative
$(h-h(\SGvertex_{i})\one_{\ssset})\vert_{\ssvertices_{0}\setminus\{\SGvertex_{i}\}}$
is allowed to be in order for the convergence as in \eqref{eq:SG-p-RF-Perron-Frobenius}
to remain holding.
\end{rmk}

Theorem \ref{thm:SG-p-RF-Perron-Frobenius} allows us to adapt the argument in
\cite[Remark 2.7]{Pei} to conclude the uniqueness of a
$p$-resistance form $\formsecond_{p}^{(0)}$ satisfying
$\renom_{\sscndc_{p},1}(\formsecond_{p}^{(0)})\big\vert_{\ssvertices_{0}}=\formsecond_{p}^{(0)}$.
Note that, as presented in \cite[Remark 2.7 and Corollary 5.7]{Pei},
\emph{the assumption of the $\sssym$-invariance of $\formsecond_{p}^{(0)}$ is NOT needed here}
thanks to the $\sssym$-invariance of $\form_{p}^{(0)}$, $\#\ssvertices_{0}\geq 3$ and the fact that
$\{g\vert_{\ssvertices_{0}}\mid g\in\sssym\}=\{g\mid\textrm{$g\colon\ssvertices_{0}\to\ssvertices_{0}$, $g$ is bijective}\}$.
\begin{thm}\label{thm:SG-p-RF-V0-unique}
Let $\formsecond_{p}^{(0)}$ be a $p$-resistance form on $\ssvertices_{0}$ with the property that
$\renom_{\sscndc_{p},1}(\formsecond_{p}^{(0)})\big\vert_{\ssvertices_{0}}=\formsecond_{p}^{(0)}$.
Then $\formsecond_{p}^{(0)}=c\form_{p}^{(0)}$ for some $c\in(0,\infty)$.
\end{thm}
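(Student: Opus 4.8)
The plan is to compare $\formsecond_{p}^{(0)}$ with the reference form $\form_{p}^{(0)}$ through the \emph{minimal ratio} of their energies and then to show, using self-similarity together with the Perron--Frobenius asymptotics of Theorem \ref{thm:SG-p-RF-Perron-Frobenius}, that this ratio cannot be non-constant. Write $E:=\form_{p}^{(0)}$ and $\tilde E:=\formsecond_{p}^{(0)}$ for brevity. Since $E^{1/p}$ and $\tilde E^{1/p}$ are norms on the finite-dimensional space $\mathbb{R}^{\ssvertices_{0}}/\mathbb{R}\one_{\ssvertices_{0}}$ by \ref{it:p-RF1}, they are equivalent, so $c:=\min\bigl\{\tilde E(u)/E(u)\bigm\vert u\in\mathbb{R}^{\ssvertices_{0}}\setminus\mathbb{R}\one_{\ssvertices_{0}}\bigr\}\in(0,\infty)$ is attained, say at $u^{*}$. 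Setting $F:=\tilde E-cE$, we then have $F\geq 0$ on $\mathbb{R}^{\ssvertices_{0}}$ and $F(u^{*})=0$, so $u^{*}$ is a global minimum of $F$, and the goal is to prove $F\equiv 0$.

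First I would record that, since $\tilde E$ satisfies $\renom_{\sscndc_{p},1}(\tilde E)\vert_{\ssvertices_{0}}=\tilde E$ with the \emph{same} $\sscndc_{p}$, the proof of Proposition \ref{prop:SG-p-RF-compatible-seq} (which uses only this identity and Proposition \ref{prop:p-RF-trace-compatible}) gives $\renom_{\sscndc_{p},n+m}(\tilde E)\vert_{\ssvertices_{n}}=\renom_{\sscndc_{p},n}(\tilde E)$, so that harmonic extension from $\ssvertices_{0}$ to $\ssvertices_{n}$ with respect to $\renom_{\sscndc_{p},n}(\tilde E)$ is well defined and compatible across scales by Theorem \ref{thm:p-RF-trace} and Proposition \ref{prop:p-RF-trace-compatible}. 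The key local step is that \emph{at $u^{*}$ the $\tilde E$- and $E$-harmonic extensions coincide}: letting $\tilde\psi$ and $\omega$ denote the harmonic extensions of $u^{*}$ to $\ssvertices_{1}$ with respect to $\renom_{\sscndc_{p},1}(\tilde E)$ and $\renom_{\sscndc_{p},1}(E)$ respectively, the eigenform identities for $\tilde E$ and for $cE$ give
\[
cE(u^{*})=\sscndc_{p}\sum_{i\in\ssindex}cE(\omega\circ\ssmap_{i}\vert_{\ssvertices_{0}})\leq\sscndc_{p}\sum_{i\in\ssindex}cE(\tilde\psi\circ\ssmap_{i}\vert_{\ssvertices_{0}})\leq\sscndc_{p}\sum_{i\in\ssindex}\tilde E(\tilde\psi\circ\ssmap_{i}\vert_{\ssvertices_{0}})=\tilde E(u^{*})=cE(u^{*}),
\]
where the first inequality is minimality of $\omega$ for $E$ and the second is $\tilde E\geq cE$. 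Hence both are equalities: uniqueness of the minimizer in Theorem \ref{thm:p-RF-trace} forces $\tilde\psi=\omega=:\phi_{1}$, and the nonnegative summands of $\sum_{i}F(\tilde\psi\circ\ssmap_{i}\vert_{\ssvertices_{0}})$ all vanish, i.e.\ $F(\phi_{1}\circ\ssmap_{i}\vert_{\ssvertices_{0}})=0$ for each $i$. Iterating on each subcell and using Proposition \ref{prop:p-RF-trace-compatible}, I obtain a single $\form_{p}$-harmonic function $\phi:=\harext{\form_{p}}{\ssvertices_{0}}[u^{*}]\in\harfunc{\form_{p}}{\ssvertices_{0}}$ with $F(\phi\circ\ssmap_{w}\vert_{\ssvertices_{0}})=0$ for \emph{every} $w\in\words_{*}$; since each such function is again a global minimum of the $\contfunc^{1}$ functional $F$ (Theorem \ref{thm:p-RF-Diff}), the derivative also vanishes there, so $\tilde E(\phi\circ\ssmap_{w}\vert_{\ssvertices_{0}};\cdot)=cE(\phi\circ\ssmap_{w}\vert_{\ssvertices_{0}};\cdot)$.

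Next I would feed the corner words $w=i^{n}$ into Theorem \ref{thm:SG-p-RF-Perron-Frobenius}. Because $F$ is continuous, invariant under adding constants, and homogeneous of degree $p$, while $F(\cdot\,;\cdot)$ is continuous and homogeneous of degree $p-1$ in its first argument by \eqref{eq:p-RF-Diff-homogeneous}--\eqref{eq:p-RF-Diff-Hoelder-cont}, rescaling the identities $F(\phi\circ\ssmap_{i^{n}}\vert_{\ssvertices_{0}})=0$ by $\sscndc_{p}^{n/(p-1)}$ and passing to the limit \eqref{eq:SG-p-RF-Perron-Frobenius} yields $F(\harprincipal{p}{i}\vert_{\ssvertices_{0}})=0$ and $\tilde E(\harprincipal{p}{i}\vert_{\ssvertices_{0}};\cdot)=cE(\harprincipal{p}{i}\vert_{\ssvertices_{0}};\cdot)$ for each $i$ with $c_{p,i}(\phi)\neq 0$; here $\harprincipal{p}{i}\vert_{\ssvertices_{0}}=\one_{\SGvertex_{j}}+\one_{\SGvertex_{k}}$ by definition. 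Moreover $c_{p,i}(\phi)\neq 0$ for at least one $i$, since otherwise $\form_{p}^{(0)}(\phi\vert_{\ssvertices_{0}};\cdot)\equiv 0$ would give $\form_{p}(\phi)=0$ and hence $u^{*}\in\mathbb{R}\one_{\ssvertices_{0}}$, a contradiction. This is exactly where the $\sssym$-invariance of $E=\form_{p}^{(0)}$ and the fact that $\{g\vert_{\ssvertices_{0}}\mid g\in\sssym\}$ is all bijections of $\ssvertices_{0}$ (Proposition \ref{prop:SG-symmetry}) enter, rendering the $\sssym$-invariance of $\tilde E$ unnecessary.

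Finally, the set $Z:=\{u\mid F(u)=0\}$ is a closed, constant- and scaling-invariant cone, and the computation above shows it is invariant under the subcell maps $u\mapsto\harext{\form_{p}}{\ssvertices_{0}}[u]\circ\ssmap_{i}\vert_{\ssvertices_{0}}$; its projectivization in $\mathbb{R}^{\ssvertices_{0}}/\mathbb{R}\one_{\ssvertices_{0}}\cong\mathbb{R}^{2}$ is nonempty, closed, forward-invariant, and (by the previous paragraph and the attracting nature of the corner directions in \eqref{eq:SG-p-RF-Perron-Frobenius}) contains all three directions $\harprincipal{p}{i}\vert_{\ssvertices_{0}}$. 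The remaining task, which I expect to be \emph{the main obstacle}, is to show that such a set must be the whole projective line, equivalently that $\tilde E$ and $cE$ cannot agree only on a proper invariant subcone; this is the minimality of the projectivized harmonic-renormalization dynamics, which I would establish by adapting the argument of \cite[Remark 2.7]{Pei} and using the strong comparison principle (Theorem \ref{thm:SG-p-RF-strong-comp}) to control these subcell maps, again relying on the $S_{3}$-symmetry of $E$ that conjugates them to one another. Once $Z=\mathbb{R}^{\ssvertices_{0}}$, continuity and homogeneity of $F$ give $F\equiv 0$, i.e.\ $\tilde E=c\form_{p}^{(0)}$ with $c\in(0,\infty)$, as desired.
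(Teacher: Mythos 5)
Your setup is precisely the route the paper intends (its proof simply defers to \cite[Proof of Theorem 2.6 and Remark 2.7]{Pei} given Theorem \ref{thm:SG-p-RF-Perron-Frobenius}): the extremal ratio $c$, the equality chain forcing the $\formsecond_{p}^{(0)}$- and $\form_{p}^{(0)}$-harmonic extensions of $u^{*}$ to coincide and $F$ to vanish on every subcell restriction $\phi\circ\ssmap_{w}\vert_{\ssvertices_{0}}$, and the passage to the corner directions via \eqref{eq:SG-p-RF-Perron-Frobenius} are all correct and constitute the first half of that argument. But the proof is not finished, and the mechanism you propose for finishing it is not the right one. Knowing that the degree-$p$ homogeneous functional $F=\formsecond_{p}^{(0)}-c\form_{p}^{(0)}\geq 0$ vanishes at finitely many directions --- even at all three of $\one_{\SGvertex_{1}},\one_{\SGvertex_{2}},\one_{\SGvertex_{3}}$ (note $\harprincipal{p}{i}\vert_{\ssvertices_{0}}=\one_{\ssvertices_{0}}-\one_{\SGvertex_{i}}\equiv-\one_{\SGvertex_{i}}$ modulo constants, so these are the same points for $F$) --- does not force $F\equiv 0$ when $p\neq 2$, and the ``minimality of the projectivized harmonic-renormalization dynamics'' you invoke to bridge this is neither established in your proposal nor part of Peirone's argument; you flag it yourself as the main obstacle. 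This is a genuine gap.

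The correct conclusion plays the minimum against the maximum rather than trying to propagate the zero set dynamically. First sharpen your coefficient claim: the three numbers $\form_{p}^{(0)}(u^{*};\one_{\SGvertex_{i}})$, $i\in\ssindex$, sum to $\form_{p}^{(0)}(u^{*};\one_{\ssvertices_{0}})=0$ and cannot all vanish, since $\sum_{i\in\ssindex}u^{*}(\SGvertex_{i})\form_{p}^{(0)}(u^{*};\one_{\SGvertex_{i}})=\form_{p}^{(0)}(u^{*})>0$; hence \emph{at least two} of them are nonzero, so $c_{p,i}(\phi)\neq 0$ and therefore $F(\one_{\SGvertex_{i}})=0$ for at least two corners $i$. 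Now run your entire argument a second time with $c':=\max\{\formsecond_{p}^{(0)}(u)/\form_{p}^{(0)}(u)\mid u\in\mathbb{R}^{\ssvertices_{0}}\setminus\mathbb{R}\one_{\ssvertices_{0}}\}$ and $F':=c'\form_{p}^{(0)}-\formsecond_{p}^{(0)}\geq 0$: the same equality chain (with the two inequalities reversed) again identifies the two harmonic extensions and shows $F'$ vanishes on all subcell restrictions of the maximizer, and \eqref{eq:SG-p-RF-Perron-Frobenius} then gives $F'(\one_{\SGvertex_{i'}})=0$ for at least two corners $i'$. By pigeonhole among the three corners there is an $i$ with $c\form_{p}^{(0)}(\one_{\SGvertex_{i}})=\formsecond_{p}^{(0)}(\one_{\SGvertex_{i}})=c'\form_{p}^{(0)}(\one_{\SGvertex_{i}})$, and since $\form_{p}^{(0)}(\one_{\SGvertex_{i}})>0$ this yields $c=c'$, i.e.\ $c\form_{p}^{(0)}\leq\formsecond_{p}^{(0)}\leq c'\form_{p}^{(0)}=c\form_{p}^{(0)}$, which is the assertion. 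With this replacement for your last paragraph the proof is complete and coincides with the one the paper cites.
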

\begin{prfsktch}
Given the Perron--Frobenius type theorem (Theorem \ref{thm:SG-p-RF-Perron-Frobenius}) above,
this is proved in exactly the same way as \cite[Proof of Theorem 2.6 and Remark 2.7]{Pei}.
\qed\end{prfsktch}

We can also translate Theorem \ref{thm:SG-p-RF-V0-unique} into the uniqueness of
a $p$-resistance form on $\ssset$ with the properties \ref{it:SG-p-RF-SSE1} and
\ref{it:SG-p-RF-SSE2} in Theorem \ref{thm:SG-p-RF}, as follows.
\begin{thm}\label{thm:SG-p-RF-unique}
Let $(\formsecond_{p},\domainsecond_{p})$ be a $p$-resistance form on $\ssset$
satisfying \ref{it:SG-p-RF-SSE1} and \ref{it:SG-p-RF-SSE2}. Then
$\domainsecond_{p}=\domain_{p}$ and $\formsecond_{p}=c\form_{p}$ for some $c\in(0,\infty)$.
\end{thm}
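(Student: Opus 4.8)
The plan is to reduce everything to the vertex-level uniqueness already established in Theorem \ref{thm:SG-p-RF-V0-unique}. Set $\formsecond_{p}^{(0)}:=\formsecond_{p}\vert_{\ssvertices_{0}}$, which is a $p$-resistance form on $\ssvertices_{0}$ by Theorem \ref{thm:p-RF-trace}. First I would show that $\formsecond_{p}^{(0)}$ is a renormalization fixed point, deduce $\formsecond_{p}^{(0)}=c\form_{p}^{(0)}$ from Theorem \ref{thm:SG-p-RF-V0-unique}, lift this to $\formsecond_{p}\vert_{\ssvertices_{n}}=c\form_{p}\vert_{\ssvertices_{n}}$ for every $n$, and finally pass from the vertex sets $\ssvertices_{n}$ to the whole of $\ssset$.

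For the fixed-point step, the key is the cell-wise decoupling of traces. Iterating \ref{it:SG-p-RF-SSE1} and \ref{it:SG-p-RF-SSE2} gives $v\in\domainsecond_{p}$ if and only if $v\circ\ssmap_{w}\in\domainsecond_{p}$ for all $w\in\words_{n}$, together with $\formsecond_{p}(v)=\sum_{w\in\words_{n}}\sscndc_{p}^{n}\formsecond_{p}(v\circ\ssmap_{w})$. Since the level-$n$ cells $\{\ssset_{w}\}_{w\in\words_{n}}$ meet only along $\ssvertices_{n}$ by Proposition \ref{prop:SG-intersecting-cells}-\ref{it:SG-intersecting-cells}, any prescription of boundary values on $\ssvertices_{n}$ can be realized by choosing the functions $v\circ\ssmap_{w}$ independently and gluing them (consistency at the shared vertices being automatic), so the infimum defining $\formsecond_{p}\vert_{\ssvertices_{n}}$ decouples and yields $\formsecond_{p}\vert_{\ssvertices_{n}}=\renom_{\sscndc_{p},n}(\formsecond_{p}^{(0)})$. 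Taking $n=1$ and then the further trace to $\ssvertices_{0}$ via the compatibility $\formsecond_{p}\vert_{\ssvertices_{1}}\vert_{\ssvertices_{0}}=\formsecond_{p}\vert_{\ssvertices_{0}}$ (Proposition \ref{prop:p-RF-trace-compatible}) gives $\renom_{\sscndc_{p},1}(\formsecond_{p}^{(0)})\vert_{\ssvertices_{0}}=\formsecond_{p}^{(0)}$, so Theorem \ref{thm:SG-p-RF-V0-unique} applies and $\formsecond_{p}^{(0)}=c\form_{p}^{(0)}$ for some $c\in(0,\infty)$. Because $\renom_{\sscndc_{p},n}$ is linear in its form argument, this propagates to $\formsecond_{p}\vert_{\ssvertices_{n}}=c\,\renom_{\sscndc_{p},n}(\form_{p}^{(0)})=c\form_{p}^{(n)}=c\form_{p}\vert_{\ssvertices_{n}}$ for every $n\in\mathbb{N}\cup\{0\}$.

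It remains to transfer this equality on the vertex sets to an identity of forms on $\ssset$. Note that $\domainsecond_{p}\subset\contfunc(\ssset)$ by \ref{it:SG-p-RF-SSE1}. For $u\in\domainsecond_{p}$, the nondecreasing sequence $\form_{p}^{(n)}(u\vert_{\ssvertices_{n}})=c^{-1}\formsecond_{p}\vert_{\ssvertices_{n}}(u\vert_{\ssvertices_{n}})\le c^{-1}\formsecond_{p}(u)$ is bounded, so $u\in\domain_{p}$ by Definition \ref{dfn:SG-p-RF}; this gives $\domainsecond_{p}\subset\domain_{p}$. For the reverse inclusion and the energy identity, take $u\in\domain_{p}$ and set $u_{n}:=\harext{\formsecond_{p}}{\ssvertices_{n}}[u\vert_{\ssvertices_{n}}]\in\domainsecond_{p}$, which is well defined since $\domainsecond_{p}\vert_{\ssvertices_{n}}=\mathbb{R}^{\ssvertices_{n}}$ by Theorem \ref{thm:p-RF-trace} and Example \ref{exmp:p-RF}-\ref{it:p-RF-finite}; then $\formsecond_{p}(u_{n})=\formsecond_{p}\vert_{\ssvertices_{n}}(u\vert_{\ssvertices_{n}})=c\form_{p}^{(n)}(u\vert_{\ssvertices_{n}})$ increases to $c\form_{p}(u)<\infty$. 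Since $u_{m}$ is the $\formsecond_{p}$-harmonic extension of $u_{n}\vert_{\ssvertices_{m}}$ for $m\le n$, the same $p$-Clarkson computation as in the proof of Proposition \ref{prop:SG-p-RF-harmonic-approx}, now applied to $\tfrac{u_{n}+u_{m}}{2},\tfrac{u_{n}-u_{m}}{2}$ (it relies only on \eqref{eq:p-RF-sClarkson-pleq2}, \eqref{eq:p-RF-wClarkson-pgeq2} and energy-minimality, hence applies verbatim to $(\formsecond_{p},\domainsecond_{p})$), shows that $\{u_{n}\}$ is Cauchy in $(\domainsecond_{p}/\mathbb{R}\one_{\ssset},\formsecond_{p}^{1/p})$; by \ref{it:p-RF2} it converges therein to some $w\in\domainsecond_{p}$. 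Proposition \ref{prop:p-RF4-conseq} applied to $u_{n}-w$ forces $u_{n}(x)-u_{n}(y)\to w(x)-w(y)$ for all $x,y\in\ssset$, while $u_{n}\to u$ pointwise on the dense set $\ssvertices_{*}$; hence $w-u$ is constant on $\ssvertices_{*}$ and, by continuity, on $\ssset$, so $u\in\domainsecond_{p}$ and $\formsecond_{p}(u)=\formsecond_{p}(w)=\lim_{n\to\infty}\formsecond_{p}(u_{n})=c\form_{p}(u)$. Combining the two inclusions yields $\domainsecond_{p}=\domain_{p}$ and $\formsecond_{p}=c\form_{p}$.

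I expect the main obstacle to be exactly this last passage to the limit. One cannot simply invoke the inductive-limit description (Theorem \ref{thm:p-RF-inductive-limit}), because the supremum there ranges over \emph{all} finite subsets $V\subset\ssset$, including points outside $\ssvertices_{*}$, whereas a priori I only control the traces $\formsecond_{p}\vert_{\ssvertices_{n}}$; controlling $\formsecond_{p}\vert_{V}$ for such $V$ would seem to require knowing in advance that $\resismet_{\formsecond_{p}}^{1/(p-1)}$ is compatible with the Euclidean topology. The proposal above sidesteps this by producing the limit through the $p$-Clarkson-based Cauchy estimate together with completeness \ref{it:p-RF2}, and identifying it with $u$ via Proposition \ref{prop:p-RF4-conseq} and the density of $\ssvertices_{*}$, so that neither topology-compatibility of $\resismet_{\formsecond_{p}}$ nor an analysis of general finite traces is needed.
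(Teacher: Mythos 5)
Your proposal is correct and follows essentially the same route as the paper: reduce to the vertex-level fixed-point property via the cell-wise decoupling of traces, invoke Theorem \ref{thm:SG-p-RF-V0-unique}, and then pass to the limit by the harmonic-extension/Clarkson/completeness argument that the paper defers to ``the same argument as the proof of Theorem \ref{thm:p-RF-inductive-limit}'' (i.e.\ the argument of \cite[Proof of Lemma 2.3.8]{Kig01}). Your closing observation—that one cannot simply cite Theorem \ref{thm:p-RF-inductive-limit} because its supremum runs over all finite subsets rather than just the $\ssvertices_{n}$—is an accurate reading of why the paper says ``the same argument as the proof of'' rather than ``by'', and your explicit Cauchy-sequence workaround is exactly what that argument amounts to.
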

\begin{prfsktch}
Set $\formsecond_{p}^{(0)}:=\formsecond_{p}\vert_{\ssvertices_{0}}$. Then
we can see from \ref{it:SG-p-RF-SSE1} and \ref{it:SG-p-RF-SSE2} that
$\formsecond_{p}\vert_{\ssvertices_{n}}=\renom_{\sscndc_{p},n}(\formsecond_{p}^{(0)})$
for any $n\in\mathbb{N}$, hence
$\renom_{\sscndc_{p},1}(\formsecond_{p}^{(0)})\big\vert_{\ssvertices_{0}}
	=\formsecond_{p}\vert_{\ssvertices_{1}}\vert_{\ssvertices_{0}}=\formsecond_{p}^{(0)}$
by Proposition \ref{prop:p-RF-trace-compatible}, and thus
$\formsecond_{p}^{(0)}=c\form_{p}^{(0)}$ for some $c\in(0,\infty)$
by Theorem \ref{thm:SG-p-RF-V0-unique}. Now it is not difficult to show that
$\domainsecond_{p}=\domain_{p}$ and $\formsecond_{p}=c\form_{p}$,
by the same argument as the proof of Theorem \ref{thm:p-RF-inductive-limit}
(see also \cite[Proof of Theorem 2.36-(2)]{K:Fractal2018} and
\cite[Proof of Lemma 2.3.8]{Kig01}).
\qed\end{prfsktch}
\begin{dfn}[Canonical $p$-resistance form]\label{dfn:SG-cannonical-p-RF}
In view of its uniqueness obtained in Theorem \ref{thm:SG-p-RF-unique},
we call $(\form_{p},\domain_{p})$ as defined in Definition \ref{dfn:SG-p-RF}
the \emph{canonical $p$-resistance form} on the Sierpi\'{n}ski gasket $\ssset$.
\end{dfn}
\begin{rmk}\label{rmk:SG-p-RF-renom-trace-necessary}
Here are a couple of remarks on our choice of the framework of $p$-resistance forms
in relation to the existence and detailed properties of $\form_{p}^{(0)}$.
\begin{enumerate}[label=\textup{(\arabic*)},align=left,leftmargin=*,topsep=4pt,itemsep=2pt]
\item\label{it:SG-p-RF-renom-necessary}As mentioned in the above sketch of the proof
	of Theorem \ref{thm:SG-p-RF-unique}, the existence of a $p$-resistance form
	$\form_{p}^{(0)}$ on $\ssvertices_{0}$ with the property that
	$\renom_{\sscndc_{p},1}(\form_{p}^{(0)})\big\vert_{\ssvertices_{0}}=\form_{p}^{(0)}$
	for some $\sscndc_{p}\in(0,\infty)$ is necessary for that of a $p$-resistance form
	$(\form_{p},\domain_{p})$ on $\ssset$ with the self-similarity \ref{it:SG-p-RF-SSE1}
	and \ref{it:SG-p-RF-SSE2}. In this sense, the construction of $(\form_{p},\domain_{p})$
	based on such $\form_{p}^{(0)}$ as presented in Subsection \ref{ssec:SG-construct-p-RF}
	does not put any restriction on the class of resulting $p$-resistance forms on
	$\ssset$ as far as self-similar ones are concerned. On the other hand, any possible
	proof of the existence of such $\form_{p}^{(0)}$ would inevitably involve
	\emph{the operation of taking traces to subsets, which does NOT preserve the class
	of $p$-energy forms on finite sets of the type \eqref{eq:p-RF-finite-graph}}.
	This is why we are forced to consider some larger class of $p$-energy forms,
	and that of $p$-resistance forms as formulated in Definition \ref{dfn:p-RF}
	seems to be the right one for unifying the existing studies of self-similar
	$p$-energy forms on self-similar sets in \cite{HPS,SW,Kig23,Shi,CGQ}.
\item\label{it:SG-p-RF-trace-necessary}Observe the central roles played
	by the functional $\form_{p}^{(0)}(\cdot;\cdot)$ in the above proofs of
	Theorem \ref{thm:SG-p-RF-strong-comp},
	Proposition \ref{prop:SG-p-RF-normal-derivative-self-sim} and
	Theorem \ref{thm:SG-p-RF-Perron-Frobenius}. These proofs are made possible by
	combining some of the basic properties of $p$-resistance forms given in
	Section \ref{sec:p-RF} with the fact that $\form_{p}^{(0)}$ coincides with the trace
	$\form_{p}\vert_{\ssvertices_{0}}$ of $(\form_{p},\domain_{p})$ to $\ssvertices_{0}$
	and provides a useful discrete characterization \eqref{eq:SG-p-RF-harmonic-Vn-test}
	of the $\form_{p}$-harmonicity. This observation suggests that it is important
	to guarantee nice properties of $\form_{p}^{(0)}$ for further detailed analysis
	of the limit $p$-energy form $(\form_{p},\domain_{p})$, and the framework of
	$p$-resistance forms is helpful for this purpose.%
\end{enumerate}
\end{rmk}
%
%%%%%
\section{$p$-Energy measures and singularity among distinct $p$}\label{sec:SG-p-energy-meas}
%%%%%
%
In this last section, we introduce the $p$-energy measures on the Sierpi\'{n}ski gasket,
present their basic properties, and give the proof that the $p$-energy measures and the
$q$-energy measures are mutually singular for any $p,q\in(1,\infty)$ with $p\not=q$,
which will be proved in \cite{KS:pqEnergySing} in a more general setting of self-similar
$p$-energy forms on p.-c.f.\ self-similar sets with certain very good geometric symmetry.

Throughout this section, we continue to follow the notation in Section \ref{sec:SG-p-RF},
and let $p$ denote an arbitrary element of $(1,\infty)$ unless otherwise stated. First,
the $p$-energy measure $\enermeas{p}{u}$ of $u\in\domain_{p}$ is defined as follows.
Note that, as mentioned in \cite[Proof of Lemma 4-(ii)]{HN}, for $p=2$ the following
definition results in what is known as the $\form_{2}$-energy measure of $u\in\domain_{2}$
in the theory of regular symmetric Dirichlet forms; see \cite[(3.2.14)]{FOT} for the
definition of the latter.
\begin{thm}\label{thm:SG-p-energy-meas}
Let $u\in\domain_{p}$. Then there exists a unique Borel measure $\enermeas{p}{u}$ on $\ssset$
such that $\enermeas{p}{u}(\ssset_{w})=\sscndc_{p}^{\lvert w\rvert}\form_{p}(u\circ\ssmap_{w})$
for any $w\in\words_{*}$. Moreover, $\enermeas{p}{u}(\{x\})=0$ for any $x\in\ssset$.
\end{thm}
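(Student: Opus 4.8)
The plan is to realize $\enermeas{p}{u}$ as the pushforward, under the canonical coding map, of a measure on the shift space, and to reduce both the defining cell identity and the absence of atoms to a single energy-decay estimate. First I would set up the shift space $\Sigma:=\ssindex^{\mathbb{N}}$ with its product (compact, metrizable) topology and the continuous surjection $\pi\colon\Sigma\to\ssset$, $\pi(\omega):=\bigcap_{n}\ssset_{\omega_{1}\ldots\omega_{n}}$. On the cylinders $[w]:=\{\omega\mid\omega_{1}\ldots\omega_{\lvert w\rvert}=w\}$ define $\nu([w]):=\sscndc_{p}^{\lvert w\rvert}\form_{p}(u\circ\ssmap_{w})$. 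The self-similarity \ref{it:SG-p-RF-SSE2} applied to $u\circ\ssmap_{w}$ gives exactly the consistency $\nu([w])=\sum_{i\in\ssindex}\nu([wi])$, and $\nu([\emptyset])=\form_{p}(u)<\infty$, so the Kolmogorov/Carath\'{e}odory extension theorem yields a unique Borel measure $\tilde{\nu}$ on $\Sigma$ with $\tilde{\nu}([w])=\nu([w])$; I then set $\enermeas{p}{u}:=\pi_{*}\tilde{\nu}$. Since $\pi([w])=\ssset_{w}$ and, by Proposition \ref{prop:SG-intersecting-cells}-\ref{it:SG-intersecting-cells}, $\pi^{-1}(\ssset_{w})$ exceeds $[w]$ only by codes of the finite vertex set $\ssset_{w}\cap\bigcup_{v\in\words_{\lvert w\rvert}\setminus\{w\}}\ssset_{v}$, both the identity $\enermeas{p}{u}(\ssset_{w})=\nu([w])$ and the atomlessness of $\enermeas{p}{u}$ will follow at once once I show that $\tilde{\nu}(\{\omega\})=\lim_{n}\sscndc_{p}^{n}\form_{p}(u\circ\ssmap_{\omega_{1}\ldots\omega_{n}})=0$ for every $\omega\in\Sigma$, because then $\tilde{\nu}$ of any finite (or countable) set of codes vanishes.

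The crux is therefore the estimate $\sscndc_{p}^{\lvert w\rvert}\form_{p}(u\circ\ssmap_{w})\to0$ as $\lvert w\rvert\to\infty$, which I would prove first for $h\in\harfunc{\form_{p}}{\ssvertices_{0}}$. Iterating the oscillation contraction in Proposition \ref{prop:SG-p-RF-normal-derivative-self-sim}-\ref{it:SG-p-RF-harmonic-osc-contraction} (legitimate since $h\circ\ssmap_{v}$ remains $\form_{p}$-harmonic by part \ref{it:SG-p-RF-harmonic-non-degenerate}) yields $\osc_{\ssset}[h\circ\ssmap_{w}]\leq\sscndc_{p}^{-\lvert w\rvert/(p-1)}\osc_{\ssset}[h]$. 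Both $(\form_{p}^{(0)})^{1/p}$ (by \ref{it:p-RF1}) and $\osc_{\ssvertices_{0}}[\cdot]$ are seminorms on $\mathbb{R}^{\ssvertices_{0}}$ with the same kernel $\mathbb{R}\one_{\ssvertices_{0}}$, hence equivalent norms on the finite-dimensional quotient $\mathbb{R}^{\ssvertices_{0}}/\mathbb{R}\one_{\ssvertices_{0}}$, so $\form_{p}^{(0)}(\xi)\leq C\osc_{\ssvertices_{0}}[\xi]^{p}$; since $\form_{p}(h)=\form_{p}^{(0)}(h\vert_{\ssvertices_{0}})$ for harmonic $h$ (as $\form_{p}\vert_{\ssvertices_{0}}=\form_{p}^{(0)}$ from Theorem \ref{thm:SG-p-RF} and harmonic functions realize the trace energy, Theorem \ref{thm:p-RF-trace}), applying this to $h\circ\ssmap_{w}$ and combining gives $\sscndc_{p}^{\lvert w\rvert}\form_{p}(h\circ\ssmap_{w})\leq C\sscndc_{p}^{-\lvert w\rvert/(p-1)}\osc_{\ssset}[h]^{p}$, which decays because $\sscndc_{p}>1$.

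For general $u\in\domain_{p}$ I would interpolate through the harmonic approximations $u_{n}:=\harext{\form_{p}}{\ssvertices_{n}}[u\vert_{\ssvertices_{n}}]$, which satisfy $\form_{p}(u-u_{n})\to0$ (Proposition \ref{prop:SG-p-RF-harmonic-approx}). For $\lvert w\rvert\geq n$, writing $w=vw'$ with $v\in\words_{n}$, the function $u_{n}\circ\ssmap_{v}$ lies in $\harfunc{\form_{p}}{\ssvertices_{0}}$ (Proposition \ref{prop:SG-p-RF-harmonic-Vn}), so the harmonic case applies to it and, using $\osc_{\ssset}[u_{n}\circ\ssmap_{v}]\leq\osc_{\ssset}[u_{n}]=\osc_{\ssvertices_{n}}[u]\leq\osc_{\ssset}[u]=:M$ (weak comparison, Proposition \ref{prop:p-RF-comp}, against constant functions), gives $\sscndc_{p}^{\lvert w\rvert}\form_{p}(u_{n}\circ\ssmap_{w})\leq CM^{p}\sscndc_{p}^{n}\sscndc_{p}^{-(\lvert w\rvert-n)/(p-1)}$. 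On the other hand, iterating \ref{it:SG-p-RF-SSE2} gives $\sum_{\lvert w\rvert=m}\sscndc_{p}^{m}\form_{p}(g\circ\ssmap_{w})=\form_{p}(g)$, so each summand for $g=u-u_{n}$ is at most $\form_{p}(u-u_{n})$. Applying the triangle inequality for the seminorm $\form_{p}^{1/p}$ to $u=u_{n}+(u-u_{n})$ and multiplying by $\sscndc_{p}^{\lvert w\rvert/p}$, I obtain, for $\lvert w\rvert\geq n$, the bound $(\sscndc_{p}^{\lvert w\rvert}\form_{p}(u\circ\ssmap_{w}))^{1/p}\leq(CM^{p}\sscndc_{p}^{n}\sscndc_{p}^{-(\lvert w\rvert-n)/(p-1)})^{1/p}+\form_{p}(u-u_{n})^{1/p}$. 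Letting $\lvert w\rvert\to\infty$ with $n$ fixed kills the first term, and then letting $n\to\infty$ kills the second, establishing the crux estimate.

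Finally, the crux estimate gives $\tilde{\nu}(\{\omega\})=0$ for every $\omega$, whence via the pushforward the two claims $\enermeas{p}{u}(\ssset_{w})=\sscndc_{p}^{\lvert w\rvert}\form_{p}(u\circ\ssmap_{w})$ and $\enermeas{p}{u}(\{x\})=\tilde{\nu}(\pi^{-1}(\{x\}))=0$. For uniqueness I would observe that any Borel measure $\mu$ with the stated cell values is automatically atomless, since $\mu(\{x\})\leq\sum_{w\in\words_{n},\,x\in\ssset_{w}}\mu(\ssset_{w})\to0$ by the crux estimate; hence $\mu$ and $\enermeas{p}{u}$ agree on all cells and, ignoring the null finite vertex overlaps, on the algebra generated by the cells, so by finiteness they coincide on $\Borel(\ssset)$. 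The main obstacle is precisely the crux energy-decay estimate for non-harmonic $u$: energy is not controlled by oscillation in general, and it is the harmonic approximation together with the exact mass conservation $\sum_{\lvert w\rvert=m}\sscndc_{p}^{m}\form_{p}((u-u_{n})\circ\ssmap_{w})=\form_{p}(u-u_{n})$ that circumvents this difficulty.
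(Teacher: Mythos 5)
Your proposal is correct and follows essentially the same route as the paper's own proof: the measure is built on the shift space via Kolmogorov's extension theorem using \ref{it:SG-p-RF-SSE2} for consistency and pushed forward by the coding map, atomlessness is reduced to the decay $\sscndc_{p}^{\lvert w\rvert}\form_{p}(u\circ\ssmap_{w})\to 0$, which is proved first for harmonic functions via the oscillation contraction of Proposition \ref{prop:SG-p-RF-normal-derivative-self-sim}-\ref{it:SG-p-RF-harmonic-osc-contraction} together with the comparability of $\form_{p}^{(0)}$ and $\osc_{\ssvertices_{0}}$ on the finite-dimensional quotient, and then for general $u$ by the harmonic approximation of Proposition \ref{prop:SG-p-RF-harmonic-approx} combined with the seminorm triangle inequality (your cell-level estimate is exactly the paper's inequality $\enermeasshift{p}{u}(\{\omega\})^{1/p}\leq\enermeasshift{p}{v}(\{\omega\})^{1/p}+\form_{p}(u-v)^{1/p}$). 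The only cosmetic difference is in the uniqueness step, where the paper invokes the Dynkin class theorem directly while you argue via atomlessness and agreement on the algebra generated by the cells; these are equivalent.
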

\begin{prfsktch}
The uniqueness of $\enermeas{p}{u}$ is immediate from \ref{it:SG-p-RF-SSE2} and the Dynkin
class theorem (see, e.g., \cite[Appendixes, Theorem 4.2]{EK}). To see its existence,
we follow the construction in \cite[Lemma 4.1]{Hin05} (see also \cite[Section 7]{Shi}).
Namely, we consider the (unique, by the Dynkin class theorem) Borel measure
$\enermeasshift{p}{u}$ on $\ssindex^{\mathbb{N}}$ such that
$\enermeasshift{p}{u}(\{w\}\times\ssindex^{\mathbb{N}\cap(\lvert w\rvert,\infty)})
	=\sscndc_{p}^{\lvert w\rvert}\form_{p}(u\circ\ssmap_{w})$
for any $w\in\words_{*}$, which exists by \ref{it:SG-p-RF-SSE2} and
Kolmogorov's extension theorem (see, e.g., \cite[Theorem 12.1.2]{Dud}),
and define a Borel measure $\enermeas{p}{u}$ on $\ssset$ by
$\enermeas{p}{u}:=\enermeasshift{p}{u}\circ\ssproj^{-1}$,
where $\ssproj\colon\ssindex^{\mathbb{N}}\to\ssset$ is the
continuous surjection given by
$\{\ssproj((\omega_{n})_{n\in\mathbb{N}})\}:=\bigcap_{n\in\mathbb{N}}\ssset_{\omega_{1}\ldots\omega_{n}}$
(see, e.g., \cite[Theorem 1.2.3]{Kig01}). Then since
$\ssproj^{-1}(\ssset_{w}\setminus\ssvertices_{*})
	=\bigl(\{w\}\times\ssindex^{\mathbb{N}\cap(\lvert w\rvert,\infty)}\bigr)\setminus\ssproj^{-1}(\ssvertices_{*})$
for any $w\in\words_{*}$ by Proposition \ref{prop:SG-intersecting-cells}-\ref{it:SG-intersecting-cells}
and $\sup_{x\in\ssset}\#\ssproj^{-1}(x)=2<\infty$ by
Proposition \ref{prop:SG-intersecting-cells}-\ref{it:SG-intersecting-cells-words}
(see also \cite[Proof of Lemma 4.2.3]{Kig01}), the desired properties of
$\enermeas{p}{u}$ will follow from the definition of $\enermeasshift{p}{u}$
and the countability of $\ssvertices_{*}$ once we have shown that
\begin{equation}\label{eq:SG-p-energy-meas-shift-atomless}
\enermeasshift{p}{u}(\{\omega\})=0\qquad\textrm{for any $\omega\in\ssindex^{\mathbb{N}}$.}
\end{equation}
Indeed, if $u\in\bigcup_{n\in\mathbb{N}\cup\{0\}}\harfunc{\form_{p}}{\ssvertices_{n}}$,
then \eqref{eq:SG-p-energy-meas-shift-atomless} is not difficult see by combining the
implication from \ref{it:SG-p-RF-harmonic-Vn} to \ref{it:SG-p-RF-harmonic-Vn-self-sim}
in Proposition \ref{prop:SG-p-RF-harmonic-Vn} with
Proposition \ref{prop:SG-p-RF-normal-derivative-self-sim}-\ref{it:SG-p-RF-harmonic-osc-contraction}
(or with \eqref{eq:p-RF-harm-func-Hoelder}, Lemma \ref{lem:SG-p-RF-RM-contraction} and
$\sup_{x,y\in\ssset}\resismet_{\form_{p}}(x,y)^{1/(p-1)}(x,y)<\infty$ from Theorem \ref{thm:SG-p-RF}).
Then \eqref{eq:SG-p-energy-meas-shift-atomless} for $u\in\domain_{p}$ follows from the inequalities
$\enermeasshift{p}{u}(\{\omega\})^{1/p}
	\leq\enermeasshift{p}{v}(\{\omega\})^{1/p}+\enermeasshift{p}{u-v}(\{\omega\})^{1/p}
	\leq\enermeasshift{p}{v}(\{\omega\})^{1/p}+\form_{p}(u-v)^{1/p}$
for $u,v\in\domain_{p}$ implied by \ref{it:p-RF1} and \ref{it:SG-p-RF-SSE2},
Proposition \ref{prop:SG-p-RF-harmonic-approx},
and \eqref{eq:SG-p-energy-meas-shift-atomless} for
$u\in\bigcup_{n\in\mathbb{N}\cup\{0\}}\harfunc{\form_{p}}{\ssvertices_{n}}$.
\qed\end{prfsktch}
\begin{dfn}[$p$-Energy measure]\label{dfn:SG-p-energy-meas}
For each $u\in\domain_{p}$, the Borel measure $\enermeas{p}{u}$ on $\ssset$ as in
Theorem \ref{thm:SG-p-energy-meas} is called the \emph{$\form_{p}$-energy measure} of $u$,
or the \emph{canonical $p$-energy measure} of $u$ on the Sierpi\'{n}ski gasket $\ssset$.
\end{dfn}

We collect some basic properties of the $p$-energy measures in the following propositions
and theorems. The details of these results will be presented in \cite{KS:GCDiff}
under a more general setting of self-similar $p$-energy forms on self-similar sets.
\begin{prop}\label{prop:SG-p-energy-meas-properties-basic}
If $f\colon\ssset\to[0,\infty)$ is Borel measurable and $\lVert f\rVert_{\sup}<\infty$, then:
\begin{enumerate}[label=\textup{(\arabic*)},align=left,leftmargin=*,topsep=4pt,itemsep=2pt]
\item\label{it:SG-p-energy-meas-seminorm}$\bigl(\int_{\ssset}f\,d\enermeas{p}{\cdot}\bigr)^{1/p}$
	is a seminorm on $\domain_{p}$ and $\int_{\ssset}f\,d\enermeas{p}{\one_{\ssset}}=0$;
\item\label{it:SG-p-energy-meas-p-RF5}$\bigl(\int_{\ssset}f\,d\enermeas{p}{\cdot},\domain_{p}\bigr)$
	satisfies \ref{it:p-RF5};
\item\label{it:SG-p-energy-meas-p-RF5-conseq}Proposition \textup{\ref{prop:p-RF5-conseq}}
	with $\int_{\ssset}f\,d\enermeas{p}{\cdot}$ in place of $\form_{p}$ holds.
\end{enumerate}
\end{prop}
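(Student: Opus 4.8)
The plan is to reduce everything to the defining relation $\enermeas{p}{u}(\ssset_{w})=\sscndc_{p}^{\lvert w\rvert}\form_{p}(u\circ\ssmap_{w})$ together with the already-established properties of $(\form_{p},\domain_{p})$, and then to pass from cylinder sets to an arbitrary $f$ by an $L^{1}$-approximation. First I would note that the membership assertions hidden in \ref{it:SG-p-energy-meas-seminorm}--\ref{it:SG-p-energy-meas-p-RF5-conseq} (that $\varphi(u)$, $uv$, $T(u)$, etc.\ lie in $\domain_{p}$) do not involve $f$ at all and are exactly the corresponding assertions for $(\form_{p},\domain_{p})$ recorded in \ref{it:p-RF5} and Proposition \ref{prop:p-RF5-conseq}; likewise $\int_{\ssset}f\,d\enermeas{p}{\one_{\ssset}}=0$ is immediate because $\enermeas{p}{\one_{\ssset}}(\ssset_{w})=\sscndc_{p}^{\lvert w\rvert}\form_{p}(\one_{\ssset})=0$ by \ref{it:p-RF1}, so $\enermeas{p}{\one_{\ssset}}$ is the zero measure. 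Thus the genuine content of \ref{it:SG-p-energy-meas-seminorm}--\ref{it:SG-p-energy-meas-p-RF5-conseq} is a finite list of inequalities, each a continuous function of finitely many numbers of the form $\int_{\ssset}f\,d\enermeas{p}{u_{i}}$, and I only need to verify these.

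For the base case I would take $f=\sum_{w\in F}a_{w}\one_{\ssset_{w}}$ with $F\subset\words_{*}$ finite and $a_{w}\in[0,\infty)$, so that $\int_{\ssset}f\,d\enermeas{p}{u}=\sum_{w\in F}a_{w}\sscndc_{p}^{\lvert w\rvert}\form_{p}(u\circ\ssmap_{w})$. By the self-similarity \ref{it:SG-p-RF-SSE1} the map $u\mapsto u\circ\ssmap_{w}$ is linear on $\domain_{p}$, and since $T_{k}(u)\circ\ssmap_{w}=T_{k}(u\circ\ssmap_{w})$ pointwise for any $T$ as in \ref{it:p-RF5}, applying \ref{it:p-RF5} for $(\form_{p},\domain_{p})$ to the tuple $(u_{1}\circ\ssmap_{w},\dots,u_{m}\circ\ssmap_{w})$ shows that each functional $u\mapsto\sscndc_{p}^{\lvert w\rvert}\form_{p}(u\circ\ssmap_{w})$ is the $p$-th power of a seminorm and satisfies \ref{it:p-RF5}. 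Lemma \ref{lem:p-RF1-p-RF5-cone} then transfers both properties to the non-negative combination $\int_{\ssset}f\,d\enermeas{p}{\cdot}$, giving \ref{it:SG-p-energy-meas-seminorm} and \ref{it:SG-p-energy-meas-p-RF5} for such $f$; property \ref{it:SG-p-energy-meas-p-RF5-conseq} then follows for such $f$ by the very same derivation of Proposition \ref{prop:p-RF5-conseq} from \ref{it:p-RF5}, using in addition the translation invariance $\int_{\ssset}f\,d\enermeas{p}{u+c\one_{\ssset}}=\int_{\ssset}f\,d\enermeas{p}{u}$, which is itself a consequence of the seminorm property and of $\int_{\ssset}f\,d\enermeas{p}{\one_{\ssset}}=0$.

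To reach an arbitrary bounded non-negative Borel $f$, I would fix one of the inequalities to be proved, together with the finite family of energy measures $\mu_{1},\dots,\mu_{N}$ (for instance $\enermeas{p}{u},\enermeas{p}{v},\enermeas{p}{u+v}$) occurring in it, and set $\nu:=\sum_{i=1}^{N}\mu_{i}$, a finite Borel measure with $\mu_{i}\le\nu$ for each $i$. Because the cylinder sets $\{\ssset_{w}\}_{w\in\words_{*}}$ generate $\Borel(\ssset)$, their pairwise overlaps are finite and hence $\nu$-null (the energy measures are atomless by Theorem \ref{thm:SG-p-energy-meas}), and their diameters tend to $0$; consequently every bounded non-negative Borel $f$ is an $L^{1}(\nu)$-limit of a uniformly bounded sequence of non-negative cell-functions $f_{n}=\sum_{w\in\words_{n}}a_{w}\one_{\ssset_{w}}$. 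For such $f_{n}$ the bound $\mu_{i}\le\nu$ gives $\bigl\lvert\int_{\ssset}(f_{n}-f)\,d\mu_{i}\bigr\rvert\le\lVert f_{n}-f\rVert_{L^{1}(\nu)}\to0$, so $\int_{\ssset}f_{n}\,d\mu_{i}\to\int_{\ssset}f\,d\mu_{i}$ for every $i$. Passing to the limit in the base-case inequality, which is continuous in these finitely many values, yields it for $f$; running this argument for every inequality in \ref{it:SG-p-energy-meas-seminorm}--\ref{it:SG-p-energy-meas-p-RF5-conseq} completes the proof.

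The main obstacle is precisely this last passage. The collection of admissible $f$ is \emph{not} a vector space, since the $p$-th-root seminorm and the generalized contraction inequalities are genuinely sublinear in $f$, so the functional monotone-class theorem does not apply directly. The device that resolves it is to freeze, for each inequality separately, the finitely many functions (hence finitely many energy measures) involved, dominate them by the single finite measure $\nu$, and approximate $f$ in $L^{1}(\nu)$; this is strong enough to pass each individual inequality to the limit while only ever invoking the cell-function case, where self-similarity and Lemma \ref{lem:p-RF1-p-RF5-cone} do all the work.
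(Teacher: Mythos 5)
Your proof is correct, and its skeleton --- verify the properties for a generating class of $f$ by applying Lemma \ref{lem:p-RF1-p-RF5-cone} to the functionals $u\mapsto\sscndc_{p}^{\lvert w\rvert}\form_{p}(u\circ\ssmap_{w})$ (each of which inherits \ref{it:p-RF1} and \ref{it:p-RF5} from $(\form_{p},\domain_{p})$ because $T(u)\circ\ssmap_{w}=T(u\circ\ssmap_{w})$), then pass to general bounded Borel $f$ by a limit argument in which each fixed inequality involves only finitely many energy measures --- is the same as the paper's. The difference is in the extension device. The paper works with the lifted measures $\enermeasshift{p}{u}$ on the shift space $\ssindex^{\mathbb{N}}$, where the cylinder sets form a genuine algebra (so there is no overlap issue to begin with), applies the monotone class theorem for \emph{sets} to show that $\enermeasshift{p}{\cdot}(A)$ satisfies \ref{it:SG-p-energy-meas-seminorm} and \ref{it:SG-p-energy-meas-p-RF5} for every $A\in\Borel(\ssindex^{\mathbb{N}})$, and then reaches general $f$ via Lemma \ref{lem:p-RF1-p-RF5-cone} and monotone convergence from simple functions. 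You instead stay on $\ssset$, use the atomlessness in Theorem \ref{thm:SG-p-energy-meas} to make the cell overlaps negligible, and approximate $f$ in $L^{1}(\nu)$ with $\nu$ the sum of the finitely many energy measures occurring in the inequality at hand. Your remark that the class of admissible $f$ is not a vector space, so the functional monotone class theorem cannot be invoked naively, identifies the real issue; your per-inequality domination by $\nu$ and the paper's set-level monotone class argument are two equally valid ways around it (the paper's has the mild advantage of producing the statement for all Borel sets $A$ in one stroke, yours avoids the detour through the shift space). Part \ref{it:SG-p-energy-meas-p-RF5-conseq} is handled identically in both: once \ref{it:SG-p-energy-meas-seminorm} and \ref{it:SG-p-energy-meas-p-RF5} hold, the derivation of Proposition \ref{prop:p-RF5-conseq} is purely formal.
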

\begin{prfsktch}
Since $\{A\times\ssindex^{\mathbb{N}\cap(n,\infty)}\mid\textrm{$n\in\mathbb{N}$, $A\subset\ssindex^{n}$}\}$
is an algebra in $\ssindex^{\mathbb{N}}$ generating $\Borel(\ssindex^{\mathbb{N}})$,
it is not difficult to see from Lemma \ref{lem:p-RF1-p-RF5-cone} and the monotone class theorem
(see, e.g., \cite[Theorem 4.4.2]{Dud}) that $\enermeasshift{p}{\cdot}(A)$ has the properties
stated in \ref{it:SG-p-energy-meas-seminorm} and \ref{it:SG-p-energy-meas-p-RF5}
for any $A\in\Borel(\ssindex^{\mathbb{N}})$, where $\enermeasshift{p}{u}$ denotes
the Borel measure on $\ssindex^{\mathbb{N}}$ defined in the sketch of the proof of
Theorem \ref{thm:SG-p-energy-meas} above. Then \ref{it:SG-p-energy-meas-seminorm} and
\ref{it:SG-p-energy-meas-p-RF5} are immediate by Lemma \ref{lem:p-RF1-p-RF5-cone} and
monotone convergence, and \ref{it:SG-p-energy-meas-p-RF5-conseq} follows from
\ref{it:SG-p-energy-meas-p-RF5} and \ref{it:SG-p-energy-meas-seminorm}
in exactly the same way as Proposition \ref{prop:p-RF5-conseq}.
\qed\end{prfsktch}
\begin{prop}\label{prop:SG-p-energy-meas-Diff}
If $f\colon\ssset\to[0,\infty)$ is Borel measurable and $\lVert f\rVert_{\sup}<\infty$,
then $\int_{\ssset}f\,d\enermeas{p}{\cdot}\colon\domain_{p}/\mathbb{R}\one_{\ssset}\to\mathbb{R}$
is Fr\'{e}chet differentiable on $(\domain_{p}/\mathbb{R}\one_{\ssset},\form_{p}^{1/p})$
and has the same properties as those of $\form_{p}$ in Theorem \textup{\ref{thm:p-RF-Diff}}
with ``$v\not\in\mathbb{R}\one_{\ssset}$'' in \eqref{eq:p-RF-Diff-strictly-convex}
replaced by ``$\int_{\ssset}f\,d\enermeas{p}{v}>0$'' and with the same $c_{p}$,
and for any $u,v\in\domain_{p}$,
\begin{align}
\Borel(\ssset)\ni A\mapsto\enermeas{p}{u;v}(A):=\frac{1}{p}&\frac{d}{dt}\enermeas{p}{u+tv}(A)\bigg\vert_{t=0}
	\mspace{10mu}\textrm{is a Borel signed measure on $\ssset$}\notag\\
\textrm{and}\quad\int_{\ssset}f\,d\enermeas{p}{u;v}=\frac{1}{p}&\frac{d}{dt}\int_{\ssset}f\,d\enermeas{p}{u+tv}\bigg\vert_{t=0}.
\label{eq:SG-p-energy-meas-Diff-dfn}
\end{align}
Further, for any $u,v\in\domain_{p}$ and any Borel measurable functions $f,g\colon\ssset\to[0,\infty]$,
\begin{equation}\label{eq:SG-p-energy-meas-Diff-Hoelder}
\int_{\ssset}fg\,d\bigl\lvert\enermeas{p}{u;v}\bigr\rvert
	\leq\biggl(\int_{\ssset}f^{p/(p-1)}\,d\enermeas{p}{u}\biggr)^{(p-1)/p}\biggl(\int_{\ssset}g^{p}\,d\enermeas{p}{v}\biggr)^{1/p}.
\end{equation}
\end{prop}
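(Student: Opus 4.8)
The plan is to \emph{transfer the proof of Theorem \ref{thm:p-RF-Diff} verbatim} to the functional $N_{f}:=\int_{\ssset}f\,d\enermeas{p}{\cdot}$. Indeed, Proposition \ref{prop:SG-p-energy-meas-properties-basic}-\ref{it:SG-p-energy-meas-seminorm},\ref{it:SG-p-energy-meas-p-RF5-conseq} guarantees that $N_{f}^{1/p}$ is a seminorm on $\domain_{p}$ with $N_{f}(\one_{\ssset})=0$ (so $N_{f}$ descends to $\domain_{p}/\mathbb{R}\one_{\ssset}$) and that the $p$-Clarkson inequalities \eqref{eq:p-RF-sClarkson-pleq2}--\eqref{eq:p-RF-wClarkson-pgeq2} hold with $N_{f}$ in place of $\form_{p}$ (via Proposition \ref{prop:p-RF5-conseq}-\ref{it:p-RF-Clarkson}). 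Since these inequalities together with the convexity of $\lvert\cdot\rvert^{p}$ are \emph{the only} ingredients of the proof of Theorem \ref{thm:p-RF-Diff}, that proof yields at once the existence of the derivative $N_{f}(u;v):=\frac{1}{p}\frac{d}{dt}N_{f}(u+tv)\vert_{t=0}$, its linearity and homogeneity, and the two H\"{o}lder estimates \eqref{eq:p-RF-Diff-Hoelder} and \eqref{eq:p-RF-Diff-Hoelder-cont} \emph{with the same constant $c_{p}$}, now phrased in terms of $N_{f}$. The one modification is in the strict-monotonicity statement \eqref{eq:p-RF-Diff-strictly-convex}: its proof invokes \ref{it:p-RF1} solely to identify the kernel of the seminorm, and the kernel of $N_{f}$ need not be $\mathbb{R}\one_{\ssset}$, so for $N_{f}$ the map $t\mapsto N_{f}(u+tv;v)$ is strictly increasing exactly when $N_{f}(v)>0$. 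Finally, since $N_{f}(v)\leq\lVert f\rVert_{\sup}\form_{p}(v)$ (the $w=\emptyset$ case of Theorem \ref{thm:SG-p-energy-meas}), these $N_{f}$-estimates combine with $N_{f}^{1/p}\leq\lVert f\rVert_{\sup}^{1/p}\form_{p}^{1/p}$ to give Fr\'{e}chet differentiability of $N_{f}$ with respect to the stronger seminorm $\form_{p}^{1/p}$ on $\domain_{p}/\mathbb{R}\one_{\ssset}$, as asserted.

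For the signed measure in \eqref{eq:SG-p-energy-meas-Diff-dfn}, I would fix $u,v\in\domain_{p}$ and set $\enermeas{p}{u;v}(A):=N_{\one_{A}}(u;v)$ for $A\in\Borel(\ssset)$, which is meaningful by the first paragraph. The map $f\mapsto N_{f}(u;v)$ is linear (by linearity of $f\mapsto N_{f}$ and of differentiation), whence $A\mapsto\enermeas{p}{u;v}(A)$ is finitely additive; and applying \eqref{eq:p-RF-Diff-Hoelder} to $N_{\one_{A}}$ gives the crucial set-wise bound $\lvert\enermeas{p}{u;v}(A)\rvert\leq\enermeas{p}{u}(A)^{(p-1)/p}\enermeas{p}{v}(A)^{1/p}\leq\frac{p-1}{p}\enermeas{p}{u}(A)+\frac{1}{p}\enermeas{p}{v}(A)$. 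Domination by the finite measure $\frac{p-1}{p}\enermeas{p}{u}+\frac{1}{p}\enermeas{p}{v}$ promotes finite additivity to continuity at $\emptyset$, hence to countable additivity, so $\enermeas{p}{u;v}$ is a Borel signed measure. The integral identity in \eqref{eq:SG-p-energy-meas-Diff-dfn} then holds for indicators by definition, extends to simple $f\geq0$ by linearity, and extends to bounded Borel $f\geq0$ by uniform approximation, where the error is controlled \emph{without circularity} through $\lvert N_{h}(u;v)\rvert\leq 2\lVert h\rVert_{\sup}\form_{p}(u)^{(p-1)/p}\form_{p}(v)^{1/p}$ (obtained by applying \eqref{eq:p-RF-Diff-Hoelder} to $N_{h^{+}}$ and $N_{h^{-}}$ and using $N_{h^{\pm}}(w)\leq\lVert h\rVert_{\sup}\form_{p}(w)$).

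For the weighted H\"{o}lder inequality \eqref{eq:SG-p-energy-meas-Diff-Hoelder}, I would first upgrade the set-wise bound to the total-variation measure: taking the supremum of $\sum_{j}\lvert\enermeas{p}{u;v}(B_{j})\rvert$ over finite Borel partitions $\{B_{j}\}$ of $A$ and applying the set-wise bound with the discrete H\"{o}lder inequality gives $\lvert\enermeas{p}{u;v}\rvert(A)\leq\enermeas{p}{u}(A)^{(p-1)/p}\enermeas{p}{v}(A)^{1/p}$. For simple $f=\sum_{i}a_{i}\one_{A_{i}}$, $g=\sum_{i}b_{i}\one_{A_{i}}$ with $a_{i},b_{i}\geq0$ on a common partition $\{A_{i}\}$, this and the discrete H\"{o}lder inequality with exponents $\frac{p}{p-1}$ and $p$ produce \eqref{eq:SG-p-energy-meas-Diff-Hoelder} exactly, and the general Borel case $f,g\colon\ssset\to[0,\infty]$ follows by monotone convergence. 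I expect the \emph{measure-theoretic upgrade} to be the main point requiring care: the first paragraph only delivers $N_{f}(u;v)$ as a number for each fixed bounded Borel $f\geq0$, and the substance of the proposition is that these numbers assemble, via $f=\one_{A}$, into a genuine countably additive signed measure $\enermeas{p}{u;v}$ satisfying \eqref{eq:SG-p-energy-meas-Diff-dfn} and \eqref{eq:SG-p-energy-meas-Diff-Hoelder}. The single estimate that makes all of this work is the set-wise H\"{o}lder bound extracted from \eqref{eq:p-RF-Diff-Hoelder}; everything else is routine measure theory.
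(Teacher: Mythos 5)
Your proposal is correct and follows essentially the same route as the paper's proof: the first part by rerunning the proof of Theorem \ref{thm:p-RF-Diff} on the basis of Proposition \ref{prop:SG-p-energy-meas-properties-basic}, the signed-measure claim via finite additivity in $f$ together with the set-wise bound from \eqref{eq:p-RF-Diff-Hoelder} for $\int_{\ssset}\one_{A}\,d\enermeas{p}{\cdot}$, and \eqref{eq:SG-p-energy-meas-Diff-Hoelder} by first treating indicators to bound $\lvert\enermeas{p}{u;v}\rvert$ and then passing to general $f,g$ by H\"{o}lder's inequality and monotone convergence. The details you supply (the domination $\lvert\enermeas{p}{u;v}(A)\rvert\leq\frac{p-1}{p}\enermeas{p}{u}(A)+\frac{1}{p}\enermeas{p}{v}(A)$ for countable additivity, and the passage from $N_{f}$-estimates to Fr\'{e}chet differentiability in the stronger norm $\form_{p}^{1/p}$) are exactly the right ones.
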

\begin{prfsktch}
The first part is proved in exactly the same way as Theorem \ref{thm:p-RF-Diff} on the basis of
Proposition \ref{prop:SG-p-energy-meas-properties-basic}-\ref{it:SG-p-energy-meas-seminorm},\ref{it:SG-p-energy-meas-p-RF5-conseq}.
\eqref{eq:SG-p-energy-meas-Diff-dfn} follows by the finite additivity of
$\frac{1}{p}\frac{d}{dt}\int_{\ssset}f\mspace{2.85mu}d\enermeas{p}{u+tv}\big\vert_{t=0}$ in $f$,
\eqref{eq:p-RF-Diff-Hoelder} for $\int_{\ssset}f\mspace{2.85mu}d\enermeas{p}{\cdot}$ and
dominated convergence. \eqref{eq:SG-p-energy-meas-Diff-Hoelder} can be shown
first for $f=g=\one_{A}$ with $A\in\Borel(\ssset)$ by the definition of
$\bigl\lvert\enermeas{p}{u;v}\bigr\rvert(A)$, \eqref{eq:p-RF-Diff-Hoelder} for
$\int_{\ssset}\one_{B}\,d\enermeas{p}{\cdot}=\enermeas{p}{\cdot}(B)$ with $B\in\Borel(\ssset)$ and
H\"{o}lder's inequality, and then for general $f,g$ by H\"{o}lder's inequality and monotone convergence.
\qed\end{prfsktch}
\begin{thm}[Chain rule for $p$-energy measures]\label{thm:SG-p-energy-meas-chain-rule}
Let $n\in\mathbb{N}$, $u\in\domain_{p}$, $v=(v_{1},\ldots,v_{n})\in\domain_{p}^{n}$,
$\Phi\in\contfunc^{1}(\mathbb{R})$ and $\Psi\in\contfunc^{1}(\mathbb{R}^{n})$.
Then $\Phi(u),\Psi(v)\in\domain_{p}$ and
\begin{equation}\label{eq:SG-p-energy-meas-chain-rule}
d\enermeas{p}{\Phi(u);\Psi(v)}=\sum_{k=1}^{n}\sgn(\Phi'(u))\lvert\Phi'(u)\rvert^{p-1}\partial_{k}\Psi(v)\,d\enermeas{p}{u;v_{k}}.
\end{equation}
\end{thm}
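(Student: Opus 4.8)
\begin{prfsktch}
The plan is to secure membership in $\domain_{p}$, to reduce \eqref{eq:SG-p-energy-meas-chain-rule} to two one-slot chain rules that are algebraically trivial for affine maps, and then to upgrade from affine to $\contfunc^{1}$ maps by a cell-wise first-order approximation controlled by the oscillation decay of $u$ and $v$.

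First I would check $\Phi(u),\Psi(v)\in\domain_{p}$. Since $\ssset$ is compact and $u,v_{1},\ldots,v_{n}\in\domain_{p}\subset\contfunc(\ssset)$ are bounded, the ranges of $u$ and of $v=(v_{1},\ldots,v_{n})$ lie in a compact set on which $\Phi,\Psi$ are Lipschitz; extending them to globally Lipschitz maps without altering the compositions and rescaling, membership follows from Proposition~\ref{prop:p-RF5-conseq}-\ref{it:p-RF-1Lip} for $\Phi$ and from \ref{it:p-RF5} for $\Psi$. Next, by \eqref{eq:SG-p-energy-meas-Diff-dfn}, Theorem~\ref{thm:SG-p-energy-meas} and \ref{it:SG-p-RF-SSE2}, the polarized measures obey $\enermeas{p}{a;b}(\ssset_{w})=\sscndc_{p}^{\lvert w\rvert}\form_{p}(a\circ\ssmap_{w};b\circ\ssmap_{w})$ for every $w\in\words_{*}$, and Proposition~\ref{prop:SG-p-energy-meas-Diff} grants $\enermeas{p}{a;\cdot}$ the structural properties of $\form_{p}(a;\cdot)$ from Theorem~\ref{thm:p-RF-Diff}: it is linear and annihilates additive constants in its second slot, whereas $\enermeas{p}{\cdot;b}$ is $(p-1)$-homogeneous-with-sign and constant-invariant in its first slot.

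On this basis I would reduce \eqref{eq:SG-p-energy-meas-chain-rule} to the two one-slot chain rules
\begin{enumerate}[label=\textup{(\Alph*)},align=left,leftmargin=*,topsep=4pt,itemsep=2pt]
\item\label{it:chain-A}$d\enermeas{p}{u';\Psi(v)}=\sum_{k=1}^{n}\partial_{k}\Psi(v)\,d\enermeas{p}{u';v_{k}}$ for all $u'\in\domain_{p}$,
\item\label{it:chain-B}$d\enermeas{p}{\Phi(u);z}=\sgn(\Phi'(u))\lvert\Phi'(u)\rvert^{p-1}\,d\enermeas{p}{u;z}$ for all $z\in\domain_{p}$,
\end{enumerate}
since applying \ref{it:chain-A} with $u'=\Phi(u)$ and then \ref{it:chain-B} with $z=v_{k}$ gives \eqref{eq:SG-p-energy-meas-chain-rule}; here \ref{it:chain-A} is immediate when $\Psi$ is affine and \ref{it:chain-B} is immediate when $\Phi$ is affine, by the structural properties just recorded. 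To prove them in general I would argue cell by cell: both sides are finite signed Borel measures (Proposition~\ref{prop:SG-p-energy-meas-Diff}, the densities being bounded and continuous), so it suffices to match them on each $\ssset_{w_{0}}$, which I would do by partitioning $\ssset_{w_{0}}$ into its level-$m$ subcells, replacing $\Phi,\Psi$ on each subcell $\ssset_{w}$ by their first-order Taylor approximations at the values of $u,v$ there, and letting $m\to\infty$. In \ref{it:chain-A} the linearity of the second slot splits the contribution of $\ssset_{w}$ \emph{exactly} into an affine part, equal to $\sum_{k}\partial_{k}\Psi(v)\,\sscndc_{p}^{\lvert w\rvert}\form_{p}(u'\circ\ssmap_{w};v_{k}\circ\ssmap_{w})$, and a remainder bounded via \eqref{eq:p-RF-Diff-Hoelder} and \ref{it:p-RF5} by the oscillation of $v$ on $\ssset_{w}$ times $(\sum_{j}\form_{p}(v_{j}\circ\ssmap_{w}))^{1/p}$. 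In \ref{it:chain-B} the affine part is handled exactly by \eqref{eq:p-RF-Diff-homogeneous}, while the nonlinearity of the first slot forces the use of the H\"{o}lder-continuity estimate \eqref{eq:p-RF-Diff-Hoelder-cont}, whose remainder factor is controlled through \ref{it:p-RF5} by the oscillation of $u$ on $\ssset_{w}$.

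The main obstacle is to make these cell-wise remainders summable and uniformly negligible, i.e.\ to show $\sum_{w\in\words_{m}}$ of the remainder bounds is $o(1)$ as $m\to\infty$. This is where the hypotheses conspire: in both estimates the energy exponents sum to $1$, so H\"{o}lder's inequality over the index $w$ together with the iterated self-similarity $\sum_{\lvert w\rvert=m}\sscndc_{p}^{\lvert w\rvert}\form_{p}(c\circ\ssmap_{w})=\form_{p}(c)$ from \ref{it:SG-p-RF-SSE2} bounds the total by a fixed multiple of the total energies of $u,z$ (resp.\ the $v_{j}$), multiplied by a supremum of moduli of continuity of $\Phi',\partial_{k}\Psi$ across the cell-oscillations; since $u,v$ are uniformly continuous on $\ssset$ and $\diam\ssset_{w}\to0$ as $\lvert w\rvert\to\infty$, these oscillations, and hence the whole remainder, vanish as $m\to\infty$. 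The affine parts, meanwhile, form Riemann-type sums that converge by continuity of the densities and finiteness of the measures to $\int_{\ssset_{w_{0}}}\partial_{k}\Psi(v)\,d\enermeas{p}{u';v_{k}}$ and $\int_{\ssset_{w_{0}}}\sgn(\Phi'(u))\lvert\Phi'(u)\rvert^{p-1}\,d\enermeas{p}{u;z}$. Passing to the limit establishes \ref{it:chain-A} and \ref{it:chain-B} on every cell, hence as identities of signed measures, and composing them yields \eqref{eq:SG-p-energy-meas-chain-rule}.
\end{prfsktch}
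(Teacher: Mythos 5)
Your proposal is correct and follows essentially the same route as the paper: the paper likewise establishes membership via \ref{it:p-RF5}, proves \eqref{eq:SG-p-energy-meas-chain-rule} on each cell $\ssset_{w}$ by comparing $\sscndc_{p}^{\lvert w\tau\rvert}\form_{p}(\Phi(u\circ\ssmap_{w\tau});\Psi(v\circ\ssmap_{w\tau}))$ with a Riemann sum over the subcells $\{\ssset_{w\tau}\}_{\tau}$, controls the cellwise first-order remainders through \eqref{eq:p-RF-Diff-homogeneous}, \eqref{eq:p-RF-Diff-Hoelder}, \eqref{eq:p-RF-Diff-Hoelder-cont} and Proposition \ref{prop:p-RF5-conseq}-\ref{it:p-RF-1Lip}, sums them via H\"{o}lder's inequality and the self-similarity, and concludes by the Dynkin class theorem. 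Your only deviation is organizational --- factoring the identity into the two one-slot chain rules \ref{it:chain-A} and \ref{it:chain-B} before composing them --- which is a harmless repackaging of the same estimates.
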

\begin{prfsktch}
\ref{it:p-RF5} implies $\Phi(u),\Psi(v)\in\domain_{p}$. \eqref{eq:SG-p-energy-meas-chain-rule}
on $\ssset_{w}$ for $w\in\words_{*}$ can be proved by approximating its right-hand side
by a Riemann sum associated with the partition $\{\ssset_{w\tau}\}_{\tau\in\words_{n}}$
for $n\in\mathbb{N}$, estimating the difference between the term for $\ssset_{w\tau}$ in the sum and
$\enermeas{p}{\Phi(u);\Psi(v)}(\ssset_{w\tau})=\sscndc_{p}^{\lvert w\tau\rvert}\form_{p}(\Phi(u\circ\ssmap_{w\tau});\Psi(v\circ\ssmap_{w\tau}))$
through \eqref{eq:p-RF-Diff-homogeneous}, \eqref{eq:p-RF-Diff-Hoelder}, \eqref{eq:p-RF-Diff-Hoelder-cont}
and Proposition \ref{prop:p-RF5-conseq}-\ref{it:p-RF-1Lip}, adding up the resulting bounds
over $\tau\in\words_{n}$ via H\"{o}lder's inequality, and then letting $n\to\infty$.
Thus \eqref{eq:SG-p-energy-meas-chain-rule} follows by the Dynkin class theorem
(see, e.g., \cite[Appendixes, Theorem 4.2]{EK}).\hspace*{-3.2pt}%
\qed\end{prfsktch}
\begin{thm}\label{thm:SG-p-energy-meas-image-density}
For any $u\in\domain_{p}$, the Borel measure $\enermeas{p}{u}\circ u^{-1}$ on
$\mathbb{R}$ defined by $\enermeas{p}{u}\circ u^{-1}(A):=\enermeas{p}{u}(u^{-1}(A))$
is absolutely continuous with respect to the Lebesgue measure on $\mathbb{R}$.
\end{thm}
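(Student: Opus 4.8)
The plan is to exploit the chain rule for $p$-energy measures (Theorem \ref{thm:SG-p-energy-meas-chain-rule}) together with the self-similarity of $\enermeas{p}{u}$ and the oscillation contraction of $\form_{p}$-harmonic functions. Write $\nu:=\enermeas{p}{u}\circ u^{-1}$, a finite Borel measure on $\mathbb{R}$ with $\nu(\mathbb{R})=\form_{p}(u)$ and $\nu(\mathbb{R}\setminus u(\ssset))=0$. Taking $n=1$, $v=u$ and $\Psi=\Phi$ in \eqref{eq:SG-p-energy-meas-chain-rule} gives $d\enermeas{p}{\Phi(u)}=\lvert\Phi'(u)\rvert^{p}\,d\enermeas{p}{u}$, hence the change-of-variables identity $\int_{\mathbb{R}}\lvert\Phi'\rvert^{p}\,d\nu=\form_{p}(\Phi(u))$ for every $\Phi\in\contfunc^{1}(\mathbb{R})$; this identity, and more directly the self-similar definition $\enermeas{p}{u}(\ssset_{w})=\sscndc_{p}^{\lvert w\rvert}\form_{p}(u\circ\ssmap_{w})$, are the starting points. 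To prove $\nu\ll\mathrm{Leb}$ it suffices to dominate $\nu$ by an absolutely continuous measure, and the whole difficulty is to make this domination quantitative.

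The key estimate concerns $h\in\harfunc{\form_{p}}{\ssvertices_{0}}$. Since $\form_{p}(h)=\form_{p}^{(0)}(h\vert_{\ssvertices_{0}})$ with $\form_{p}^{(0)1/p}$ a norm on the finite-dimensional space $\mathbb{R}^{\ssvertices_{0}}/\mathbb{R}\one_{\ssvertices_{0}}$, and since the weak comparison principle (Proposition \ref{prop:p-RF-comp}) gives $\osc_{\ssset}[h]=\osc_{\ssvertices_{0}}[h\vert_{\ssvertices_{0}}]$, equivalence of norms yields $c_{1}\osc_{\ssset}[h]^{p}\le\form_{p}(h)\le c_{2}\osc_{\ssset}[h]^{p}$ for some $0<c_{1}\le c_{2}<\infty$ depending only on $p$. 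Combining the upper bound with the iterated oscillation contraction $\osc_{\ssset_{w}}[u]\le\sscndc_{p}^{-\lvert w\rvert/(p-1)}\osc_{\ssset}[u]$ from Proposition \ref{prop:SG-p-RF-normal-derivative-self-sim}-\ref{it:SG-p-RF-harmonic-osc-contraction}, I obtain, for harmonic $u$ and \emph{every} $w\in\words_{*}$, the density bound
\[
\enermeas{p}{u}(\ssset_{w})=\sscndc_{p}^{\lvert w\rvert}\form_{p}(u\circ\ssmap_{w})
	\le c_{2}\sscndc_{p}^{\lvert w\rvert}\osc_{\ssset_{w}}[u]^{p}
	\le c_{2}\,\osc_{\ssset}[u]^{p-1}\,\mathrm{Leb}\bigl(u(\ssset_{w})\bigr),
\]
using that $u(\ssset_{w})=u\circ\ssmap_{w}(\ssset)$ is an interval of length $\osc_{\ssset_{w}}[u]$. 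Consequently, for a Borel set $A\subset\mathbb{R}$ with $\mathrm{Leb}(A)=0$, covering $u^{-1}(A)$ by the cells $\ssset_{w}$, $w\in\words_{n}$, whose image meets $A$ gives $\nu(A)\le c_{2}\osc_{\ssset}[u]^{p-1}\sum_{w\in\words_{n},\,u(\ssset_{w})\cap A\neq\emptyset}\mathrm{Leb}(u(\ssset_{w}))$.

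For general $u\in\domain_{p}$ I would first run this for the piecewise harmonic approximations $u_{n}:=\harext{\form_{p}}{\ssvertices_{n}}[u\vert_{\ssvertices_{n}}]$, for which $u_{n}\circ\ssmap_{w}$ is harmonic for $\lvert w\rvert\ge n$, and then pass to the limit: $\form_{p}(u-u_{n})\to0$ (Proposition \ref{prop:SG-p-RF-harmonic-approx}) forces $\sup_{B\in\Borel(\ssset)}\lvert\enermeas{p}{u_{n}}(B)^{1/p}-\enermeas{p}{u}(B)^{1/p}\rvert\to0$ (Proposition \ref{prop:SG-p-energy-meas-properties-basic}-\ref{it:SG-p-energy-meas-seminorm}), while $u_{n}\to u$ uniformly, so $\enermeas{p}{u_{n}}\circ u_{n}^{-1}\to\nu$ weakly; absolute continuity then survives the limit \emph{provided} the estimate above is uniform in $n$. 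The main obstacle is exactly this uniformity: one must show that $\sum_{w\in\words_{n},\,u(\ssset_{w})\cap A\neq\emptyset}\mathrm{Leb}(u(\ssset_{w}))\to0$ for every null $A$, even though the level-images $\{u(\ssset_{w})\}_{w\in\words_{n}}$ overlap with total length $\sum_{w\in\words_{n}}\mathrm{Leb}(u(\ssset_{w}))$ that need not stay bounded as $n\to\infty$. Equivalently, one needs the uniform integrability of the densities of the auxiliary measures $\Theta_{n}:=\sum_{w\in\words_{n}}\enermeas{p}{u}(\ssset_{w})\,\mathrm{Leb}\vert_{u(\ssset_{w})}/\mathrm{Leb}(u(\ssset_{w}))$, each of total mass $\form_{p}(u)$, i.e.\ one must control how the covering multiplicity of $\mathbb{R}$ by the intervals $u(\ssset_{w})$ concentrates near a null set. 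I expect this uniform-integrability step—carried out through the self-similar, renewal-type structure of $\{\Theta_{n}\}_{n}$ and the \emph{strictness} in Proposition \ref{prop:SG-p-RF-normal-derivative-self-sim}-\ref{it:SG-p-RF-harmonic-osc-contraction}, and genuinely using the p.-c.f.\ geometry—to be the crux; the atomlessness already established in Theorem \ref{thm:SG-p-energy-meas} disposes of the harmless overlaps of the intervals $u(\ssset_{w})$ at their shared endpoints in $\ssvertices_{*}$.
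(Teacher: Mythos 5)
Your proposal does not close: the step you yourself label ``the crux''---showing that $\sum_{w\in\words_{n},\,u(\ssset_{w})\cap A\neq\emptyset}\mathrm{Leb}(u(\ssset_{w}))\to 0$ for Lebesgue-null $A$---is left entirely unproved, and it is not a routine verification but the whole content of the theorem in your formulation. Your own estimates bound the \emph{unrestricted} total length $\sum_{w\in\words_{n}}\mathrm{Leb}(u(\ssset_{w}))$ only by $(3\sscndc_{p}^{-1/(p-1)})^{n}\osc_{\ssset}[u]$, which already for $p=2$ (where $\sscndc_{2}=5/3$) is $(9/5)^{n}$ and diverges; a level set $u^{-1}(t)$ can meet exponentially many $n$-cells, so no tightness or uniform integrability of your auxiliary measures $\Theta_{n}$ comes for free, and nothing in the proposal supplies the missing quantitative input. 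The reduction to general $u$ has a second, unacknowledged defect: for $u_{n}=\harext{\form_{p}}{\ssvertices_{n}}[u\vert_{\ssvertices_{n}}]$ your density bound on a cell $\ssset_{w'\tau}$ with $w'\in\words_{n}$ carries the prefactor $\sscndc_{p}^{n}\osc_{\ssset_{w'}}[u_{n}]^{p-1}$, which is only controlled by a constant times $\sscndc_{p}^{n/p}\form_{p}(u)^{(p-1)/p}$ and hence degenerates as $n\to\infty$; since absolute continuity does not pass to weak limits of measures without uniform integrability of the densities, the limiting argument collapses exactly where you flag that it might. As written, the argument does not establish the theorem even for a single $\form_{p}$-harmonic function.

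The paper's proof goes a completely different and much softer route---one you in fact had in hand in your first paragraph and then abandoned. It rests on the chain rule (Theorem \ref{thm:SG-p-energy-meas-chain-rule}) and follows \cite[Proposition 7.6]{Shi}, an adaptation of \cite[Theorem 4.3.8]{CF}: given a compact Lebesgue-null $K\subset\mathbb{R}$, choose open sets $G_{n}\supset K$ decreasing with $\mathrm{Leb}(G_{n})\to 0$ and continuous $g_{n}$ with $\one_{K}\leq g_{n}\leq\one_{G_{n}}$, and set $\Phi_{n}(t):=\int_{0}^{t}g_{n}(s)\,ds\in\contfunc^{1}(\mathbb{R})$. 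Each $\Phi_{n}$ is a contraction with $\lVert\Phi_{n}\rVert_{\sup}\leq\mathrm{Leb}(G_{n})\to 0$, and your identity $\form_{p}(\Phi(u))=\int_{\mathbb{R}}\lvert\Phi'\rvert^{p}\,d\nu$ gives $\form_{p}(\Phi_{n}(u))\geq\nu(K)$; since $g_{n}\geq\one_{K}$ for every $n$, the same identity applied to the Ces\`{a}ro means $\frac{1}{N}\sum_{n\leq N}\Phi_{n}(u)$ again yields the lower bound $\nu(K)$, while $p$-Clarkson's inequalities (uniform convexity of $(\domain_{p}/\mathbb{R}\one_{\ssset},\form_{p}^{1/p})$, hence the Banach--Saks property, combined with the pointwise convergence $\Phi_{n}(u)\to 0$ and the bound $\form_{p}(\Phi_{n}(u))\leq\form_{p}(u)$) force a subsequence of these means to tend to $0$ in $\form_{p}^{1/p}$. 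Hence $\nu(K)=0$, with no gasket geometry, no oscillation estimates, and no covering argument. To salvage your approach you would have to prove the uniform-integrability statement you defer; the paper's route avoids it altogether.
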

\begin{proof}
This is proved, on the basis of Theorem \ref{thm:SG-p-energy-meas-chain-rule}, in exactly
the same way as \cite[Proposition 7.6]{Shi}, which is a simple adaptation of \cite[Theorem 4.3.8]{CF}.
\qed\end{proof}

The following theorem is an improvement on Theorem \ref{thm:SG-p-RF-regular-str-local}-\ref{it:SG-p-RF-strongly-local}
and gives arguably the strongest possible form of the strong locality of $(\form_{p},\domain_{p})$.
\begin{thm}\label{thm:SG-p-energy-meas-strongly-local}
Let $u_{1},u_{2},v\in\domain_{p}$ and $a,b\in\mathbb{R}$.
Then $\enermeas{p}{u_{1};v}(A)=\enermeas{p}{u_{2};v}(A)$
for any $A\in\Borel(\ssset)$ with $A\subset(u_{1}-u_{2})^{-1}(a)\cup v^{-1}(b)$.
\end{thm}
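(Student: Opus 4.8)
The plan is to derive the statement as a measure-localized consequence of two facts already available: the H\"older-continuity estimate \eqref{eq:p-RF-Diff-Hoelder-cont} in its ``energy-measure form'' provided by Proposition \ref{prop:SG-p-energy-meas-Diff}, and the energy image density theorem (Theorem \ref{thm:SG-p-energy-meas-image-density}). First I would reduce to two cases. Since $u_{1},u_{2},v\in\domain_{p}\subset\contfunc(\ssset)$, the sets $(u_{1}-u_{2})^{-1}(a)$ and $v^{-1}(b)$ are closed, hence Borel, so that with $A_{1}:=A\cap(u_{1}-u_{2})^{-1}(a)$ and $A_{2}:=A\setminus A_{1}$ I obtain disjoint Borel sets with $A=A_{1}\cup A_{2}$, $A_{1}\subset(u_{1}-u_{2})^{-1}(a)$ and $A_{2}\subset v^{-1}(b)$ (the latter because $A\subset(u_{1}-u_{2})^{-1}(a)\cup v^{-1}(b)$). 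As $\enermeas{p}{u_{1};v}$ and $\enermeas{p}{u_{2};v}$ are signed measures and hence finitely additive, it suffices to prove $\enermeas{p}{u_{1};v}(A_{i})=\enermeas{p}{u_{2};v}(A_{i})$ for $i=1,2$.

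The crucial input is that Theorem \ref{thm:SG-p-energy-meas-image-density} forces $\enermeas{p}{w}(w^{-1}(c))=0$ for every $w\in\domain_{p}$ and every $c\in\mathbb{R}$: indeed $\enermeas{p}{w}\circ w^{-1}$ is absolutely continuous with respect to Lebesgue measure on $\mathbb{R}$, and the singleton $\{c\}$ is Lebesgue-null, so $\enermeas{p}{w}(w^{-1}(c))=(\enermeas{p}{w}\circ w^{-1})(\{c\})=0$. Applying this with $w=u_{1}-u_{2}$ and $c=a$ gives $\enermeas{p}{u_{1}-u_{2}}(A_{1})=0$, and applying it with $w=v$ and $c=b$ gives $\enermeas{p}{v}(A_{2})=0$.

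I would then invoke the measure form of \eqref{eq:p-RF-Diff-Hoelder-cont}. By Proposition \ref{prop:SG-p-energy-meas-Diff}, for the bounded nonnegative Borel function $f=\one_{A_{i}}$ the functional $\int_{\ssset}f\,d\enermeas{p}{\cdot}$ enjoys all the conclusions of Theorem \ref{thm:p-RF-Diff}; recalling from \eqref{eq:SG-p-energy-meas-Diff-dfn} that $\int_{\ssset}\one_{A_{i}}\,d\enermeas{p}{u;v}=\enermeas{p}{u;v}(A_{i})$ and $\int_{\ssset}\one_{A_{i}}\,d\enermeas{p}{u}=\enermeas{p}{u}(A_{i})$, the estimate \eqref{eq:p-RF-Diff-Hoelder-cont} applied to this functional reads
\begin{equation*}
\bigl\lvert\enermeas{p}{u_{1};v}(A_{i})-\enermeas{p}{u_{2};v}(A_{i})\bigr\rvert
	\leq c_{p}\bigl(\enermeas{p}{u_{1}}(A_{i})\vee\enermeas{p}{u_{2}}(A_{i})\bigr)^{(p-1-\formhoelderexp_{p})/p}\enermeas{p}{u_{1}-u_{2}}(A_{i})^{\formhoelderexp_{p}/p}\enermeas{p}{v}(A_{i})^{1/p}.
\end{equation*}
For $i=1$ the factor $\enermeas{p}{u_{1}-u_{2}}(A_{1})^{\formhoelderexp_{p}/p}$ vanishes (note $\formhoelderexp_{p}>0$), while for $i=2$ the factor $\enermeas{p}{v}(A_{2})^{1/p}$ vanishes; since the remaining factors are finite ($\enermeas{p}{u_{j}}(A_{i})\leq\form_{p}(u_{j})<\infty$), both right-hand sides equal $0$, which gives the claim.

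The genuinely substantive case is $i=1$, where $u_{1},u_{2}$ sit in the nonlinear first slot of $\enermeas{p}{\cdot\,;v}$ and bilinearity is unavailable; here the $\formhoelderexp_{p}$-H\"older continuity \eqref{eq:p-RF-Diff-Hoelder-cont} is exactly the tool that compares the two derivative measures, and the point is that on $A_{1}$ the ``increment energy'' $\enermeas{p}{u_{1}-u_{2}}$ vanishes by image density. The one step to double-check carefully is the identification of $\int_{\ssset}\one_{A_{i}}\,d\enermeas{p}{u;v}$ with the derivative of $\int_{\ssset}\one_{A_{i}}\,d\enermeas{p}{\cdot}$, which is precisely \eqref{eq:SG-p-energy-meas-Diff-dfn}; granting this, the remainder is a direct substitution into the already-established inequalities, with no further computation.
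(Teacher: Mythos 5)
Your proposal is correct and follows essentially the same route as the paper's proof, which likewise combines the energy image density theorem (Theorem \ref{thm:SG-p-energy-meas-image-density}), the finite additivity of the signed measures $\enermeas{p}{u_{1};v},\enermeas{p}{u_{2};v}$, and the H\"{o}lder-continuity estimate \eqref{eq:p-RF-Diff-Hoelder-cont} applied to $\int_{\ssset}\one_{B}\,d\enermeas{p}{\cdot}=\enermeas{p}{\cdot}(B)$ via Proposition \ref{prop:SG-p-energy-meas-Diff}. The only cosmetic difference is that the paper splits $A$ into $A\cap v^{-1}(b)$ and $A\setminus v^{-1}(b)$ rather than along $(u_{1}-u_{2})^{-1}(a)$, which is an equivalent decomposition.
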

\begin{proof}
This is proved in the same way as \cite[Theorem 7.7]{Shi} by combining
Theorem \ref{thm:SG-p-energy-meas-image-density} with the finite additivity of
$\enermeas{p}{u_{1};v},\enermeas{p}{u_{2};v}$ and \eqref{eq:p-RF-Diff-Hoelder-cont}
for $\int_{\ssset}\one_{B}\,d\enermeas{p}{\cdot}=\enermeas{p}{\cdot}(B)$ with
$B\in\{A\setminus v^{-1}(b),A\cap v^{-1}(b)\}$ from Proposition \ref{prop:SG-p-energy-meas-Diff}.
\qed\end{proof}

We now turn to our last topic, planned to be treated in \cite{KS:pqEnergySing}, of the
singularity of the $p$-energy measures $\enermeas{p}{u}$ and the $q$-energy measures
$\enermeas{q}{v}$ for $p,q\in(1,\infty)$ with $p<q$. It turns out that this singularity
is implied by the \emph{strict} inequality $\sscndc_{p}^{1/(p-1)}<\sscndc_{q}^{1/(q-1)}$,
which on the other hand may or may not hold depending on the precise geometric nature of
the self-similar set and has been proved in \cite{KS:pqEnergySing} only under the
assumption of certain very good geometric symmetry. In the present setting of the
Sierpi\'{n}ski gasket, these results can be stated as follows.
\begin{thm}\label{thm:SG-p-q-energy-meas-sing}
Let $p,q\in(1,\infty)$ satisfy $p\not=q$. Then $\enermeas{p}{u}\perp\enermeas{q}{v}$
for any $u\in\domain_{p}$ and any $v\in\domain_{q}$.
\end{thm}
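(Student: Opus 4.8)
The plan is to lift both measures to the shift space $\ssindex^{\mathbb{N}}$ and reduce the singularity to a comparison of the exponential decay rates of the two cylinder masses along $\enermeasshift{p}{u}$-typical rays, which I then resolve by a positive relative-entropy (Kullback--Leibler rate) argument powered by the strict inequality $\sscndc_{p}^{1/(p-1)}\neq\sscndc_{q}^{1/(q-1)}$. Set $\theta_{p}:=\sscndc_{p}^{1/(p-1)}$, so that $\theta_{p}\neq\theta_{q}$ for $p\neq q$ by Theorem~\ref{thm:SG-p-RF-resis-scale-decreasing}. For nonconstant $u,v$ (the constant case being trivial), recall from the proof of Theorem~\ref{thm:SG-p-energy-meas} the Borel measure $\enermeasshift{p}{u}$ on $\ssindex^{\mathbb{N}}$ with $\enermeasshift{p}{u}([\omega|_{n}])=\sscndc_{p}^{n}\form_{p}(u\circ\ssmap_{\omega|_{n}})$ on cylinders $[\omega|_{n}]:=\{\omega_{1}\ldots\omega_{n}\}\times\ssindex^{\mathbb{N}\cap(n,\infty)}$, together with $\enermeas{p}{u}=\enermeasshift{p}{u}\circ\ssproj^{-1}$. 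Since $\ssproj$ is at most $2$-to-$1$ and $\enermeas{p}{u},\enermeas{q}{v}$ are non-atomic (Theorem~\ref{thm:SG-p-energy-meas}), while the set of double points is countable, $\enermeas{p}{u}\perp\enermeas{q}{v}$ will follow once I show $\enermeasshift{p}{u}\perp\enermeasshift{q}{v}$; and by the martingale characterization of the Radon--Nikodym derivative it suffices to prove that for $\enermeasshift{p}{u}$-a.e.\ $\omega$,
\[
\frac{1}{n}\log\frac{\enermeasshift{p}{u}([\omega|_{n}])}{\enermeasshift{q}{v}([\omega|_{n}])}\xrightarrow{\ n\to\infty\ }D>0
\quad\text{for some constant }D.
\]

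The core is thus a law of large numbers for these cylinder masses. First I would reduce to piecewise harmonic $u\in\harfunc{\form_{p}}{\ssvertices_{N}}$, $v\in\harfunc{\form_{q}}{\ssvertices_{N}}$: by Proposition~\ref{prop:SG-p-RF-harmonic-approx} the harmonic approximations converge in energy, whence $\enermeas{p}{u_{n}}\to\enermeas{p}{u}$ in total variation because $A\mapsto\enermeas{p}{\cdot}(A)^{1/p}$ is a seminorm uniformly in $A$ (Proposition~\ref{prop:SG-p-energy-meas-properties-basic}), and mutual singularity is stable under total-variation limits (along a summable subsequence, via Borel--Cantelli). For piecewise harmonic $u$ the conditional masses $\enermeasshift{p}{u}([\omega|_{n+1}])/\enermeasshift{p}{u}([\omega|_{n}])=\sscndc_{p}\form_{p}(u\circ\ssmap_{\omega|_{n+1}})/\form_{p}(u\circ\ssmap_{\omega|_{n}})$ are, by \ref{it:SG-p-RF-SSE2} and the Perron--Frobenius dynamics of $u\circ\ssmap_{\omega|_{n}}$ (Proposition~\ref{prop:SG-p-RF-normal-derivative-self-sim} and Theorem~\ref{thm:SG-p-RF-Perron-Frobenius}), governed by a Markov chain on the space of harmonic functions modulo constants and scaling. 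The \emph{very good geometric symmetry} is exactly what forces this chain to possess a unique stationary ergodic law, \emph{independent of the initial} $u$; an ergodic theorem of Shannon--McMillan--Breiman type for this system should then produce, for $\enermeasshift{p}{u}$-a.e.\ $\omega$, the convergence displayed above with $D=D(\enermeasshift{p}{u}\,\|\,\enermeasshift{q}{v})\ge0$ the relative-entropy rate between the two shift-space laws.

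It remains to show $D>0$, and this is where $\theta_{p}\neq\theta_{q}$ enters: the two laws cannot coincide, since they give different rates on the corner cylinders. Indeed, $u\circ\ssmap_{i^{n}}$ converges (after normalization) to the principal direction $\harprincipal{p}{i}$ by Theorem~\ref{thm:SG-p-RF-Perron-Frobenius}, and $\harprincipal{p}{i}\circ\ssmap_{i}=\sscndc_{p}^{-1/(p-1)}\harprincipal{p}{i}$ (Proposition~\ref{prop:SG-p-RF-normal-derivative-self-sim}-\ref{it:SG-p-RF-harmonic-eigenfunc}) forces $-\tfrac{1}{n}\log\enermeasshift{p}{u}([i^{n}])\to\log\theta_{p}$, whereas the analogous $q$-quantity tends to $\log\theta_{q}\neq\log\theta_{p}$; hence $\enermeasshift{p}{u}\neq\enermeasshift{q}{v}$ and the relative-entropy rate between the associated distinct ergodic systems is strictly positive. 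With $D>0$ the displayed ratio forces $\enermeasshift{q}{v}([\omega|_{n}])/\enermeasshift{p}{u}([\omega|_{n}])\to0$, giving $\enermeasshift{p}{u}\perp\enermeasshift{q}{v}$ and, after the total-variation reduction, the theorem. \textbf{The main obstacle} is the second paragraph: extracting a genuinely stationary ergodic structure from the $u$-dependent, non-stationary cylinder-mass process and establishing the almost-sure convergence to $u$-independent exponents. The comparison of rates is soft once the limits exist; the real work — precisely where the symmetry hypothesis is indispensable — is the ergodic analysis of the energy measures, which is carried out in full generality in \cite{KS:pqEnergySing}.
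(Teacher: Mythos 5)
Your proposal correctly identifies the decisive quantitative input, namely the strict inequality $\sscndc_{p}^{1/(p-1)}\neq\sscndc_{q}^{1/(q-1)}$ from Theorem \ref{thm:SG-p-RF-resis-scale-decreasing}, and your two reduction steps (to piecewise-harmonic $u,v$ via Proposition \ref{prop:SG-p-RF-harmonic-approx} and the seminorm property of $\enermeas{p}{\cdot}(A)$, and from $\ssset$ to the shift space via the at-most-$2$-to-$1$ projection) are sound and close to what the paper does. The corner-rate computation $-\frac{1}{n}\log\enermeasshift{p}{u}([i^{n}])\to\log\sscndc_{p}^{1/(p-1)}$ is also correct when $\SGvertex_{i}$ is an extremal vertex of $u$, by Lemma \ref{lem:SG-p-RF-harmonic-contraction-energy}.

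However, the core of your argument --- the second paragraph --- is a genuine gap, and you acknowledge as much. The measures $\enermeasshift{p}{u}$ are not shift-invariant, so neither the Shannon--McMillan--Breiman theorem nor the standard fact that distinct ergodic invariant measures are mutually singular applies; the claimed ``unique stationary ergodic law independent of $u$'' for the induced chain on harmonic functions modulo constants and scaling is precisely the hard, unproven assertion, and deferring it to \cite{KS:pqEnergySing} is circular, since that is the very result being proved. Moreover, even granting an ergodic structure, positivity of the relative-entropy rate $D$ does not follow from the two laws merely being distinct, and your scheme additionally needs $\frac{1}{n}\log\enermeasshift{q}{v}([\omega|_{n}])$ to converge along $\enermeasshift{p}{u}$-typical $\omega$, which an SMB theorem for the $q$-system would not provide. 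The paper avoids all of this: it applies Hino's martingale singularity criterion (Theorem \ref{thm:prob-meas-filtration-singular}), which requires only a \emph{uniform per-scale} Hellinger-type bound $\mathbb{E}[\sqrt{\alpha_{n}}\mid\probspsigmaalg_{n-1}]\leq\delta_{p,N}<1$ rather than an almost-sure exponential rate. That bound is extracted deterministically, cell by cell, from the two-sided energy-decay estimates of Lemma \ref{lem:SG-p-RF-harmonic-contraction-energy} along the corner word $i^{N}$ (with $i$ the vertex where $u\circ\ssmap_{w}$ is extremal and $N$ fixed once and for all using Theorem \ref{thm:SG-p-RF-resis-scale-decreasing}) together with the Cauchy--Schwarz inequality. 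To salvage your approach you would need to replace the ergodic limit by exactly this kind of uniform, scale-by-scale estimate --- at which point you have reproduced the paper's proof.
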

\begin{thm}\label{thm:SG-p-RF-resis-scale-decreasing}
The function $(1,\infty)\ni p\mapsto\sscndc_{p}^{1/(p-1)}$ is strictly increasing.
\end{thm}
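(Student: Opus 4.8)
The plan is to reduce the statement to the strict monotonicity of a single value of a harmonic function, for which the exact eigenfunction relation in Proposition \ref{prop:SG-p-RF-normal-derivative-self-sim}-\ref{it:SG-p-RF-harmonic-eigenfunc} is the starting point. Fix $i\in\ssindex$ and $j,k$ as in Proposition \ref{prop:SG-p-RF-normal-derivative-self-sim}. Evaluating the identity $\harprincipal{p}{i}\circ\ssmap_{i}=\sscndc_{p}^{-1/(p-1)}\harprincipal{p}{i}$ at $\SGvertex_{j}$ and using $\harprincipal{p}{i}(\SGvertex_{j})=1$ yields
\begin{equation*}
\sscndc_{p}^{-1/(p-1)}=\harprincipal{p}{i}(\SGvertex_{ij}),\qquad\SGvertex_{ij}=\ssmap_{i}(\SGvertex_{j}),
\end{equation*}
so that $\sscndc_{p}^{-1/(p-1)}$ is precisely the value, at the edge midpoint $\SGvertex_{ij}$, of the $\form_{p}$-harmonic function on $\ssset\setminus\ssvertices_{0}$ with boundary data $\one_{\SGvertex_{j}}+\one_{\SGvertex_{k}}$. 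Since $p\mapsto\sscndc_{p}^{1/(p-1)}$ is strictly increasing if and only if $p\mapsto\sscndc_{p}^{-1/(p-1)}$ is strictly decreasing, it suffices to prove that $\harprincipal{p}{i}(\SGvertex_{ij})$ is \emph{strictly decreasing} in $p$, which I would do by comparing $\harprincipal{p}{i}(\SGvertex_{ij})$ with $\harprincipal{q}{i}(\SGvertex_{ij})$ for arbitrary fixed $p,q\in(1,\infty)$ with $p<q$.

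Next I would turn this midpoint value into a finite-dimensional variational quantity at the level of $\form_{p}^{(0)}$. Writing $a:=\harprincipal{p}{i}(\SGvertex_{ij})$ and $b:=\harprincipal{p}{i}(\SGvertex_{jk})$ and using $\form_{p}\vert_{\ssvertices_{1}}=\renom_{\sscndc_{p},1}(\form_{p}^{(0)})$ from Theorem \ref{thm:SG-p-RF}, the self-similarity \ref{it:SG-p-RF-SSE2}, the relations $\SGvertex_{ij}=\ssmap_{i}(\SGvertex_{j})=\ssmap_{j}(\SGvertex_{i})$ from Proposition \ref{prop:SG-intersecting-cells} and the $\sssym$-invariance of $\form_{p}^{(0)}$, one finds that the $\sssym$-symmetry of the boundary data forces $\harprincipal{p}{i}(\SGvertex_{ij})=\harprincipal{p}{i}(\SGvertex_{ik})$ and that $(a,b)$ is the unique minimizer over $(s,t)\in\mathbb{R}^{2}$ of
\begin{equation*}
G_{p}(s,t):=\form_{p}^{(0)}(0,s,s)+2\form_{p}^{(0)}(s,1,t).
\end{equation*}
Thus $a=\sscndc_{p}^{-1/(p-1)}$ is pinned down by the vanishing-derivative (harmonicity) conditions $\partial_{s}G_{p}=\partial_{t}G_{p}=0$, expressed through the derivative functional $\form_{p}^{(0)}(\cdot\,;\cdot)$ of Theorem \ref{thm:p-RF-Diff}, and the whole problem is reduced to showing that the minimizing first coordinate of $G_{p}$ strictly decreases as the exponent grows.

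The cross-exponent comparison is the analytic heart of the argument, and it is where Lemma \ref{lem:SG-p-RF-RM-contraction} enters. Together with the H\"{o}lder continuity estimate (Theorem \ref{thm:p-RF-harm-func-Hoelder}) and the triangle inequality for $\resismet_{\form_{p}}^{1/(p-1)}$ (Corollary \ref{cor:p-RF-p-RM-triangle-ineq}), it controls how $\harprincipal{p}{i}$ and its energy redistribute across the cells $\{\ssset_{w}\}_{w\in\words_{n}}$ at every scale, and I would use this multiscale control to convert the $\ell^{p}$-versus-$\ell^{q}$ inequalities supplied by the generalized contraction property \ref{it:p-RF5} and $p$-Clarkson's inequalities (Proposition \ref{prop:p-RF5-conseq}-\ref{it:p-RF-Clarkson}) into a strict comparison between the two minimizers, by testing the eigenform fixed-point identity for one exponent against the harmonic configuration optimal for the other and tracking the resulting strict gap.

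The main obstacle is exactly this comparison across distinct exponents. Unlike the classical case $p=2$, where $\form_{2}^{(0)}$ is the explicit graph energy \eqref{eq:p-RF-finite-graph} on $\ssvertices_{0}$ and the midpoint value is the familiar $3/5$, for $p\neq2$ the eigenform $\form_{p}^{(0)}$ is known only abstractly (unique up to a positive constant by Theorem \ref{thm:SG-p-RF-V0-unique}), is in general \emph{not} of the form \eqref{eq:p-RF-finite-graph}, and depends on $p$ through the fixed-point equation within an infinite-dimensional family of $\sssym$-invariant $p$-energy forms on three points. Consequently $\form_{p}^{(0)}$ and $\form_{q}^{(0)}$ cannot be related by direct computation, and the strictness of $\sscndc_{p}^{1/(p-1)}<\sscndc_{q}^{1/(q-1)}$ has to be extracted entirely from the structural inequalities above; the full execution of this step, in the greater generality of self-similar $p$-energy forms under very good geometric symmetry, is carried out in \cite{KS:pqEnergySing}.
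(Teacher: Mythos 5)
Your opening reduction is correct and is a genuinely nice reformulation: evaluating $\harprincipal{p}{i}\circ\ssmap_{i}=\sscndc_{p}^{-1/(p-1)}\harprincipal{p}{i}$ at $\SGvertex_{j}$ does give $\sscndc_{p}^{-1/(p-1)}=\harprincipal{p}{i}(\SGvertex_{ij})$, and your variational characterization of $(a,b)$ via $G_{p}$ is consistent with \ref{it:SG-p-RF-SSE2} and the $\sssym$-symmetry. But the proof stops exactly where it needs to begin. The entire content of the theorem is the comparison between the two exponents, and your proposal never actually relates $\form_{p}^{(0)}$ to $\form_{q}^{(0)}$: the tools you invoke for the ``analytic heart'' --- Lemma \ref{lem:SG-p-RF-RM-contraction}, Theorem \ref{thm:p-RF-harm-func-Hoelder}, Corollary \ref{cor:p-RF-p-RM-triangle-ineq}, \ref{it:p-RF5} and Clarkson's inequalities --- are all statements about a \emph{single fixed} exponent, so they cannot by themselves produce an inequality between $\sscndc_{p}^{1/(p-1)}$ and $\sscndc_{q}^{1/(q-1)}$. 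The sentence ``testing the eigenform fixed-point identity for one exponent against the harmonic configuration optimal for the other and tracking the resulting strict gap'' is a description of a hoped-for argument, not an argument, and you then explicitly defer the execution to \cite{KS:pqEnergySing}. That is a genuine gap, not a routine omission.

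For comparison, the paper's mechanism for crossing between exponents is concrete and elementary: since $\ssvertices_{0}$ is finite, \ref{it:p-RF1} gives the two-sided comparison $\bigl(C_{q}^{-1}\form_{q}^{(0)}(u)\bigr)^{1/q}\leq\osc_{\ssvertices_{0}}[u]\leq\bigl(C_{p}\form_{p}^{(0)}(u)\bigr)^{1/p}$, whence for an $\form_{p}$-harmonic $h$ one can write $\form_{q}^{(0)}(h\circ\ssmap_{w}\vert_{\ssvertices_{0}})\leq C_{q}C_{p}\form_{p}^{(0)}(h\circ\ssmap_{w}\vert_{\ssvertices_{0}})(\osc[h\circ\ssmap_{w}])^{q-p}$ and then exploit the geometric decay $\osc[h\circ\ssmap_{w}]\leq\sscndc_{p}^{-\lvert w\rvert/(p-1)}\osc[h]$ from Proposition \ref{prop:SG-p-RF-normal-derivative-self-sim}-\ref{it:SG-p-RF-harmonic-osc-contraction}; summing over $\words_{n}$, comparing with $\form_{q}^{(0)}(h\vert_{\ssvertices_{0}})>0$ and letting $n\to\infty$ already yields the non-strict inequality. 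Strictness then requires two further compactness arguments on the finite-dimensional quotient $\mathbb{R}^{\ssvertices_{0}}/\mathbb{R}\one_{\ssvertices_{0}}$: in each generation at least one of the three cells contracts the oscillation by a uniform extra factor $\delta_{p}<1$, and each cell retains a uniform fraction $\varepsilon_{p}$ of the energy, giving a per-level improvement $(1-\varepsilon_{p}(1-\delta_{p}^{q-p}))<1$. Your proposal contains neither the finite-dimensional norm-equivalence that makes the cross-exponent estimate possible nor any quantitative source of strictness, so as written it does not prove the theorem.
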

\begin{rmk}\label{rmk:Viksec-Baudoin-Li}
\emph{Theorems \textup{\ref{thm:SG-p-q-energy-meas-sing}} and \textup{\ref{thm:SG-p-RF-resis-scale-decreasing}}
are NOT always true for more general p.-c.f.\ self-similar sets, even under the assumption
of good geometric symmetry of the set.} Indeed, in the case of the Vicsek set
(Figure \ref{fig:Vicsek} below), Baudoin and Chen have recently observed in
\cite[Definition 2.7, Theorem 3.1 and Remark 3.11]{BC} for any $p\in(1,\infty)$ that
the counterpart of $\sscndc_{p}$ as in Theorem \ref{thm:SG-p-RF-V0-exists} is given by
$3^{p-1}$ and that the $p$-energy measures are absolutely continuous with respect to
the length (one-dimensional Lebesgue) measure on the skeleton of the Vicsek set.
\end{rmk}
%
%%%%%
\begin{figure}[h]\centering
	\includegraphics[height=140pt]{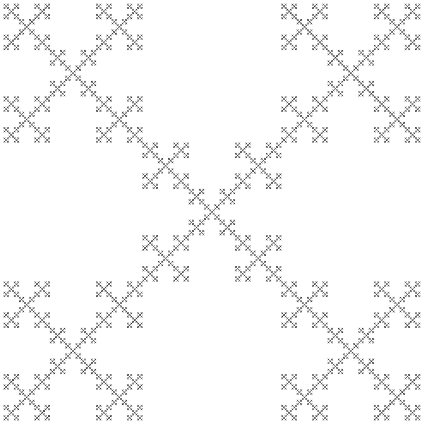}
	\caption{The Vicsek set}
	\label{fig:Vicsek}
\end{figure}
%%%%%

The rest of this section is devoted to the proofs of
Theorems \ref{thm:SG-p-q-energy-meas-sing} and \ref{thm:SG-p-RF-resis-scale-decreasing}.
We begin with the proof of Theorem \ref{thm:SG-p-RF-resis-scale-decreasing}, which goes as follows.
\begin{proof}[\hspace*{-1.45pt}of Theorem \textup{\ref{thm:SG-p-RF-resis-scale-decreasing}}]
Let $p,q\in(1,\infty)$ satisfy $p<q$. To highlight what causes the strict inequality
$\sscndc_{p}^{1/(p-1)}<\sscndc_{q}^{1/(q-1)}$, we first give the proof of the
non-strict one and then describe the necessary modifications to make it strict.

Recall that by Proposition \ref{prop:SG-p-RF-harmonic-Vn} and
Proposition \ref{prop:SG-p-RF-normal-derivative-self-sim}-\ref{it:SG-p-RF-harmonic-osc-contraction},
\begin{equation}\label{eq:SG-p-RF-harmonic-osc-contraction-words}
h\circ\ssmap_{w}\in\harfunc{\form_{p}}{\ssvertices_{0}}
	\qquad\textrm{and}\qquad
	\osc_{\ssset}[h\circ\ssmap_{w}]\leq\sscndc_{p}^{-\lvert w\rvert/(p-1)}\osc_{\ssset}[h]
\end{equation}
for any $h\in\harfunc{\form_{p}}{\ssvertices_{0}}$ and any $w\in\words_{*}$. Note also that
\begin{equation}\label{eq:SG-q-RF-osc-p-RF-comparable}
\bigl(C_{q}^{-1}\form_{q}^{(0)}(u)\bigr)^{1/q}\leq\osc_{\ssvertices_{0}}[u]\leq\bigl(C_{p}\form_{p}^{(0)}(u)\bigr)^{1/p}
	\qquad\textrm{for any $u\in\mathbb{R}^{\ssvertices_{0}}$}
\end{equation}
for some $C_{q},C_{p}\in(0,\infty)$ by $\#\ssvertices_{0}<\infty$,
\hyperlink{p-RF1}{\textup{(RF1)$_{q}$}} for $\form_{q}^{(0)}$ and \ref{it:p-RF1}
for $\form_{p}^{(0)}$. Recalling Theorem \ref{thm:p-RF-trace},
choose any $h\in\harfunc{\form_{p}}{\ssvertices_{0}}$ satisfying
$h\vert_{\ssvertices_{0}}\not\in\mathbb{R}\one_{\ssvertices_{0}}$.
Then for any $n\in\mathbb{N}$, it follows from
\hyperlink{p-RF1}{\textup{(RF1)$_{q}$}} for $\form_{q}^{(0)}$,
Proposition \ref{prop:SG-p-RF-compatible-seq},
\eqref{eq:p-RF-V0-Vn}, \eqref{eq:SG-q-RF-osc-p-RF-comparable}, $q-p>0$,
\eqref{eq:SG-p-RF-harmonic-osc-contraction-words},
$\renom_{\sscndc_{p},n}(\form_{p}^{(0)})=\form_{p}^{(n)}=\form_{p}\vert_{\ssvertices_{n}}$
from Theorem \ref{thm:SG-p-RF}, and  $h\in\harfunc{\form_{p}}{\ssvertices_{0}}\subset\harfunc{\form_{p}}{\ssvertices_{n}}$
that, with $C_{p,q,h}:=C_{q}C_{p}(\osc_{\ssset}[h])^{q-p}\in(0,\infty)$, 
\begin{align}
0<\form_{q}^{(0)}&(h\vert_{\ssvertices_{0}})
	\leq\renom_{\sscndc_{q},n}(\form_{q}^{(0)})(h\vert_{\ssvertices_{n}})
	=\sum_{w\in\words_{n}}\sscndc_{q}^{n}\form_{q}^{(0)}(h\circ\ssmap_{w}\vert_{\ssvertices_{0}})\notag\\
&\leq\sum_{w\in\words_{n}}C_{q}\sscndc_{q}^{n}(\osc_{\ssvertices_{0}}[h\circ\ssmap_{w}\vert_{\ssvertices_{0}}])^{q}\notag\\
&\leq\sum_{w\in\words_{n}}C_{q}C_{p}\sscndc_{q}^{n}\form_{p}^{(0)}(h\circ\ssmap_{w}\vert_{\ssvertices_{0}})(\osc_{\ssvertices_{0}}[h\circ\ssmap_{w}\vert_{\ssvertices_{0}}])^{q-p}\notag\\
&\leq\sum_{w\in\words_{n}}C_{q}C_{p}\sscndc_{q}^{n}\form_{p}^{(0)}(h\circ\ssmap_{w}\vert_{\ssvertices_{0}})\sscndc_{p}^{-n(q-p)/(p-1)}(\osc_{\ssset}[h])^{q-p}\label{eq:SG-p-RF-resis-scale-non-increasing}\\
&=C_{p,q,h}\sscndc_{q}^{n}\sscndc_{p}^{-n(q-1)/(p-1)}\form_{p}^{(n)}(h\vert_{\ssvertices_{n}})
	=C_{p,q,h}\sscndc_{q}^{n}\sscndc_{p}^{-n(q-1)/(p-1)}\form_{p}(h),\notag
\end{align}
whence
$\sscndc_{q}^{1/(q-1)}/\sscndc_{p}^{1/(p-1)}
	\geq\bigl(C_{p,q,h}^{-1}\form_{q}^{(0)}(h\vert_{\ssvertices_{0}})/\form_{p}(h)\bigr)^{1/(n(q-1))}
	\xrightarrow{n\to\infty}1$,
proving $\sscndc_{p}^{1/(p-1)}\leq\sscndc_{q}^{1/(q-1)}$.

To achieve $\sscndc_{p}^{1/(p-1)}<\sscndc_{q}^{1/(q-1)}$, we improve the estimate in the line
\eqref{eq:SG-p-RF-resis-scale-non-increasing} by a compactness argument based on
Proposition \ref{prop:SG-p-RF-normal-derivative-self-sim}-\ref{it:SG-p-RF-harmonic-osc-contraction},\ref{it:SG-p-RF-harmonic-non-degenerate}.
Recall that $\#\ssvertices_{0}<\infty$ and that
$\lVert u-v\rVert_{\sup}=\lVert u\vert_{\ssvertices_{0}}-v\vert_{\ssvertices_{0}}\rVert_{\sup,\ssvertices_{0}}$
and $\osc_{\ssset}[u]=\osc_{\ssvertices_{0}}[u\vert_{\ssvertices_{0}}]$ for any
$u,v\in\harfunc{\form_{p}}{\ssvertices_{0}}$ by \eqref{eq:p-RF-harext-homogeneous}
and Proposition \ref{prop:p-RF-comp}. In particular, for each $i\in\ssindex$ the map
$\mathbb{R}^{\ssvertices_{0}}/\mathbb{R}\one_{\ssvertices_{0}}\ni u\mapsto\harext{\form_{p}}{\ssvertices_{0}}[u]\circ\ssmap_{i}\vert_{\ssvertices_{0}}\in\mathbb{R}^{\ssvertices_{0}}/\mathbb{R}\one_{\ssvertices_{0}}$
can be defined thanks to \eqref{eq:p-RF-harext-homogeneous} and is continuous, and by
considering the maximum of its post-composition by $\osc_{\ssvertices_{0}}[\cdot]$ on the compact set
$\{u\in\mathbb{R}^{\ssvertices_{0}}/\mathbb{R}\one_{\ssvertices_{0}}\mid\textrm{$\osc_{\ssvertices_{0}}[u]=1$, $\osc_{\ssvertices_{0}\setminus\{\SGvertex_{i}\}}[u\vert_{\ssvertices_{0}\setminus\{\SGvertex_{i}\}}]\geq\frac{1}{2}$}\}$,
we see from Proposition \ref{prop:SG-p-RF-normal-derivative-self-sim}-\ref{it:SG-p-RF-harmonic-osc-contraction}
and \eqref{eq:p-RF-harext-homogeneous} that for some $\delta_{p}\in(0,1)$,
\begin{equation}\label{eq:SG-p-RF-harmonic-osc-contraction-strict}
\osc_{\ssset}[h\circ\ssmap_{i}]\leq\delta_{p}\sscndc_{p}^{-1/(p-1)}\osc_{\ssset}[h]
	\quad\textrm{for \emph{some} $i\in\ssindex$ for each $h\in\harfunc{\form_{p}}{\ssvertices_{0}}$.}
\end{equation}
Similarly, since $(\form_{p}^{(0)})^{1/p}$ is a norm on
$\mathbb{R}^{\ssvertices_{0}}/\mathbb{R}\one_{\ssvertices_{0}}$ by \ref{it:p-RF1},
by considering the minimum of
$\mathbb{R}^{\ssvertices_{0}}/\mathbb{R}\one_{\ssvertices_{0}}\ni u\mapsto\form_{p}^{(0)}\bigl(\harext{\form_{p}}{\ssvertices_{0}}[u]\circ\ssmap_{i}\vert_{\ssvertices_{0}}\bigr)\in[0,\infty)$
on $\{u\in\mathbb{R}^{\ssvertices_{0}}/\mathbb{R}\one_{\ssvertices_{0}}\mid\form_{p}^{(0)}(u)=1\}$
for each $i\in\ssindex$, it follows from Proposition \ref{prop:SG-p-RF-normal-derivative-self-sim}-\ref{it:SG-p-RF-harmonic-non-degenerate}
and \eqref{eq:p-RF-harext-homogeneous} that
\begin{equation}\label{eq:SG-p-RF-harmonic-non-degenerate}
\sscndc_{p}\form_{p}^{(0)}(h\circ\ssmap_{i}\vert_{\ssvertices_{0}})\geq\varepsilon_{p}\form_{p}^{(0)}(h\vert_{\ssvertices_{0}})
	\quad\textrm{for any $i\in\ssindex$ and any $h\in\harfunc{\form_{p}}{\ssvertices_{0}}$}
\end{equation}
for some $\varepsilon_{p}\in(0,1)$. Then for any $h\in\harfunc{\form_{p}}{\ssvertices_{0}}$,
combining \eqref{eq:SG-p-RF-harmonic-osc-contraction-words}, \eqref{eq:SG-p-RF-harmonic-osc-contraction-strict}
and \eqref{eq:SG-p-RF-harmonic-non-degenerate} with the fact that
$\sum_{i\in\ssindex}\sscndc_{p}\form_{p}^{(0)}(h\circ\ssmap_{i}\vert_{\ssvertices_{0}})
	=\form_{p}^{(1)}(h\vert_{\ssvertices_{1}})=\form_{p}(h)=\form_{p}^{(0)}(h\vert_{\ssvertices_{0}})$
by \eqref{eq:p-RF-V0-Vn} and $\form_{p}^{(n)}=\form_{p}\vert_{\ssvertices_{n}}$
for $n\in\{0,1\}$ from Theorem \ref{thm:SG-p-RF}, we obtain
\begin{equation}\label{eq:SG-p-RF-resis-scale-decreasing}
\begin{split}
\sum_{i\in\ssindex}\sscndc_{p}&\form_{p}^{(0)}(h\circ\ssmap_{i}\vert_{\ssvertices_{0}})(\osc_{\ssvertices_{0}}[h\circ\ssmap_{i}\vert_{\ssvertices_{0}}])^{q-p}\\
&\leq(1-\varepsilon_{p}(1-\delta_{p}^{q-p}))\sscndc_{p}^{-(q-p)/(p-1)}\form_{p}^{(0)}(h\vert_{\ssvertices_{0}})(\osc_{\ssvertices_{0}}[h\vert_{\ssvertices_{0}}])^{q-p}.
\end{split}
\end{equation}
Now letting $h\in\harfunc{\form_{p}}{\ssvertices_{0}}\setminus\mathbb{R}\one_{\ssset}$
and recalling from \eqref{eq:SG-p-RF-harmonic-osc-contraction-words} that
$h\circ\ssmap_{w}\in\harfunc{\form_{p}}{\ssvertices_{0}}$ for any $w\in\words_{*}$,
we apply \eqref{eq:SG-p-RF-resis-scale-decreasing} in the line before
\eqref{eq:SG-p-RF-resis-scale-non-increasing} and then let $n\to\infty$ to get
$\sscndc_{q}^{1/(q-1)}/\sscndc_{p}^{1/(p-1)}\geq(1-\varepsilon_{p}(1-\delta_{p}^{q-p}))^{-1/(q-1)}>1$,
completing the proof.
\qed\end{proof}

We next turn to the proof of Theorem \ref{thm:SG-p-q-energy-meas-sing}, which is reduced
to the case of $u\in\bigcup_{n\in\mathbb{N}\cup\{0\}}\harfunc{\form_{p}}{\ssvertices_{n}}$
and $v\in\bigcup_{n\in\mathbb{N}\cup\{0\}}\harfunc{\form_{q}}{\ssvertices_{n}}$ by
Proposition \ref{prop:SG-p-RF-harmonic-approx} and the following lemma.
\begin{lem}\label{lem:SG-p-q-energy-meas-sing-cont}
Let $p,q\in(1,\infty)$ satisfy $p\not=q$. If $u\in\domain_{p}$,
$\{u_{n}\}_{n\in\mathbb{N}}\subset\domain_{p}$, $v\in\domain_{q}$ and
$\{v_{n}\}_{n\in\mathbb{N}}\subset\domain_{q}$ satisfy
$\lim_{n\to\infty}\form_{p}(u-u_{n})=0=\lim_{n\to\infty}\form_{q}(v-v_{n})$
and $\enermeas{p}{u_{l}}\perp\enermeas{q}{v_{m}}$ for any $l,m\in\mathbb{N}$,
then $\enermeas{p}{u}\perp\enermeas{q}{v}$.
\end{lem}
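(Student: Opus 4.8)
The plan is to reduce mutual singularity to a Borel--Cantelli argument built on the subadditivity of the energy-measure seminorms. First I would record the basic approximation inequality: by Proposition~\ref{prop:SG-p-energy-meas-properties-basic}-\ref{it:SG-p-energy-meas-seminorm} applied with $f=\one_{A}$, the map $w\mapsto\enermeas{p}{w}(A)^{1/p}$ is a seminorm on $\domain_{p}$ for each $A\in\Borel(\ssset)$, so for every $n\in\mathbb{N}$,
\begin{equation*}
\enermeas{p}{u}(A)^{1/p}\leq\enermeas{p}{u_{n}}(A)^{1/p}+\enermeas{p}{u-u_{n}}(A)^{1/p}\leq\enermeas{p}{u_{n}}(A)^{1/p}+\form_{p}(u-u_{n})^{1/p},
\end{equation*}
where the last step uses $\enermeas{p}{u-u_{n}}(A)\leq\enermeas{p}{u-u_{n}}(\ssset)=\form_{p}(u-u_{n})$ from Theorem~\ref{thm:SG-p-energy-meas} with $w=\emptyset$. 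The symmetric inequality holds for $\enermeas{q}{v}$, $\enermeas{q}{v_{m}}$ and $\form_{q}$.

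Next, for each $k\in\mathbb{N}$ I would choose $l,m\in\mathbb{N}$ so large that $\form_{p}(u-u_{l})^{1/p}<1/k$ and $\form_{q}(v-v_{m})^{1/q}<1/k$, and invoke $\enermeas{p}{u_{l}}\perp\enermeas{q}{v_{m}}$ to fix a Borel set $B_{k}\subset\ssset$ with $\enermeas{p}{u_{l}}(B_{k})=0$ and $\enermeas{q}{v_{m}}(\ssset\setminus B_{k})=0$. Feeding $A=B_{k}$ and $A=\ssset\setminus B_{k}$ into the two approximation inequalities then gives $\enermeas{p}{u}(B_{k})\leq k^{-p}$ and $\enermeas{q}{v}(\ssset\setminus B_{k})\leq k^{-q}$. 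Finally I would set $B:=\limsup_{k\to\infty}B_{k}=\bigcap_{n\in\mathbb{N}}\bigcup_{k\geq n}B_{k}$. Since $p>1$, countable subadditivity yields $\enermeas{p}{u}(B)\leq\sum_{k\geq n}k^{-p}\to 0$ as $n\to\infty$, so $\enermeas{p}{u}(B)=0$; and since $\enermeas{q}{v}\bigl(\bigcap_{k\geq n}(\ssset\setminus B_{k})\bigr)\leq\inf_{k\geq n}k^{-q}=0$ for every $n$, the set $\ssset\setminus B=\bigcup_{n}\bigcap_{k\geq n}(\ssset\setminus B_{k})$ is $\enermeas{q}{v}$-null. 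Thus $\enermeas{q}{v}$ is concentrated on $B$ and $\enermeas{p}{u}$ on $\ssset\setminus B$, which is exactly $\enermeas{p}{u}\perp\enermeas{q}{v}$.

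There is no substantial obstacle here; the argument is purely measure-theoretic once the seminorm inequality is in hand. The only genuinely load-bearing points are the subadditivity supplied by Proposition~\ref{prop:SG-p-energy-meas-properties-basic}-\ref{it:SG-p-energy-meas-seminorm} (which lets the approximation error be controlled by $\form_{p}(u-u_{n})^{1/p}$ uniformly over all Borel sets $A$) and the summability $\sum_{k}k^{-p}<\infty$, which is where the hypothesis $p>1$ (and symmetrically $q>1$) enters; both $p,q>1$ are of course built into the standing assumptions. The choice of $\limsup B_{k}$ rather than $\liminf$ is dictated by the need to make $\enermeas{p}{u}(B)$ vanish via the convergent tail while forcing $\enermeas{q}{v}(\ssset\setminus B)=0$ via the infimum over a nested intersection.
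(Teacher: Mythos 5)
Your proof is correct and rests on the same key estimate as the paper's, namely the triangle inequality for $w\mapsto\enermeas{p}{w}(A)^{1/p}$ from Proposition \ref{prop:SG-p-energy-meas-properties-basic}-\ref{it:SG-p-energy-meas-seminorm} together with $\enermeas{p}{w}(A)\leq\enermeas{p}{w}(\ssset)=\form_{p}(w)$, which yields setwise convergence of the energy measures uniformly over $A\in\Borel(\ssset)$. The only difference is in the bookkeeping: the paper keeps the doubly indexed family $A_{l,m}$ and takes $\bigcup_{m}\bigcap_{l}A_{l,m}$, passing to the limit in $l$ and then in $m$, whereas you diagonalize into a single sequence $B_{k}$ with quantitative bounds and conclude by a Borel--Cantelli argument on $\limsup_{k}B_{k}$ --- both are routine once the uniform approximation is in place.
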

\begin{proof}
We follow \cite[Proof of Lemma 3.7-(b)]{KM20}. First, for any $A\in\Borel(\ssset)$,
since $\enermeas{p}{u-u_{n}}(A)\leq\form_{p}(u-u_{n})$ and
$\enermeas{q}{v-v_{n}}(A)\leq\form_{q}(v-v_{n})$ for any $n\in\mathbb{N}$, we see from
Proposition \ref{prop:SG-p-energy-meas-properties-basic}-\ref{it:SG-p-energy-meas-seminorm}
for $\int_{\ssset}\one_{A}\,d\enermeas{p}{\cdot}=\enermeas{p}{\cdot}(A)$ and
$\int_{\ssset}\one_{A}\,d\enermeas{q}{\cdot}=\enermeas{q}{\cdot}(A)$ that
\begin{equation}\label{eq:SG-p-q-energy-meas-convergence}
\lim_{n\to\infty}\enermeas{p}{u_{n}}(A)=\enermeas{p}{u}(A)\qquad\textrm{and}\qquad
	\lim_{n\to\infty}\enermeas{q}{v_{n}}(A)=\enermeas{q}{v}(A).
\end{equation}
For each $l,m\in\mathbb{N}$, by $\enermeas{p}{u_{l}}\perp\enermeas{q}{v_{m}}$
we can choose $A_{l,m}\in\Borel(\ssset)$ so that
$\enermeas{p}{u_{l}}(A_{l,m})=0=\enermeas{q}{v_{m}}(\ssset\setminus A_{l,m})$.
Set $A_{m}:=\bigcap_{l\in\mathbb{N}}A_{l,m}$ for each $m\in\mathbb{N}$ and
$A:=\bigcup_{m\in\mathbb{N}}A_{m}$. Then
$\enermeas{p}{u_{l}}(A_{m})=0=\enermeas{q}{v_{m}}(\ssset\setminus A_{m})$
for any $l,m\in\mathbb{N}$, hence
$\enermeas{p}{u}(A_{m})=\lim_{l\to\infty}\enermeas{p}{u_{l}}(A_{m})=0$
by \eqref{eq:SG-p-q-energy-meas-convergence} and $\enermeas{q}{v_{m}}(\ssset\setminus A)=0$
for any $m\in\mathbb{N}$, and thus $\enermeas{p}{u}(A)=0$ and
$\enermeas{q}{v}(\ssset\setminus A)=\lim_{m\to\infty}\enermeas{q}{v_{m}}(\ssset\setminus A)=0$
by \eqref{eq:SG-p-q-energy-meas-convergence}, proving $\enermeas{p}{u}\perp\enermeas{q}{v}$.
\qed\end{proof}

We prove that $\enermeas{p}{u}\perp\enermeas{q}{v}$ for
$u\in\bigcup_{n\in\mathbb{N}\cup\{0\}}\harfunc{\form_{p}}{\ssvertices_{n}}$ and
$v\in\bigcup_{n\in\mathbb{N}\cup\{0\}}\harfunc{\form_{q}}{\ssvertices_{n}}$, by
combining Theorem \ref{thm:SG-p-RF-resis-scale-decreasing} with the following lemma and theorem.
\begin{lem}\label{lem:SG-p-RF-harmonic-contraction-energy}
There exist $C_{p,1},C_{p,2}\in(0,\infty)$ such that for any
$h\in\harfunc{\form_{p}}{\ssvertices_{0}}$, any $w\in\words_{*}$, any $i\in\ssindex$ with
$h(\SGvertex_{i})\in\{\max_{x\in\ssvertices_{0}}h(x),\min_{x\in\ssvertices_{0}}h(x)\}$
and any $n\in\mathbb{N}\cup\{0\}$,
\begin{align}\label{eq:SG-p-RF-harmonic-contraction-energy-upper}
\form_{p}(h\circ\ssmap_{w})&\leq C_{p,1}\sscndc_{p}^{-\lvert w\rvert p/(p-1)}\form_{p}(h),\\
\form_{p}(h\circ\ssmap_{i^{n}})&\geq C_{p,2}\sscndc_{p}^{-np/(p-1)}\form_{p}(h).
\label{eq:SG-p-RF-harmonic-contraction-energy-lower}
\end{align}
\end{lem}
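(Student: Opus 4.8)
The plan is to work entirely on the finite-dimensional quotient $\mathbb{R}^{\ssvertices_{0}}/\mathbb{R}\one_{\ssvertices_{0}}$, where $(\form_{p}^{(0)})^{1/p}$ and $\osc_{\ssvertices_{0}}[\cdot]$ are both norms and hence, since $\#\ssvertices_{0}<\infty$, comparable: the same reasoning as for \eqref{eq:SG-q-RF-osc-p-RF-comparable}, now with the single exponent $p$, yields $c_{1},c_{2}\in(0,\infty)$ depending only on $p$ with $c_{1}\osc_{\ssvertices_{0}}[u]\leq\form_{p}^{(0)}(u)^{1/p}\leq c_{2}\osc_{\ssvertices_{0}}[u]$ for all $u\in\mathbb{R}^{\ssvertices_{0}}$. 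For $g\in\harfunc{\form_{p}}{\ssvertices_{0}}$ one has $\form_{p}(g)=\form_{p}^{(0)}(g\vert_{\ssvertices_{0}})$ and, by the maximum principle (Proposition \ref{prop:SG-p-RF-harmonic-Vn-comp}), $\osc_{\ssset}[g]=\osc_{\ssvertices_{0}}[g\vert_{\ssvertices_{0}}]$; these apply in particular to each $h\circ\ssmap_{w}$, which is harmonic by \eqref{eq:SG-p-RF-harmonic-osc-contraction-words}. The upper bound \eqref{eq:SG-p-RF-harmonic-contraction-energy-upper} then follows by chaining the two norm comparisons with the oscillation contraction in \eqref{eq:SG-p-RF-harmonic-osc-contraction-words}: $\form_{p}(h\circ\ssmap_{w})\leq c_{2}^{p}\osc_{\ssset}[h\circ\ssmap_{w}]^{p}\leq c_{2}^{p}\sscndc_{p}^{-\lvert w\rvert p/(p-1)}\osc_{\ssset}[h]^{p}\leq(c_{2}/c_{1})^{p}\sscndc_{p}^{-\lvert w\rvert p/(p-1)}\form_{p}(h)$, so $C_{p,1}:=(c_{2}/c_{1})^{p}$ works.

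For the lower bound \eqref{eq:SG-p-RF-harmonic-contraction-energy-lower} I would first observe that the rescaled oscillations $o_{n}:=\sscndc_{p}^{n/(p-1)}\osc_{\ssset}[h\circ\ssmap_{i^{n}}]$ form a non-increasing sequence: applying \eqref{eq:SG-p-RF-harmonic-osc-contraction-words} with $w=i$ to the harmonic function $h\circ\ssmap_{i^{n}}$ gives $\osc_{\ssset}[h\circ\ssmap_{i^{n+1}}]\leq\sscndc_{p}^{-1/(p-1)}\osc_{\ssset}[h\circ\ssmap_{i^{n}}]$, i.e.\ $o_{n+1}\leq o_{n}$. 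Hence $\osc_{\ssset}[h\circ\ssmap_{i^{n}}]=\sscndc_{p}^{-n/(p-1)}o_{n}\geq\sscndc_{p}^{-n/(p-1)}o_{\infty}$ with $o_{\infty}:=\lim_{n\to\infty}o_{n}$. The Perron--Frobenius theorem (Theorem \ref{thm:SG-p-RF-Perron-Frobenius}) identifies this limit: since $\sscndc_{p}^{n/(p-1)}(h\circ\ssmap_{i^{n}}-h(\SGvertex_{i})\one_{\ssset})\to c_{p,i}(h)\harprincipal{p}{i}$ uniformly and $\osc_{\ssset}$ is sup-norm continuous and constant-invariant, $o_{\infty}=\lvert c_{p,i}(h)\rvert\osc_{\ssset}[\harprincipal{p}{i}]$. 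It thus remains to bound $\lvert c_{p,i}(h)\rvert$ below in terms of $\osc_{\ssset}[h]$.

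This is where the hypothesis that $h(\SGvertex_{i})$ is extremal on $\ssvertices_{0}$ is used. By the definition of $c_{p,i}(h)$ in Theorem \ref{thm:SG-p-RF-Perron-Frobenius}, $c_{p,i}(h)$ vanishes precisely when the flux $\form_{p}^{(0)}(h\vert_{\ssvertices_{0}};\one_{\SGvertex_{i}})$ does, and I would show this flux is nonzero for non-constant $h$ with $h(\SGvertex_{i})$ maximal or minimal, using $\form_{p}^{(0)}(h\vert_{\ssvertices_{0}};\one_{\ssvertices_{0}})=0$ from Theorem \ref{thm:p-RF-Diff}, the $\sssym$-symmetry, the sign-definiteness from \eqref{eq:p-RF-Diff-homogeneous} at the opposite vertex, and the strict monotonicity of Proposition \ref{prop:SG-p-RF-normal-derivative-self-sim}-\ref{it:SG-p-RF-normal-derivative-monotone} (equivalently Proposition \ref{prop:p-RF-Diff-monotone}); concretely the flux is positive at a maximal and negative at a minimal vertex. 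Because the flux is $(p-1)$-homogeneous and continuous in $h$ (by \eqref{eq:p-RF-Diff-homogeneous} and \eqref{eq:p-RF-Diff-Hoelder-cont}), the scale-invariant ratio $\lvert c_{p,i}(h)\rvert/\osc_{\ssset}[h]$ is continuous and strictly positive on the compact set $\{h\in\harfunc{\form_{p}}{\ssvertices_{0}}/\mathbb{R}\one_{\ssset}\mid\osc_{\ssset}[h]=1,\ h(\SGvertex_{i})\text{ extremal}\}$, hence bounded below by some $C'\in(0,\infty)$ depending only on $p$ (and, by $\sssym$-invariance, uniform in $i$). Feeding $o_{\infty}\geq C'\osc_{\ssset}[h]\,\osc_{\ssset}[\harprincipal{p}{i}]$ back into the norm comparisons gives $\form_{p}(h\circ\ssmap_{i^{n}})\geq c_{1}^{p}\sscndc_{p}^{-np/(p-1)}(C'\osc_{\ssset}[h]\,\osc_{\ssset}[\harprincipal{p}{i}])^{p}\geq C_{p,2}\sscndc_{p}^{-np/(p-1)}\form_{p}(h)$ with $C_{p,2}:=1\wedge(c_{1}C'\osc_{\ssset}[\harprincipal{p}{i}]/c_{2})^{p}$, the case $n=0$ and the constant case being trivial.

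The main obstacle is the lower bound, and within it the uniform positivity of $o_{\infty}/\osc_{\ssset}[h]$. Monotonicity alone only gives $o_{\infty}\geq0$; ruling out $o_{\infty}=0$ with a constant independent of $n$ and $h$ forces one to identify $o_{\infty}$ with the \emph{continuous} (indeed $1$-homogeneous) quantity $\lvert c_{p,i}(h)\rvert\osc_{\ssset}[\harprincipal{p}{i}]$ via Perron--Frobenius, and then to verify the non-degeneracy of the flux exactly at extremal vertices. It is here, and only here, that the hypothesis on $i$ is indispensable: at a non-extremal vertex the flux, and hence $c_{p,i}(h)$, can vanish, so that $h\circ\ssmap_{i^{n}}$ decays strictly faster than $\sscndc_{p}^{-n/(p-1)}$ and \eqref{eq:SG-p-RF-harmonic-contraction-energy-lower} genuinely fails.
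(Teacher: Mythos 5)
Your proof is correct. The upper bound \eqref{eq:SG-p-RF-harmonic-contraction-energy-upper} is obtained exactly as in the paper: the two-sided comparison of $\form_{p}^{(0)}$ with $(\osc_{\ssvertices_{0}}[\cdot])^{p}$ on the finite-dimensional quotient, combined with \eqref{eq:SG-p-RF-harmonic-osc-contraction-words}. For the lower bound \eqref{eq:SG-p-RF-harmonic-contraction-energy-lower}, however, you take a genuinely different route. The paper also reduces to an oscillation lower bound of the form \eqref{eq:SG-p-RF-harmonic-contraction-osc-lower}, but proves it directly: after normalizing $h(\SGvertex_{i})=0=\min_{\ssvertices_{0}}h$ it compares $h$ from below with $\harext{\form_{p}}{\ssvertices_{0}}[\osc_{\ssset}[h]\one_{\SGvertex_{j}}]$ via the weak comparison principle, peels off one application of $\ssmap_{i}$, dominates the result from below by a multiple of $\harprincipal{p}{i}=\harext{\form_{p}}{\ssvertices_{0}}[\one_{\ssvertices_{0}\setminus\{\SGvertex_{i}\}}]$, and then uses the exact eigenvalue relation $\harprincipal{p}{i}\circ\ssmap_{i}=\sscndc_{p}^{-1/(p-1)}\harprincipal{p}{i}$ to handle all remaining iterations in one stroke, with the explicit constant $C_{p,3}=\sscndc_{p}^{1/(p-1)}\min_{x\in\ssvertices_{0}\setminus\{\SGvertex_{i}\}}\harext{\form_{p}}{\ssvertices_{0}}[\one_{\SGvertex_{j}}]\circ\ssmap_{i}(x)$, whose positivity is exactly where Theorem \ref{thm:SG-p-RF-strong-comp} enters. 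You instead pass to the limit: the rescaled oscillations $o_{n}$ are non-increasing, Theorem \ref{thm:SG-p-RF-Perron-Frobenius} identifies $o_{\infty}=\lvert c_{p,i}(h)\rvert\osc_{\ssset}[\harprincipal{p}{i}]$, and the extremality of $h(\SGvertex_{i})$ guarantees via Proposition \ref{prop:SG-p-RF-normal-derivative-self-sim}-\ref{it:SG-p-RF-normal-derivative-monotone} that the flux $\form_{p}^{(0)}(h\vert_{\ssvertices_{0}};\one_{\SGvertex_{i}})$ is nonzero, after which $1$-homogeneity and compactness of the normalized extremal set give the uniform lower bound on $\lvert c_{p,i}(h)\rvert/\osc_{\ssset}[h]$. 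Both arguments are sound and non-circular (Theorem \ref{thm:SG-p-RF-Perron-Frobenius} is established independently of this lemma); yours invokes heavier machinery and yields a non-explicit constant via compactness, but it isolates cleanly \emph{why} the extremality hypothesis is needed --- at a non-extremal vertex the flux, hence $c_{p,i}(h)$, can vanish (e.g.\ for $h=\harsecond{p}{i}$, where the decay rate is $\lambda_{p}<\sscndc_{p}^{-1/(p-1)}$) --- whereas the paper's argument is more elementary and self-contained, using only the comparison principles and the eigenfunction identity from Proposition \ref{prop:SG-p-RF-normal-derivative-self-sim}-\ref{it:SG-p-RF-harmonic-eigenfunc}.
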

\begin{proof}
Recall that by $\#\ssvertices_{0}<\infty$ and \ref{it:p-RF1} for $\form_{p}^{(0)}$ we have
\begin{equation}\label{eq:SG-osc-p-RF-comparable}
C_{p}^{-1}(\osc_{\ssvertices_{0}}[u])^{p}\leq\form_{p}^{(0)}(u)
	\leq C_{p}(\osc_{\ssvertices_{0}}[u])^{p}
	\qquad\textrm{for any $u\in\mathbb{R}^{\ssvertices_{0}}$}
\end{equation}
for some $C_{p}\in(0,\infty)$, and that any $h\in\harfunc{\form_{p}}{\ssvertices_{0}}$
satisfies $\form_{p}(h)=\form_{p}^{(0)}(h\vert_{\ssvertices_{0}})$ by
$\form_{p}\vert_{\ssvertices_{0}}=\form_{p}^{(0)}$ from Theorem \ref{thm:SG-p-RF},
$h\circ\ssmap_{w}\in\harfunc{\form_{p}}{\ssvertices_{0}}$
for any $w\in\words_{*}$ by Proposition \ref{prop:SG-p-RF-harmonic-Vn},
and $\osc_{\ssset}[h]=\osc_{\ssvertices_{0}}[h\vert_{\ssvertices_{0}}]$ 
by \eqref{eq:p-RF-harext-homogeneous} and Proposition \ref{prop:p-RF-comp}.
Therefore \eqref{eq:SG-p-RF-harmonic-contraction-energy-upper} holds with
$C_{p,1}=C_{p}^{2}$ by \eqref{eq:SG-osc-p-RF-comparable} and
\eqref{eq:SG-p-RF-harmonic-osc-contraction-words}, and we also see from
\eqref{eq:SG-osc-p-RF-comparable} that \eqref{eq:SG-p-RF-harmonic-contraction-energy-lower}
is implied by the existence of $C_{p,3}\in(0,\infty)$ such that
for any $h\in\harfunc{\form_{p}}{\ssvertices_{0}}$, any $i\in\ssindex$ with
$h(\SGvertex_{i})\in\{\max_{x\in\ssvertices_{0}}h(x),\min_{x\in\ssvertices_{0}}h(x)\}$
and any $n\in\mathbb{N}\cup\{0\}$,
\begin{equation}\label{eq:SG-p-RF-harmonic-contraction-osc-lower}
\osc_{\ssset}[h\circ\ssmap_{i^{n}}]\geq C_{p,3}\sscndc_{p}^{-n/(p-1)}\osc_{\ssset}[h],
\end{equation}
which we prove as follows. Thanks to \eqref{eq:p-RF-harext-homogeneous} we may and do
assume that $h(\SGvertex_{i})=0=\min_{x\in\ssvertices_{0}}h(x)$ by replacing $h$ with
$h-h(\SGvertex_{i})\one_{\ssset}$ or $h(\SGvertex_{i})\one_{\ssset}-h$. Choose
$j\in\ssindex\setminus\{i\}$ so that $h(\SGvertex_{j})=\osc_{\ssset}[h]$ and set
$C_{p,3}:=\sscndc_{p}^{1/(p-1)}\min_{x\in\ssvertices_{0}\setminus\{\SGvertex_{i}\}}\harext{\form_{p}}{\ssvertices_{0}}[\one_{\SGvertex_{j}}]\circ\ssmap_{i}(x)$,
which is independent of $i,j$ by the $\sssym$-invariance of $(\form_{p},\domain_{p})$
from Theorem \ref{thm:SG-p-RF}. Then $C_{p,3}\in(0,1)$ by Theorem \ref{thm:SG-p-RF-strong-comp}
and Proposition \ref{prop:SG-p-RF-normal-derivative-self-sim}-\ref{it:SG-p-RF-harmonic-eigenfunc},
and it follows from Proposition \ref{prop:p-RF-comp}, \eqref{eq:p-RF-harext-homogeneous} and
Proposition \ref{prop:SG-p-RF-normal-derivative-self-sim}-\ref{it:SG-p-RF-harmonic-eigenfunc}
that for any $n\in\mathbb{N}$,
\begin{align*}
&\osc_{\ssset}[h\circ\ssmap_{i^{n}}]
	=\max_{x\in\ssvertices_{0}}h(\ssmap_{i^{n}}(x))\\
&\mspace{10mu}\geq\max_{x\in\ssvertices_{0}}\harext{\form_{p}}{\ssvertices_{0}}\bigl[\osc_{\ssset}[h]\one_{\SGvertex_{j}}\bigr](\ssmap_{i^{n}}(x))
	=\osc_{\ssset}[h]\max_{x\in\ssvertices_{0}}\harext{\form_{p}}{\ssvertices_{0}}[\one_{\SGvertex_{j}}]\circ\ssmap_{i}(\ssmap_{i^{n-1}}(x))\\
&\mspace{10mu}\geq\osc_{\ssset}[h]\max_{x\in\ssvertices_{0}}\harext{\form_{p}}{\ssvertices_{0}}\bigl[C_{p,3}\sscndc_{p}^{-1/(p-1)}\one_{\ssvertices_{0}\setminus\{\SGvertex_{i}\}}\bigr](\ssmap_{i^{n-1}}(x))
	=C_{p,3}\sscndc_{p}^{-n/(p-1)}\osc_{\ssset}[h],
\end{align*}
proving \eqref{eq:SG-p-RF-harmonic-contraction-osc-lower} and thereby
\eqref{eq:SG-p-RF-harmonic-contraction-energy-lower}.
\qed\end{proof}
\begin{thm}[{\cite[Theorem 4.1]{Hin05}}]\label{thm:prob-meas-filtration-singular}
Let $(\Omega,\probspsigmaalg,\mathbb{P})$ be a probability space and
$\{\probspsigmaalg_{n}\}_{n\in\mathbb{N}\cup\{0\}}$ a non-decreasing sequence of $\sigma$-algebras
in $\Omega$ such that $\bigcup_{n\in\mathbb{N}\cup\{0\}}\probspsigmaalg_{n}$ generates $\probspsigmaalg$.
Let $\widetilde{\mathbb{P}}$ be a probability measure on $(\Omega,\probspsigmaalg)$ such that
$\widetilde{\mathbb{P}}\vert_{\probspsigmaalg_{n}}\ll\mathbb{P}\vert_{\probspsigmaalg_{n}}$
for any $n\in\mathbb{N}\cup\{0\}$, and for each $n\in\mathbb{N}$ define
$\alpha_{n}\in L^{1}(\Omega,\probspsigmaalg_{n},\mathbb{P}\vert_{\probspsigmaalg_{n}})$ by
\begin{equation}\label{eq:prob-meas-filtration-singular-alphan}
\alpha_{n}:=
	\begin{cases}
	\dfrac{d(\widetilde{\mathbb{P}}\vert_{\probspsigmaalg_{n}})/d(\mathbb{P}\vert_{\probspsigmaalg_{n}})}{d(\widetilde{\mathbb{P}}\vert_{\probspsigmaalg_{n-1}})/d(\mathbb{P}\vert_{\probspsigmaalg_{n-1}})}
		& \textrm{on $\{d(\widetilde{\mathbb{P}}\vert_{\probspsigmaalg_{n-1}})/d(\mathbb{P}\vert_{\probspsigmaalg_{n-1}})>0\}$,} \\
	0 & \textrm{on $\{d(\widetilde{\mathbb{P}}\vert_{\probspsigmaalg_{n-1}})/d(\mathbb{P}\vert_{\probspsigmaalg_{n-1}})=0\}$,}
	\end{cases}
\end{equation}
so that $\mathbb{E}[\sqrt{\alpha_{n}}\mid\probspsigmaalg_{n-1}]\leq 1$
$\mathbb{P}\vert_{\probspsigmaalg_{n-1}}$-a.s.\ by conditional Jensen's inequality,
where $\mathbb{E}[\cdot\mid\probspsigmaalg_{n-1}]$ denotes the conditional
expectation given $\probspsigmaalg_{n-1}$ with respect to $\mathbb{P}$. If
\begin{equation}\label{eq:prob-meas-filtration-singular}
\sum_{n\in\mathbb{N}}(1-\mathbb{E}[\sqrt{\alpha_{n}}\mid\probspsigmaalg_{n-1}])=\infty
	\qquad\textrm{$\mathbb{P}$-a.s.,}
\end{equation}
then $\widetilde{\mathbb{P}}\perp\mathbb{P}$.
\end{thm}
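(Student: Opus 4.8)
The plan is to reduce the singularity $\widetilde{\mathbb{P}}\perp\mathbb{P}$ to the statement that the density martingale converges to $0$ $\mathbb{P}$-almost surely, and then to run a Kakutani-type product-martingale argument exploiting the divergence hypothesis. Write $Z_{n}:=d(\widetilde{\mathbb{P}}\vert_{\probspsigmaalg_{n}})/d(\mathbb{P}\vert_{\probspsigmaalg_{n}})$ for $n\in\mathbb{N}\cup\{0\}$, which is well defined by the hypothesis $\widetilde{\mathbb{P}}\vert_{\probspsigmaalg_{n}}\ll\mathbb{P}\vert_{\probspsigmaalg_{n}}$. A direct check using $\widetilde{\mathbb{P}}(A)=\int_{A}Z_{n}\,d\mathbb{P}=\int_{A}Z_{n-1}\,d\mathbb{P}$ for $A\in\probspsigmaalg_{n-1}$ shows that $\{Z_{n}\}_{n\in\mathbb{N}\cup\{0\}}$ is a non-negative $\mathbb{P}$-martingale with respect to $\{\probspsigmaalg_{n}\}_{n\in\mathbb{N}\cup\{0\}}$, so by the martingale convergence theorem $Z_{n}\to Z_{\infty}$ $\mathbb{P}$-a.s.\ for some $[0,\infty)$-valued $Z_{\infty}$. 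First I would record the two elementary identities that $Z_{n}=Z_{n-1}\alpha_{n}$ holds $\mathbb{P}$-a.s.\ everywhere (on $\{Z_{n-1}=0\}$ both sides vanish, since a non-negative martingale reaching the value $0$ stays at $0$), whence $Z_{n}=Z_{0}\prod_{k=1}^{n}\alpha_{k}$, and that $\mathbb{E}[\alpha_{n}\mid\probspsigmaalg_{n-1}]\leq 1$, so that $\beta_{n}:=\mathbb{E}[\sqrt{\alpha_{n}}\mid\probspsigmaalg_{n-1}]\in[0,1]$.

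Next I would separate the behaviour according to whether the conditional expectations $\beta_{n}$ vanish. On the event $\{\beta_{k}=0\}\in\probspsigmaalg_{k-1}$ we have $\sqrt{\alpha_{k}}=0$ and hence $Z_{k}=0$ $\mathbb{P}$-a.s., so $Z_{\infty}=0$ there; thus it remains to treat $\Omega_{0}:=\bigcap_{k\in\mathbb{N}}\{\beta_{k}>0\}$. Setting $\xi_{k}:=\sqrt{\alpha_{k}}/\beta_{k}$ on $\{\beta_{k}>0\}$ and $\xi_{k}:=1$ on $\{\beta_{k}=0\}$, we get $\mathbb{E}[\xi_{k}\mid\probspsigmaalg_{k-1}]=1$, so $R_{n}:=\prod_{k=1}^{n}\xi_{k}$ is a non-negative $\mathbb{P}$-martingale and therefore converges $\mathbb{P}$-a.s.\ to a finite limit $R_{\infty}$. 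On $\Omega_{0}$ we then have the factorization $\sqrt{Z_{n}}=\sqrt{Z_{0}}\,R_{n}\prod_{k=1}^{n}\beta_{k}$. The divergence hypothesis now enters: on $\Omega_{0}$, from $\log\beta_{k}\leq\beta_{k}-1$ and $\sum_{k\in\mathbb{N}}(1-\beta_{k})=\infty$ $\mathbb{P}$-a.s.\ we obtain $\prod_{k=1}^{n}\beta_{k}\to 0$. Since $R_{\infty}<\infty$ and $Z_{0}<\infty$ $\mathbb{P}$-a.s.\ (the latter because $\mathbb{E}[Z_{0}]=\widetilde{\mathbb{P}}(\Omega)=1$), the factorization forces $\sqrt{Z_{n}}\to 0$, i.e.\ $Z_{\infty}=0$ $\mathbb{P}$-a.s.\ on $\Omega_{0}$ as well. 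Combining the two cases, $Z_{\infty}=0$ $\mathbb{P}$-a.s.\ on $\Omega$.

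Finally I would deduce $\widetilde{\mathbb{P}}\perp\mathbb{P}$ from $Z_{\infty}=0$; this last implication is the step requiring the most care, since it is exactly here that the assumption that $\bigcup_{n\in\mathbb{N}\cup\{0\}}\probspsigmaalg_{n}$ generates $\probspsigmaalg$ must be invoked. I would introduce the reference measure $\mathbb{Q}:=\frac{1}{2}(\mathbb{P}+\widetilde{\mathbb{P}})$ on $(\Omega,\probspsigmaalg)$, put $X_{n}:=d(\mathbb{P}\vert_{\probspsigmaalg_{n}})/d(\mathbb{Q}\vert_{\probspsigmaalg_{n}})$ and $Y_{n}:=d(\widetilde{\mathbb{P}}\vert_{\probspsigmaalg_{n}})/d(\mathbb{Q}\vert_{\probspsigmaalg_{n}})$, so that $X_{n}+Y_{n}=2$ and $X_{n}=\mathbb{E}_{\mathbb{Q}}[d\mathbb{P}/d\mathbb{Q}\mid\probspsigmaalg_{n}]$, $Y_{n}=\mathbb{E}_{\mathbb{Q}}[d\widetilde{\mathbb{P}}/d\mathbb{Q}\mid\probspsigmaalg_{n}]$ are closed $\mathbb{Q}$-martingales. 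By L\'{e}vy's upward convergence theorem (valid thanks to the generating property) they converge $\mathbb{Q}$-a.s.\ to $X_{\infty}=d\mathbb{P}/d\mathbb{Q}$ and $Y_{\infty}=d\widetilde{\mathbb{P}}/d\mathbb{Q}$. Since $\mathbb{P}$-a.s.\ $X_{\infty}>0$ and $Z_{n}=Y_{n}/X_{n}\to Y_{\infty}/X_{\infty}$ on $\{X_{\infty}>0\}$, the relation $Z_{\infty}=0$ $\mathbb{P}$-a.s.\ gives $Y_{\infty}=0$ $\mathbb{P}$-a.s., i.e.\ $\mathbb{P}(\{Y_{\infty}>0\})=0$; as $\widetilde{\mathbb{P}}(\{Y_{\infty}=0\})=\int_{\{Y_{\infty}=0\}}Y_{\infty}\,d\mathbb{Q}=0$, the set $\{Y_{\infty}>0\}$ carries $\widetilde{\mathbb{P}}$ while being $\mathbb{P}$-null, which is precisely $\widetilde{\mathbb{P}}\perp\mathbb{P}$.
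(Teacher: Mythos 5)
Your proof is correct. Note that the paper itself offers no proof of this statement: it is quoted verbatim from Hino's paper \cite[Theorem 4.1]{Hin05}, so there is no internal argument to compare against; what you have written is essentially the classical Kakutani-type dichotomy argument, which is also the route taken in \cite{Hin05}. The three delicate points are all handled properly: (i) the absorption at zero of the non-negative $\mathbb{P}$-martingale $Z_{n}$, which justifies both the product formula $Z_{n}=Z_{0}\prod_{k=1}^{n}\alpha_{k}$ and the disposal of the events $\{\beta_{k}=0\}$ (on such an event $\sqrt{\alpha_{k}}=0$ $\mathbb{P}$-a.s.\ because $\{\beta_{k}=0\}\in\probspsigmaalg_{k-1}$, whence $Z_{\infty}=0$ there); (ii) the normalization $\xi_{k}=\sqrt{\alpha_{k}}/\beta_{k}$, which yields a mean-one non-negative martingale $R_{n}$ (integrability follows by the tower property, $\mathbb{E}[R_{n}]=\mathbb{E}[R_{n-1}]=\cdots=1$) whose pathwise boundedness converts the hypothesis $\sum_{k}(1-\beta_{k})=\infty$, via $\log\beta_{k}\leq\beta_{k}-1$, into $\sqrt{Z_{n}}=\sqrt{Z_{0}}\,R_{n}\prod_{k=1}^{n}\beta_{k}\to 0$; and (iii) the reduction of $\widetilde{\mathbb{P}}\perp\mathbb{P}$ to $Z_{\infty}=0$ $\mathbb{P}$-a.s., which is indeed the only place where the hypothesis that $\bigcup_{n}\probspsigmaalg_{n}$ generates $\probspsigmaalg$ enters, and which you carry out correctly with L\'{e}vy's upward theorem under the dominating measure $\mathbb{Q}=\frac{1}{2}(\mathbb{P}+\widetilde{\mathbb{P}})$, using $\mathbb{P}(\{X_{\infty}=0\})=0$ and $\widetilde{\mathbb{P}}(\{Y_{\infty}=0\})=0$ to exhibit the separating set $\{Y_{\infty}>0\}$. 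The only step stated without justification is the inequality $\mathbb{E}[\alpha_{n}\mid\probspsigmaalg_{n-1}]\leq 1$ feeding into Jensen, but this is already asserted in the theorem statement and follows from $\int_{A}\alpha_{n}Z_{n-1}\,d\mathbb{P}=\int_{A\cap\{Z_{n-1}>0\}}Z_{n}\,d\mathbb{P}\leq\int_{A}Z_{n-1}\,d\mathbb{P}$ for $A\in\probspsigmaalg_{n-1}$, so no gap remains.
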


We will apply Theorem \ref{thm:prob-meas-filtration-singular} under the setting of the following lemma.
\begin{lem}\label{lem:prob-meas-filtration-singular-SG-N}
Set $\Omega:=\ssset$, $\probspsigmaalg:=\mathscr{B}(\ssset)$ and let
$\mathbb{P},\widetilde{\mathbb{P}}$ be probability measures on $(\Omega,\probspsigmaalg)$
such that $\mathbb{P}(\ssset_{w})>0$ for any $w\in\words_{*}$ and
$\mathbb{P}(\ssvertices_{*})=\widetilde{\mathbb{P}}(\ssvertices_{*})=0$.
Let $N\in\mathbb{N}$, set
$\probspsigmaalg_{n}:=\{A\cup\bigcup_{w\in\Lambda}(\ssset_{w}\setminus\ssvertices_{nN})
	\mid\textrm{$\Lambda\subset\words_{nN}$, $A\subset\ssvertices_{nN}$}\}$
for each $n\in\mathbb{N}\cup\{0\}$, so that $\{\probspsigmaalg_{n}\}_{n\in\mathbb{N}\cup\{0\}}$
is a non-decreasing sequence of $\sigma$-algebras in $\Omega$ by
Proposition \textup{\ref{prop:SG-intersecting-cells}-\ref{it:SG-intersecting-cells}},
$\bigcup_{n\in\mathbb{N}\cup\{0\}}\probspsigmaalg_{n}$ generates $\probspsigmaalg$, and
$\widetilde{\mathbb{P}}\vert_{\probspsigmaalg_{n}}\ll\mathbb{P}\vert_{\probspsigmaalg_{n}}$
for any $n\in\mathbb{N}\cup\{0\}$. Let $n\in\mathbb{N}$
and define $\alpha_{n}\in L^{1}(\Omega,\probspsigmaalg_{n},\mathbb{P}\vert_{\probspsigmaalg_{n}})$
by \eqref{eq:prob-meas-filtration-singular-alphan}. Then for each $w\in\words_{(n-1)N}$,
\begin{equation}\label{eq:prob-meas-filtration-singular-SG-N}
\mathbb{E}[\sqrt{\alpha_{n}}\mid\probspsigmaalg_{n-1}]\vert_{\ssset_{w}\setminus\ssvertices_{(n-1)N}}=
	\begin{cases}
	\displaystyle\sum_{\tau\in\words_{N}}\sqrt{\frac{\widetilde{\mathbb{P}}(\ssset_{w\tau})}{\widetilde{\mathbb{P}}(\ssset_{w})}}
		\sqrt{\frac{\mathbb{P}(\ssset_{w\tau})}{\mathbb{P}(\ssset_{w})}} & \textrm{if $\widetilde{\mathbb{P}}(\ssset_{w})>0$,} \\
	0 & \textrm{if $\widetilde{\mathbb{P}}(\ssset_{w})=0$.}
	\end{cases}
\end{equation}
\end{lem}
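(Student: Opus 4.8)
The plan is to reduce the conditional expectation to an explicit $\mathbb{P}$-average over the level-$(n-1)N$ cells, after first writing down the two relevant Radon--Nikodym derivatives in closed form. The starting point is the observation that, modulo the $\mathbb{P}$-null (and $\widetilde{\mathbb{P}}$-null) set $\ssvertices_{*}$, the $\sigma$-algebra $\probspsigmaalg_{k}$ is generated by the finite partition $\{\ssset_{v}\setminus\ssvertices_{kN}\}_{v\in\words_{kN}}$: indeed the only other atoms of $\probspsigmaalg_{k}$ are the singletons $\{x\}$ with $x\in\ssvertices_{kN}$, which are $\mathbb{P}$-null since $\mathbb{P}(\ssvertices_{*})=0$, and the cells $\ssset_{v}\setminus\ssvertices_{kN}$ are pairwise disjoint modulo $\ssvertices_{kN}$ by Proposition \ref{prop:SG-intersecting-cells}-\ref{it:SG-intersecting-cells}. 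Since each $\probspsigmaalg_{k}$-measurable function is thus $\mathbb{P}$-a.e.\ constant on each such cell, and using $\mathbb{P}(\ssvertices_{*})=\widetilde{\mathbb{P}}(\ssvertices_{*})=0$ together with $\mathbb{P}(\ssset_{v})>0$, I would first verify that $\mathbb{P}$-a.e.
\[
\frac{d(\widetilde{\mathbb{P}}\vert_{\probspsigmaalg_{k}})}{d(\mathbb{P}\vert_{\probspsigmaalg_{k}})}
	=\sum_{v\in\words_{kN}}\frac{\widetilde{\mathbb{P}}(\ssset_{v})}{\mathbb{P}(\ssset_{v})}\one_{\ssset_{v}\setminus\ssvertices_{kN}},
	\qquad k\in\{n-1,n\}.
\]

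Next I would compute $\alpha_{n}$ cell by cell. Each $v\in\words_{nN}$ factors uniquely as $v=w\tau$ with $w\in\words_{(n-1)N}$ and $\tau\in\words_{N}$, and the self-similarity gives $\ssset_{w}=\bigcup_{\tau\in\words_{N}}\ssset_{w\tau}$ with overlaps contained in $\ssvertices_{nN}$. Since $\ssvertices_{(n-1)N}\subset\ssvertices_{nN}$, each subcell $\ssset_{w\tau}\setminus\ssvertices_{nN}$ lies in $\ssset_{w}\setminus\ssvertices_{(n-1)N}$. Substituting the two densities above into \eqref{eq:prob-meas-filtration-singular-alphan}, on $\ssset_{w\tau}\setminus\ssvertices_{nN}$ I would obtain, whenever $\widetilde{\mathbb{P}}(\ssset_{w})>0$,
\[
\alpha_{n}=\frac{\widetilde{\mathbb{P}}(\ssset_{w\tau})/\mathbb{P}(\ssset_{w\tau})}{\widetilde{\mathbb{P}}(\ssset_{w})/\mathbb{P}(\ssset_{w})},
\]
while the lower branch of \eqref{eq:prob-meas-filtration-singular-alphan} forces $\alpha_{n}=0$ on the whole atom $\ssset_{w}\setminus\ssvertices_{(n-1)N}$ when $\widetilde{\mathbb{P}}(\ssset_{w})=0$, because the level-$(n-1)$ density equals $\widetilde{\mathbb{P}}(\ssset_{w})/\mathbb{P}(\ssset_{w})=0$ there.

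Finally, since $\probspsigmaalg_{n-1}$ is generated modulo $\mathbb{P}$-null sets by the partition $\{\ssset_{w}\setminus\ssvertices_{(n-1)N}\}_{w\in\words_{(n-1)N}}$, the standard atomic formula for conditional expectation gives that $\mathbb{E}[\sqrt{\alpha_{n}}\mid\probspsigmaalg_{n-1}]$ is constant on each such cell with value $\mathbb{P}(\ssset_{w})^{-1}\int_{\ssset_{w}}\sqrt{\alpha_{n}}\,d\mathbb{P}$ (recall $\mathbb{P}(\ssset_{w})>0$). I would then split this integral over $\{\ssset_{w\tau}\}_{\tau\in\words_{N}}$, insert the value of $\alpha_{n}$ from the previous step, and simplify the products of square roots, which produces exactly \eqref{eq:prob-meas-filtration-singular-SG-N}; the case $\widetilde{\mathbb{P}}(\ssset_{w})=0$ yields the stated value $0$. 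The computation itself is essentially routine; the points demanding care are the null-set bookkeeping for $\ssvertices_{*}$ that legitimizes treating each $\probspsigmaalg_{k}$ as the corresponding cell partition, and keeping the degenerate branch $\widetilde{\mathbb{P}}(\ssset_{w})=0$ consistent across the density computation, the evaluation of $\alpha_{n}$, and the final averaging.
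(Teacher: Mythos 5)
Your proposal is correct and follows exactly the route the paper intends: the paper's own proof is just the one-line remark that the identity ``follows easily by direct calculations based on \eqref{eq:prob-meas-filtration-singular-alphan} and Proposition \ref{prop:SG-intersecting-cells}-\ref{it:SG-intersecting-cells}'', and your computation — identifying the atoms of $\probspsigmaalg_{k}$ modulo the null set $\ssvertices_{*}$, writing the Radon--Nikodym derivatives as ratios $\widetilde{\mathbb{P}}(\ssset_{v})/\mathbb{P}(\ssset_{v})$ on each cell, and then averaging $\sqrt{\alpha_{n}}$ over the subcells $\{\ssset_{w\tau}\}_{\tau\in\words_{N}}$ — is precisely that direct calculation, with the degenerate branch $\widetilde{\mathbb{P}}(\ssset_{w})=0$ handled consistently.
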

\begin{proof}
This follows easily by direct calculations based on \eqref{eq:prob-meas-filtration-singular-alphan}
and Proposition \ref{prop:SG-intersecting-cells}-\ref{it:SG-intersecting-cells}.
\qed\end{proof}
\begin{proof}[\hspace*{-1.45pt}of Theorem \textup{\ref{thm:SG-p-q-energy-meas-sing}}]
We first prove $\enermeas{p}{u}\perp\enermeas{q}{v}$ for $u\in\harfunc{\form_{p}}{\ssvertices_{0}}$ and
$v\in\harfunc{\form_{q}}{\ssvertices_{0}}$. Without loss of generality we may and do assume $p<q$, let
$C_{p,2}\in(0,\infty)$ be as in \eqref{eq:SG-p-RF-harmonic-contraction-energy-lower},
$C_{q,1}\in(0,\infty)$ be as in \eqref{eq:SG-p-RF-harmonic-contraction-energy-upper}
with $q$ in place of $p$, and choose $N\in\mathbb{N}$ so that
$C_{q,1}\sscndc_{q}^{-N/(q-1)}\leq\frac{1}{2}C_{p,2}\sscndc_{p}^{-N/(p-1)}$,
which is possible since $\sscndc_{q}^{-1/(q-1)}<\sscndc_{p}^{-1/(p-1)}$
by Theorem \ref{thm:SG-p-RF-resis-scale-decreasing}. Noting that we clearly have
$\enermeas{p}{u}\perp\enermeas{q}{v}$ if $\enermeas{p}{u}(\ssset)\enermeas{q}{v}(\ssset)=0$,
we assume that $\enermeas{p}{u}(\ssset)\enermeas{q}{v}(\ssset)>0$, set
$(\Omega,\probspsigmaalg,\mathbb{P}):=\bigl(\ssset,\Borel(\ssset),\enermeas{p}{u}(\ssset)^{-1}\enermeas{p}{u}\bigr)$,
let $\{\probspsigmaalg_{n}\}_{n\in\mathbb{N}\cup\{0\}}$ denote the non-decreasing sequence
of $\sigma$-algebras in $\Omega$ with $\bigcup_{n\in\mathbb{N}\cup\{0\}}\probspsigmaalg_{n}$
generating $\probspsigmaalg$ as defined in Lemma \ref{lem:prob-meas-filtration-singular-SG-N},
and set $\widetilde{\mathbb{P}}:=\enermeas{q}{v}(\ssset)^{-1}\enermeas{q}{v}$, so that
$\mathbb{P}(\ssset_{w})\widetilde{\mathbb{P}}(\ssset_{w})>0$ for any $w\in\words_{*}$ by
Proposition \ref{prop:SG-p-RF-normal-derivative-self-sim}-\ref{it:SG-p-RF-harmonic-non-degenerate},
\ref{it:p-RF1} for $(\form_{p},\domain_{p})$ and \hyperlink{p-RF1}{\textup{(RF1)$_{q}$}} for
$(\form_{q},\domain_{q})$, and $\mathbb{P}(\ssvertices_{*})=0=\widetilde{\mathbb{P}}(\ssvertices_{*})$
by Theorem \ref{thm:SG-p-energy-meas} and the countability of $\ssvertices_{*}$. In particular,
$\widetilde{\mathbb{P}}\vert_{\probspsigmaalg_{n}}\ll\mathbb{P}\vert_{\probspsigmaalg_{n}}$
for any $n\in\mathbb{N}\cup\{0\}$, and define
$\alpha_{n}\in L^{1}(\Omega,\probspsigmaalg_{n},\mathbb{P}\vert_{\probspsigmaalg_{n}})$
by \eqref{eq:prob-meas-filtration-singular-alphan} for each $n\in\mathbb{N}$.
We claim that for any $n\in\mathbb{N}$,
\begin{equation}\label{eq:prob-meas-filtration-singular-SG-N-upper-bound}
\begin{split}
&\lVert\mathbb{E}[\sqrt{\alpha_{n}}\mid\probspsigmaalg_{n-1}]\rVert_{\sup,\ssset\setminus\ssvertices_{(n-1)N}}\\
&\leq\max\bigl\{\sqrt{st}+\sqrt{(1-s)(1-t)}\bigm\vert\textrm{$s,t\in[0,1]$, $2s\leq C_{p,2}\sscndc_{p}^{-N/(p-1)}\leq t$}\bigr\}\\
&=:\delta_{p,N}\in(0,1).
\end{split}
\end{equation}
Indeed, let $n\in\mathbb{N}$ and $w\in\words_{(n-1)N}$. Then since
$\form_{p}(u\circ\ssmap_{w})\form_{q}(v\circ\ssmap_{w})>0$ by
$\mathbb{P}(\ssset_{w})\widetilde{\mathbb{P}}(\ssset_{w})>0$ and
$u\circ\ssmap_{w}\in\harfunc{\form_{p}}{\ssvertices_{0}}$ and
$v\circ\ssmap_{w}\in\harfunc{\form_{q}}{\ssvertices_{0}}$
by Proposition \ref{prop:SG-p-RF-harmonic-Vn}, taking $i\in\ssindex$ such that
$u\circ\ssmap_{w}(\SGvertex_{i})=\min_{x\in\ssvertices_{0}}u\circ\ssmap_{w}(x)$,
we see from \eqref{eq:SG-p-RF-harmonic-contraction-energy-upper} with $q$ in place of $p$,
the way above of having chosen $N$ and \eqref{eq:SG-p-RF-harmonic-contraction-energy-lower} that
\begin{align}
\frac{\widetilde{\mathbb{P}}(\ssset_{wi^{N}})}{\widetilde{\mathbb{P}}(\ssset_{w})}
	&=\frac{\enermeas{q}{v}(\ssset_{wi^{N}})}{\enermeas{q}{v}(\ssset_{w})}
	=\frac{\sscndc_{q}^{N}\form_{q}(v\circ\ssmap_{wi^{N}})}{\form_{q}(v\circ\ssmap_{w})}
	\leq C_{q,1}\sscndc_{q}^{-N/(q-1)}\leq\frac{1}{2}C_{p,2}\sscndc_{p}^{-N/(p-1)},\notag\\
\frac{\mathbb{P}(\ssset_{wi^{N}})}{\mathbb{P}(\ssset_{w})}
	&=\frac{\enermeas{p}{u}(\ssset_{wi^{N}})}{\enermeas{p}{u}(\ssset_{w})}
	=\frac{\sscndc_{p}^{N}\form_{p}(u\circ\ssmap_{wi^{N}})}{\form_{p}(u\circ\ssmap_{w})}
	\geq C_{p,2}\sscndc_{p}^{-N/(p-1)},
\label{eq:prob-meas-filtration-singular-SG-N-non-parallel}
\end{align}
and hence from Lemma \ref{lem:prob-meas-filtration-singular-SG-N}, the Cauchy--Schwarz
inequality, $\widetilde{\mathbb{P}}(\ssvertices_{*})=0=\mathbb{P}(\ssvertices_{*})$,
Proposition \ref{prop:SG-intersecting-cells}-\ref{it:SG-intersecting-cells}
and \eqref{eq:prob-meas-filtration-singular-SG-N-non-parallel} that
\begin{equation}\label{eq:prob-meas-filtration-singular-SG-N-upper-bound-prf}
\begin{split}
\mathbb{E}&[\sqrt{\alpha_{n}}\mid\probspsigmaalg_{n-1}]\vert_{\ssset_{w}\setminus\ssvertices_{(n-1)N}}\\
&=\sqrt{\frac{\widetilde{\mathbb{P}}(\ssset_{wi^{N}})}{\widetilde{\mathbb{P}}(\ssset_{w})}}
	\sqrt{\frac{\mathbb{P}(\ssset_{wi^{N}})}{\mathbb{P}(\ssset_{w})}}
	+\sum_{\tau\in\words_{N}\setminus\{i^{N}\}}\sqrt{\frac{\widetilde{\mathbb{P}}(\ssset_{w\tau})}{\widetilde{\mathbb{P}}(\ssset_{w})}}
		\sqrt{\frac{\mathbb{P}(\ssset_{w\tau})}{\mathbb{P}(\ssset_{w})}}\\
&\leq\sqrt{\frac{\widetilde{\mathbb{P}}(\ssset_{wi^{N}})}{\widetilde{\mathbb{P}}(\ssset_{w})}}
	\sqrt{\frac{\mathbb{P}(\ssset_{wi^{N}})}{\mathbb{P}(\ssset_{w})}}
	+\sqrt{1-\frac{\widetilde{\mathbb{P}}(\ssset_{wi^{N}})}{\widetilde{\mathbb{P}}(\ssset_{w})}}
		\sqrt{1-\frac{\mathbb{P}(\ssset_{wi^{N}})}{\mathbb{P}(\ssset_{w})}}
	\leq\delta_{p,N},
\end{split}
\end{equation}
proving \eqref{eq:prob-meas-filtration-singular-SG-N-upper-bound}. Thus
$\sum_{n\in\mathbb{N}}(1-\mathbb{E}[\sqrt{\alpha_{n}}\mid\probspsigmaalg_{n-1}](x))=\infty$
for any $x\in\ssset\setminus\ssvertices_{*}$ by \eqref{eq:prob-meas-filtration-singular-SG-N-upper-bound}
and in particular for $\mathbb{P}$-a.e.\ $x\in\ssset=\Omega$ by $\mathbb{P}(\ssvertices_{*})=0$,
so that Theorem \ref{thm:prob-meas-filtration-singular} is applicable and yields
$\widetilde{\mathbb{P}}\perp\mathbb{P}$, namely $\enermeas{p}{u}\perp\enermeas{q}{v}$.

Next, let $u\in\bigcup_{n\in\mathbb{N}\cup\{0\}}\harfunc{\form_{p}}{\ssvertices_{n}}$
and $v\in\bigcup_{n\in\mathbb{N}\cup\{0\}}\harfunc{\form_{q}}{\ssvertices_{n}}$. Then
choosing $n\in\mathbb{N}\cup\{0\}$ so that $u\in\harfunc{\form_{p}}{\ssvertices_{n}}$
and $v\in\harfunc{\form_{q}}{\ssvertices_{n}}$, for each $w\in\words_{n}$ we have
$\enermeas{p}{u}\circ\ssmap_{w}=\sscndc_{p}^{n}\enermeas{p}{u\circ\ssmap_{w}}$ and
$\enermeas{q}{v}\circ\ssmap_{w}=\sscndc_{q}^{n}\enermeas{q}{v\circ\ssmap_{w}}$
by the uniqueness of $\enermeas{p}{u\circ\ssmap_{w}}$ and $\enermeas{q}{v\circ\ssmap_{w}}$
from Theorem \ref{thm:SG-p-energy-meas}, $u\circ\ssmap_{w}\in\harfunc{\form_{p}}{\ssvertices_{0}}$
and $v\circ\ssmap_{w}\in\harfunc{\form_{q}}{\ssvertices_{0}}$
by Proposition \ref{prop:SG-p-RF-harmonic-Vn}, and hence
$\enermeas{p}{u}(\ssmap_{w}(A_{w}))=\sscndc_{p}^{n}\enermeas{p}{u\circ\ssmap_{w}}(A_{w})=0$ and
$\enermeas{q}{v}(\ssset_{w}\setminus\ssmap_{w}(A_{w}))=\sscndc_{q}^{n}\enermeas{q}{v\circ\ssmap_{w}}(\ssset\setminus A_{w})=0$
for some $A_{w}\in\Borel(\ssset)$ by the previous paragraph. Thus
$\enermeas{p}{u}\bigl(\bigcup_{w\in\words_{n}}\ssmap_{w}(A_{w})\bigr)=0
	=\enermeas{q}{v}\bigl(\ssset\setminus\bigcup_{w\in\words_{n}}\ssmap_{w}(A_{w})\bigr)$,
proving $\enermeas{p}{u}\perp\enermeas{q}{v}$.

Finally, combining the result of the last paragraph with
Proposition \ref{prop:SG-p-RF-harmonic-approx} and Lemma \ref{lem:SG-p-q-energy-meas-sing-cont},
we conclude that $\enermeas{p}{u}\perp\enermeas{q}{v}$ for any $u\in\domain_{p}$ and
any $v\in\domain_{q}$.
\qed\end{proof}
\end{document}